\theoremstyle{plain} 
\newtheorem{lemma}[equation]{Lemma} 
\newtheorem{proposition}[equation]{Proposition} 
\newtheorem{theorem}[equation]{Theorem} 
\newtheorem{corollary}[equation]{Corollary}
\theoremstyle{definition}
\newtheorem{definition}[equation]{Definition} 
\theoremstyle{remark}
\numberwithin{equation}{section}
\title[Two Weight Riesz Transform Inequalities]{Two Weight Inequalities for Riesz Transforms:\\ Uniformly Full Dimension Weights} 
 \subjclass[2000]{Primary: 42B20 Secondary:   42B35}
 \keywords{Two weight inequalities, Riesz transform, Poisson operator, Carleson Measure} 
\author[M. T. Lacey]{Michael T. Lacey$^{1}$}   
\address{Michael T. Lacey, School of Mathematics, Georgia Institute of Technology, Atlanta GA 30332, USA}
\email {lacey@math.gatech.edu}
\thanks{1.  Research supported in part by grant NSF-DMS 1265570, 
and the Australian Research Council through grant ARC-DP120100399.}
\author[B. D. Wick]{Brett D. Wick$^{2}$}
\address{Brett D. Wick, Department of Mathematics\\ Washington University -- St. Louis\\ One Brookings Drive\\ St. Louis, MO USA 63130}
\email{wick@math.wustl.edu}
\thanks{2.  Research supported in part by a National Science Foundation DMS grants \# 1603246 and \# 1560955.}
\begin{document}
\begin{abstract}  
Fix an integer $ n$ and number $d$, $ 0< d\neq n-1 \leq n$, and two weights $ w$ and $ \sigma $ on $ \mathbb R ^{n}$. 
We impose an extra condition that the two weights separately are not concentrated on a set of codimension one, 
uniformly over locations and scales.  (This condition holds for doubling weights.)
Then, we characterize the two weight inequality for the $ d$-dimensional Riesz transform on $ \mathbb R ^{n}$, 
\begin{equation*}
\left\lVert 
\int   f (y) \frac {x-y} {\lvert  x-y\rvert ^{d+1} } \; \sigma (dy)
\right\rVert_{L ^{2} (\mathbb{R}^n;w)} \le \mathscr N \lVert f\rVert_{L ^2 (\mathbb{R}^n;\sigma )}
\end{equation*}
in terms of these two conditions, and their duals:  For finite constants $ \mathscr A_2$ and $ \mathscr T$, 
uniformly over all cubes $ Q\subset \mathbb R ^{n}$
\begin{gather*}
\frac {w (Q)} { \lvert  Q\rvert ^{d/n} } 
\int _{\mathbb R ^{n}} \frac {\lvert  Q\rvert ^{d/n} } 
{ \lvert  Q\rvert ^{2d/n} +\textup{dist}(x, Q) ^{2d/n} } \; \sigma (dx) \leq  \mathscr A _2 
\\
\int _{Q} \lvert \mathsf  R _{\sigma } \mathbf 1_{Q} (x)\rvert ^2 \; w(dx) \le \mathscr T ^2 \sigma (Q),
\end{gather*}
where $ \mathsf R _{\sigma }$ is the Riesz transform as above, and 
the dual conditions are obtained by interchanging the roles of the two weights.
Examples show that a key step of the proof fails in absence of the extra geometric condition imposed on the 
weights.  
\end{abstract}

\maketitle
 
\setcounter{tocdepth}{1}
\tableofcontents

\section{Introduction} 
We are interested in the two weight inequality for  fractional Riesz transforms.  Namely, for two weights, non-negative Borel measures $ w $ and $ \sigma $ 
on $ \mathbb R ^{n}$, 
we consider the  norm inequality  
\begin{equation}\label{e:N}
\left\lVert 
\int_{\mathbb{R}^n}  f (y) \frac {x-y} {\lvert  x-y\rvert ^{d+1} } \; \sigma (dy)
\right\rVert_{L ^{2} (\mathbb{R}^n;w)} \le \mathscr N \lVert f\rVert_{L ^2 (\mathbb{R}^n;\sigma )}. 
\end{equation}
Here, $ 0< d\neq n-1 \le n$,  and the kernel above is vectorial, making it the standard $ d$-dimensional Riesz transform on $ \mathbb R ^{n}$.  
Throughout, we take $ \mathscr N$ to be the best constant in the inequality above.
To avoid cumbersome notation, we shorten the inequality above to 
\begin{equation*}
\lVert R _{\sigma } f \rVert_{ w } \le \mathscr N \lVert f\rVert_{\sigma } . 
\end{equation*}
We also suppress the dimensionality of the Riesz transform,  and 
since our results only hold in $ L ^2 $, we write $ \lVert f\rVert_{\sigma }\equiv  \lVert f\rVert_{L ^2 (\mathbb R ^{n}; \sigma )}$.  
The norm inequality is written with the weight $ \sigma $ on both sides of the inequality
since it then dualizes by interchanging the roles of the weights.

The desired  characterization of the boundedness of the Riesz transform is in terms of a pair of testing inequalities and an $ A_2$ type condition phrased in the language of this Poisson-like operator. For a cube $ Q\subset \mathbb R ^{n}$,  set 
\begin{equation}\label{e:Pr}
\mathsf P ^{\textup{r}} (\sigma ,Q) \equiv 
\int _{\mathbb R ^{n}} \frac {\lvert  Q\rvert ^{d/n} } 
{ \lvert  Q\rvert ^{2d/n} +\textup{dist}(x, Q) ^{2d} } \; \sigma (dx). 
\end{equation}
The superscript $ {} ^{\textup{r}}$ denotes `reproducing   Poisson' due to the similarities with the analogous power in the setting of weighted estimates on reproducing kernel Hilbert spaces.

Below, we impose the condition that the dimension of the Riesz transform \emph{not} be of codimension one.  
But, we also impose an extra geometric condition on each weight individually.  Say that the weight $ w$ is \emph{uniformly of full dimension} 
if there is a $ 0<\eta <1$ such that for all cubes $ Q \subset \mathbb R ^{n}$, 
\begin{equation}\label{e:ufd}
\inf_H \int _{Q}\int _{Q} \frac { \textup{dist}(x', H+x)  } { \lvert  x-x'\rvert  }  \; w (dx)\, w (dx') \ge \eta w (Q) ^2 .  
\end{equation}
The infimum is formed over all hyperplanes $ H$ through the origin of co-dimension one.  
Without this assumption, a crucial step in the proof of   Lemma~\ref{l:E}  fails. 
We thank Xavier Tolsa for pointing this out to us; elaborations of his example are given in \S\ref{s:example}.  

A weight $ w$ is doubling if there is a constant $ C _{d}$ so that for all cubes $ Q$, $ w (2Q) \le C_d w (Q)$, where $2Q$ is the cube concentric with $Q$ but twice the side length. 
Such weights are uniformly of full dimension, so that in the case of both $ \sigma $ and $ w$ doubling, and 
$ n=d \ge 2$, the Theorem below provides a characterization of the two weight inequality for the Riesz transforms.

\begin{theorem}\label{t:main} Assume that $ 0< d \neq n-1 \leq n$ and that $ \sigma $ and $ w$ are two weights on $ \mathbb R ^{n}$,  
each of which are uniformly of full dimension. 
The norm inequality \eqref{e:N} holds if and only if 
for finite positive constants $ \mathscr A_2, \mathscr T$, these inequalities hold uniformly 
over cubes $ Q$, and over their  dual formulations 
\begin{gather}\label{e:A2}
\frac { w (Q)} { \lvert  Q\rvert ^{d/n} }  \cdot  \mathsf P ^{\textup{r}} (\sigma ,Q)
\leq \mathscr  A_2,
\\  \label{e:test}
\int _{Q} \lvert  R _{\sigma } \mathbf 1_{Q} (x)\rvert ^2 \; w(dx) \le \mathscr T ^2 \sigma (Q), 
\end{gather}
(The dual statements are obtained by interchanging the roles of $ w$ and $ \sigma $.) 
Moreover, 
$\mathscr N \simeq \mathscr R\equiv \mathscr A_2 ^{1/2} + \mathscr T$.  
\end{theorem}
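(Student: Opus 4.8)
The equivalence $\mathscr N\simeq\mathscr R$ splits into $\mathscr R\lesssim\mathscr N$ and $\mathscr N\lesssim\mathscr R$, and the first is routine. Testing \eqref{e:N} against $f=\mathbf 1_{Q}$ gives \eqref{e:test} at once. For \eqref{e:A2} one again takes $f=\mathbf 1_{Q}$ and uses that the vectorial Riesz kernel has no cancellation at a distance: when $\textup{dist}(x,Q)\gtrsim\ell(Q)$ the unit vectors $\tfrac{x-y}{\lvert x-y\rvert}$, $y\in Q$, lie in a narrow cone, so $\lvert R_{\sigma}\mathbf 1_{Q}(x)\rvert\gtrsim\sigma(Q)\,\textup{dist}(x,Q)^{-d}$; integrating $\lvert R_{\sigma}\mathbf 1_{Q}\rvert^{2}$ against $w$ over a lacunary family of dyadic annuli around $Q$ and summing reconstructs the Poisson integral $\mathsf P^{\textup{r}}(\sigma,Q)$, while the remaining near piece is handled by a pair of well-separated comparable cubes. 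Interchanging $w$ and $\sigma$ yields the dual conditions. Everything that follows is devoted to sufficiency, $\mathscr N\lesssim\mathscr R$.

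For sufficiency I would follow the Nazarov–Treil–Volberg program in the form developed for the two-weight Hilbert transform. By truncation and a limiting argument it suffices to bound $\lvert\langle R_{\sigma}f,g\rangle_{w}\rvert$ for $f\in L^{2}(\sigma)$ and $g\in L^{2}(w)$ bounded and compactly supported. Averaging over random dyadic grids — the hypothesis that $w,\sigma$ carry no common point mass being used here to discard exceptional configurations — reduces matters to pairs of \emph{good} cubes, and one expands $f=\sum_{Q}\Delta_{Q}^{\sigma}f$ and $g=\sum_{Q}\Delta_{Q}^{w}g$ in the martingale difference bases adapted to $\sigma$ and $w$. The bilinear form becomes $\sum_{Q,Q'}\langle R_{\sigma}\Delta_{Q}^{\sigma}f,\Delta_{Q'}^{w}g\rangle_{w}$, which is organized by the relative size and location of $Q$ and $Q'$ into (i) well-separated cubes, (ii) a boundedly overlapping family of cubes of comparable size lying close together, and (iii) the deeply nested terms $Q'\Subset Q$ and $Q\Subset Q'$.

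Type (i) is controlled by $\mathscr A_{2}^{1/2}$ through the monotonicity (energy) estimate — which dominates the disjoint part of the form by the $w$-energies of cubes paired with Poisson integrals of the martingale differences — together with \eqref{e:A2} and Cauchy–Schwarz; type (ii), finite per cube thanks to goodness, is dominated by the testing constant $\mathscr T$. Type (iii) is the heart of the matter. Following NTV I would run a corona (stopping-time) decomposition of the grid, with stopping cubes chosen so that (a) the relevant averages stay roughly constant down each corona, forcing a Carleson/sparseness property, and (b) an \emph{energy condition} holds. Within a single corona the main part of the form sees an essentially constant function and is dominated directly by $\mathscr T$, the residual paraproduct and neighbour pieces being absorbed into $\mathscr A_{2}^{1/2}$ and $\mathscr T$; summing these contributions over all coronas is done by pairing the Carleson property with the energy condition, and it is there that $\mathscr A_{2}$, $\mathscr T$ and the energy constant combine into the final bound $\lesssim\mathscr R$.

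The one genuinely delicate point — and the sole place the standing hypothesis \eqref{e:ufd} is used — is \emph{proving} the energy condition, i.e.\ Lemma~\ref{l:E}: for every cube $I_{0}$ and every family $\{I\}$ of pairwise disjoint subcubes of $I_{0}$ (and dually),
\begin{equation*}
\sum_{I}\Bigl(\frac{\mathsf P^{\textup{r}}(\mathbf 1_{I_{0}}\sigma,I)}{\lvert I\rvert^{d/n}}\Bigr)^{2}\mathsf E(w,I)^{2}\ \lesssim\ (\mathscr A_{2}+\mathscr T^{2})\,\sigma(I_{0}),
\end{equation*}
where $\mathsf E(w,I)^{2}=\ell(I)^{-2}\int_{I}\bigl\lvert x-\langle x\rangle_{I}^{w}\bigr\rvert^{2}\,w(dx)$ is the $w$-energy of $I$ and $\langle x\rangle_{I}^{w}$ the $w$-average of the coordinate function over $I$. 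The proof must convert the testing bound \eqref{e:test} for $R_{\sigma}$ into control of this $w$-energy, and for that one needs $w$ to spread, uniformly over locations and scales, in all $n$ coordinate directions: a component of $R_{\sigma}$ transverse to the support of $w$ is invisible to \eqref{e:test}, so absent a quantitative ``no codimension-one concentration'' hypothesis the argument simply collapses — precisely the phenomenon the examples of \S\ref{s:example} exhibit, and precisely what \eqref{e:ufd} rules out. I expect this step — Lemma~\ref{l:E} and its reliance on \eqref{e:ufd} — to be the main obstacle; granted it, the reassembly of (i), (ii) and (iii) into $\mathscr N\lesssim\mathscr A_{2}^{1/2}+\mathscr T$ is the by-now-standard Nazarov–Treil–Volberg synthesis.
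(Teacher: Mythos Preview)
Your high-level architecture is right --- NTV random grids, good/bad reduction, martingale expansion, corona decomposition, and an energy lemma as the one place where the full-dimension hypothesis \eqref{e:ufd} is spent --- and it matches the paper's.  But several of the pieces you describe as routine or ``by-now-standard'' are not, and one technical formulation is off.

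First, your energy inequality is stated with the reproducing Poisson $\mathsf P^{\textup r}$ (decay $2d$), but what the monotonicity principle actually produces --- and what the Energy Lemma~\ref{l:E} must control --- is the \emph{gradient} Poisson $\mathsf P^{\textup g}$ of \eqref{e:Pg}, with decay $d+1$.  These agree only for $d=1$; in general they are incomparable, and the whole argument is organized around $\mathsf P^{\textup g}$ (and its cousin $\mathsf P^{\textup{g}+}$), with $\mathsf P^{\textup r}$ used only in the $A_2$ hypothesis.  Relatedly, the monotonicity estimate here is \emph{not} a single energy bound: Lemma~\ref{l:monotone} has two terms, one with $\mathsf P^{\textup g}$ paired with the Haar coefficient $\langle x/\ell(Q),g\rangle_w$, and a second with $\mathsf P^{\textup{g}+}$ paired with the full energy $E(w,Q)$.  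These two terms are by example incomparable, and both must be carried through the off-diagonal analysis --- this is a genuine new complication over the Hilbert/Cauchy setting that your outline does not anticipate.

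Second, the reassembly is not the standard NTV synthesis.  Two specific points: (a) for cubes of comparable size that overlap, the paper needs the \emph{surgery} argument of Lemma~\ref{l:surgery}, which uses the \emph{a priori} norm $\mathscr N$ and the expectation over random grids to absorb a small piece; this replaces the two-weight Hardy inequality, which is unavailable here.  (b) The stopping form inside a single corona --- your ``residual paraproduct and neighbour pieces'' --- is the hardest part of the paper.  It is \emph{not} absorbed by $\mathscr A_2^{1/2}+\mathscr T$ via standard estimates; it requires the introduction of the auxiliary cubes $Q^{e}$ of \eqref{e:Qe} to split off the $\mathsf P^{\textup{g}+}$ contribution, followed by a delicate recursion on admissible collections of pairs with a notion of \emph{size} (Lemma~\ref{l:size}), in the spirit of \cite{part2} but with additional complications.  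Calling this ``by-now-standard'' understates what is needed: the global-to-local step also rests on a separate functional energy inequality (Theorem~\ref{t:fe}), proved via a dyadic Poisson model and a testing characterization due to Hyt\"onen.
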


The Theorem has two critical side conditions. The first is the uniformly full dimension assumption, 
which we need to derive a critical energy inequality.  The second, of no common point masses, 
we can eliminate provided $ n-1 < d \leq n$.  We assume it in the general case in order to 
avoid some complications in the functional energy argument.  A precise statement 
of a more general result is given in Theorem~\ref{t:exact}.

The difficult part of the theorem is to show the sufficiency of the $ A_2$ and testing inequalities.  
The key property of Riesz transforms that we will exploit is the divergence condition 
\begin{equation} \label{e:div}
\operatorname {div} \frac {y} {\lvert  y\rvert ^{d+1} } = \frac {n-d-1} {\lvert  y\rvert ^{d+1} } \neq 0. 
\end{equation}
Note that in the case that the codimension is not one, i.e. $d+1\neq n$, the divergence of the kernel is signed, while in the case of codimension one, i.e. $d+1=n$, we gain no information since the divergence vanishes.  

Concerning the additional condition that the weights be of \emph{uniformly full dimension}, we have 

\begin{proposition}\label{p:ufd} Sufficient conditions for a weight $ w$ to be of uniformly full dimension are either 
(1) the weight is doubling, or (2) the weight is Ahlfors-David regular of parameter $ n-1 < d \leq n$.  
The latter condition means that there are constants $ 0< C_0 < C_1 < \infty $ so that  for all points $ x $ in the support of $ w$, 
and balls $ B (x,r)$ centered at $ x$, of radius $ 0< r < \infty $, there holds 
\begin{equation} \label{e:AD}
C_0 r ^{d} < w (B (x,r)) < C_1 r ^{d}.  
\end{equation}

\end{proposition}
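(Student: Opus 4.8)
The plan is to recast \eqref{e:ufd} in a purely directional form and then, in each of the two cases, exhibit for every cube $Q$ and every direction two well‑separated subsets of $Q$, each carrying a fixed fraction of $w(Q)$. A hyperplane $H$ through the origin has a unit normal $\nu$, and $H+x=\{z:\langle z-x,\nu\rangle=0\}$, so $\operatorname{dist}(x',H+x)=\lvert\langle x-x',\nu\rangle\rvert$; thus \eqref{e:ufd} is equivalent to the statement that, for some $\eta\in(0,1)$ and all cubes $Q$,
\begin{equation*}
\inf_{\lvert\nu\rvert=1}\ \int_Q\int_Q\frac{\lvert\langle x-x',\nu\rangle\rvert}{\lvert x-x'\rvert}\ w(dx)\,w(dx')\ \ge\ \eta\,w(Q)^2 .
\end{equation*}
Since the integrand is at most $1$ (Cauchy--Schwarz), the content is the lower bound: $w|_Q$ must not be nearly flattened onto any hyperplane. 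To get it, it suffices to produce, for each $Q$ and each unit $\nu$, sets $A,B\subset Q$ with $w(A),w(B)\ge c_0\,w(Q)$ and $\lvert\langle x-x',\nu\rangle\rvert\ge c_1\,\ell(Q)$ on $A\times B$; because $\lvert x-x'\rvert\le\sqrt n\,\ell(Q)$ throughout $Q\times Q$, the integrand is then $\ge c_1/\sqrt n$ on $A\times B$, and integrating over $A\times B$ gives \eqref{e:ufd} with $\eta=c_0^2c_1/\sqrt n$.

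\emph{Doubling weights.} Fix $Q$ and a unit $\nu$, set $\pi_\nu(x)=\langle x,\nu\rangle$, so $\pi_\nu(Q)$ is an interval of length $L$ with $\ell(Q)\le L\le\sqrt n\,\ell(Q)$. Let $A$ (resp. $B$) be the set of $x\in Q$ with $\pi_\nu(x)$ in the closed lower (resp. upper) quarter of $\pi_\nu(Q)$. An elementary geometric computation shows each of $A,B$ contains an axis‑parallel cube of side $\gtrsim_n\ell(Q)$; and if $Q''\subset Q$ is a cube with $\ell(Q'')\ge c_n\ell(Q)$ then $Q\subset C_nQ''$ for a dimensional $C_n$, so iterating the doubling bound $w(2\cdot)\le C_d w(\cdot)$ gives $w(Q'')\ge c(C_d,n)\,w(Q)$. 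Hence $w(A),w(B)\ge c(C_d,n)\,w(Q)$, while $\lvert\langle x-x',\nu\rangle\rvert=\lvert\pi_\nu(x)-\pi_\nu(x')\rvert\ge L/2\ge\ell(Q)/2$ on $A\times B$; the scheme above yields \eqref{e:ufd} with $\eta=\eta(C_d,n)$.

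\emph{Ahlfors--David regular weights, $n-1<d\le n$.} The new ingredient is a slab estimate: covering a slab of width $t$ meeting $B(z,r)$ by $\lesssim_n(r/t)^{n-1}$ cubes of side $t$ and applying the upper bound in \eqref{e:AD} to each cube meeting $\operatorname{supp}w$,
\begin{equation*}
w\bigl(B(z,r)\cap\{y:\lvert\langle y-y_0,\nu\rangle\rvert\le t\}\bigr)\ \lesssim_{n,d,C_1}\ (t/r)^{\,d-n+1}\,r^{d},\qquad 0<t\le r ,
\end{equation*}
for all $y_0$ and $\nu$. The exponent $d-n+1$ is \emph{positive} exactly when $d>n-1$ — at $d=n-1$ the estimate is empty — which is precisely where the hypothesis enters. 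Granting it, first treat a cube $Q$ with $w(Q)\ge c\,\ell(Q)^d$ (for a constant $c=c(n,d,C_0,C_1)>0$): fix a unit $\nu$, let $m$ be a median of the push‑forward of $w|_Q$ by $\pi_\nu$, so $\{\pi_\nu\le m\}\cap Q$ and $\{\pi_\nu\ge m\}\cap Q$ each carry $w$‑mass $\ge\tfrac12 w(Q)$; the slab estimate bounds the mass of $w|_Q$ in $\{\lvert\pi_\nu-m\rvert<\delta\,\ell(Q)\}$ by $C'\delta^{\,d-n+1}\ell(Q)^d$, which is $\le\tfrac14 w(Q)$ once $\delta$ is a small constant (this uses $w(Q)\ge c\,\ell(Q)^d$ and pins down $c$). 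Then $A=\{\pi_\nu\le m-\delta\ell(Q)\}\cap Q$ and $B=\{\pi_\nu\ge m+\delta\ell(Q)\}\cap Q$ have $w$‑mass $\ge\tfrac14 w(Q)$ and $\lvert\langle x-x',\nu\rangle\rvert\ge2\delta\ell(Q)$ on $A\times B$, and the scheme closes the argument for these $Q$.

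\emph{The main obstacle.} What remains — and this is the heart of the matter — are the cubes $Q$ with $0<w(Q)<c\,\ell(Q)^d$, for which one must show that such cubes in fact do not occur: that $w(Q)>0$ already forces $w(Q)\ge c_0\,\ell(Q)^d$ for a fixed $c_0>0$. Equivalently, $\operatorname{supp}w$ cannot merely ``graze'' a cube — the part of $\operatorname{supp}w$ lying in $Q$ cannot be squeezed into a thin neighbourhood of $\partial Q$, hence (essentially) onto a hyperplane. The mechanism is again dimensional: any $z\in\operatorname{supp}w\cap Q$ carries $\ge C_0 r^d$ mass in each $B(z,r)$ by \eqref{e:AD}, while if that mass were confined to a codimension‑one neighbourhood of scale $\rho$ the slab estimate caps it by a multiple of $(\rho/r)^{d-n+1}r^d$; comparing these for a suitable range $r\lesssim\ell(Q)$ forces $\operatorname{supp}w$ to reach a definite depth into $Q$, whence $w(Q)\gtrsim\ell(Q)^d$ and the previous paragraph applies. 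Making this uniform over all placements of $Q$ is delicate exactly when $z$ sits near an edge or corner, where the relevant neighbourhood of $\partial Q$ is an intersection of several slabs and the slab estimate must be iterated across successive faces while keeping all constants dependent only on $n,d,C_0,C_1$; this iteration is the main technical obstacle, and it is also exactly where the non‑degeneracy $d\neq n-1$ (as opposed to the borderline $d=n-1$) is indispensable.
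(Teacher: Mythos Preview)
The doubling case is correct and is essentially the paper's argument. For the Ahlfors--David case you take a different route: you argue constructively via a median split along $\nu$ together with a slab estimate, whereas the paper argues by contradiction---if \eqref{e:ufd} fails at some $(Q,H)$ with parameter $\eta$, then Chebyshev places all but $\lesssim\sqrt\eta\,w(Q)$ of the mass of $w|_Q$ in a slab of width $\sim\sqrt\eta\,\ell(Q)$ about $H+x$; covering that slab by $\lesssim\eta^{-(n-1)/2}$ balls of radius $\sqrt\eta\,\ell(Q)$ and applying the upper AD bound yields $w(Q)\lesssim\eta^{(d-n+1)/2}\ell(Q)^d$, which is then compared against an asserted lower bound $w(Q)\ge C_0\ell(Q)^d$ to reach a contradiction.

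The obstacle you flag is genuine, and your sketched fix does not work. The claim ``$w(Q)>0\Rightarrow w(Q)\gtrsim\ell(Q)^d$'' is simply false for AD-regular weights: take $w$ to be Lebesgue measure on the half-space $\{x_1>0\}\subset\mathbb R^n$ (which is AD-regular with $d=n$, constants $\tfrac12 c_n$ and $c_n$) and $Q=[-1+\epsilon,\epsilon]^n$; then $w(Q)=\epsilon$ while $\ell(Q)^d=1$. Your depth argument fails because the AD lower bound $w(B(z,r))\ge C_0r^d$ for $z\in\operatorname{supp}w\cap Q$ says nothing about $w(B(z,r)\cap Q)$---in this example that mass lies almost entirely outside $Q$. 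Worse, this same example shows that part~(2) of the proposition is false as stated: with $H=\{x_1=0\}$ one has $\lvert x_1-x_1'\rvert\le\epsilon$ on the support of $w|_Q\times w|_Q$, so the double integral in \eqref{e:ufd} is at most $\epsilon\int_Q\int_Q\lvert x-x'\rvert^{-1}w(dx)w(dx')\lesssim_n\epsilon\,w(Q)^2$ for $n\ge3$ (use $\lvert x-x'\rvert\ge\lvert y-y'\rvert$ and the local integrability of $\lvert y\rvert^{-1}$ in $\mathbb R^{n-1}$), and similarly with an extra factor $\log(1/\epsilon)$ when $n=2$. The paper's proof has exactly the same gap---it writes $C_0\ell(Q)^d\le w(Q)$ without justification---so the statement itself needs an additional hypothesis (for instance, restricting to cubes with $w(Q)\ge c\,\ell(Q)^d$, or to cubes whose center lies in $\operatorname{supp}w$).
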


\smallskip 

We adopt key elements of the proof from the Hilbert transform \cites{MR3285857,MR3285858,13120843}, with essential modifications.  
(1) An energy inequality, derived from the sufficient conditions for 
norm boundedness, is a decisive tool.
This occurs in a very clean way for the Hilbert transform, but has additional complications 
herein, complications that are close to those of the Cauchy transform \cite{combined}.   
The divergence property and the side condition of being uniformly of full dimension are essential here.  See Lemma~\ref{l:E}.  
The divergence condition says that the trace of the tensor $ \nabla \mathsf R _{\sigma } f $ is not zero, namely 
there is at least one non-negative eigenvector.  But, by the example of Proposition~\ref{p:example}, the Riesz transform 
can be nearly zero on a set as large as a square of co-dimension one. But, the weights cannot be concentrated on 
such sets by the uniformly full dimension assumption. 
(2) The monotonicity inequality dominates  certain off-diagonal terms.
Estimates of this type are standard, but in the two weight setting they 
have an important modification to be consistent with the energy inequality.  
The inequality \eqref{e:mono}  now has two components, 
arising from first and second gradient calculations. They are, by example, incomparable terms. 
Hence,  both play a role in the off-diagonal considerations which dominate the proof. 
(3) In the global-to-local reduction, we use the  argument of  Hyt\"onen \cite{13120843}. 
A certain complication arises, in the case of $ 0< d < n-1$, and in order to give a short argument, 
we impose the $ A_2$ condition `without holes.' 
(4) The local part of the proof, an argument invented in \cite{MR3285858}, has additional complications that are not present in other proofs of this type. 
(5) The $ A_2$ condition, with its large exponent, if $ d> 1$, requires new arguments to apply. 
Curiously, it is only needed in a couple of `standard' lemmas, see Lemma~\ref{l:nearby}.  
 (6) The proofs for the Hilbert transform, likewise for the Cauchy operator \cite{combined}, employ Muckenhoupt's two weight inequality for the Hardy inequality.  This is of course no longer available to us, creating a new complication. 
We use the deep technique of \emph{surgery}, invented by Nazarov-Treil-Volberg \cite{MR1998349}, 
and a fundamental tool in local $ Tb$ theorems.  In our setting, the application in Lemma~\ref{l:surgery} is new, but not  very difficult.

\section{Conventions} 

\begin{itemize}
\item Letters like $ P,Q,K, L$ will denote a cube, and it is convenient to 
denote the \emph{scale} of cube $Q $ by $ \ell (Q) \equiv \lvert  Q\rvert ^{1/n} $.  
The geometric center of the cube $ Q $ is denoted by $ x_Q$.    

\item We  will introduce two distinct grids $ \mathcal D _{\sigma }$ and $ \mathcal D _{w}$, 
associated with the corresponding $ L ^2 $ spaces.  We let $\pi Q$ denote the parent of the cube $Q$ in the grid $\mathcal{D}_{\sigma}$ (or $\mathcal{D}_{w}$, which should be clear from context).  We will construct subgrids, for instance 
$ \mathcal F \subset \mathcal D _{\sigma }$. In this case, we set $ \pi _{\mathcal F} Q $ 
to be the smallest element of $ \mathcal F$ which contains $ Q$. This is well-defined for 
all $ Q\in \mathcal D _{\sigma }$ which are contained in some element of $ \mathcal F$.  
We then set $ \pi _{\mathcal F} ^{1} Q = \pi _{\mathcal F} Q$, and inductively set 
$ \pi _{\mathcal F} ^{t+1} Q$ to be the member of $ \mathcal F$ that \emph{strictly} contains $ \pi ^{t} _{\mathcal F} Q$.

\item  For two cubes $ P, Q$, and integer $ s$,  we will write $ Q\Subset_s P_Q$, if $ Q\subset P$ and $ 2 ^{s} \ell (Q) \le \ell (P) $. 
This notation will be used for $ s=r$ and $ s=4r$, where $ r$ will be a fixed, large, integer, associated with the 
notion of goodness (defined below). This notation will be used when the cubes are in the same, or two distinct grids.  
(In the case of distinct grids, we will in addition be able to assume that $ 2 ^{r} \ell (Q)\le \ell (P)$ and $ Q\cap P\neq \emptyset $ implies 
	$ Q\subset P$.) 

\item   The inclusion $ \Subset _r $ will also be used to define parents, with $  \dot\pi _{\mathcal F} Q= F$,  meaning that $ F \in \mathcal F$ is the minimal element such that  $ Q \Subset _{r} F$.  

\item We will very frequently apply operators to indicators of cubes, and so will identify $ \mathbf 1_{Q} \equiv Q $ at many points in the proof.

\item 
Averages appear repeatedly in the analysis below.  The average of $ f$ over cube a $ Q$, with respect to the weight $ \sigma $ is 
\begin{equation*}
[ f ] ^{\sigma } _{Q} \equiv \frac 1 {\sigma (Q)} \int _{Q} f(x) \; \sigma (dx), 
\end{equation*}
this provided $ \sigma (Q) >0$.  
In particular, when we write
\begin{equation*}
[x]^{\sigma}_{Q}\equiv  (   [x_1 ] ^{\sigma } _Q , \ldots, [x_n]^\sigma_Q ) 
\end{equation*}
we mean the $\sigma$ center of mass of the cube $Q$.

\item The three Poisson averages that are relevant are 
\begin{gather}
\textup{(Reproducing)}  \qquad 
\mathsf P ^{\textup{r}} (\sigma ,Q) \equiv 
\int _{\mathbb R ^{n}} \frac {\lvert  Q\rvert ^{d/n} } 
{ \lvert  Q\rvert ^{2d/n} +\textup{dist}(x, Q) ^{2d/n} } \; \sigma (dx) , 
\\   \label{e:Cgrad}
\textup{(Gradient)}  \qquad 
\mathsf P_{\sigma} ^{\textup{g}} ( f ,Q) \equiv 
\int _{\mathbb R ^{n}}  \lvert  f(x)\rvert  \frac {\ell (Q)  } 
{ \ell (Q) ^{d+1} +\textup{dist}(x, Q) ^{d+1} } \; \sigma (dx), 
\\  \label{e:Cgrad+}
\textup{(Gradient Plus)}  \qquad  
\mathsf P ^{\textup{g}+}_{\sigma } (  f, Q ) \equiv 
 \int _{\mathbb R ^{n}}  \lvert  f(x)\rvert \frac { \ell (Q) ^{2} } 
 { \ell (Q) ^{d+2} + \textup{dist}(x,Q) ^{d+2}} \; \sigma (dx). 
\end{gather}
The last two Poisson averages appear in the monotonicity estimate \eqref{e:mono}, but in somewhat different roles. 
The more complicated role of the gradient plus Poisson operator there is (strongly) mitigated by the estimate \eqref{e:g+<g}.  

\item Energy is defined by $ E (w, K) ^2 =\frac 2 {w (K)} 
\sum_{Q \in \mathcal D_ \sigma  \;:\; Q\subset K} \left\lVert  \Delta ^{w} _{Q} \frac x { \ell (K)} \right\rVert_{w} ^2 $.  
The key assumption is the finiteness of the energy constant given in \eqref{e:E}. 

\item Using energy, we will use this partition of  a cube $ Q \in \mathcal D _{\sigma }$.  Set $ \mathcal W_Q$ to be the maximal cubes $ K\in \mathcal D _{w}$  (or $ \mathcal D_ \sigma  $) such that 
$ 2 ^{r} \ell (K) \leq \ell (Q) $, and $ \textup{dist} (K, \partial Q') \ge \ell (K) ^{\epsilon } \ell (Q') ^{1- \epsilon } $ 
for all dyadic cubes $ Q'\subset Q$, with $ 2 ^{r} \ell (K) \le \ell (Q')$.   See Definition~\ref{d:W} and their Whitney property in Proposition~\ref{p:w}.

\item Constants.  We want to estimate $ \mathscr N$, the norm of the Riesz transform in terms of the $ A_2$ constant, the energy constant, and the testing constant.  Namely, that $ \mathscr N \lesssim \mathscr R \equiv \mathscr A_2 ^{1/2} + \mathscr E + \mathscr T$.

 \end{itemize}

\bigskip 

\subsection{Dyadic Grids}

Let $\hat{\mathcal{D}}$ denote the standard dyadic grid in $\mathbb R $.  A \textit{random} dyadic grid $\mathcal{D} ^{r}$ is specified by $\xi\in\{0,1\}^{\mathbb Z }$ 
and choice of $ 1\le \lambda \le 2$.  The elements of $ \mathcal D^{r}$ are  given by
$$
I\equiv \hat{I}\dot+\xi = \lambda \biggl\{ \hat{I}+\sum_{n: 2^{-n}<|\hat{I}|} 2^{-n}\xi_n \biggr\}. 
$$
Place the uniform probability measure $\mathbb{P}$ on $ \xi \in \{0,1\}^{\mathbb Z }$, and choose $ \lambda $ with respect to normalized measure 
on $ [1,2]$ with measure $ \frac {d \lambda } \lambda $.

Let $ \mathcal D _{\sigma }$ be a $ n$-fold tensor product of independent copies $ \mathcal D ^{r}_1 \times \cdots \times \mathcal D ^{r}_n$, 
which is used in $ L ^2 (\mathbb{R}^n;\sigma )$.  
It is imperative to use a second random grid $ \mathcal D _{w}$ for $ L ^2 (\mathbb{R}^n;w)$, but the subscripts on the two random grids are 
frequently suppressed, and largely irrelevant to the argument, except for the surgery argument in Lemma~\ref{l:surgery}.

A choice of grid $ \mathcal D _{\sigma }$ is said to be \emph{admissible} if $ \sigma $ does not assign positive mass 
to any lower dimensional face of a cube $ Q\in \mathcal D _{\sigma }$.  By the construction of the random grid, in 
particular the use of dilations, a randomly selected grid is admissible with probability one.  This is 
always assumed below, for both $ \mathcal D _{\sigma }$ and $ \mathcal D _{w}$. We assume that the dilation 
parameter is $ \lambda =1$, which should not cause any confusion.

For $Q\in \mathcal D _{\sigma }$, we speak of  $ \textup{ch} (Q)$, the \emph{children} of $ Q$, the $ 2 ^{n}$ maximal elements of $ \mathcal D$ which are strictly 
contained in $ Q$. If $ \sigma (Q) >0$ and $ \sigma (Q') >0$ for at least two children of $ Q$, then we define the martingale difference 
\begin{equation*}
\Delta ^{\sigma } _{Q} f \equiv   - [ f ]  ^{\sigma }_{Q} \cdot Q + \sum_{Q'\in \textup{ch} (Q) \::\: \sigma (Q') >0 } [ f ] ^{\sigma } _{Q'} \cdot Q'. 
\end{equation*}
Otherwise, $ \Delta ^{\sigma } _{Q} f \equiv 0$. In this definition, we are identifying the cube $ Q$ with $ \mathbf 1_{Q}$, which we will do throughout.  
It is well-known that  $ f = \sum_{Q\in \mathcal D_\sigma} \Delta ^{\sigma } _{Q} f$, and that 
\begin{equation}
\lVert f\rVert_{\sigma } ^2 = \sum_{Q\in \mathcal D_\sigma} \left\lVert \Delta ^{\sigma } _{Q} f \right\rVert_{\sigma } ^2 . 
\end{equation}

\subsection{Good and Bad Cubes}

Fix $0<\epsilon<1$ and $r\in\mathbb{N}$.  An cube $I\in\mathcal{D}_\sigma$ is said to be $(\epsilon,r)$-\textit{bad} if there is an cube $J\in\mathcal{D}$ such that $\ell (J)>2^{r}\ell(I)$ and $\textnormal{dist}(I,\partial J)<\ell(I)^{\epsilon}\ell(J)^{1-\epsilon}$.  Otherwise, an cube $I$ will be called $(\epsilon,r)$-\textit{good}.  We have the following well-known properties associated to the random dyadic grid $\mathcal{D}_\sigma$.

\begin{proposition}
\label{RandomLattice}
The following properties hold:
\begin{enumerate}
\item The property of $I=\hat{I}\dot +\xi$ being $(\epsilon,r)$-good depends only on $\xi$ and $\vert I\vert$;
\item $\mathbf{p}_{\textnormal{good}}\equiv\mathbb{P}\left(I \textnormal{ is } (\epsilon,r)-\textnormal{good}\right)$ is independent of $I$;
\item $\mathbf{p}_{\textnormal{bad}}\equiv 1-\mathbf{p}_{\textnormal{good}}\lesssim \epsilon^{-1} 2^{-\epsilon r}$.
\end{enumerate}
\end{proposition}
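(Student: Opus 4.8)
The plan is to reduce all three assertions to a one–dimensional, single–scale computation and then sum a geometric series. Since $\mathcal D _{\sigma }$ is the product of $n$ independent copies of the one–dimensional random grid, and the boundary of a dyadic cube $J$ is the union of its $2n$ faces, each of which lies in a coordinate hyperplane of one of the component grids, a cube $I = I_1\times\cdots\times I_n\in\mathcal D _{\sigma }$ of side $\ell(I) = 2^{-j}$ is $(\epsilon,r)$–bad if and only if, for some coordinate $i$ and some scale $2^{-L}$ with $L < j-r$, the interval $I_i$ comes within $\ell(I)^{\epsilon }2^{-L(1-\epsilon )}$ of a level–$2^{-L}$ grid point of the $i$–th factor — the reverse implication being valid up to harmless $\ell(I)$–sized adjustments of the threshold, since $2^{-L}\ge 2^{r}\ell(I)$. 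A union bound over the $n$ coordinates therefore reduces everything to the one–dimensional random grid $\mathcal D ^{r}$, which we treat for the remainder; as in the text we take $\lambda =1$.

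Assertion (1) is a scaling remark. With $\hat I$ held fixed, the cube $I = \hat I\dot+\xi$ and every larger cube of $\mathcal D ^{r}$ are determined by $\xi $; and because the defining inequality $\textnormal{dist}(I,\partial J)<\ell(I)^{\epsilon }\ell(J)^{1-\epsilon }$ is invariant under a common dilation of the whole configuration, goodness does not see the dilation parameter. Hence whether $\hat I\dot+\xi $ is good is a function of $\xi $ and the scale $|I|=|\hat I|$ alone, and inspection of the formula for $\hat I\dot+\xi $ shows that only those digits $\xi _n$ with $2^{-n}\ge |I|$ enter.

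For (2) we exploit two distributional symmetries of the one–dimensional random grid. First, $2^{j}\mathcal D ^{r}$ has, at side $2^{-i}$, the shifted lattice $2^{-i}\mathbb Z + \sum_{l>i}2^{-l}\xi _{l+j}$, so it is again a random grid, now built from the shifted digit sequence $(\xi _{l+j})_{l}$, which has the same law as $\xi $; combined with the dilation–invariance from (1) this shows $\mathbf{p}_{\textnormal{good}}$ is independent of the scale of $I$. Second, for $m\in\mathbb Z$ the translated grid $\mathcal D ^{r}+m2^{-j}$ is obtained from $\mathcal D ^{r}$ by adding the fixed integer $m$ to the binary string $\xi $ (with carries), aligned at the $2^{-j}$ digit; this is a measure–preserving bijection of $(\{0,1\}^{\mathbb Z },\mathbb P)$, so $\mathcal D ^{r}+m2^{-j}\overset{d}{=}\mathcal D ^{r}$, and translating $I$ and the grid together shows $\mathbf{p}_{\textnormal{good}}$ is independent of the position of $I$ at its scale. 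This gives (2).

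For (3), fix $I$ of side $2^{-j}$ at position $k$, and for an integer $L<j-r$ put $m=j-L>r$. The distance from $I$ to the level–$2^{-L}$ grid points of $\mathcal D ^{r}$ equals, up to an error at most $2^{-j}$, the quantity $2^{-j}$ times the distance from $(k-\tilde s)\bmod 2^{m}$ to $\{0,2^{m}\}$, where $\tilde s=\sum_{t=0}^{m-1}2^{t}\xi _{j-t}$. Since the bits $\xi _{j-t}$ are independent and fair, $\tilde s$ is uniform on $\{0,\dots,2^{m}-1\}$, hence so is $(k-\tilde s)\bmod 2^{m}$, independently of $j$ and $k$; and the event that $I$ is $(\epsilon,r)$–bad with respect to some cube of side $2^{-L}$ forces this residue to lie within $2\cdot 2^{m(1-\epsilon )}$ of $0$ or of $2^{m}$, an event of probability at most $4\cdot 2^{-\epsilon m}$. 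Summing the union bound over coordinates and over $m>r$,
\begin{equation*}
\mathbf{p}_{\textnormal{bad}} \;\le\; 4n\sum_{m>r}2^{-\epsilon m} \;=\; 4n\,\frac{2^{-\epsilon (r+1)}}{1-2^{-\epsilon }} \;\lesssim\; \epsilon^{-1}2^{-\epsilon r},
\end{equation*}
using that $1-2^{-\epsilon }$ is concave in $\epsilon $ and vanishes at $0$, so $1-2^{-\epsilon }\ge \epsilon /2$ on $0<\epsilon <1$. The only step that needs care — and it is care rather than depth — is the geometric reduction in the first paragraph: because $I$ need not be contained in $J$, one must check that ``$I$ lies near $\partial J$ for some much larger grid cube $J$'' is equivalent to ``$I$ lies near a component grid hyperplane of the corresponding scale''. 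This is immediate once one notes that every face of a grid cube of side $2^{-L}$ lies in such a hyperplane and every such hyperplane is a face of an adjacent grid cube. No genuine obstacle arises; the proposition records the standard probabilistic properties of the Nazarov–Treil–Volberg random grid.
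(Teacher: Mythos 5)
The paper offers no proof of this proposition --- it is quoted as well-known --- so there is no internal argument to compare against; what you give is the standard Nazarov--Treil--Volberg computation, and the substantive parts (2) and (3) are correct. The reduction to one dimension through the grid skeleton, the identification of the position of $I$ relative to the scale-$2^{-L}$ lattice with the residue $(k-\tilde s)\bmod 2^{m}$ built from the $m=j-L$ digits $\xi_{L+1},\dots,\xi_{j}$, the uniformity of that residue, and the summation $\sum_{m>r}2^{-\epsilon m}\lesssim \epsilon ^{-1}2^{-\epsilon r}$ are exactly the needed steps. Note that this uniformity computation already gives (2) for free, for every scale and position at once, so the separate dilation and odometer symmetries in your second paragraph, while valid, are not required.

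The one genuine issue is (1). Your own formula shows that the relative position of $I$ inside the coarser cubes is $(k-\tilde s)\bmod 2^{m}$, which depends on $k$, i.e.\ on \emph{where} $\hat I$ sits at its scale, not only on $|\hat I|$ and $\xi $. Thus for a fixed $\xi $ two cubes of equal side length can have different goodness status, and the claim that goodness ``is a function of $\xi $ and the scale $|I|$ alone'' is not delivered by the dilation-invariance remark (nor is it true as literally stated; the imprecision is inherited from the statement itself, which is copied across the literature). Also, the attribution is reversed: inspection of the formula for $\hat I\dot+\xi $ shows that the \emph{set} $I$ depends only on the fine digits $\xi _n$ with $2^{-n}<|\hat I|$, whereas \emph{goodness} depends on $\hat I$ and the complementary coarse digits $\xi _n$ with $2^{-n}\ge |\hat I|$, through the relative shifts $\tilde s_m$. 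That independence of the goodness event from the fine digits (hence from $\Delta ^{\sigma }_{I}f$), together with your uniformity computation showing $\mathbf p_{\textnormal{good}}$ is the same for all $I$, is the content actually used later (e.g.\ in Proposition~\ref{BadExpectation}), and is how (1) should be stated and proved.
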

A similar result of course holds for the grid $\mathcal{D}_w$.  Now, write the identity operator in $L^2(\mathbb{R}^n;\sigma)$ as
$$
f=P^\sigma_{\textnormal{good}} f+P^\sigma_{\textnormal{bad}}f \quad\textnormal{ where } P^\sigma_{\textnormal{good}} f\equiv \sum_{Q\in\mathcal{D}: Q \textnormal{ is } (\epsilon, r)-\textnormal{good}} \Delta_I^{\sigma} f.
$$
Similar notation applies for the identity operator on $L^2(\mathbb R^n;w)$.  Below, we will frequently impose the condition that the cubes are good, and will not explicitly point this out in the notation.

We have the following well-known proposition in this context.
\begin{proposition}  
\label{BadExpectation}
The following estimate holds:
$$
\mathbb{E}\left\Vert P_{\textnormal{bad}}^\sigma f\right\Vert_{\sigma}^2\lesssim \epsilon^{-1} 2^{-\epsilon r}\Vert f\Vert_{\sigma}^2.
$$
An identical estimate is true for the weight $w$.
\end{proposition}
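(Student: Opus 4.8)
The plan is to prove the exact Nazarov--Treil--Volberg averaging identity
\begin{equation*}
\mathbb E \lVert P ^{\sigma } _{\textup{bad}} f\rVert_{\sigma } ^2 = \mathbf p _{\textup{bad}}\, \lVert f\rVert_{\sigma } ^2 ,
\end{equation*}
after which part~(3) of Proposition~\ref{RandomLattice} yields the stated bound immediately; the argument for the weight $ w$ is identical.

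First, for a fixed realization of the grid $ \mathcal D _{\sigma }$, the orthogonality of the martingale differences $ \Delta ^{\sigma } _{Q}$ recorded above gives
\begin{equation*}
\lVert P ^{\sigma } _{\textup{bad}} f\rVert_{\sigma } ^2 = \sum_{\substack{Q\in \mathcal D _{\sigma }\\ Q\ \textup{bad}}} \lVert \Delta ^{\sigma } _{Q} f\rVert_{\sigma } ^2 ,
\end{equation*}
where ``bad'' abbreviates ``$ (\epsilon ,r)$-bad''. Taking expectations and using Tonelli (all terms are nonnegative), it suffices to evaluate $ \sum_{Q} \mathbb E \bigl[ \mathbf 1_{\{Q\ \textup{bad}\}} \lVert \Delta ^{\sigma } _{Q} f\rVert_{\sigma } ^2 \bigr]$. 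I would parametrize $ \mathcal D _{\sigma }$ by writing each of its cubes as $ Q = \hat Q \dot + \xi$, with $ \hat Q$ ranging over the standard dyadic cubes and $ \xi$ the random offset, so that this becomes $ \sum_{\hat Q} \mathbb E _{\xi } \bigl[ \mathbf 1_{\{\hat Q \dot + \xi\ \textup{bad}\}} \lVert \Delta ^{\sigma } _{\hat Q \dot + \xi} f\rVert_{\sigma } ^2 \bigr]$.

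The crux is a decoupling by scale. From the construction of $ \mathcal D ^{r}$, the set $ \hat Q \dot + \xi$ is the standard dyadic cube $ \hat Q$ translated by an offset assembled only from the coordinates of $ \xi$ at scales $ < \ell (\hat Q)$, and the same holds for its children in $ \mathcal D _{\sigma }$; hence $ \Delta ^{\sigma } _{\hat Q \dot + \xi} f$ depends on $ \xi$ only through those coordinates. On the other hand, whether $ \hat Q \dot + \xi$ is bad is a statement about its position relative to cubes $ \hat J \dot + \xi$ with $ \ell (\hat J) > 2 ^{r} \ell (\hat Q)$; since $ \hat Q \dot + \xi$ and every such $ \hat J \dot + \xi$ receive the \emph{same} translation from the coordinates at scales $ < \ell (\hat Q)$, those coordinates cancel out of all the relevant relative positions, so --- in agreement with part~(1) of Proposition~\ref{RandomLattice} --- badness depends on $ \xi$ only through the complementary coordinates, at scales $ \ge \ell (\hat Q)$. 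These two blocks of coordinates are disjoint and distinct coordinates of $ \xi$ are independent, so the event $ \{\hat Q \dot + \xi\ \textup{bad}\}$ is independent of $ \Delta ^{\sigma } _{\hat Q \dot + \xi} f$. Using this together with part~(2), which gives $ \mathbb P (\hat Q \dot + \xi\ \textup{bad}) = \mathbf p _{\textup{bad}}$ for every $ \hat Q$,
\begin{equation*}
\mathbb E _{\xi } \bigl[ \mathbf 1_{\{\hat Q \dot + \xi\ \textup{bad}\}} \lVert \Delta ^{\sigma } _{\hat Q \dot + \xi} f\rVert_{\sigma } ^2 \bigr] = \mathbf p _{\textup{bad}} \cdot \mathbb E _{\xi } \lVert \Delta ^{\sigma } _{\hat Q \dot + \xi} f\rVert_{\sigma } ^2 .
\end{equation*}

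Finally, summing over $ \hat Q$, applying Tonelli once more and the Parseval identity for each fixed $ \xi$ (for which $ \{\hat Q \dot + \xi\}_{\hat Q} = \mathcal D _{\sigma }$ is an honest grid), the right-hand side sums to $ \mathbf p _{\textup{bad}} \lVert f\rVert_{\sigma } ^2 $, which is the desired identity; part~(3) of Proposition~\ref{RandomLattice} then finishes the proof. The only point requiring genuine care is the scale-decoupling step above: one must check that the location of $ \hat Q \dot + \xi$ and of its $ \mathcal D _{\sigma }$-children is measurable with respect to the offset coordinates at scales $ < \ell (\hat Q)$, while good/bad is measurable with respect to those at scales $ \ge \ell (\hat Q)$, so that the two are functions of independent blocks of the randomness. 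This is precisely the content of part~(1) of Proposition~\ref{RandomLattice}, and it is routine once the indexing of the random shift in the construction of $ \mathcal D ^{r}$ is unwound; the dilation parameter $ \lambda $ does not enter, consistent with the normalization $ \lambda = 1$ fixed above.
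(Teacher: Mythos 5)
Your argument is correct; note that the paper itself offers no proof of this proposition, citing it as well known, so there is nothing to compare against except the standard Nazarov--Treil--Volberg argument --- which is exactly what you have reconstructed. The two pillars are right: (i) for each fixed $\xi$ the martingale differences are orthogonal, so $\lVert P^{\sigma}_{\textnormal{bad}}f\rVert_{\sigma}^2=\sum_{Q\ \textnormal{bad}}\lVert\Delta^{\sigma}_{Q}f\rVert_{\sigma}^2$; and (ii) the position of $\hat Q\dot+\xi$ and of its grid-children is a function of the coordinates $\xi_n$ with $2^{-n}<\ell(\hat Q)$, whereas the relative position of $\hat Q\dot+\xi$ with respect to any larger $\hat J\dot+\xi$ involves only the coordinates with $2^{-n}\ge\ell(\hat Q)$ (the common small-scale translation cancels), so badness and $\lVert\Delta^{\sigma}_{\hat Q\dot+\xi}f\rVert_{\sigma}^2$ are functions of disjoint, hence independent, blocks of the randomness. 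Combined with parts (2) and (3) of Proposition~\ref{RandomLattice} this gives the exact identity $\mathbb E\lVert P^{\sigma}_{\textnormal{bad}}f\rVert_{\sigma}^2=\mathbf p_{\textnormal{bad}}\lVert f\rVert_{\sigma}^2$ and the claimed bound.

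One caveat worth recording: your exact-independence step reads the definition of badness as being relative to the \emph{same} grid ($J\in\mathcal D_{\sigma}$), which is consistent with the paper's literal wording and with part (1) of Proposition~\ref{RandomLattice}. In the two-grid normalization common in this literature (badness of $I\in\mathcal D_{\sigma}$ measured against large cubes of $\mathcal D_{w}$), the badness event also depends on the small-scale coordinates of $\xi^{\sigma}$ through the absolute position of $I$, so the clean independence fails; one instead conditions on $\xi^{\sigma}$ and averages over $\xi^{w}$, obtaining $\mathbb E_{\xi^{w}}\bigl[\mathbf 1_{\{I\ \textnormal{bad}\}}\bigr]\le\mathbf p_{\textnormal{bad}}$ uniformly in the position of $I$, which turns your identity into an inequality but leaves the stated estimate intact.
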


\section{Energy Inequalities} 

We begin the discussion of energy inequalities with the case of codimension other than one, $ 0< d\neq n-1\leq n$.  

Crucial to this discussion is the introduction of two different  Poisson averages, 
with critically, a different power than  that of the `reproducing Poisson' average in  \eqref{e:Pr}. 
The Poisson-like average arise  from gradient considerations, 
hence we write a superscript $ {} ^{\textup{g}}$ on it, and $ {} ^{\textup{g}+}$ on the second.  
\begin{align}\label{e:Pg}
\mathsf P_{\sigma} ^{\textup{g}} ( f ,Q) \equiv 
\int _{\mathbb R ^{n}}  \lvert  f(x)\rvert  \frac {\ell (Q)  } 
{ \ell (Q) ^{d+1} +\textup{dist}(x, Q) ^{d+1} } \; \sigma (dx), 
\\ \label{e:Pg+}
 \mathsf P ^{\textup{g}+}_{\sigma } (  f, Q ) \equiv 
 \int _{\mathbb R ^{n}}  \lvert  f(x)\rvert \frac { \ell (Q) ^{2} } 
 { \ell (Q) ^{d+2} + \textup{dist}(x,Q) ^{d+2}} \; \sigma (dx). 
\end{align}
We are making the operators be sublinear, inserting the absolute value of $ f$ on the right, since we only ever need to use them for non-negative functions. 
It is important to note that the reproducing Poisson decay is $ 2d$, as in \eqref{e:Pr}, whereas the 
decay for $ \mathsf P_{\sigma}^{\textup{g}}$  is 
$ d+1$. These agree when $ d=1$,  e.g.~the case of the Hilbert transform,  which is included in our 
discussion, and  the Cauchy transform, which is not.  More generally, the 
reproducing decay is slower for $ 0< d <1$, but is otherwise faster.
Faster decay on the reproducing  kernel creates additional technical problems for us.  
Fortunately, we will \emph{however} not find it necessary to distinguish these three cases in the analysis below.

The \emph{energy} of $ w$ over the cube $ K \in \mathcal D_ \sigma $ is taken to be 
\begin{align}\label{e:energy1}
E(w,K) ^2  & \equiv \frac 1 {w (K) ^2 }
\int _{K} \int _{K}
\frac { \lvert  x-x'\rvert ^2  } { \ell (K) ^2    } \; w(dx)\, w(dx')
\\
& = \frac 2 {w (K)} 
\int _{K} \frac { \lvert  x- [x] _{K} ^{w} \rvert ^2  } { \ell (K) ^2    } \;  w(dx)
\\ \label{e:xe}
&= \frac 2 {w (K)} 
\sum_{Q \in \mathcal D_ \sigma  \;:\; Q\subset K} \left\lVert  \Delta ^{w} _{Q} \frac x { \ell (K)} \right\rVert_{w} ^2 .
\end{align}
The energy is the norm of $ \frac{x}{\ell (K)} \cdot K $ in  $ L ^2 _0 (\mathbb{R}^n;w)$, the subspace in $L^2(\mathbb{R}^n;w)$ that is orthogonal to constants. 
In the middle line, we are subtracting off the mean value of $ x \cdot K$, in particular, 
$ E (w,K) \le 1$, and is as small as zero if $ w \cdot Q$ is just a point mass. 
It is easy to check the equality of the three expressions above, and we will use all three.  
And, one should be careful to note that the last equality requires that $ K\in \mathcal D _{\sigma }$, a condition 
we will not always have. 
Observe that  
\begin{equation*}
\sum_{Q \in \mathcal D_w \;:\; Q\subset K} \Bigl\lVert  \Delta ^{w} _{Q} \frac x { \ell (K)} \Bigr\rVert_{w} ^2 
\leq E(w,K) ^2\,w(K).  
\end{equation*}
The difference between this last display and \eqref{e:xe} is that 
here is that we are summing over cubes in $ \mathcal D _{w}$, while in \eqref{e:xe}, we are summing over cubes in $ \mathcal D _{\sigma }$.

The energy constant is defined in terms of supplemental constants $ 0< C_0, C_1$, both of which 
are functions of $ n$ and $ d$. We will comment in more detail on the selection 
of these constants below.  The \emph{energy constant} is the 
 best constant $ \mathscr E = \mathscr E (C_0, C_1)$ in the inequality 
\begin{equation}\label{e:E}
\sum_{K\in \mathcal K}  \mathsf P^{\textup{g}}_ \sigma  (Q_0 \setminus C_0K , K) ^2  
 E (w,K) ^2  w (K)
\le \mathscr E ^2   \sigma (Q_0).  
\end{equation}
Here, $ Q_0 \subset \mathbb R ^{n} $ is a cube, and $ \mathcal K \subset \mathcal D _{\sigma }$ or $ \mathcal K \subset \mathcal D _{w}$ 
is any partition of $ Q_0$ into dyadic cubes for  which this Whitney type condition holds,  
\begin{equation}\label{e:bounded}
\sum_{K\in \mathcal K} (C_0 K) (x) \le C_1 Q_{0}(x), \qquad x\in \mathbb R ^{n}.    
\end{equation}
Importantly, we need the inequality \eqref{e:E} above with the roles of the weights reversed. 
And $ \mathscr E$ will denote the best constant in \eqref{e:E} and its dual statement. 

With this notation the precise result we are proving in this paper is as follows. 
In particular, note that the condition that the pair of weights have no common point mass, and a stronger $ A_2$ condition, 
are imposed in the case of $ 0< d\leq n-1$. 

\begin{theorem}\label{t:exact} Let $ \sigma $ and $ w$ be two weights on $ \mathbb R ^{n}$, and let $ 0< d  \leq n$. 
Assume that the energy constant $ \mathscr E$ as defined in \eqref{e:E} is finite.  
\begin{enumerate}
\item For $ n-1 < d \leq n$, assume the $ A_2$ condition `with holes', namely 
\begin{equation*}
\sup _{ \textup{$ Q$ a cube }} \frac  {\sigma (Q) } {\ell (Q) ^{d}} \mathsf P _{w}  ^{r}( \mathbb R ^{n} \setminus Q, Q)  
+ \frac  {w (Q) } {\ell (Q) ^{d}} \mathsf P _{\sigma }  ^{r}( \mathbb R ^{n} \setminus Q, Q)  = \mathscr A_2 < \infty . 
\end{equation*}
Here, we are using the reproducing kernel Poisson average, as defined in \eqref{e:Pr}.  

\item For $ 0< d \leq  n-1$, assume that $ \sigma $ and $ w$ do not share a common point mass, and that the the $ A_2 $ 
condition `with no holes' below holds.  
\begin{equation*}
\sup _{ \textup{$ Q$ a cube }} \frac  {\sigma (Q) } {\ell (Q) ^{d}} \mathsf P _{w}  ^{r}( \mathbb R ^{n}, Q)  
+ \frac  {w (Q) } {\ell (Q) ^{d}} \mathsf P _{\sigma }  ^{r}( \mathbb R ^{n}, Q)  = \mathscr A_2 < \infty . 
\end{equation*}

\end{enumerate}
Finally, assume that the two testing inequalities \eqref{e:test} and their duals hold.  Then, the two weight norm inequality 
\eqref{e:N} holds, and $ \mathscr N \lesssim \mathscr A_2 ^{1/2} + \mathscr E + \mathscr T  \equiv \mathscr R$.  

\end{theorem}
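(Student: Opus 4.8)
The plan is to adapt the corona/stopping-time machinery developed for the Hilbert transform in \cites{12014319,part2,13120843} — with the modifications for the Cauchy transform in \cite{combined} — to the present vector-valued, higher-codimension setting. Crucially, the energy inequality \eqref{e:E} is \emph{assumed} here, so its derivation (the content of Lemma~\ref{l:E}, where the uniformly-full-dimension hypothesis enters) plays no role; we use $\mathscr E$ as a black box.

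First I would run the Nazarov--Treil--Volberg random-grid reduction: introduce the two independent grids $\mathcal D_\sigma$ and $\mathcal D_w$ and, invoking Proposition~\ref{BadExpectation}, reduce to estimating $\langle R_\sigma f, g\rangle_w$ for $f,g$ supported on $(\epsilon,r)$-good cubes, at the cost of an error that is $o(1)$ as $r\to\infty$ and hence absorbable. Expanding $f=\sum_Q\Delta^\sigma_Q f$ and $g=\sum_{Q'}\Delta^w_{Q'}g$, split the double sum by the relative scale and position of $Q$ and $Q'$ into: a \emph{nearby} part (comparable scales, or cubes that meet at close scales), a \emph{far interaction} part (one cube deeply contained in and well-separated from the other), and the main part where one of the cubes, say $Q'$, satisfies $Q'\Subset_r Q$. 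The nearby part is treated by orthogonality together with the $A_2$ condition; this is one of the "standard" lemmas (Lemma~\ref{l:nearby}) where the large exponent $2d/n$, present when $d>1$, forces the Poisson-tail estimates to be redone. The far interaction part is controlled by the monotonicity inequality \eqref{e:mono} and the $A_2$ condition, and here one must carry \emph{both} incomparable components of \eqref{e:mono} — the first-gradient term built on $\mathsf P^{\textup g}_\sigma$ in \eqref{e:Pg} and the second-gradient term built on $\mathsf P^{\textup{g}+}_\sigma$ in \eqref{e:Pg+} — since neither dominates the other.

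For the main part I would build the corona decomposition of $\mathcal D_\sigma$: stopping cubes $\mathcal F$ selected by the usual doubling-of-averages rule augmented by \emph{energy stopping times} tuned to \eqref{e:E}, so that inside each corona averages are controlled and the total accumulated energy is Carleson-summable. The form then splits, within a fixed corona, into a \emph{paraproduct} term, estimated by the testing inequality \eqref{e:test}; a \emph{stopping} term, absorbed using the stopping construction and the monotonicity/energy estimate; and a \emph{neighbour} term, where the absence of Muckenhoupt's two-weight Hardy inequality is circumvented by the surgery technique of Lemma~\ref{l:surgery}. The global-to-local reduction that makes this corona argument applicable follows Hyt\"onen \cite{13120843}, and it is to keep this step short in the range $0<d\le n-1$ that the $A_2$ condition "without holes," together with the no-common-point-mass hypothesis, is imposed there. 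The decisive quantitative input underlying all the off-diagonal bounds is the \emph{functional energy} inequality, bounding the relevant quadratic sum of reproducing-Poisson averages against energies by $\mathscr A_2^{1/2}+\mathscr E+\mathscr T$; this is itself proved by a further corona/Carleson-embedding argument from the $A_2$ condition and \eqref{e:test}. Assembling the estimates for the nearby, far, paraproduct, stopping, and neighbour pieces, and summing the corona contributions via the Carleson property of $\mathcal F$, yields $\mathscr N\lesssim \mathscr A_2^{1/2}+\mathscr E+\mathscr T$.

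I expect the main obstacle to be the local part of the proof together with the two-component monotonicity inequality: because \eqref{e:mono} genuinely involves both $\mathsf P^{\textup g}$ and $\mathsf P^{\textup{g}+}$, each off-diagonal sum must be estimated in two ways and the two estimates reconciled against the single energy condition \eqref{e:E}; and the stopping-time argument of \cite{part2} acquires extra terms — close in spirit to those arising for the Cauchy transform \cite{combined} — that have no analogue in the Hilbert-transform case. A secondary, more technical difficulty is ensuring that the surgery argument of Lemma~\ref{l:surgery} is compatible both with the energy bound and with the use of two distinct random grids, and that the large-$A_2$-exponent estimates survive the passage through the functional energy inequality.
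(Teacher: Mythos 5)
Your proposal matches the paper's proof in essentially all respects: random-grid reduction to good cubes, NTV-type decomposition by relative scale and position, corona construction with averaging and energy-augmented stopping times, the two-component monotonicity inequality, the functional energy theorem proved through a dyadic Poisson approximation and Hyt\"onen's two-weight characterization (with the no-holes $A_2$ handling $0<d\leq n-1$), and surgery in place of the two-weight Hardy inequality. The only small misplacements are that the surgery estimate (Lemma~\ref{l:surgery}) enters in the reduction to the forms $B^{\textup{above}},B^{\textup{below}}$ — treating the comparable-scale, $3P\cap 3Q\neq\emptyset$ diagonal — rather than inside the corona, and that the functional energy bound in Theorem~\ref{t:fe} is $\mathscr A_2+\mathscr E^2$ (the testing constant $\mathscr T$ does not enter it directly when $\mathscr E$ is taken as a hypothesis).
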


Sawyer, Uriarte-Tuero, and Shen \cite{13104484} have formulated a result along these lines.  Moreover, their paper \cite{MR3431617} gives example weights which show that the energy condition need not follow from the other hypotheses, which it does in the case of the Hilbert transform.  

This is the one place in which the side condition of $ \sigma $ and $ w$ being uniformly of full dimension is used.  
\begin{lemma}[Uniformly Full Dimension implies Energy] 
\label{l:E}   Assume   $ 0< d \neq n-1 < n$ and  both $ \sigma $ and $ w$ are uniformly of full dimension, in the sense of \eqref{e:ufd}, with 
constant $ 0< \eta <1$. In addition assume that they don't share a common point mass.  
There is a constant $ C_0 = C_0 (n,d, \eta )$, absolute, so that this holds.   
Let $ \sigma $ and $ w$ be a pair of weight for which the $ A_2$ hypothesis \eqref{e:A2} 
and the testing inequalities \eqref{e:test} and the dual to \eqref{e:test} holds.  
Then $ \mathscr E ^2 \lesssim  \mathscr A_2 + \mathscr T ^2 $.  The implied constant depends upon $ C_1$, and the constants $ \eta $ that enter into \eqref{e:ufd}.
\end{lemma}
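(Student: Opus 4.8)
### Proof proposal for Lemma~\ref{l:E}

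\textbf{Overall strategy.} The plan is to prove the energy inequality \eqref{e:E} by the standard stopping-time/parallel-corona scheme used for the Hilbert transform, but replacing the one-dimensional monotonicity estimate with the two-component monotonicity inequality \eqref{e:mono} and, crucially, converting the gradient Poisson term $\mathsf P^{\textup g}_\sigma(Q_0\setminus C_0 K,K)$ into something controlled by $\mathsf R_\sigma$ applied to a well-chosen piece of $\mathbf 1_{Q_0\setminus C_0 K}$. The fixed partition $\mathcal K$ of $Q_0$ satisfying the bounded overlap \eqref{e:bounded} will be organized into a stopping tree $\mathcal F\subset\mathcal K$ of ``energy-doubling'' cubes: starting from $Q_0$, let the stopping children of $F\in\mathcal F$ be the maximal $K\in\mathcal K$, $K\subsetneq F$, such that either $\sigma(K)>2\,[x\text{-type energy ratio}]\,\sigma(F)$-type condition fails or $\mathsf P^{\textup g}_\sigma(\cdot,K)$ jumps by a fixed factor relative to $F$. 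The usual maximality/Carleson argument then gives $\sum_{F\in\mathcal F}\sigma(F)\lesssim \sigma(Q_0)$, and reduces \eqref{e:E} to showing, for each $F$,
\begin{equation*}
\sum_{K\in\mathcal K\,:\,\pi_{\mathcal F}K=F}\mathsf P^{\textup g}_\sigma(Q_0\setminus C_0 K,K)^2\,E(w,K)^2\,w(K)\lesssim \mathscr R^2\,\sigma(F).
\end{equation*}

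\textbf{The core step: from energy to the testing functional.} Within a fixed stopping cube $F$, the Poisson average $\mathsf P^{\textup g}_\sigma(Q_0\setminus C_0 K,K)$ is essentially constant in $K$ (that is the point of the stopping construction) and comparable to $\mathsf P^{\textup g}_\sigma(F\setminus C_0 K,K)$ plus the ``far'' part, which the $A_2$ hypothesis \eqref{e:A2} absorbs into $\mathscr A_2\,\sigma(F)$ after summing, using the bounded overlap \eqref{e:bounded}. For the local part one writes, via \eqref{e:div}, that for $x'\in K$
\begin{equation*}
\operatorname{div}_{x'}\!\int_{F\setminus C_0 K}\frac{x'-y}{|x'-y|^{d+1}}\,\sigma(dy)=(n-d-1)\int_{F\setminus C_0 K}\frac{\sigma(dy)}{|x'-y|^{d+1}}\simeq \ell(K)^{-1}\,\mathsf P^{\textup g}_\sigma(F\setminus C_0 K,K)
\end{equation*}
uniformly for $x'\in K$ (here $C_0$ is chosen large in terms of $n,d$ so that $|x'-y|\simeq\operatorname{dist}(y,K)$, and $n-d-1\neq0$ by the codimension hypothesis). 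Pairing against $x'-[x']^w_K$ and using the uniformly-full-dimension hypothesis \eqref{e:ufd} for $w$ — this is exactly where \eqref{e:ufd} is indispensable — one shows
\begin{equation*}
\mathsf P^{\textup g}_\sigma(F\setminus C_0 K,K)^2\,E(w,K)^2\,w(K)\lesssim_{\eta}\ \bigl\|\,\mathbf 1_K\,\nabla_{x'}\mathsf R_\sigma(\mathbf 1_{F\setminus C_0 K})\cdot(x'-[x']^w_K)\,\bigr\|_w^2,
\end{equation*}
and the right side, after the monotonicity/telescoping identity \eqref{e:mono} (both gradient components enter here, as the paper emphasizes), is bounded by $\sum_{K'}\|\Delta^\sigma_{K'}\mathbf 1_{F}\|_\sigma^2$-type quantities controlled by the testing constant $\mathscr T$ on $F$, i.e.\ by $\mathscr T^2\sigma(F)$, after summing over $K$ with $\pi_{\mathcal F}K=F$ using bounded overlap.

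\textbf{Role of no common point mass and the constants.} The no-common-point-mass hypothesis guarantees $[x]^w_K\in K$ and that the energy $E(w,K)$ genuinely measures spread (it is not forced to vanish from a pathological alignment of atoms of $\sigma$ and $w$), which is needed to make the pairing in the previous step legitimate and to run the surgery splitting of the ``$C_0 K$ collar'' cleanly. The constant $C_0=C_0(n,d,\eta)$ is chosen first (large enough that the divergence lower bound holds on $K$ with the stated comparability, and that the full-dimension bound \eqref{e:ufd} is not spoiled by excising $C_0 K$); the implied constant in $\mathscr E^2\lesssim\mathscr R^2$ then depends on $C_1$ through the overlap bound \eqref{e:bounded} and on $\eta$.

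\textbf{Main obstacle.} The hardest point is the transfer inequality relating $\mathsf P^{\textup g}_\sigma(F\setminus C_0 K,K)^2 E(w,K)^2 w(K)$ to an $\mathsf R_\sigma$-testing quantity: one must extract a \emph{signed} scalar multiple of the identity out of the matrix $\nabla\mathsf R_\sigma$ (only the trace is known to be signed, by \eqref{e:div}), and then show that the ``bad'' directions of that matrix are harmless because $w$ cannot concentrate on a hyperplane — this is precisely the content of \eqref{e:ufd}, and Proposition~\ref{p:example} shows the step is false without it. Making this quantitative, with the correct $d+1$ (not $2d$) Poisson power and keeping both terms of \eqref{e:mono}, is the technical heart of the argument; everything else (the Carleson stopping bound, the $A_2$ absorption of far terms, the surgery on the collar $C_0K\setminus K$) follows established templates.
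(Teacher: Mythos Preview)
Your proposal has the right intuition in the ``main obstacle'' paragraph --- the heart of the matter is indeed extracting from the signed trace of $\nabla\mathsf R_\sigma$ a lower bound on the Riesz transform in terms of the gradient Poisson times energy, and \eqref{e:ufd} is exactly what prevents $w$ from hiding in the null directions of that matrix. But the surrounding architecture is both unnecessary and, in places, broken.

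\textbf{The paper's route is far shorter.} There is no stopping tree, no monotonicity lemma, and no near/far splitting. One proves directly, for \emph{each} $K\in\mathcal K$, the single-cube inequality
\begin{equation*}
\mathsf P^{\textup g}_\sigma(Q_0\setminus C_0K,K)^2\,E(w,K)^2\,w(K)\ \lesssim\ \int_K\lvert\mathsf R_\sigma(\mathbf 1_{Q_0\setminus C_0K})\rvert^2\,dw,
\end{equation*}
by contradiction: if it failed with small constant $\kappa$, then the \emph{variance} of $\mathsf R_\sigma(\mathbf 1_{Q_0\setminus C_0K})$ on $K$ would also be small. Taylor-expand: the variance is governed by $T_x=\nabla\mathsf R_\sigma(\mathbf 1_{Q_0\setminus C_0K})(x)$, which (with $C_0$ large) is essentially constant on $K$ and has trace $\simeq P_x\simeq\ell(K)^{-1}\mathsf P^{\textup g}_\sigma(Q_0\setminus C_0K,K)$ by \eqref{e:div}. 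Small variance forces the displacements $x'-x$ to lie near the hyperplane orthogonal to the top eigenvector of $T_x$, contradicting \eqref{e:ufd} for $w$. Having this single-cube inequality, one sums over $K\in\mathcal K$, writes $Q_0\setminus C_0K=Q_0 - C_0K$, and applies testing \eqref{e:test} to $Q_0$ (the $K$ are disjoint in $Q_0$) and to each $C_0K$ (bounded overlap \eqref{e:bounded} gives $\sum_K\sigma(C_0K)\le C_1\sigma(Q_0)$). That is the whole proof.

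\textbf{Specific gaps in your scheme.} First, $\mathcal K$ is a \emph{partition} of $Q_0$, so there are no $K\in\mathcal K$ with $K\subsetneq F$ for $F\in\mathcal K$; the stopping tree $\mathcal F\subset\mathcal K$ is vacuous beyond one level. Second, the step ``the right side, after the monotonicity/telescoping identity \eqref{e:mono}, is bounded by $\sum_{K'}\lVert\Delta^\sigma_{K'}\mathbf 1_F\rVert_\sigma^2$-type quantities controlled by $\mathscr T$'' does not work: \eqref{e:mono} is an \emph{upper} bound on $\lvert\langle\mathsf R_\sigma f,g\rangle_w\rvert$ in terms of Poisson-energy quantities, not a device to pass from a gradient expression back to a testing functional. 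What you actually need after your transfer inequality is the trivial observation that $\lVert\mathbf 1_K\,\nabla\mathsf R_\sigma(\cdot)\cdot(x'-[x']^w_K)\rVert_w^2$ is (up to the second-order remainder) the $w$-variance of $\mathsf R_\sigma(\cdot)$ on $K$, hence $\le\int_K\lvert\mathsf R_\sigma(\cdot)\rvert^2\,dw$ --- and then testing applies directly as above. Third, your ``far part absorbed by $A_2$'' is suspect: the $A_2$ hypothesis \eqref{e:A2} uses the reproducing Poisson $\mathsf P^{\textup r}$ with decay $2d$, while the energy involves $\mathsf P^{\textup g}$ with decay $d+1$; for $d>1$ the former decays faster and does not control the latter. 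The paper avoids this entirely by never splitting into near and far --- the single-cube inequality handles the full $Q_0\setminus C_0K$ at once.
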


\begin{proof}
The expression on the left in \eqref{e:E} 
is a sum of positive terms and so we can assume that the sum above is over just a finite number of terms.  
This is the main inequality: 
\begin{align} \label{e:xx}
[\mathsf P^{\textup{g}} (\sigma \cdot Q_0\setminus  C_0K , K)] ^2 E (w,K) ^2  w (K) 
\lesssim \int _{K} \lvert \mathsf R _{\sigma } (Q_0 \setminus C_0K)  (x)\rvert ^2 \;  w(dx) .   
\end{align}
Using linearity in the argument of the Riesz transform, and the testing inequality \eqref{e:test}, one sees that
\begin{align*}
 \sum_{K\in \mathcal K} \int _{K} 
\lvert \mathsf R _{\sigma } Q_0   \rvert ^2 \; d w &\le \mathscr T ^2 \sigma (Q_0) , 
\\
 \sum_{K\in \mathcal K}  \int _{K} 
\lvert \mathsf R _{\sigma } C_0K  \rvert ^2 \; d w &\le \mathscr T ^2  
 \sum_{K\in \mathcal K} \sigma (C_0K) \le C_1\mathscr T ^2 \sigma (Q_0). 
\end{align*}
Note that the inequality \eqref{e:bounded} is used here in the second inequality, while for the first we use that $\mathcal K$ is a partition of $Q_0$.  These two estimates coupled with \eqref{e:xx} prove \eqref{e:E}, which would give the statement of the Lemma.

\medskip 
So, it remains to prove \eqref{e:xx}. Assume that it fails, namely for a constant $ 0<\kappa < 1 $ that we will pick, as a function of $ 0<\eta<1 $ 
in \eqref{e:ufd}, that there holds 
\begin{equation*}
\int _{K} \lvert \mathsf R _{\sigma } (Q_0 \setminus C_0K)  (x)\rvert ^2 \;  w(dx) \leq 
\kappa [\mathsf P^{\textup{g}} (\sigma \cdot Q_0\setminus  C_0K , K)] ^2 E (w,K) ^2  w (K) . 
\end{equation*}
We do not know how to use this condition directly, passing instead to it's implication that 
\begin{equation}\label{e:kappa}
\begin{split}	
 \int _{K}\int _{K} \lvert \mathsf R _{\sigma } (Q_0 \setminus C_0K)  (x') &-\mathsf R _{\sigma } (Q_0 \setminus C_0K)  (x)\rvert ^2 \;  w(dx')\, w (dx)
\\&
\leq 2\kappa [\mathsf P^{\textup{g}} (\sigma \cdot Q_0\setminus  C_0K , K)] ^2 E (w,K) ^2  w (K)^{2 }  .
\end{split}
\end{equation}
Fix $ x$ as in the integral on the left, and consider the symmetric tensor $ T_x = \nabla \mathsf R _{\sigma } (Q_0 \setminus C_0 K) (x)$, 
which by the Spectral Theorem,  has a diagonalization.  
By the divergence equality \eqref{e:div}, the trace of this tensor is, up to a sign 
\begin{equation*}
P _{x} \equiv \int _{Q_0 \setminus C_0 K} \frac {\lvert n-d-1 \rvert} { \lvert  x-y\rvert ^{d+1}} \;  \sigma(dy) . 
 \end{equation*}
Thus, there is at least one eigenvalue of $ T_x$ of magnitude $ c \cdot P _{x}$, and the maximal eigenvalue is $ C \cdot P _{x}$. 

Observe that $ P _{x}$ and $ T _{x}$ are  essentially constant on $ K$: For $ x',x\in K$: 
\begin{equation} \label{e:ksmall}
\lvert  P _{x'} - P _{x}\rvert + \lvert  T _{x'}-T _{x}\rvert \lesssim  \frac { \lvert  x'-x\rvert } { C_0 \ell (K)}  P _{x_K}.  
\end{equation}
This depends only a second derivative calculation.  
We will choose $ C_0 \gtrsim \eta  ^{-4}$, and $ \kappa \simeq \eta ^{4}$, where $ \eta $ is as in \eqref{e:ufd}.  
In particular, the right hand side above will be quite small. 

Define 
\begin{equation*}
L _{x} = \{  y \in \mathbb R ^{n} \::\: \lvert  y\rvert= 1,\   \lvert   T _x  y\rvert < \sqrt {\kappa}  P_x    \}. 
\end{equation*} 
By our diagonalization observation, there must be a  a hyperplane $ H_x$ with 
\begin{equation*}
\sup _{ y\in L _{x}} \textup{dist}( y, H_x) \lesssim \sqrt {\kappa} . 
\end{equation*}
Take $ H_x$ to be orthogonal to the  eigenvector with maximal eigenvalue.   By \eqref{e:ksmall}, we can in addition take $ H _x=H$, 
namely independent of $ x\in K$.  

  For $ x'$ as in \eqref{e:kappa}, set $ v $ to be the unit vector in the direction $ x'-x$ 
and set $ \delta = \lvert  x'-x\rvert $.  Then, 
\begin{equation*}
\mathsf R _{v, \sigma } (Q_0 \setminus C_0 K) (x') -\mathsf R _{v,\sigma } (Q_0 \setminus C_0 K) (x) 
=  \delta \cdot   T _x v + O (C_0 ^{-1}  P_x). 
\end{equation*}
Indeed,  the first term on the right is the first derivative approximation to the difference. 
By Taylor's theorem, we should have a second derivative error term, but this is  controlled by \eqref{e:ksmall}.

The inequality \eqref{e:kappa} can be rewritten as 
\begin{align*}\frac 1 {w (K)}
\int _{K} \int _{K}  \delta ^2  \lvert   T _x v \rvert ^2 \; dw \, dw 
\leq C \kappa \frac 1 {w (K)} \int _{K}  \int _{K}  \delta ^2  \cdot P_x ^2  \; dw \, dw 
\end{align*}
where we are suppressing the dependence of $ v$ and $ \delta $ on $ x $ and $ x'$,  
therefore, 
\begin{equation*}
\int _{K} \int _{K}   \frac { \textup{dist}(x' , H +x) ^2 } { \lvert  x-x'\rvert ^2  } \; w (dx)\, w (dx')  \lesssim \sqrt \kappa  w (K ) ^2 .  
\end{equation*}
 This follows since the left hand side we recognize the difference between the Riesz transforms as the symmetric tensor $T_x$ and the right hand side we use the computation of the trace of the tensor $T_x$ and estimating it by its trace $P_x$. 
 The difference between this and \eqref{e:ufd} is   the presence of the square above. But due to the normalization by $ w (K)$, we have 
\begin{equation*}
\int _{K} \int _{K}   \frac { \textup{dist}(x' , H +x)  } { \lvert  x-x'\rvert  } \; w (dx)\, w (dx')  \lesssim   \kappa ^{1/4}  w (K ) ^2.  
\end{equation*}
Assuming  the failure of \eqref{e:xx} with $ \kappa \simeq \eta ^{4}$ leads to a contradiction of the assumption \eqref{e:ufd}.  The proof is complete.

\end{proof}

The partitions that we use  with the term energy are defined here. 
\begin{definition}\label{d:W}For a cube $ Q \in \mathcal D _{\sigma }$, set $ \mathcal W_Q$ to be the maximal cubes $ K\in \mathcal D _{w}$  (or $ \mathcal D_ \sigma  $) such that 
$ 2 ^{r} \ell (K) \leq \ell (Q) $, and $ \textup{dist} (K, \partial Q') \ge \ell (K) ^{\epsilon } \ell (Q') ^{1- \epsilon } $, for $ Q'\supset Q$. 
\end{definition}
These are the maximal cubes in the dual dyadic grid that are `good with respect to all cubes larger than $ Q$.' 
They have this Whitney property. 

\begin{proposition}\label{p:w}  For any finite $ C_0$, and $ 2 ^{r (1- \epsilon )} > 4 C_0$, there holds 
\begin{equation*}
\sum_{K\in \mathcal W_Q} (C_0 K) (x) \le  (2r C_0)  Q (x). 
\end{equation*}
\end{proposition}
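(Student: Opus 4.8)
The plan is to show that no point $x \in \mathbb{R}^n$ lies in more than $2rC_0$ of the dilated cubes $C_0 K$, $K \in \mathcal{W}_Q$, which immediately yields the claimed pointwise bound. First I would record the two defining features of the Whitney cubes in $\mathcal{W}_Q$: each $K$ satisfies $2^r \ell(K) \le \ell(Q)$, so in particular $\ell(K) \le 2^{-r}\ell(Q)$, and the maximality ensures that the parent $\widehat K$ of each $K$ \emph{fails} the goodness/separation condition, i.e. there is a dyadic ancestor cube $Q' \subset Q$ with $2^r\ell(\widehat K) \le \ell(Q')$ and $\textup{dist}(\widehat K, \partial Q') < \ell(\widehat K)^\epsilon \ell(Q')^{1-\epsilon}$. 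Since the $K$ are pairwise disjoint (they are maximal in a dyadic grid subject to a hereditary condition), the only way for $C_0 K$ and $C_0 K'$ to overlap is for the cubes to be at comparable scales: if $\ell(K') \le \ell(K)$ and $C_0K \cap C_0 K' \ne \emptyset$, then $\textup{dist}(K,K') < C_0 \ell(K)$, and $K'$ must sit within a $C_0$-neighborhood of $K$.

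Next I would bound the scale spread. Fix $x$ and suppose $x \in C_0 K$. If $K' \in \mathcal{W}_Q$ also has $x \in C_0 K'$ with $\ell(K') \le \ell(K)$, I claim $\ell(K') \ge 2^{-2r}\ell(K)$, say, so that at most $O(r)$ dyadic scales are involved. The mechanism is the Whitney property: a cube $K'$ that is much smaller than $K$ but sits next to it would itself be good with respect to $Q$ at the scale of $K$'s parent (because the separation condition $\textup{dist}(\cdot,\partial Q') \ge \ell(\cdot)^\epsilon \ell(Q')^{1-\epsilon}$ that $K$ verifies for all relevant $Q'$ transfers, up to the $C_0$-neighborhood, once we use the hypothesis $2^{r(1-\epsilon)} > 4C_0$ to absorb the dilation), contradicting the maximality of $K'$ — its parent would already be admissible. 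This is the step I expect to be the main obstacle: one has to carefully track how the exponential gain $\ell(K)^\epsilon \ell(Q')^{1-\epsilon}$ in the separation inequality beats the linear loss $C_0\ell(K)$ coming from the dilation, and the condition $2^{r(1-\epsilon)} > 4C_0$ is exactly what makes this work across the $\le r$ scales that separate $K$ from $Q'$.

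Finally I would count. At a fixed dyadic scale $2^{-j}\ell(Q)$, the disjoint cubes $K' \in \mathcal{W}_Q$ whose dilate $C_0 K'$ contains the fixed point $x$ are all contained in $B(x, C_0 2^{-j}\ell(Q))$ and have volume $(2^{-j}\ell(Q))^n$; since they are pairwise disjoint, there are at most $(2C_0+1)^n$ of them. But in fact, because they are all dyadic cubes of the \emph{same} scale in the same grid, at most one of them contains $x$ outright, and at most $2^n$ (the cubes adjacent to that one) can have $x \in C_0 K'$ once one is slightly more careful — in any case a dimensional constant suffices and, summing the geometric count over the $O(r)$ admissible scales, one gets a bound of the form $c_n r$. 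Choosing the bookkeeping so that the scale count is at most $2r$ and the per-scale count is at most $C_0$ (which is the crude volume bound, valid since the disjoint same-scale cubes in $C_0 K$ dilated number at most $C_0^n$, and one organizes the estimate to read off $C_0$ rather than $C_0^n$ by noting only $x$'s own cube and its dyadic neighbors at that scale can contribute), we arrive at $\sum_{K \in \mathcal{W}_Q}(C_0 K)(x) \le (2rC_0) Q(x)$, where the factor $Q(x)$ appears because every $K \in \mathcal{W}_Q$, hence every $C_0 K$ meeting the picture, is contained in $Q$. $\qed$
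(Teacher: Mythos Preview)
Your approach is the same as the paper's: show $C_0K\subset Q$, then argue that two dilates $C_0K_1,\,C_0K_2$ with $K_1,K_2\in\mathcal W_Q$ cannot overlap if their scales differ by a factor $\ge 2^{r}$, and finish by counting scales. The paper's proof is shorter and more direct on the key step: it writes the explicit inequality
\[
\textup{dist}(K_1,\partial Q)\ \ge\ \ell(K_2)^{\epsilon}\ell(Q)^{1-\epsilon}-C_0\,\ell(K_2)
\ =\ \bigl(1-C_0\,2^{-r(1-\epsilon)}\bigr)\,\ell(K_2)^{\epsilon}\ell(Q)^{1-\epsilon},
\]
which, under $2^{r(1-\epsilon)}>4C_0$, places $K_1$ far from $\partial Q$ and immediately contradicts the maximality of $K_1$. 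Your verbal description of ``the separation condition transfers up to the $C_0$-neighborhood'' is aiming at exactly this computation; writing it out would tighten the argument considerably.

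Two places where your write-up is looser than the paper's. First, the containment $C_0K\subset Q$ is not automatic --- it uses the hypothesis $2^{r(1-\epsilon)}>4C_0$ together with $\textup{dist}(K,\partial Q)\ge \ell(K)^{\epsilon}\ell(Q)^{1-\epsilon}$, and the paper states this as the first step; you assert it only in the last line without explanation. Second, your per-scale count is muddled: disjoint dyadic cubes of a fixed side length $s$ whose $C_0$-dilate contains a given point number roughly $C_0^{n}$, not $C_0$, and your attempt to reduce this to ``$x$'s own cube and its dyadic neighbors'' does not hold once $C_0$ is large. The paper simply does not spell out the per-scale count; the specific constant $2rC_0$ in the statement is not sharp in dimension $n>1$, and only a bound of the form $c(n,C_0)\,r$ is actually being claimed and used.
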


\begin{proof}
The condition on $ C_0$ and $ r$, and the selection criteria for $ \mathcal W_Q$ imply that $ C_0 K\subset Q$ for all $ K\in \mathcal W_Q$. 
So we need only control the overlaps.  
Suppose there are $ K_1 , K_2 \in \mathcal W_Q$ with $ 2 ^{r}\ell (K_1) \le  \ell (K_2)$, and $ C_0 K_1 \cap C_0 K_2 \neq \emptyset $.  
Then, we have 
\begin{equation*}
\textup{dist}(K_1 , \partial Q) \ge \ell (K_2) ^{\epsilon } \ell (Q) ^{1- \epsilon }  - C_0 \ell (K_2)
=  (1 - C_0 2 ^{-r (1- \epsilon )}) \ell (K_2) ^{\epsilon } \ell (Q) ^{1- \epsilon } . 
 \end{equation*}
 That is, the cube $ K_1$ with small side length is rather far from the boundary of $ Q$.  
 But, by maximality of $ K_1$, we must have some $ Q'\supset Q$ with 
 \begin{equation*}
\textup{dist}(K_1 , \partial Q')  \leq 2 ^{\epsilon } (\ell K_1) ^{\epsilon } \ell (Q') ^{1- \epsilon }. 
\end{equation*}
And then, turning to the larger cube $ K_2$, we see that 
\begin{align*}
 (\ell K_2) ^{\epsilon } \ell (Q') ^{1- \epsilon } & \leq \textup{dist}(K_2 , \partial Q') 
 \\
 & \leq 2 ^{\epsilon } (\ell K_1) ^{\epsilon } \ell (Q') ^{1- \epsilon }. 
\end{align*}
This contradicts the selection criteria for $ K_2$.   
\end{proof}

\subsection{Sufficient Conditions for Uniformly Full Dimension}

We give the proof of Proposition~\ref{p:ufd}.  
First, if $ w$ is \emph{doubling}, then there is a constant $C_d $ so that for all cubes $ Q\subset \mathbb R ^{n}$,  
there holds $ w (2Q) \leq C_d w (Q)$.  To check the condition \eqref{e:ufd}, take a cube $ Q$,  
and partition it into $ \mathcal P$, cubes of side length $ \frac 14 \ell (Q)$.  For each $ x\in Q$, and 
hyperplane $ H$ of co-dimension one, we can choose $ Q', Q'' \in \mathcal P$ so that  $ x\in Q''$ and for any $ x'\in Q'$, the following holds 
\begin{equation*}
\textup{dist} (x', H+x) \simeq \textup{dist}(Q',Q'') \simeq \ell (Q).  
\end{equation*}
Since for each $ Q'\in \mathcal P$, there holds $ w (Q) \leq  C_d ^3  w (Q') $, we conclude that 
\begin{equation*}
\int _{Q}\int _{Q} \frac { \textup{dist}(x', H+x)  } { \lvert  x-x'\rvert  }  \; w (dx)\, w (dx') \ge \eta (C_d) w (Q) ^2 , 
\end{equation*}
as required.  

Second, if $ w$ is Ahlfors-David regular, namely satisfying \eqref{e:AD}, with $ n-1 < d \leq n$, we argue by contradiction 
that it is of uniformly full dimension.  Namely we assume that the inequality \eqref{e:ufd} fails for some cube $ Q$, for a sufficiently small $ 0< \eta <1$ 
specified below.  

We can select an $ x\in Q$ and a hyperplane $ H$ so that 
\begin{equation*}
\int _{Q} \frac { \textup{dist}(x', H+x)  } { \lvert  x-x'\rvert  }  \;  w (dx') \le 2 \eta w (Q). 
\end{equation*}
Let $ \tilde H = \{ x' \in Q \::\: \textup{dist}(x' , H+x) < 4 \sqrt \eta \ell (Q) \}$ be a neighborhood of $ H+x$.  
It follows that  $ w (Q \setminus \tilde H) < \sqrt \eta w (Q)$.  On the other hand, we can take a cover $ \mathcal C$  of $ \tilde H$ 
by balls of radius $ 2 \sqrt \eta \ell(Q)$. Clearly, we can assume that $ \mathcal C \le c \eta ^{- \frac {n-1}2}$.  Using both 
inequalities in the Ahlfors-David assumption \eqref{e:AD}, we then have 
\begin{align*}
C_0 \ell (Q) ^{d}  & \le w (Q) 
\\
&\le  w (Q \setminus \tilde H)  + w (\tilde H) 
\\
&\le \sqrt \eta w (Q) + \sum_{B\in \mathcal C} w (B) 
\\
&\le  \sqrt \eta w (Q) 
 + c  \eta ^{- \frac {n-1}2} \sup _{B\in \mathcal C} w (B) 
\\
& \le C_1 
\Bigl\{
\sqrt {\eta }  + c' \eta ^{- \frac {n-1}2 + \frac d 2  }
\Bigr\} \ell (Q) ^{d}  . 
\end{align*}
By the assumption $ n-1 < d \leq n$, it follows that both exponents on $ \eta $ above are positive.  
A contradiction is found for a sufficiently small $ 0< \eta <1$, as a function of $ 0< C_0 < C_1 < \infty $.  
This completes the proof.

\subsection{An Example}\label{s:example}

We discuss the proof of the energy lemma. 
The extra condition \eqref{e:ufd} is used to deduce \eqref{e:xx},  although that inequality is not directly 
proved rather, the variance inequality \eqref{e:kappa} is shown to fail for sufficiently small $ \kappa $. 

If $ \sigma $ is Lebesgue measure, restricted to some affine hyperplane of dimension less or equal to $ d$, 
then at some distance from the hyperplane, the transform $ \mathsf R _{\sigma } 1 $ is essentially constant 
in directions parallel to the hyperplane.  We show here that there are worse examples: 
For certain choices of $ d$,  one can construct $ \mathsf R _{\sigma }$ so that it is essentially zero on a unit square of co-dimension $ 1$.  

\begin{proposition}\label{p:example} 
Let $ n$ be an integer, and $ \max\{0,n-2\}< d\leq n$.  
Let $ S \subset \mathbb R ^{n}$ be a  cube of co-dimension one.   
For all $ \epsilon >0$ there is a finite weight $ \sigma $ so that 
\begin{gather} \label{ex:supp}
\textup{dist}( S, \textup{supp} (\sigma )) \geq  \epsilon ^{-1}  \ell(S) 
\\  \label{ex:R}
\epsilon ^{-1} \sup _{x\in S} \lvert  \mathsf R _{\sigma } \mathbf 1 (x) \rvert\le  
\int _{\mathbb R ^{n}} \frac {  \ell (S)} {  \ell (S) ^{d+1}+ \textup{dist}(S,x) ^{d+1}} \; \sigma(dx).
\end{gather}
\end{proposition}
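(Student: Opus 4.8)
The plan is to exploit the signed divergence \eqref{e:div} directly by building $\sigma$ as a surface measure (or a thin shell of full-dimensional measure approximating one) on a sphere-like hypersurface that "surrounds" the square $S$ in the complementary directions, so that on $S$ the vectorial Riesz kernel cancels by symmetry while the scalar Poisson-type integral on the right of \eqref{ex:R} does not. Concretely, I would place $S$ inside the hyperplane $\{x_n=0\}$, centered at the origin, with $\ell(S)=1$; the complementary direction to the codimension-one face — together with the ambient directions — gives us room. Since $\max\{0,n-2\}<d\le n$, we have $d>n-2$, equivalently $n-d<2$, which is exactly the regime in which a codimension-two (or smaller) object carries infinite $d$-dimensional Riesz mass relative to its "visual size" — this is the arithmetic that makes the right-hand side of \eqref{ex:R} large while keeping $\lvert\mathsf R_\sigma\mathbf 1\rvert$ small on $S$.

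The construction I would carry out: let $\sigma$ be (a mollification of) the $(n-1)$-dimensional Hausdorff measure on the sphere $\Sigma_\rho=\{\,\lvert x\rvert=\rho\,\}$ of large radius $\rho=\epsilon^{-1}$, restricted to a band where $\lvert x_n\rvert$ is comparable to $\rho$ — or, even cleaner, take $\sigma$ rotationally symmetric about the $x_n$-axis and supported in $\{\lvert x\rvert\asymp\rho\}$. First I would verify \eqref{ex:supp}: trivial, since $\mathrm{dist}(S,\mathrm{supp}\,\sigma)\asymp\rho=\epsilon^{-1}=\epsilon^{-1}\ell(S)$. Next, for the left side of \eqref{ex:R}: fix $x\in S$, so $\lvert x\rvert\le 1\ll\rho$, and write
\begin{equation*}
\mathsf R_\sigma\mathbf 1(x)=\int \frac{x-y}{\lvert x-y\rvert^{d+1}}\;\sigma(dy).
\end{equation*}
By the rotational symmetry of $\sigma$ about the $x_n$-axis, the components of this vector orthogonal to that axis integrate to something of size $O(\lvert x\rvert\cdot\text{(mass)}\cdot\rho^{-d-1})$ — a first-order Taylor term in the displacement of $x$ from the axis — and the $x_n$-component is the delicate one: one chooses $\sigma$ to be additionally symmetric under $x_n\mapsto -x_n$, so the $x_n$-component at $x=0$ vanishes identically, and near $S$ it is again $O(\lvert x\rvert\cdot\sigma(\mathbb R^n)\cdot\rho^{-d-1})$. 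Thus $\sup_{x\in S}\lvert\mathsf R_\sigma\mathbf 1(x)\rvert\lesssim \sigma(\mathbb R^n)\,\rho^{-d-1}$. For the right side, since $\mathrm{dist}(S,x)\asymp\rho$ for $x\in\mathrm{supp}\,\sigma$ and $\ell(S)=1\ll\rho$,
\begin{equation*}
\int \frac{\ell(S)}{\ell(S)^{d+1}+\mathrm{dist}(S,x)^{d+1}}\;\sigma(dx)\asymp \sigma(\mathbb R^n)\,\rho^{-d-1},
\end{equation*}
so the two sides are comparable up to an absolute constant, and inserting the factor $\epsilon^{-1}=\rho$ on the left is absorbed by choosing $\rho$ one power larger (i.e.\ taking the sphere of radius $\rho=\epsilon^{-2}$, say) — one simply tracks how much room the ratio "right side over left side" gives. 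This last bookkeeping, and the precise mollification needed so that $\sigma$ is a genuine full-dimensional weight rather than a singular surface measure (replace $\Sigma_\rho$ by an annular shell of thickness $1$ and normalize), are routine.

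The main obstacle, and the only place requiring genuine care, is controlling the $x_n$-component of $\mathsf R_\sigma\mathbf 1(x)$ for $x\in S$ away from the origin: the reflection symmetry $x_n\mapsto -x_n$ kills it at $x_n=0$, but the points of $S$ have $\lvert x\rvert$ up to $\tfrac12$, and one must check that the resulting gradient bound $\lvert\nabla\mathsf R_\sigma\mathbf 1\rvert\lesssim\sigma(\mathbb R^n)\rho^{-d-2}$ on the region $\lvert x\rvert\le 1$ is strong enough — here I would differentiate under the integral, use $\lvert x-y\rvert\asymp\rho$ uniformly, and note that the hypothesis $d>n-2$ is what guarantees $\sigma(\mathbb R^n)$ (of order $\rho^{n-1}$ for the shell) times $\rho^{-d-1}$ stays bounded below relative to nothing worrying — actually the relevant point is merely that all three quantities scale as the \emph{same} power $\rho^{n-1-d-1}$, so no delicate cancellation between scales is needed, only the symmetry argument. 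I expect the symmetrization to make this essentially mechanical, so the proposition should follow cleanly once the symmetric shell construction is written down.
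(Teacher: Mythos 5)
The symmetric-shell construction has a genuine gap, and it is precisely at the step you call "routine bookkeeping."

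For your rotationally symmetric shell $\sigma$ concentrated near $\{\lvert x\rvert\asymp\rho\}$, rotational symmetry about the $x_n$-axis kills the off-diagonal entries $\partial_k\mathsf R_{j,\sigma}\mathbf 1(0)$ for $j\neq k$, and the reflection $x_n\mapsto -x_n$ gives $\mathsf R_\sigma\mathbf 1(0)=0$; that part is fine. But the diagonal entries $\partial_j\mathsf R_{j,\sigma}\mathbf 1(0)$ for $j<n$ are \emph{not} killed by these symmetries. For the round sphere of radius $\rho$, a direct computation gives
\begin{equation*}
\partial_j\mathsf R_{j,\sigma}\mathbf 1(0)=\int_{\lvert y\rvert=\rho}\frac{\lvert y\rvert^2-(d+1)y_j^2}{\lvert y\rvert^{d+3}}\,\sigma(dy)=\Bigl(1-\frac{d+1}{n}\Bigr)\rho^{-d-1}\sigma(\mathbb R^n),
\end{equation*}
which vanishes only when $d=n-1$ — exactly the codimension excluded by the theorem. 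So for the cases of interest the gradient of $\mathsf R_\sigma\mathbf 1$ in the $S$-directions at the center is of order $\sigma(\mathbb R^n)\rho^{-d-1}$, and for $x\in S$ with $\lvert x\rvert\asymp\ell(S)$ you get $\lvert\mathsf R_\sigma\mathbf 1(x)\rvert\asymp\ell(S)\sigma(\mathbb R^n)\rho^{-d-1}$. This is the \emph{same} size as the right side of \eqref{ex:R}, which is also $\asymp\ell(S)\sigma(\mathbb R^n)\rho^{-d-1}$ since $\ell(S)\ll\rho$. Both sides scale identically in $\rho$, so taking $\rho$ "one power larger" changes nothing: the ratio of the two sides is a fixed constant $\lvert 1-\frac{d+1}{n}\rvert$ independent of $\rho$, and the required factor $\epsilon^{-1}$ cannot be absorbed. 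Your own remark that "all three quantities scale as the \emph{same} power $\rho^{n-1-d-1}$, so no delicate cancellation between scales is needed" is the opposite of what the proposition requires — uniform scaling is precisely what defeats you here. A further warning sign is that your argument nowhere uses the hypothesis $d>n-2$.

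The missing idea is that one must tune $\sigma$ so that the first derivative $\partial_j\mathsf R_{j,\sigma}\mathbf 1$ at the center of $S$ vanishes in the $S$-parallel directions, so that only a quadratic Taylor error survives. A single sphere (or uniform shell) has no free parameter with which to achieve this. The paper instead places a radial weight $\tilde\sigma$ on two parallel hyperplanes $\mathbb R^{n-1}\times\{\pm\lambda\}$ and chooses the radial profile so that the cancellation equation \eqref{ex:1} holds; the coefficient $(d+2-n)/(n-1)$ in that equation is positive exactly when $d>n-2$, which is where the hypothesis enters. After the first-order term is killed, the two-gradient bound $\lvert\nabla^2\mathsf R_\sigma\mathbf 1\rvert\lesssim 1$ gives $\sup_{x\in S}\lvert\mathsf R_\sigma\mathbf 1(x)\rvert\lesssim\ell(S)^2$, while the right side of \eqref{ex:R} is $\asymp\ell(S)$, and the factor $\epsilon^{-1}\asymp\ell(S)^{-1}$ is absorbed. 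To salvage your construction you would have to introduce a nontrivial radial profile (for instance, a signed combination of two concentric shells or a non-uniform density) and impose the analogue of \eqref{ex:1}; that is essentially the paper's argument in a spherical rather than hyperplanar geometry.
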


The first condition says that $ \sigma $ is not supported near $ S$, and the second shows that 
the strongest norm we can place on $ S$, the $ L ^{\infty }$-norm, does not dominate the gradient Poisson average of $ \sigma $. 
 We thank Xavier Tolsa for a suggestion that lead to this example. 

\begin{proof}
Write $ \mathbb R ^{n}= \mathbb R ^{n-1} \times \mathbb R $, and write coordinates as $ (y,z)$, for $ y\in \mathbb R ^{n-1}$ and $ z\in \mathbb R $.  
A constant $ \lambda >1$ will be chosen.  Take the  cube $ S$ to be 
$ [- \lambda ^{-1/2}, \lambda ^{-1/2}] ^{n-1} \times \{0\}$.   
The weight $ \sigma $ is supported on the union of the two hyperplanes $ \Lambda  = \bigcup _{\theta \in \{-,+\}}  \Lambda _{\theta }$, 
where $ \Lambda _{\theta } = \mathbb R ^{n-1} \times \{ \theta \lambda \}$. 

We take $ \sigma $ to be translates of the same radial weight $ \tilde \sigma $ on $ \mathbb R ^{n-1}$ to the hyperplanes $ \Lambda _{\pm}$. 
Then, by oddness of the kernel for the Riesz transform,  it follows that $ \mathsf R _{\sigma } 1 (0)=0$.  
Write the coordinates of the Riesz transform by $ \mathsf R _{j, \sigma }$, for $ 1\le j\le n$. 
For $ 1\le j  \le n$, and $ 1\le j\neq k <n$,  we have 
\begin{equation*}
\frac {\partial} {\partial _{k}}  \mathsf R _{j, \sigma } 1 (0) = - (d+1)
\int \frac { y_j y_k} { \lvert  z^2 + \lvert  y\rvert ^2  \rvert ^{ \frac {d+3}2} } \; \sigma (dy \, dz) =0
\end{equation*}
by the radial property of $ \tilde \sigma $.  
For $ 1\le j < n$, we require 
\begin{equation*}
\frac {\partial} {\partial _{j}}  \mathsf R _{j, \sigma } 1 (0) = 
\int _{\Lambda } \frac { z ^2  + \lvert  y\rvert ^2  - (d+1) y_j ^2  } { \lvert  z^2 + \lvert  y\rvert ^2  \rvert ^{ \frac {d+3}2} } \; \sigma (dy\, dz) =0 .
\end{equation*}
By symmetry in $ y$, we can recognize the integral involving  $ y_j$ as $ \frac 1 {n-1}$ times the integral involving $ \lvert  y\rvert ^2  $. Thus, 
the equality above reduces to 
\begin{equation} \label{ex:1}
\int _{ \mathbb R ^{n-1}} \frac {\lambda ^2 } { \{ \lambda ^2 + \lvert  y\rvert ^2 \} ^{\frac {d+3}2} } \;\tilde \sigma (dy)
=  \frac { d+2-n} {n-1}   \cdot  \int _{ \mathbb R ^{n-1}} \frac {\lvert  y\rvert  ^2 } { \{ \lambda ^2 + \lvert  y\rvert ^2 \} ^{\frac {d+3}2} } 
\;\tilde \sigma (dy). 
 \end{equation}
 This is only possible for $ n-2<d\leq n$.  
 The conditions we place on $ \sigma $ are the symmetry properties already described, the equality \eqref{ex:1} above, and 
 \begin{align} \label{ex:2}
\int _{\mathbb R ^{n}} \frac { 1} {  1+ \textup{dist}(S,x) ^{d+1}} \; \sigma(dx) =1 
\end{align}
And, one can check that under these assumptions 
\begin{align*}
\frac {\partial} {\partial _{n}}  \mathsf R _{n, \sigma } 1 (0) 
&= \int _{\Lambda }  
 \frac { z ^2  + \lvert  y\rvert ^2  - (d+1) z^2  } { \{\lvert  z^2 + \lvert  y\rvert^2\}    ^{ \frac {d+3}2} } \; \sigma (dy\, dz) 
\\
&= (n-d-1) \int _{\Lambda }   \frac 1  { \{  z^2 + \lvert  y\rvert ^2 \}    ^{d+1} } \; \sigma (dy\, dz) .  
\end{align*}

To verify \eqref{ex:R}, namely that the Riesz transforms are uniformly small.  Observe that  
\begin{equation*}
\lvert  \nabla ^{2} \mathsf R _{\sigma } 1\rvert \lesssim 1,  
\end{equation*}
as follows by inspection, and the normalization \eqref{ex:2}.  
It follows then from Taylor's theorem, that $ \lvert  \mathsf R _{\sigma }1 (x)\rvert \lesssim \lambda ^{-1} $, for all $ x\in S$,  
since the side length of $ S$ is $ \lambda ^{-1/2}$. On the other hand, by the selection of $ \Lambda $,  
 \begin{equation*}
\int _{\mathbb R ^{n}} \frac { \lambda ^{-1/2}} {   \lambda ^{-\frac {d+1}2} + \textup{dist}(S,x) ^{d+1}} \; \sigma(dx) 
\simeq \lambda ^{-1/2}.  
\end{equation*}
and so for $ \lambda ^{-1/2} \simeq \epsilon $, we have verified \eqref{ex:R}.
\end{proof}

\section{Monotonicity} 

We discuss how to use energy to dominate off-diagonal inner products, 
which turns on two points. First, we must impose smooth truncations on the 
Riesz transforms. Second, an analysis of the off-diagonal inner products 
will reveal a more complicated monotonicity principle than appears 
in the case of the Hilbert transform ($ d=n=1$) or the Cauchy transform ($ d=1, n=2$).

This is the lemma in which we dominate the off-diagonal terms by those 
which involve positive quantities.

\begin{lemma}\label{l:monotone} Let $P$ be a cube and  $ Q$ a cube  with $10 Q\subset P$. 
Then for all functions $ g \in L ^2 (\mathbb{R}^n;w)$, supported on $ Q$, and of $ w$-mean zero,
and functions $f \in L ^2 (\mathbb{R}^n;\sigma )$ which are \emph{not} supported on $ P$, 
\begin{gather}\label{e:mono}
\begin{split}
\lvert\left\langle \mathsf R _{\sigma }  f  , g  \right\rangle _w \rvert 
&\lesssim 
\mathsf P_{\sigma} ^{\textup{g}} (   f, Q )\biggl\vert  
\biggl\langle  \frac {x} {\ell (Q)}, g \biggr\rangle_{w} \biggr\vert
 + \mathsf P ^{\textup{g}+} _{\sigma } (   f, Q ) E (w,Q) w (Q) ^{1/2} \lVert g\rVert_w .
\end{split}
 \end{gather}
\end{lemma}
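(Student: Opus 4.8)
The plan is to use the $ w$-mean zero property of $ g$ together with a second order Taylor expansion of the Riesz kernel $ K (u) = u\lvert u\rvert ^{-d-1}$ about the $ w$-center of mass $ x_0 \equiv [x]^{w}_{Q}$ of $ Q$, which lies in $ Q$ by the standing hypotheses on the weights. Writing $ K_j (u) = u_j \lvert u\rvert ^{-d-1}$ for the $ j$-th component of the kernel, Taylor's theorem along the segment $ [x_0,x]\subset Q$ gives, for $ x\in Q$ and $ y\in \textup{supp} (f)$,
\begin{equation*}
K_j (x-y) = K_j (x_0-y) + \nabla K_j (x_0-y)\cdot (x-x_0) + \tfrac12 (x-x_0) ^{t}\, \nabla ^2 K_j (\xi -y)\, (x-x_0),
\end{equation*}
with $ \xi = \xi (x,y,j)\in Q$. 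Integrating the corresponding identity for $ \mathsf R _{\sigma } f$ against $ g\, w$ over $ Q$, the constant term contributes $ \mathsf R _{\sigma } f (x_0)\int _Q g\, w = 0$; it then remains to estimate a linear term and a quadratic remainder, summing over the coordinates at the cost of a dimensional constant.

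For the linear term, since $ \nabla K_j (x_0-y)$ is independent of $ x$ and $ \int _Q g\, w = 0$, it equals $ v_j \cdot \int _Q (x-x_0)\, g\, w = v_j\cdot \int _Q x\, g\, w$, where $ v_j \equiv \int f(y)\, \nabla K_j (x_0-y)\, \sigma (dy) \in \mathbb R ^{n}$ is the gradient at $ x_0$ of the $ j$-th component of $ \mathsf R _{\sigma } f$. Since $ f$ vanishes on $ P$, which contains $ 10Q$, one has $ \textup{dist}(y,Q)\gtrsim \ell (Q)$ and hence $ \lvert x_0-y\rvert \simeq \ell (Q) + \textup{dist}(y,Q)$ for every $ y\in \textup{supp} (f)$; combined with the homogeneity bound $ \lvert \nabla K_j (u)\rvert \lesssim \lvert u\rvert ^{-d-1}$ this yields $ \bigl(\sum _{i,j}\lvert (v_j)_i\rvert ^2\bigr) ^{1/2}\lesssim \ell (Q) ^{-1} \mathsf P_{\sigma} ^{\textup{g}} (\lvert f\rvert, Q)$, where $ (v_j)_i$ denotes the $ i$-th entry of $ v_j$. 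A Cauchy--Schwarz in $ i$ then bounds the contribution of the linear terms by $ \mathsf P_{\sigma} ^{\textup{g}} (\lvert f\rvert, Q)\, \bigl\lvert \langle x/\ell (Q), g\rangle _w\bigr\rvert$, which is the first term in \eqref{e:mono}.

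For the quadratic remainder, the bound $ \lVert \nabla ^2 K_j (u)\rVert \lesssim \lvert u\rvert ^{-d-2}$, the inclusion $ \xi \in Q$, and the same distance comparison show that the inner $ \sigma $-integral is $ \lesssim \ell (Q) ^{-2}\, \mathsf P ^{\textup{g}+}_{\sigma } (\lvert f\rvert, Q)$, leaving $ \int _Q \lvert x-x_0\rvert ^2 \lvert g(x)\rvert\, w(dx)$ to be controlled. The essential point is to preserve the energy here, rather than use the crude estimate $ \lvert x-x_0\rvert \le \textup{diam}(Q)\lesssim \ell (Q)$ on both factors: applying that crude bound to only one of the two factors and Cauchy--Schwarz in $ x$ to the rest,
\begin{equation*}
\int _Q \lvert x-x_0\rvert ^2 \lvert g(x)\rvert\, w(dx) \lesssim \ell (Q)\Bigl(\int _Q \lvert x-x_0\rvert ^2\, w(dx)\Bigr) ^{1/2}\lVert g\rVert _w \simeq \ell (Q) ^2\, E(w,Q)\, w(Q) ^{1/2}\lVert g\rVert _w,
\end{equation*}
where the last step is the middle expression in \eqref{e:energy1}, namely $ \int _Q \lvert x-[x]^{w}_{Q}\rvert ^2\, w(dx) = \tfrac12\, \ell (Q) ^2 E(w,Q) ^2 w(Q)$, which requires precisely that $ x_0$ be the $ w$-center of mass. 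Collecting the factors bounds the quadratic remainder by $ \mathsf P ^{\textup{g}+}_{\sigma } (\lvert f\rvert, Q)\, E(w,Q)\, w(Q) ^{1/2}\lVert g\rVert _w$, the second term in \eqref{e:mono}.

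The computation is a routine Taylor/Whitney estimate; the only genuinely delicate point is the one just emphasized, that the second order term must be absorbed by the energy $ E(w,Q)$ and not merely by $ \ell (Q)$. This is what dictates centering the expansion at the $ w$-center of mass $ [x]^{w}_{Q}$, and it is the reason both Poisson averages $ \mathsf P_{\sigma} ^{\textup{g}}$ and $ \mathsf P ^{\textup{g}+}_{\sigma }$, with their distinct decay exponents $ d+1$ and $ d+2$, genuinely appear. Absolute convergence of all the integrals is immediate, since the supports of $ f$ and $ g$ are separated by a distance of order $ \ell (Q)$.
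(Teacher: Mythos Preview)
Your proof is correct and follows essentially the same approach as the paper: a second order Taylor expansion of $\mathsf R_\sigma f$ about a center point in $Q$, with the constant term killed by the $w$-mean zero of $g$, the linear term producing $\mathsf P_\sigma^{\textup{g}}(|f|,Q)\,|\langle x/\ell(Q),g\rangle_w|$, and the quadratic remainder producing the $\mathsf P_\sigma^{\textup{g}+}$--energy term. You have in fact been more careful than the paper on one point: the paper's proof centers the expansion at $[x]^{\sigma}_Q$, but the identity $\bigl\lVert \tfrac{|x-c|}{\ell(Q)}\cdot Q\bigr\rVert_w = E(w,Q)\,w(Q)^{1/2}$ (up to the factor $\sqrt 2$) requires $c=[x]^{w}_Q$, exactly as you emphasize.
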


Note in particular that the first term on the right is quite economical, involving only a 
part of the energy term associated to the Haar support of $ g$ in $ L ^2 (\mathbb{R}^n;w)$.  
The second term however has the full energy term, but with a Poisson term with degree 
one more than $ \mathsf P_{\sigma} ^{\textup{g}}$, as defined in \eqref{e:Pg+}. 
It will be the source of  complications in the  subsequent parts of the argument, although these turn out to not be too severe, due to the following trivial bound:  Let $ Q \subset  P \subset F$, and $ \textup{dist}( Q, \partial P) \geq (\ell(Q)) ^{\epsilon } (\ell(P)) ^{1- \epsilon }$.  Then, 
\begin{align} \label{e:g+<g} 
 \mathsf P ^{\textup{g}+} _{\sigma } ( F \setminus P , Q ) 
 \lesssim \Bigl[ \frac { \ell(Q)} {\ell(P)} \Bigr] ^{1- \epsilon }  \mathsf P ^{\textup{g}} _{\sigma } ( F \setminus P , Q ) .
\end{align}
This follows immediately from the definitions \eqref{e:Pg} and \eqref{e:Pg+}.

\begin{proof}
Since $ f$ is not supported on the cube $10 Q$, it follows that $ \mathsf R _\sigma f$ is 
a $ C ^2 $ function on $ Q$. Thus, 
for $ x\in Q$ and $ [x] ^{\sigma }_Q$, we have 
\begin{align*}
\mathsf R _\sigma f (x) - \mathsf R _\sigma f ([x] ^{\sigma }_Q) 
& =      \nabla \mathsf R _\sigma f([x] ^{\sigma }_Q) \cdot (x-[x] ^{\sigma }_Q) 
\\& \qquad 
+\tfrac{1}{2} (x-[x] ^{\sigma }_Q) ^{t} \cdot \nabla ^2 \mathsf R _\sigma f (x') \cdot (x-[x] ^{\sigma }_Q)
\end{align*}
for some point $ x'$ that lies on the line between $ x $ and $[x] ^{\sigma }_Q$.  This just depends upon a Taylor Theorem (with remainder) calculation.

By the mean zero property of $ g$, 
\begin{align*}
\langle  \mathsf R _\sigma f , g \rangle _{w} 
& = 
\langle  \mathsf R _\sigma f  - \mathsf R _\sigma f ([x] ^{\sigma }_Q), g  \rangle _{w} . 
\end{align*}
Then, by Taylor's Theorem, the right hand side is dominated by two terms.   The first of these is 
\begin{align*}
\bigl\lvert  
\langle   \nabla \mathsf R _\sigma f ([x] ^{\sigma }_Q)\cdot (x-[x] ^{\sigma }_Q) , g  \rangle _{w} 
\bigr\rvert&= \bigl\lvert 
\nabla \mathsf R _\sigma f ([x] ^{\sigma }_Q) 
\langle   x-[x] ^{\sigma }_Q , g  \rangle _{w} 
\bigr\rvert
\\& \lesssim \mathsf P ^{\textup{g}} _{\sigma } (\lvert  f\rvert , Q) 
\biggl\vert  
\biggl\langle  \frac {x} {\ell (Q)}, g \biggr\rangle_{w} \biggr\vert.  
\end{align*}
This is the first term on the right in \eqref{e:mono}. 

The second term is at most 
\begin{align*}
\sup _{x\in Q} \lvert  \nabla ^2 \mathsf R _\sigma f (x) \rvert \cdot 
 \left\lVert  \, \lvert x-[x] ^{\sigma }_Q\rvert ^2 \cdot Q\right\rVert _{w} \lVert g\rVert_w.
\end{align*}
Above, we divide $ \lvert  x-[x] ^{\sigma }_Q\rvert ^2  $ by $ \ell (Q) ^2 $, and observe that 
\begin{equation*}
\left\lVert  \frac { \lvert  x-[x] ^{\sigma }_Q\rvert ^2  } {\ell (Q) ^2  } \cdot Q\right\rVert_w 
\lesssim 
\left\lVert  \frac { \lvert  x-[x] ^{\sigma }_Q\rvert   } {\ell (Q)   } \cdot Q\right\rVert_w 
= E (w,Q) w(Q)^{\frac{1}{2}}.  
\end{equation*}
In addition, there holds 
\begin{equation*}
\ell (Q) ^2 
\sup _{x\in Q} \lvert  \nabla ^2 \mathsf R _\sigma f (x) \rvert 
\lesssim \mathsf P_{\sigma}^{\textup{g+}} (\lvert  f\rvert, Q ).  
\end{equation*}
This completes the proof. 
\end{proof}

\section{Functional Energy} 

Let $ \mathcal F \subset \mathcal D_ \sigma $ be a collection of intervals which is $ \sigma $-Carleson.  That is, for all 
$ F\in \mathcal F$, suppose that 
\begin{equation} \label{e:F-carleson}
\sum_{F' \in \mathcal F \::\: F'\subsetneq F} \sigma (F') \le \tfrac 12  \sigma (F).  
\end{equation} 
Recall the definition of $ \mathcal W_Q$ just before Proposition~\ref{p:w}.  
This is the primary tool in the proof of Lemma~\ref{l:global}, and the reader can look to the proof of that Lemma for justification for the formulation below.

\begin{theorem}[Functional Energy]
\label{t:fe}  
The Poisson operator $\mathsf P^{\textup{g}} _{\sigma } $,as defined in \eqref{e:Pg},  
satisfies this inequality. For  $ \mathcal F$ satisfying \eqref{e:F-carleson}, and 
 $ f\in L ^2 (\mathbb{R}^n;\sigma )$, 
\begin{equation}  \label{e:P<}
\sum_{F\in \mathcal F} \sum_{K\in \mathcal W_F} 
 \mathsf P^{\textup{g}} _{\sigma }  (f  \,(\mathbb R ^{n} \setminus F) ,K) ^2  
\Biggl\lVert  \sum_{\substack{ Q \in \mathcal D _{g} ^{r} \;:\; 
Q \subset K\\  \dot \pi _{\mathcal F}  Q = F}}  \Delta ^{w } _{Q} 
\frac {x} { \ell (K)} 
\Biggr\rVert_{w} ^2  \lesssim \{\mathscr E ^2 + \mathscr A_2\}   \lVert f\rVert _{\sigma } ^2 , 
\end{equation}
where $ \mathscr E$ is as in \eqref{e:E}.  
\end{theorem}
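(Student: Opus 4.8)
The plan is to prove the functional energy bound \eqref{e:P<} by reducing it to a testing condition for the Poisson operator $\mathsf P^{\textup{g}}_\sigma$ against the ``stopping'' measure built from the energy terms, in the spirit of the Hilbert transform arguments of \cite{12014319,part2,13120843}. First I would introduce the measure
\begin{equation*}
\mu \equiv \sum_{F\in\mathcal F}\sum_{K\in\mathcal V_F} \Biggl\lVert \sum_{\substack{Q\in\mathcal D^r_g:\ Q\subset K\\ \dot\pi_{\mathcal F}Q=F}}\Delta^\sigma_Q \tfrac{x}{\ell(K)}\Biggr\rVert_w^2\ \delta_{(x_K,\ell(K))}
\end{equation*}
living on the upper half-space $\mathbb R^n\times(0,\infty)$, and observe that the left side of \eqref{e:P<} is exactly $\int |\widehat{\mathsf P^{\textup{g}}_\sigma}(f\cdot\mathbf 1_{\mathbb R^n\setminus\pi_{\mathcal F}})|^2\,d\mu$ up to the restriction on the arguments. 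The claim then becomes: $\mathsf P^{\textup{g}}_\sigma\colon L^2(\sigma)\to L^2(\mu)$ is bounded with norm $\lesssim \mathscr E + \mathscr A_2^{1/2}$. Since $\mathsf P^{\textup{g}}_\sigma$ has a positive kernel, this is amenable to a $T1$-type theorem for positive operators (a dyadic/Carleson embedding argument), so it suffices to verify the two dual testing conditions: testing on $\mathbf 1_Q$ in $L^2(\sigma)$, and testing on ``boxes'' $\mathbf 1_{\hat Q}$ in $L^2(\mu)$.

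The forward testing condition — $\int_{\widehat Q}|\mathsf P^{\textup{g}}_\sigma(\mathbf 1_Q\sigma)|^2\,d\mu \lesssim (\mathscr E^2+\mathscr A_2)\sigma(Q)$ — is where the energy constant $\mathscr E$ enters. I would split the cubes $K$ appearing in $\mu$ with $\widehat K\subset\widehat Q$ according to whether $C_0K$ meets $\partial Q$ or not. For $K$ well inside $Q$, one has $\mathsf P^{\textup{g}}_\sigma(\mathbf 1_Q\sigma,K)\approx \mathsf P^{\textup{g}}_\sigma(\mathbf 1_{Q\setminus C_0K}\sigma,K)+\mathsf P^{\textup{g}}_\sigma(\mathbf 1_{C_0K}\sigma,K)$; the first piece is controlled directly by the energy inequality \eqref{e:E} applied to the partition generated by the $\mathcal W_{Q'}$ construction (Proposition~\ref{p:w} guarantees the overlap bound \eqref{e:bounded}), after noting that the $w$-norm of the truncated martingale differences is at most $E(w,K)^2 w(K)$; the second, ``local'' piece is an $\mathscr A_2$ term handled by a direct geometric-series summation over scales using \eqref{e:A2}. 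The cubes straddling $\partial Q$ are few at each scale and contribute an $\mathscr A_2$ term as well. The backward testing condition is the more routine one: one checks $\int_{\widehat Q}\mathsf P^{\textup{g}}_\sigma(\cdot\, ,Q)\,d\mu$-type quantities are controlled by $\mu(\widehat Q)$, which by the energy identity \eqref{e:xe} telescopes to something $\lesssim w(Q)$, matched against the $\mathscr A_2$ condition — essentially the dual of the forward estimate.

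The main obstacle I expect is the interaction between the $\sigma$-Carleson structure of $\mathcal F$ and the $w$-side telescoping of the energy: the inner sum over $Q$ with $\dot\pi_{\mathcal F}Q=F$ must be reorganized so that, for a fixed stopping cube $F$, the contributions over its descendants $K\in\mathcal V_F$ are genuinely orthogonal in $L^2(w)$ and sum to at most $E(w,F)^2 w(F)$ (or the restricted version thereof), which is precisely what allows \eqref{e:E} to be invoked with a legitimate partition. Getting the bookkeeping right here — in particular ensuring the stopping-time telescoping does not lose the geometric decay needed to sum $\mathsf P^{\textup{g}}_\sigma(f\cdot\mathbf 1_{\mathbb R^n\setminus F},K)^2$ against $\|f\|_\sigma^2$ via the Carleson condition \eqref{e:F-carleson} — is the delicate point. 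Once that reduction is in place, the final claimed inequality $\mathscr E\lesssim \mathscr A_2^{1/2}+\mathscr T$ is immediate from the Energy Lemma~\ref{l:E}, under the uniformly-full-dimension hypothesis, which is the only place that geometric side condition is used.
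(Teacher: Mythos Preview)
Your high-level strategy matches the paper's: lift the energy data to a measure on the upper half-space, recognize \eqref{e:P<} as a two-weight bound for the positive operator $\mathsf P^{\textup{g}}_\sigma$, and reduce to testing conditions. The paper implements this via a dyadic approximation (Proposition~\ref{p:poisson}) using the $3^n$ shifted grids $\mathcal C_u$, which converts the Poisson into a genuinely dyadic positive operator so that Hyt\"onen's characterization (Theorem~\ref{t:hytP}, \cite{13120843}) applies cleanly; you gesture at a $T1$-type theorem for positive operators but do not set up this reduction, and without it the testing characterization is not quite available in the form you invoke. The forward testing is handled in the paper by a recursion along the $\mathcal F$-tree (\S\ref{s:core}), not by splitting according to whether $C_0K$ meets $\partial Q$; your sketch of that part is plausible but does not quite match.

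The real gap is your treatment of the backward testing, which you call ``the more routine one.'' It is not. Your proposed telescoping via \eqref{e:xe} to $w(Q)$ does give $\eta(\textup{Box}_R)\lesssim \ell(R)^2 w(R)$, but converting this to the required bound still needs
\[
\sum_{S\subset R} \frac{\eta(W_S^k)}{\ell(S)^{d+1}}\,\sigma(\tilde S)\lesssim \ell(R)^{d+1},
\]
and after using $A_2$ you are left with $\sum_{S\subset R}\ell(S)^{d+1}$. This geometric sum converges only when $d+1>n$, i.e.\ $n-1<d\le n$. For $0<d\le n-1$ the sum diverges and the argument genuinely fails; the paper closes the gap with Lemma~\ref{l:d-small}, a nontrivial counting argument that requires the \emph{stronger} $A_2$ condition without holes (this is the only place that stronger hypothesis is used, cf.\ Theorem~\ref{t:exact}). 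Your sketch does not distinguish the two ranges of $d$ and would not go through for $d\le n-1$ as written.
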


Above, note that $ \mathsf P^{\textup{g}} _{\sigma }  (f  \,(\mathbb R ^{n} \setminus F) ,K) ^2  $ is multiplied by only a part of the the term $ E (w, K) ^2 w (K)$.  In view of the form of the monotonicity estimate \eqref{e:mono}, one can see that we will need a similar estimate, but with the gradient plus Poisson.  It is 

\begin{corollary}\label{c:fe} Under the assumptions of Theorem~\ref{t:fe}, there holds 
\begin{equation}  \label{e:P+<}
\sum_{F\in \mathcal F} \sum_{K\in \mathcal W_F} 
 \mathsf P^{\textup{g}+} _{\sigma }  (f  \,(\mathbb R ^{n} \setminus F) ,K) ^2  E (w,K) ^2 w (K)
 \lesssim \{\mathscr E ^2 + \mathscr A_2\}   \lVert f\rVert _{\sigma } ^2 .
\end{equation}
\end{corollary}

\subsection{Dyadic Approximate to the Poisson} 
It is more direct to work with this form of the inequality 
\begin{gather}\label{e:Q<}
\sum_{F\in \mathcal F} \sum_{K\in \mathcal W_F} 
 \tilde {\mathsf P} _{\sigma }^{\textup{g}}  (f \, (\mathbb R ^{n} \setminus F) ,K) ^2  
\Biggl\lVert  \sum_{\substack{ Q \in \mathcal D _{g} ^{r} \;:\; 
Q \subset K\\  \dot \pi _{\mathcal F}  Q = F}}  \Delta ^{w } _{Q} 
{x} 
\Biggr\rVert_{w} ^2  \lesssim \{\mathscr E ^2 + \mathscr A_2\}  \lVert f\rVert _{\sigma } ^2 ,
\\
\textup{where} \quad 
\tilde {\mathsf P} _{\sigma }^{\textup{g}} (h, R) 
\equiv \int_{\mathbb{R}^n}  \frac {h (x)} { \ell (R) ^{d+1} + \textup{dist}(x,R) ^{d+1}} \; \sigma (dx).  
\end{gather}
Namely, the side length of $ R$ is canceled out.

Recall that  $ \mathcal D _{\sigma }$ is our dyadic grid.  
The collection of cubes $ \{ 3Q \;:\; Q\in \mathcal D _{\sigma }\}$ is the 
union of collections $ \mathcal C _{u}$, $ 1\le u \le 3 ^{n}$ 
each of which is like the dyadic grid with respect to covering and nested properties.  
This is straightforward in the case of dimension $ n=1$, and the  general case follows from this.  

It is also well-known in dimension one, that for any non-dyadic interval $ I$, there are two choice of $ 1\le u \le 3$ 
and intervals $ J_u \in \mathcal C_u$ such that $ I\subset J$ and $ \lvert  J\rvert\le 6 \lvert  I\rvert  $. 
For each  non-dyadic cube $ Q$ let $ \pi _{u} Q$ be the unique, if it exists, 
cube $L\in \mathcal C_u $ such that $ 3Q \subset L$, and $ 9 \ell (Q)\le \ell (L) \le 18 \ell (Q)$, 
so that, in the dyadic case, $ \ell (L)= 12 \ell (Q)$.   If no such cube exists, set $ \pi _{u} Q= \emptyset $. 

Then, for each $ j\in \mathbb N $, there is a  choice of $ 1\le u = u _{Q,j} \le 3 ^{n} $ 
so that  $2 ^{j}Q \subset  \pi _{\mathcal C_u} ^{j} (\pi _{u} Q) \equiv (\pi _{u} Q) ^{(j)}$.   
From this, we have 

\begin{proposition}\label{p:poisson} \cite{13120843}*{Prop 6.6} 
For $ R\in \mathcal D _{\sigma }$, and function $ \phi \ge 0$, there holds 
\begin{gather}\label{e:Papprox}
\tilde {\mathsf P} _{\sigma } ^{\textup{g}}(\phi , R) 
\simeq  
\sum_{ u \;:\; \pi _{u} R \neq \emptyset } \mathsf Q ^{\textup{g}}_{u,\sigma} (\phi , \pi _{u} R)
\\
\textup{where} \quad 
\mathsf Q ^{\textup{g}}_{u,\sigma}  (\phi , R) 
:= \sum_{j\ge 0} \frac 1 {[ 2 ^{j}\ell (R)]^{d+1}}  \int _{R ^{(j)}} \phi \; d \sigma , 
\qquad R\in \mathcal C_u. 
\end{gather}
\end{proposition}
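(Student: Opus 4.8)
To prove Proposition~\ref{p:poisson} the plan is to route both sides through the \emph{annular decomposition} of $\tilde{\mathsf P}_\sigma(\phi,R)$ about the cube $R$, and then transport the resulting dyadic sum onto the shifted grids $\mathcal C_u$ using the covering facts recorded just above (this is the route of \cite{13120843}*{Prop. 6.6}). For $t>0$ put $N_t(R)\equiv\{x\in\mathbb R^n:\textup{dist}(x,R)\le t\}$, so that $2^kR\subset N_{2^k\ell(R)}(R)\subset c_n\,2^kR$ for a dimensional constant $c_n$. Splitting $\mathbb R^n$ into the dyadic shells $N_{\ell(R)}(R)$ and $N_{2^k\ell(R)}(R)\setminus N_{2^{k-1}\ell(R)}(R)$, $k\ge1$, on which the kernel $[\ell(R)^{d+1}+\textup{dist}(x,R)^{d+1}]^{-1}$ is $\simeq[2^k\ell(R)]^{-(d+1)}$, and performing an Abel summation — which is where the hypothesis $d>0$, i.e.\ $d+1>0$, is used, so that $1-2^{-(d+1)}\simeq1$ and the tails satisfy $\sum_{k\ge k_0}2^{-k(d+1)}\simeq2^{-k_0(d+1)}$ — one gets the cumulative form
\begin{equation*}
\tilde{\mathsf P}_\sigma(\phi,R)\ \simeq\ \sum_{k\ge0}\frac{1}{[2^k\ell(R)]^{d+1}}\int_{N_{2^k\ell(R)}(R)}\phi\ d\sigma .
\end{equation*}

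For the lower bound $\tilde{\mathsf P}_\sigma(\phi,R)\gtrsim\sum_u\mathsf Q^{\textup g}_{u,\sigma}(\phi,\pi_uR)$, fix $u$ with $\pi_uR\neq\emptyset$ and $j\ge0$. Since $R\subset\pi_uR\subset(\pi_uR)^{(j)}$ and $\ell((\pi_uR)^{(j)})=2^j\ell(\pi_uR)\in[9\cdot2^j\ell(R),\,18\cdot2^j\ell(R)]$, every $x\in(\pi_uR)^{(j)}$ satisfies $\textup{dist}(x,R)\lesssim2^j\ell(R)$, hence $(\pi_uR)^{(j)}\subset N_{2^{j+c}\ell(R)}(R)$ for an absolute $c$, while $[2^j\ell(\pi_uR)]^{-(d+1)}\lesssim[2^j\ell(R)]^{-(d+1)}$. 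Inserting these into the definition of $\mathsf Q^{\textup g}_{u,\sigma}$, reindexing by $k=j+c$, and using $d+1>0$ to absorb the bounded shift, the right side is dominated by the display above, so $\mathsf Q^{\textup g}_{u,\sigma}(\phi,\pi_uR)\lesssim\tilde{\mathsf P}_\sigma(\phi,R)$; summing over the at most $3^n$ admissible $u$ finishes this direction.

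For the upper bound, start again from the display above. For each $k$ one has $N_{2^k\ell(R)}(R)\subset 2^{k+c_1}R$ with $c_1$ absolute, and by the covering property stated just before the Proposition there is $u=u_k$ with $\pi_{u_k}R\neq\emptyset$ and $2^{k+c_1}R\subset(\pi_{u_k}R)^{(k+c_1)}$. Using $\ell((\pi_{u_k}R)^{(k+c_1)})\simeq2^{k+c_1}\ell(R)$, the $k$-th summand of the display is $\lesssim$ the $(k+c_1)$-th summand of $\mathsf Q^{\textup g}_{u_k,\sigma}(\phi,\pi_{u_k}R)$. Reorganizing the sum over $k$ according to the value of $u_k$ — since $k\mapsto k+c_1$ is injective, within a fixed $u$ distinct $k$ land on distinct terms of $\mathsf Q^{\textup g}_{u,\sigma}$, so each such series is over-counted at most once — yields $\tilde{\mathsf P}_\sigma(\phi,R)\lesssim\sum_{u:\pi_uR\neq\emptyset}\mathsf Q^{\textup g}_{u,\sigma}(\phi,\pi_uR)$, and the two bounds together give the asserted equivalence.

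The computation is entirely elementary; the only thing to keep straight is the bookkeeping of the uniformly bounded index shifts $c,c_1$ together with the fact that the auxiliary grid label $u$ depends on the shell index, so that one must pass from a single sum over $k$ to a finite sum over $u$ of the $\mathsf Q^{\textup g}_{u,\sigma}$-series before the comparison can be read off. No genuine obstacle arises here.
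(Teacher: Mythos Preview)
Your proof is correct and follows essentially the same route as the paper's: both pass through the dyadic annular decomposition $\tilde{\mathsf P}_\sigma(\phi,R)\simeq\sum_{k\ge0}[2^k\ell(R)]^{-(d+1)}\int_{2^kR}\phi\,d\sigma$ and then invoke the covering fact recorded just before the Proposition to transport each shell onto a term of some $\mathsf Q^{\textup g}_{u,\sigma}$. The paper's proof is terser (the direction $\tilde{\mathsf P}_\sigma\gtrsim\sum_u\mathsf Q^{\textup g}_u$ is declared ``clear'' and the Abel summation is suppressed), but your version simply unpacks the same steps.
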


\begin{proof}
It is clear that the Poisson term dominates the sum over the operators $ \mathsf Q ^{\textup{g}}_{u,\sigma} $. 
In the reverse direction, we have 
\begin{equation*}
\tilde {\mathsf P} _{\sigma } ^{\textup{g}}(\phi , R) 
\simeq 
\sum_{j\ge 0} 
 \ell (  2 ^{j}R) ^{-d-1} \int _{2^{j}R} \phi \; d \sigma .
\end{equation*}
But then, the proposition follows immediately from our discussion above. 
\end{proof}

Consider the two weight inequality with holes, for an operator $ \mathsf Q_{u,\sigma}^{\textup{g}}$, namely 
for non-negative constants $ \eta_R$, the inequality is 
\begin{equation}\label{e:QQ}
\sum_{R\in \mathcal C_u} 
\mathsf \mathsf Q_{u,\sigma}^{\textup{g}} (\phi \cdot R ^{c} , R) ^{2} \eta_{R} \le \mathscr Q ^2  \lVert \phi \rVert_ \sigma ^2 . 
\end{equation}
It is of course convenient to dualize this inequality. Thus, define 
$ W (R) \equiv R \times [ \ell (R)/2, \ell (R)]$, and define 
$
\eta\equiv \sum_{R\in \mathcal C_u} \eta_{R} \delta x _{W (R)} 
$, and assume that $ \eta $ is a weight, that is $ \eta (R)$ is finite for all rectangles $ R$.  
Then, \eqref{e:QQ} is equivalent to the boundedness of the bilinear form 
\begin{equation*}
\sum_{R\in \mathcal C_u}   
\mathsf Q_{u,\sigma}^{\textup{g}} (\phi \cdot R ^{c} , R) 
\int _{W (R)}  \psi \; d \eta\le \mathscr Q \lVert \phi \rVert _{\sigma } \lVert \psi \rVert _{\eta}.  
\end{equation*}
Below, $ \textup{Box}_Q \equiv Q \times [0, \ell (Q)] $.  Similar to the notation introduced in the introduction, we will let $\left\Vert \psi\right\Vert_{\eta}\equiv \left\Vert \psi\right\Vert_{L^2(\mathbb{R}^{n+1}_+;\eta)}$.

\begin{theorem}\label{t:hytP}
The inequality \eqref{e:QQ} holds if and only if these three  constants are finite 
\begin{align}\label{e:Q1}
\sup _{R\in \mathcal C _{u}}  \frac {\bigl[ \sigma (R ^{(1)} \setminus R) \cdot \eta(\textup{Box} _{R}  )\bigr]^{1/2}  } {\ell (R) ^{d+1}} 
& \equiv \mathscr Q_1  
\\ \label{e:Q2}
\sup _{R_0\in \mathcal C _{u}}  \sigma (R_0) ^{-1/2} 
 \Biggl\lVert 
\sum_{R\in \mathcal C_u \;:\; R ^{(1)}\subset R_0}     \frac {\sigma (R ^{(1)}\setminus R)}  { \ell (R) ^{d+1}} \cdot 
\textup{Box}_R \Biggr\rVert _{\eta}
  & \equiv \mathscr Q_2 , 
\\ \label{e:Q3}
\sup _{R_0\in \mathcal C _{u}} 
\eta(\textup{Box}_{R_0}) ^{-1/2} 
 \Biggl\lVert 
\sum_{R\in \mathcal C_u \;:\; R^{(1)}\subset R_0}  (  R ^{(1)} \setminus R) 
\cdot \frac { \eta ( \textup{Box}_R)}   {\ell (R) ^{d+1}} 
 \Biggr\rVert _{\sigma}
& \equiv \mathscr Q_3. 
\end{align}
Moreover, $ \mathscr Q \simeq \sum_{v=1} ^{3} \mathscr Q _{v}$.  
\end{theorem}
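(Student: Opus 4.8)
\textbf{Proof sketch for Theorem~\ref{t:hytP}.}

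The plan is to recognize \eqref{e:QQ} (in its dualized bilinear form) as a dyadic positive operator inequality on $\mathcal C_u$, and to apply the now-standard characterization of two-weight inequalities for positive dyadic operators of the type studied by Nazarov--Treil--Volberg and Lacey--Sawyer--Uriarte-Tuero. The operator $\mathsf Q_{u,\sigma}$, after unwinding the definition, is essentially a sum over scales of cube-averages of $\phi$ paired against $\eta$-masses of the boxes $W(R)$; up to the harmless ``with holes'' modification (replacing $R$ by $R^{(1)}\setminus R$ in the integration of $\phi$, which only helps since it shrinks the domain) it is exactly a dyadic operator with a Carleson-type off-diagonal structure. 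The necessity of the three constants $\mathscr Q_1,\mathscr Q_2,\mathscr Q_3$ is immediate: $\mathscr Q_1$ is obtained by testing with $\phi = \mathbf 1_{R^{(1)}\setminus R}$ and $\psi = \mathbf 1_{W(R)}$ (a single-box test, the ``$A_2$'' / Muckenhoupt-type condition for this operator); $\mathscr Q_2$ is the testing inequality for the operator applied to $\mathbf 1_{R_0}$ (dual side), and $\mathscr Q_3$ is the testing inequality for the formal adjoint applied to $\eta\cdot\mathbf 1_{\textup{Box}_{R_0}}$. Each follows by restricting the sum in \eqref{e:QQ} or its dual to cubes below $R_0$ and bounding the norm on the right by the indicated quantity.

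For sufficiency, the route I would take is: fix $\phi\ge 0$ and $\psi\ge 0$, and perform a Calder\'on--Zygmund / corona stopping-time decomposition of $\phi$ with respect to $\sigma$-averages over $\mathcal C_u$ and, simultaneously, of $\psi$ with respect to $\eta$-averages over the boxes. This splits the bilinear sum into a ``diagonal'' part, where the stopping data of $\phi$ and $\psi$ are comparable, and the two ``off-diagonal'' parts. The diagonal part is controlled by the single-box constant $\mathscr Q_1$ together with a Carleson embedding (the stopping cubes form sparse/Carleson families, so one gets $\sum \text{stopping value}^2 \cdot \text{mass} \lesssim \|\phi\|_\sigma^2$ on each side), exactly as in the classical argument. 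The off-diagonal part where $\psi$ changes faster than $\phi$ is handled by freezing $\phi$ at its stopping average and invoking the testing inequality $\mathscr Q_2$ over each maximal stopping cube of $\phi$; symmetrically, the other off-diagonal part uses the adjoint testing inequality $\mathscr Q_3$. Summing these contributions and using Cauchy--Schwarz together with the quasi-orthogonality of the martingale differences of $\phi$ and $\psi$ yields the bound by $(\mathscr Q_1 + \mathscr Q_2 + \mathscr Q_3)\|\phi\|_\sigma\|\psi\|_\eta$, and hence $\mathscr Q \lesssim \sum_v \mathscr Q_v$; combined with necessity this gives $\mathscr Q\simeq\sum_{v=1}^3 \mathscr Q_v$.

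One point that requires a little care rather than being purely routine is that the operator $\mathsf Q_{u,\sigma}$ is not the ``local'' dyadic positive operator $\sum_R \langle\phi\rangle_R^\sigma \langle\psi\rangle_{W(R)}^\eta \sigma(R)$ but carries the extra factor $\ell(2^jR)^{-d-1}$ across \emph{all} scales above each cube; this is what forces the appearance of $R^{(1)}$ (twelve times the side length in the dyadic case) in \eqref{e:Q1}--\eqref{e:Q3} and means the ``with holes'' structure propagates one generation up. I expect the main obstacle to be bookkeeping: verifying that after the corona decomposition the off-diagonal pieces really do organize into the shape to which $\mathscr Q_2$ and $\mathscr Q_3$ apply — i.e.\ that the stopping cubes of $\phi$ on $\mathcal C_u$ and the testing cubes $R_0$ in \eqref{e:Q2} match up, and that the one-generation gap between $R$ and $R^{(1)}$ does not create an unbounded overlap. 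Since the paper only needs this as an input for the functional energy estimate (Theorem~\ref{t:fe}), and since the structure is that of the well-understood two-weight inequality for dyadic Poisson-type operators, I would present the argument at the level of indicating the stopping-time split and citing the standard positive-operator theory for the details, much as \cite{13120843} does.
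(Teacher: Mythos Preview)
Your proposal is correct and would succeed, but the paper takes a shorter and structurally cleaner route than the direct stopping-time argument you outline. Instead of running a corona decomposition on the double sum, the paper first \emph{collapses} it: writing
\[
\mathsf Q_{u,\sigma}(\phi\cdot R^c,R)\simeq \sum_{S\supsetneq R}\ell(S)^{-d-1}\int_{S^{(1)}\setminus S}\phi\,d\sigma
\]
and interchanging the order of summation over $R$ and $S$, the bilinear form becomes
\[
\sum_{S\in\mathcal C_u}\ell(S)^{-d-1}\int_{S^{(1)}\setminus S}\phi\,d\sigma\cdot\int_{\textup{Box}_S}\psi\,d\eta.
\]
This is now a \emph{single} sum over $S$, with $\phi$ integrated over the annulus $S^{(1)}\setminus S$ and $\psi$ over $\textup{Box}_S$. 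Passing to the grid $\mathcal B_u=\{\textup{Box}_Q:Q\in\mathcal D_u\}$ in the upper half-space, these two regions play the role of two \emph{fixed disjoint distinguished children} $Q_+,Q_-$ of a common parent box, so the form is a sum of $2^n$ operators of the type $\Lambda(f,g)=\sum_Q\lambda_Q\int_{Q_+}f\,d\sigma\int_{Q_-}g\,dw$. The paper then cites Hyt\"onen \cite{13120843}*{Theorem 6.8}, which characterizes exactly such bilinear forms by the analogues of \eqref{e:Q1}--\eqref{e:Q3}.

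Your stopping-time decomposition is essentially the content of Hyt\"onen's theorem unpacked, so it reaches the same conclusion; the paper's rewriting simply buys a black-box reduction and sidesteps the bookkeeping you flagged around the one-generation gap between $R$ and $R^{(1)}$ --- that gap is precisely what produces the disjoint-children structure $Q_+,Q_-$ after the collapse.
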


\begin{proof}
The three quantities are clearly necessary for the norm inequality \eqref{e:QQ}.  For the proof of sufficiency, 
we rewrite the 
\begin{equation*}
\mathsf Q ^{\textup{g}}_{u,\sigma}  (\phi , R) \simeq 
\sum_{S\supsetneq R}  \ell (S) ^{-d-1} \int _{  S ^{(1)} \setminus S} \phi \; d \sigma 
\end{equation*}
so that if we consider the associated bilinear form, for non-negative $ \psi \in L ^2 (\mathbb R ^{n+1}_+; \eta)$, 
it is 

\begin{align*}
\langle  \mathsf Q ^{\textup{g}}_{u,\sigma}  \phi , \psi \rangle _{\eta} 
&\simeq  
\sum_{R\in \mathcal C _{u}} 
\sum_{S\supsetneq R}  \ell (S) ^{-d-1} \int _{S ^{(1)} \setminus S} \phi \; d \sigma  \cdot 
\int _{W (R)} \psi \; d \eta
\\
& = 
\sum_{S\in \mathcal C_u} 
\ell (S) ^{-d-1} \int _{S ^{(1)} \setminus S} \phi \; d \sigma  
\cdot \int _{\textup{Box}_S} \psi \; d \eta. 
\end{align*}
But this is the sum of $ 2 ^{n}$ operators of the following type:  
To the dyadic grid $ \mathcal C_u$, associate the grid $ \mathcal B_u = \{\textup{Box}_Q \::\: Q\in \mathcal C_u\}$. 
To each $ Q\in \mathcal B_u$, associate two disjoint, distinguished children $ Q _{+}, Q _{-} \in \mathcal B_u$, and consider the 
bilinear form 
\begin{equation*}
\Lambda (f,g) = \sum_{Q\in \mathcal B_u} \lambda _Q \int _{Q_+} f \; d \sigma \cdot \int _{Q_-} g \; d w . 
\end{equation*}
Hyt\"onen  in \cite{13120843}*{Theorem 6.8}, shows that the boundedness of operators of this type are characterized by three conditions 
similar to \eqref{e:Q1}---\eqref{e:Q3}.  The three conditions above imply the corresponding conditions for $ \Lambda (f,g)$, hence the 
Theorem follows.  
\end{proof}

The constants $ \eta _R$ in \eqref{e:QQ} are specified as follows.  
For $ F\in \mathcal F$, and $ K\in \mathcal W_F$, set 
\begin{equation*}
\tilde \eta _K \equiv \Biggl\lVert  \sum_{\substack{ Q \in \mathcal D _{g} ^{r} \;:\; 
Q \subset K\\  \dot \pi _{\mathcal F}  Q = F}}  \Delta ^{w} _{Q} x \Biggr\rVert_{w} ^2. 
\end{equation*}
Otherwise, set $ \tilde \eta _K =0$.  Then,  for $ 1\le u \le 3 ^{n}$, and $ R\in \mathcal D _u$, set 
$ \eta _R \equiv \tilde \eta _{\pi _{u} ^{-1} R}$, provided $ \pi _{u} ^{-1} R$ is defined, and if it is not defined, set $ \eta _R =0$. 
(We suppress the dependence of $ \eta $ on $ u$.) 
Recall that $ \eta\equiv \sum_{R\in \mathcal C_u} \eta_{R} \delta x _{W (R)}$.  
With this choice of $ \eta $, the inequality \eqref{e:Q<}  implies \eqref{e:P<}.  

\smallskip 

It remains to verify the three conditions \eqref{e:Q1}---\eqref{e:Q3}, showing that each constant $ \mathscr Q_ v \lesssim \mathscr R$. 
The first of these is clearly controlled by the $ A_2$ constant, that is $ \mathscr Q_1 \lesssim \mathscr A_2 ^{1/2} $.
Indeed, for this argument, one should use the bound $ \eta _R \le  \ell (R) ^2  w (R) $, which we 
will see again below.

\subsection{Forward Testing Condition }\label{s:core}
We prove that $ \mathscr Q_2 \lesssim \mathscr E $, where the former constant is as in \eqref{e:Q2}, and we do so with a recursive argument along the stopping collection $ \mathcal F$.

The recursion is given in terms of these definitions.  
Fix an cube $ R_0 \in \mathcal C_u  $ as in \eqref{e:Q2}, 
and   let $ \mathcal R_0 (R_0)$ be those $ R\in \mathcal C_u$ such that 
$ \eta  _R \neq 0$ and $ R ^{ (1)} \subset R_0$, but   $ R$
is not contained in any $ F\in \mathcal F$ which is strictly contained in $ R_0$. 
This condition means in particular that there is a stopping interval $ F_R\in \mathcal F$, and 
a maximal good interval $ K_R \Subset \pi F$ with $ \pi _{u} K_R=R$.  
Also, set $ \mathcal R_1 (R_0)$ to be  cubes $ R\in \mathcal C_u$ such that $ \eta _R$ is 
non-zero, and $ F_R$ is a maximal stopping cube strictly contained in $ R_0$. 
We show that for $ k=0,1$,
\begin{equation} \label{e:NR0}
 N_ k (R_0) = \Biggl\lVert 
\sum_{R \in \mathcal  R_k (R_0)}   \sum_{S \;:\; R_0 \supsetneq S\supset R} 
\frac {\sigma (S ^{(1)}\setminus S)}  { \ell (S) ^{d+1}} \cdot  \textup{Box}_R
 \Biggr\rVert _{\eta}  \lesssim \mathscr E    \sigma (R_0) ^{1/2} . 
\end{equation}

The Carleson measure condition on $ \mathcal F$ will permit a recursion which completes the proof.  
Namely, let $ \mathcal F_1$ be the maximal cubes $ F\in \mathcal F$ which are contained in $ R_0$, 
and inductively set $ \mathcal F _{j+1}$ to be the $ \mathcal F$-children of the cubes in $\mathcal F_j $, then  
using a standard Cauchy-Schwarz estimate in the summing index $ j$, 
\begin{align*}
 \Biggl\lVert 
\sum_{R  \;:\; R ^{(1)}\subset R_0}     \frac {\sigma (R ^{(1)}\setminus R)}  { \ell (R) ^{d+1}} \cdot \textup{Box}_R
 \Biggr\rVert _{\eta} ^2 
 & \le 
 \sum_{k=0,1}  N_k (R_0) ^2  + \sum_{j=1} ^{\infty }  j ^2   
\sum_{F\in \mathcal F _j}  \sum_{k=0,1} N_k (F_j) ^2  
\\ & \lesssim \mathscr E ^2   \Bigl\{  \sigma (R_0) + \sum_{j=1} ^{\infty }  j ^2 \sum_{F\in \mathcal F_j} \sigma (F)
\Bigr\}
 \lesssim \mathscr E ^2   \sigma (R_0) .
\end{align*}
The geometric decay in \eqref{e:F-carleson} clearly lets us sum this series.  

To prove \eqref{e:NR0} argue first for the case of $ k=1$. 
The definition of $ \eta $, and the good property of intervals 
imply that the intervals $ \{\pi _{u}K \;:\; K\in \mathcal W_F\}$ have bounded overlaps, hence 
\begin{align*}
 N_ 1 (R_0)  
 & \lesssim 
 \sum_{R\in \mathcal R_1 (R_0)} 
 \left[
 \sum_{S \;:\; R_0 \supsetneq S\supset R} 
\frac {\sigma (S ^{(1)}\setminus S)}  { \ell (S) ^{d+1}} 
 \right] ^2 \eta _R 
 \\
 & \lesssim  
  \sum_{R\in \mathcal R_1 (R_0)} \mathsf P ^{\textup{g}} _{R} (\sigma \cdot (R_0\setminus R), R) ^2 
  E (w,R) ^2 w (R) \lesssim \mathscr E ^2 \tau (R_0) . 
\end{align*}
Here, we have passed back from the discrete approximation to the Poisson, and then used the energy inequality.

Second, argue in the case of $ k=0$, the key point is that  the collection $ \mathcal R _{0} (R_0)$ has bounded overlaps, in the sense of \eqref{e:bounded}, so that we can appeal to the energy inequality \eqref{e:E}. 
Note that  for $ R\in \mathcal R_0 (R_0)$, we have  $ R=\pi _{u}K_R$ where $ K_R\in \mathcal W_{F_R}$ for some  $ F _{R}\in \mathcal F$, with $ F_R $ not contained in $ R_0$.   
Now suppose, by way of contradiction of \eqref{e:bounded}, that  $ R_ t\in \mathcal R_0 (R_0)$, for $ 1\leq t \leq T$ satisfy 
\begin{equation*}
 \bigcup _{t=1} ^{T} (C_0-1) R_t \neq \emptyset . 
\end{equation*}
Now, we can assume that $ F _{R_t } \subset F _{R_1}$, for all $ 1\leq t \leq T$.  And, for each $ R_t$ there is a cube $ Q_t \in \mathcal W _{F_{R_1}}$ that contains it, by definition of $ \mathcal W _{F}$.  (See Proposition~\ref{p:w}.)  And, we have $ (C_0-1) R _{t} \subset C_0 Q_t$.  But the cubes $ C_0 Q_t$ have bounded overlaps, so that we see that $ T \lesssim 1$.

\subsection{Backwards Testing Condition}
We bound the term in \eqref{e:Q3}, showing $ \mathscr Q_3 \lesssim \mathscr A_2$. 
Let us first treat the case of $ n-1< d \leq n$. 
For an integer $ k\ge 0$, let $  W ^{k}_R = R \times [ 2 ^{-k-1}, 2 ^{k}] \ell (R)$,  so that $ \textup{Box}_R = \bigcup _{k=0} ^{\infty } W_R ^k$.   In \eqref{e:Q3}, we replace  $ \textup{Box}_R$ by $ W ^{k}_R$, and find geometric decay in $ k$.  

Moreover, as a matter of convienence, to each $ R\in \mathcal C_u$, we let $ \tilde R$ be one of the $ 2 ^{n}-1$ dyadic cubes of volume equal to $ R$ contained in $ R ^{(1)} \setminus R$.  
Then, using the $ A_2$ condition with holes, 
\begin{align*}
\sum_{R\in \mathcal C_u \;:\; R ^{(1)}\subset R_0}   \frac { \eta ( W_R ^k)}   {\ell (R) ^{d+1}}   &
\sum_{S\in \mathcal C_u \;:\; \tilde S \subset \tilde R } 
 \frac { \eta ( W_S ^k)}   {\ell (S) ^{d+1}}  \sigma (\tilde S) 
\\& \lesssim \mathscr A_2 2 ^{-2k} \sum_{R\in \mathcal C_u \;:\; R ^{(1)}\subset R_0}   \frac { \eta ( W_R ^k)}   {\ell (R) ^{d+1}}  
\sum_{S\in \mathcal C_u \;:\; \tilde S \subset \tilde R } \ell (S) ^{d+1} 
\\
& \lesssim \mathscr A_2 2 ^{-2k} \sum_{R\in \mathcal C_u \;:\; R ^{(1)}\subset R_0} \eta ( W_R ^k) 
\lesssim  \mathscr A_2 2 ^{-2k}  \eta (\textup{Box}_R) . 
\end{align*}
The bound on the  sum of $ \ell (S) ^{d+1}$ depends upon $ n-1 < d \leq n$. 

To argue along the same lines in the case of $ 0< d\leq n-1$, the following Lemma will complete the proof, 
but this argument depends upon the  $ A_2$ condition with no holes, and it is only at this point that 
we need this stronger $ A_2$ condition. 

\begin{lemma}\label{l:d-small} Assuming the $ A_2$ condition with no holes, the following estimate holds uniformly in $ R$:
\begin{equation}\label{e:d-small}
\sum_{S\in \mathcal C_u \;:\; S\cup \tilde S \subset \tilde R }  
 \frac { \eta ( W_S ^k)}   {\ell (S) ^{d+1}}  \sigma (\tilde S)  \lesssim 2 ^{-2k} \ell (R) ^{d+1}.  
\end{equation}
\end{lemma}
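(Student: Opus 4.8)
\textbf{Proof plan for Lemma~\ref{l:d-small}.}
The plan is to reduce the sum on the left of \eqref{e:d-small} to a geometric sum in the scale of $S$, exactly as in the case $n-1 < d \leq n$, by using the $A_2$ condition ``with no holes'' to absorb the two ``bad'' factors $\eta(W_S^k)/\ell(S)^{d+1}$ and $\sigma(\tilde S)$. The key observation is that each of these factors is controlled pointwise in terms of the reproducing Poisson average $\mathsf P^{\textup r}$ appearing in the no-holes $A_2$ hypothesis: for a cube $S$ one has $\eta(W_S^k) \lesssim 2^{-2k}\ell(S)^2 w(S)$ (the bound $\eta_R \le \ell(R)^2 w(R)$ already invoked at the end of \S\ref{s:core}, now localized to the slab $W_S^k$ picking up the factor $2^{-2k}$), and $\sigma(\tilde S) \le \ell(S)^d \cdot \mathsf P^{\textup r}(\sigma, S)$ trivially since the integrand in \eqref{e:Pr} is $\simeq \ell(S)^{-d}$ on $S$ itself.

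First I would insert these two pointwise bounds into the left side of \eqref{e:d-small}, producing
\begin{equation*}
\sum_{S \,:\, S\cup\tilde S\subset\tilde R}  \frac{\eta(W_S^k)}{\ell(S)^{d+1}}\,\sigma(\tilde S)
\lesssim 2^{-2k} \sum_{S\,:\, S\cup\tilde S\subset\tilde R} \ell(S)^{1-d}\, w(S)\, \ell(S)^{d}\,\mathsf P^{\textup r}(\sigma,S)
= 2^{-2k}\sum_{S} \ell(S)\, w(S)\,\mathsf P^{\textup r}(\sigma,S).
\end{equation*}
Now the no-holes $A_2$ condition gives $w(S)\,\mathsf P^{\textup r}(\sigma,S) \le \mathscr A_2\,\ell(S)^d$, so each summand is $\lesssim \mathscr A_2\,\ell(S)^{d+1}$, and since the cubes $S$ with $S\subset \tilde R$ are organized in a dyadic tower, $\sum_{S\subset\tilde R}\ell(S)^{d+1}$ is a convergent geometric series (here $d+1>0$, so no restriction on $d$ is needed, unlike the computation for $n-1<d\le n$ where the exponent came out negative) summing to $\lesssim \ell(\tilde R)^{d+1}\simeq\ell(R)^{d+1}$. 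This yields \eqref{e:d-small}. The reason the no-holes condition is essential here is precisely that the cube $\tilde S$ over which $\sigma$ is measured sits \emph{inside} $S$ (or near it), so the with-holes version $\mathsf P^{\textup r}(\mathbb R^n\setminus S, S)$ carries no information about $\sigma(\tilde S)$; only the full average $\mathsf P^{\textup r}(\mathbb R^n, S)$ controls the on-diagonal mass.

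The main obstacle I anticipate is not the arithmetic of the geometric sum but getting the localized slab estimate $\eta(W_S^k)\lesssim 2^{-2k}\ell(S)^2 w(S)$ right, i.e.\ tracking how the $2^{-k}$ truncation of the vertical extent of $\textup{Box}_S$ interacts with the definition $\eta \equiv \sum_R \eta_R\,\delta_{x_{W(R)}}$ and the bound $\tilde\eta_K\le \sum_{Q\subset K}\lVert\Delta_Q^w x\rVert_w^2 \le E(w,K)^2\ell(K)^2 w(K)\le \ell(K)^2 w(K)$; one must check that a point mass assigned at height $\ell(R)$ indeed falls in $W_S^k$ only for the appropriate range of $S$ relative to $R$, so that the $k$-dependence genuinely improves to $2^{-2k}$ rather than being lost. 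Once that bookkeeping is done, the remainder is routine.
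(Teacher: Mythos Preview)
Your argument has a genuine gap at the final step. The sum $\sum_{S \subset \tilde R}\ell(S)^{d+1}$ is \emph{not} over a dyadic tower (a single chain of nested cubes) but over \emph{all} cubes of the grid $\mathcal C_u$ contained in a region of volume $\simeq \ell(R)^n$. At each scale $\ell(S)=2^{-j}\ell(R)$ there are $\sim 2^{jn}$ such cubes, so
\[
\sum_{S\subset \tilde R}\ell(S)^{d+1}\;\simeq\;\sum_{j\ge 0} 2^{jn}\bigl(2^{-j}\ell(R)\bigr)^{d+1}
=\ell(R)^{d+1}\sum_{j\ge 0}2^{j(n-d-1)}\,,
\]
which converges only when $d+1>n$, i.e.\ $d>n-1$. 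That is precisely the regime already handled in the paragraph \emph{preceding} this lemma; the whole point of Lemma~\ref{l:d-small} is the complementary range $0<d\le n-1$, where your geometric series diverges. Your parenthetical ``here $d+1>0$, so no restriction on $d$ is needed'' conflates the exponent on a single term with the exponent governing convergence of a sum over a full dyadic tree.

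The paper's proof does something genuinely different to beat this obstruction. It does not try to sum $\ell(S)^{d+1}$ at all. Instead it stratifies the cubes $S$ by scale $u$ (so $2^u\ell(S)=\ell(R)$) \emph{and} by the level $t$ of the local $A_2$ ratio $\frac{w(S)}{\ell(S)^d}\cdot\frac{\sigma(S)}{\ell(S)^d}\simeq 2^t\alpha$, then uses a pigeonhole argument (one of the two densities $w(S)/\ell(S)^d$ or $\sigma(S)/\ell(S)^d$ must be large) together with disjointness to bound the \emph{cardinality} $\#\mathcal S_{t,u}\lesssim 2^{-t/2+ud}$. The resulting bound $2^{-2k+t/2-u}\alpha\,\ell(R)^{d+1}$ is then summable over $u\ge 1$ and over $t$ with $2^t\alpha\lesssim\mathscr A_2$. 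Your pointwise bounds $\eta(W_S^k)\lesssim 2^{-2k}\ell(S)^2 w(S)$ and the simple $A_2$ product are correct ingredients, but they are not enough on their own; the extra cardinality control is where the argument actually lives.
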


\begin{proof}
Let 
\begin{equation*}
\alpha = \frac {w (R) } {\ell (R) ^{d}} \cdot \frac {\sigma  (R) } {\ell (R) ^{d}} \leq  \mathscr A_2.  
\end{equation*}
For integers $ t \in \mathbb Z $ with $ 2 ^{t} \alpha \leq \mathscr A_2$, and integers $ u\geq 1$ let 
$ \mathcal S _{t,u}$ be the intervals $ S\subset  R$ such that $ 2 ^{u} \ell (S) = \ell (R)$, and 
\begin{equation*}
2 ^{t} \alpha \leq \frac {w (S) } {\ell (S) ^{d}} \cdot \frac {\sigma  (S) } {\ell (S) ^{d}}  < 2 ^{t+1} \alpha . 
\end{equation*}
Now, for each $ S\in \mathcal S _{t,u}$, we have either 
\begin{equation*}
\frac {w (S) } {\ell (S) ^{d}} \geq \sqrt { w (R) / \sigma  (R)}  2 ^{t/2} \sqrt \alpha 
\quad \textup{or} \quad 
\frac {\sigma (S) } {\ell (S) ^{d}} \geq \sqrt {\sigma (R)/w(R)}  2 ^{t/2} \sqrt \alpha .  
\end{equation*}
Assume that the former condition holds for at least half of the cubes in $ \mathcal S _{t,u}$.   This case suffices for the argument since the complementary case can be handled via similar methods. 

From this, we deduce an upper bound on the cardinality of $ \mathcal S _{t,u}$.  Namely, we have 
\begin{align*}
\sqrt { w (R) / \sigma(R)}  2 ^{t/2} \sqrt \alpha  [ 2 ^{-u} \ell (R)] ^{d}\cdot   {} ^{\sharp} \mathcal S _{t,u} 
& \lesssim 
\sum_{S\in \mathcal S _{t,u}} w (S) \le w (R). 
\end{align*}
From this, it follows that 
$ {}^{\sharp} \mathcal S _{t,u} \lesssim 2 ^{-t/2 + u d}  $. 

Hence, we have 
\begin{align*}
\sum_{S\in \mathcal S _{t,u} }  
 \frac { \eta ( W_S ^k)}   {\ell (S) ^{d+1}}  \sigma (\tilde S)  
 &\lesssim 2 ^{-2k +t  - u (d+1)} \alpha \ell (R) ^{d+1} \cdot  {}  ^{\sharp} \mathcal S _{t,u} 
   \\ 
   &  \lesssim 2 ^{-2k +t/2 - u} \alpha \ell (R) ^{d+1}   . 
\end{align*}
This is summable in the $ u\ge 1$ and $ t \in \mathbb Z $ such that $ 2 ^{t} \alpha \le 2 \mathscr A_2$ to the estimate we need. 

\end{proof}

\subsection{Proof of Corollary~\ref{c:fe}}

The proof of \eqref{e:P+<} is a much easier fact than \eqref{e:P<}.  Recall that the energy term $ E (w, K)$ is defined in \eqref{e:E}, and that in particular, for $ F\in \mathcal F$, 
\begin{equation*}
E (w, K) ^2 w (K) = 
\sum_{t=1} ^{\infty }  
\Biggl\lVert  \sum_{\substack{ Q \in \mathcal D _{g} ^{r} \;:\; 
Q \subset K\\  \dot \pi _{\mathcal F} ^{t}  Q = F}}  \Delta ^{w } _{Q} 
\frac {x} { \ell (K)} 
\Biggr\rVert_{w} ^2  . 
\end{equation*}
We will have a geometric decay in the variable $ t$, which measures how far down the $ \mathcal F$-stopping tree the cube $ Q$ is.

Using the definition of $ \mathcal W_F$, see Definition~\ref{d:W}, and \eqref{e:g+<g}, with $ t \geq 1$ fixed,  
below, we can replace $ \mathsf P^{\textup{g}+} _{\sigma }$ by $ \mathsf P^{\textup{g}} _{\sigma }$, gaining a factor of  $ 2 ^{-t}$.  Indeed,
\begin{align*}
\sum_{F\in \mathcal F} \sum_{K\in \mathcal W_F} &
 \mathsf P^{\textup{g}+} _{\sigma }  (f  \,(\mathbb R ^{n} \setminus F) ,K) ^2  
\Biggl\lVert  \sum_{\substack{ Q \in \mathcal D _{g} ^{r} \;:\; 
Q \subset K\\  \dot \pi _{\mathcal F} ^{t}  Q = F}}  \Delta ^{w } _{Q} 
\frac {x} { \ell (K)} 
\Biggr\rVert_{w} ^2  
\\
& \lesssim 2 ^{-t} 
\sum_{F\in \mathcal F} \sum_{K\in \mathcal W_F} 
 \mathsf P^{\textup{g}} _{\sigma }  (f  \,(\mathbb R ^{n} \setminus F) ,K) ^2  
\Biggl\lVert  \sum_{\substack{ Q \in \mathcal D _{g} ^{r} \;:\; 
Q \subset K\\  \dot \pi _{\mathcal F} ^{t}  Q = F}}  \Delta ^{w } _{Q} 
\frac {x} { \ell (K)} 
\Biggr\rVert_{w} ^2
\lesssim 2 ^{-t} \{\mathscr E ^2 + \mathscr A_2 \} \lVert f\rVert_ \sigma ^2 . 
\end{align*}
The last inequality is an instance of \eqref{e:P<}.  Clearly we can sum this over $ t\geq 1$.

\section{The Global to Local Reduction} 

\subsection{Initial Steps}
We begin the task of proving the norm boundedness of the Riesz transforms, assuming the 
$ A_2$, energy and testing hypotheses.  

\begin{lemma}\label{l:absorb}  Assume the \emph{a priori} inequality \eqref{e:N}. 
For all $ 0 < \vartheta < 1$, and choices of $ 0< \epsilon < (4d+4) ^{-1} $, 
 there is a choice of $ r$ sufficiently large so that,   
\begin{equation} \label{e:absorb}
\mathbb E \lvert  \langle  \mathsf R _{\sigma } (P ^{\sigma } _{\textup{good}}f),  P ^{w} _{\textup{good}} g \rangle _{w}\rvert 
\le \{C _{\epsilon , r, \vartheta } \mathscr R + \vartheta  {\mathscr N}\} \lVert f\rVert_{\sigma } \lVert g\rVert_{w} .
\end{equation}
It follows that $ \mathscr N \lesssim \mathscr R$,  where $ \mathscr R $ is defined at the end of Theorem~\ref{t:exact}.  
\end{lemma}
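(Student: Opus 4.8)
The plan is to prove Lemma~\ref{l:absorb}, and from it deduce $\mathscr N \lesssim \mathscr R$ by an absorption argument. Granting the lemma, observe that $f = P^\sigma_{\textup{good}}f + P^\sigma_{\textup{bad}}f$ and similarly for $g$. The ``diagonal'' good-good piece is controlled by \eqref{e:absorb}. Each remaining piece involves either $P^\sigma_{\textup{bad}}f$ or $P^w_{\textup{bad}}g$, and here I would simply apply the \emph{a priori} boundedness \eqref{e:N} together with the bad-projection estimate of Proposition~\ref{BadExpectation}, $\mathbb E\lVert P^\sigma_{\textup{bad}}f\rVert_\sigma^2 \lesssim \epsilon^{-1}2^{-\epsilon r}\lVert f\rVert_\sigma^2$, and the identical estimate for $w$; Cauchy--Schwarz in the probability space then gives that these terms contribute at most $C(\epsilon^{-1}2^{-\epsilon r})^{1/2}\mathscr N \lVert f\rVert_\sigma \lVert g\rVert_w$. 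Combining, $\mathbb E|\langle \mathsf R_\sigma f, g\rangle_w| \le \{C_{\epsilon,r,\vartheta}\mathscr R + \vartheta\mathscr N + C(\epsilon^{-1}2^{-\epsilon r})^{1/2}\mathscr N\}\lVert f\rVert_\sigma\lVert g\rVert_w$. Since the pairing $\langle \mathsf R_\sigma f, g\rangle_w$ does not depend on the random grids, the expectation equals the pairing itself, so taking the supremum over $\lVert f\rVert_\sigma, \lVert g\rVert_w \le 1$ yields $\mathscr N \le C_{\epsilon,r,\vartheta}\mathscr R + (\vartheta + C(\epsilon^{-1}2^{-\epsilon r})^{1/2})\mathscr N$. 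Fixing $\vartheta$ small, then $r$ large (so $(\epsilon^{-1}2^{-\epsilon r})^{1/2}$ is small), the coefficient of $\mathscr N$ on the right is $< 1$, and we absorb it into the left side to conclude $\mathscr N \lesssim \mathscr R$.

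The one subtlety worth flagging is the \emph{qualitative} role of the \emph{a priori} hypothesis $\mathscr N < \infty$: the absorption is only legitimate once we know $\mathscr N$ is finite, so strictly speaking the argument first establishes the bound for a truncated or regularized operator (with a uniform constant), and then a limiting argument removes the truncation. In the body of the paper this is typically handled by running the entire scheme with smoothly truncated Riesz kernels $\mathsf R_\sigma^{(s,t)}$, for which $\mathscr N^{(s,t)} < \infty$ automatically, deriving $\mathscr N^{(s,t)} \lesssim \mathscr R$ with constant independent of $s,t$, and then letting $s\to 0$, $t\to\infty$. I would mention this but not belabor it here, since the quantitative content is exactly the absorption displayed above.

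The hard part is, of course, not this deduction but the proof of \eqref{e:absorb} itself, which is where the entire machinery of the paper enters — the martingale decomposition of $\langle \mathsf R_\sigma P^\sigma_{\textup{good}}f, P^w_{\textup{good}}g\rangle_w$ into paraproduct, stopping, and neighbor/far terms; the monotonicity Lemma~\ref{l:monotone} to pass from off-diagonal Riesz pairings to positive Poisson quantities; the energy Lemma~\ref{l:E} and the functional energy Theorem~\ref{t:fe} to sum those Poisson quantities against $\mathscr E^2 + \mathscr A_2 \lesssim \mathscr R^2$; the testing hypothesis \eqref{e:test} for the diagonal/paraproduct pieces; and the surgery argument of Lemma~\ref{l:surgery} replacing the Hardy-type inequality used in the Hilbert transform proofs. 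The small parameter $\vartheta$ arises precisely because one term in this decomposition — the ``long-range'' interaction of good cubes with distant masses, where goodness fails to give summability — must be estimated back in terms of $\mathscr N$ itself, with a constant that shrinks as $r\to\infty$; all other terms are bounded by $C_{\epsilon,r,\vartheta}\mathscr R$. So the decomposition in \eqref{e:absorb} is engineered exactly to make the absorption step above go through.
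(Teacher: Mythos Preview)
Your deduction of $\mathscr N \lesssim \mathscr R$ from \eqref{e:absorb} via the good/bad split and Proposition~\ref{BadExpectation} is correct and is exactly the standard absorption scheme the paper has in mind (though it does not spell it out). Your remark about needing the \emph{a priori} finiteness of $\mathscr N$ through truncations is also well taken.

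Your high-level account of how \eqref{e:absorb} is proved is accurate in its list of ingredients and matches the paper's architecture: the bilinear form is split into the ``above/below'' forms $B^{\textup{above}}$, $B^{\textup{below}}$ (controlled purely by $\mathscr R$ via Lemma~\ref{l:above}, which in turn uses monotonicity, functional energy, and the local/stopping analysis), plus a remainder handled by the case analysis of \S8 (Far Apart, Surgery, Nearby, Inside), which is Lemma~\ref{l:2above}.

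There is one concrete misstatement. You write that the $\vartheta\mathscr N$ term arises from ``the `long-range' interaction of good cubes with distant masses, where goodness fails to give summability.'' That is not where it comes from. The far-apart terms (Lemma~\ref{l:far}) are bounded outright by $\mathscr A_2^{1/2}$ with geometric decay---goodness and the kernel gradient are enough there. The $\vartheta\mathscr N$ contribution appears only in the \emph{surgery} lemma (Lemma~\ref{l:surgery}), which treats \emph{nearby} cubes of comparable scale with $3P\cap 3Q\neq\emptyset$: one excises a thin boundary strip $Q_\partial$ of width $\vartheta\ell(Q)$, applies the \emph{a priori} norm $\mathscr N$ on that strip, and uses the randomness of the grid so that $\mathbb E_{\mathcal D_\sigma} w(Q_\partial)\lesssim \vartheta\, w(Q_j)$. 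So the mechanism producing the absorbable term is a short-range overlap issue handled by surgery, not a long-range summability failure.
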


To prove \eqref{e:absorb}, we can assume that $ f$ and $ g$ are supported on a fixed cube $ Q ^{0}$.  After trivial application of 
the testing inequality, we can further assume that $ f$ and $ g$ are of mean zero in their respective spaces.  
With probability one, there is a cube $ Q ^{0} _ \sigma  \in \mathcal D_\sigma$ which 
contains $ Q ^{0}$.  Then, 
\begin{equation*}
f = \sum_{\substack{Q\in \mathcal D_\sigma\\ Q\subset Q ^{0} _{\sigma } }} \Delta ^{\sigma } _{Q} f. 
\end{equation*} 
The function $ g$ satisfies an analogous expansion.   
  
Define the bilinear form 
\begin{equation}\label{e:above}
B ^{\textup{above}} (f,g) \equiv 
\sum_{\substack{P\in \mathcal D_f  }}
\sum_{\substack{Q\in \mathcal D_{g}\\ Q\Subset _{4r} P_Q }} 
[ \Delta ^{\sigma } _{P} f ]  ^{\sigma } _{P_Q} 
\langle  \mathsf R _{\sigma }  P_Q , \Delta ^{w} _{Q} g  \rangle_w, 
\end{equation}
and define $ B ^{\textup{below}} (f,g)$ similarly.  
Here $ Q\Subset _{4r} R$ means that $ Q\cap   R \neq \emptyset $ and $ 2 ^{4r}\ell (Q) \le \ell (P)$. 
But, $ Q\in \mathcal D_{w}$ must be good, hence $ Q\subset R$, and is a relatively long way 
from the boundary of any child of $ R$.  The cube  $ P_Q$ is the child of  $ P$ that contains $ Q$. 
We have also simply written $ \mathsf R _{\sigma }$ above, suppressing the truncations.

A basic estimate is 

\begin{lemma}\label{l:2above} Under the hypotheses of Lemma~\ref{l:absorb}, there holds 
\begin{equation}\label{e:2above}
\begin{split}
\mathbb E \bigl\lvert
\langle  \mathsf R _{\sigma } (P ^{\sigma } _{\textup{good}}f),  P ^{w} _{\textup{good}} g \rangle _{w}
- &
B ^{\textup{above}} (P ^{\sigma } _{\textup{good}}f,P ^{w} _{\textup{good}} g) 
\\&
- B ^{\textup{below}} (P ^{\sigma } _{\textup{good}}f,P ^{w} _{\textup{good}} g)
\bigr\rvert  
\le \{C _{\epsilon , r, \vartheta } \mathscr R + \vartheta \mathscr N\} \lVert f\rVert_{\sigma } \lVert g\rVert_{w}.
\end{split}
\end{equation}
\end{lemma}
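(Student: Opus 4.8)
The plan is to expand $P^{\sigma}_{\textup{good}} f = \sum_P \Delta^{\sigma}_P f$ and $P^{w}_{\textup{good}} g = \sum_Q \Delta^{w}_Q g$ over good cubes (contained in the random cube $Q^{0}_{\sigma}$), so that
\[
\langle \mathsf R_{\sigma} (P^{\sigma}_{\textup{good}} f), P^{w}_{\textup{good}} g \rangle_w = \sum_{P} \sum_{Q} \langle \mathsf R_{\sigma} \Delta^{\sigma}_P f, \Delta^{w}_Q g \rangle_w ,
\]
and then to split the set of pairs $(P,Q)$ into four classes: (I) $Q \Subset_{4r} P_Q$, i.e.\ $Q \subset P$ with $2^{4r}\ell(Q) \le \ell(P)$ and $P_Q$ the $\sigma$-child of $P$ containing $Q$; (II) the dual relation $P \Subset_{4r} Q_P$; (III) the comparable pairs $2^{-4r} < \ell(Q)/\ell(P) < 2^{4r}$; and (IV) the remaining, separated, pairs. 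Goodness of the cubes and the cross-grid nesting it forces mean that an intersecting pair in (III) is one of boundedly many rigid configurations, while a pair in (IV) is genuinely disjoint and far apart relative to its sizes. Classes (I) and (II) are, after one reduction, exactly $B^{\textup{above}}$ and $B^{\textup{below}}$, so the content of the lemma is that the reduction error in (I)--(II), together with the sums over (III) and (IV), is at most $\{ C_{\epsilon,r,\vartheta}\mathscr R + \vartheta\mathscr N \}\lVert f\rVert_\sigma \lVert g\rVert_w$.

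For a pair in class (I), since $\Delta^{\sigma}_P f$ is constant on each $\sigma$-child of $P$,
\[
\langle \mathsf R_{\sigma} \Delta^{\sigma}_P f, \Delta^{w}_Q g \rangle_w = [\Delta^{\sigma}_P f]^{\sigma}_{P_Q} \langle \mathsf R_{\sigma} \mathbf 1_{P_Q}, \Delta^{w}_Q g \rangle_w + \langle \mathsf R_{\sigma} (\Delta^{\sigma}_P f \, \mathbf 1_{P \setminus P_Q}), \Delta^{w}_Q g \rangle_w ,
\]
the first summand being precisely that of $B^{\textup{above}}$. For the second, goodness of $Q$ with $2^{4r}\ell(Q) \le \ell(P)$ gives $10Q \subset P_Q$ while $\Delta^{\sigma}_P f\,\mathbf 1_{P \setminus P_Q}$ is not supported on $P_Q$, so the monotonicity inequality, Lemma~\ref{l:monotone} applied to the pair $(P_Q, Q)$, bounds it by two terms, each controlled (using $\mathsf P^{\textup{g}+}_{\sigma} \lesssim \mathsf P^{\textup g}_{\sigma}$ and $\lVert \Delta^{w}_Q \tfrac{x}{\ell(Q)} \rVert_w \le E(w,Q)\,w(Q)^{1/2}$) by
\[
\mathsf P^{\textup g}_{\sigma} \bigl( \lvert \Delta^{\sigma}_P f \rvert\, \mathbf 1_{P \setminus P_Q}, Q \bigr)\, E(w,Q)\, w(Q)^{1/2}\, \lVert \Delta^{w}_Q g \rVert_w .
\]
After summing over the ancestors $P$ of $Q$ inside the Poisson factor (using its linearity and the geometric distance decay, where the smallness of $\epsilon$ assumed in Lemma~\ref{l:absorb} enters), a Cauchy--Schwarz in $Q$, and then the energy inequality \eqref{e:E} (organized along the Whitney cubes $\mathcal W_P$ of Proposition~\ref{p:w}) together with the $A_2$ condition \eqref{e:A2}, the total of these errors is $\lesssim (\mathscr E + \mathscr A_2^{1/2})\lVert f\rVert_\sigma \lVert g\rVert_w \lesssim \mathscr R\,\lVert f\rVert_\sigma \lVert g\rVert_w$ by Lemma~\ref{l:E}; class (II) is dual.

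The comparable pairs in (III) with $Q \cap P \neq \emptyset$ yield, by goodness, one of boundedly many ``nearby'' inner products, each estimated by the testing inequalities \eqref{e:test} on cubes comparable to $P \cup Q$ and the $A_2$ condition \eqref{e:A2}, as in Lemma~\ref{l:nearby}; summing over $P$ and the $O_r(1)$ admissible $Q$ gives $\lesssim C_r\mathscr R\,\lVert f\rVert_\sigma \lVert g\rVert_w$. For the comparable pairs that are disjoint but close, and for all of class (IV), the truncated kernel of $\mathsf R_{\sigma}$ is smooth on $P \times Q$; distributing its oscillation and summing the resulting geometric series in the scale gap bounds the total by $\mathscr A_2^{1/2}\lVert f\rVert_\sigma \lVert g\rVert_w$ (the $A_2$ condition ``with holes'' suffices here). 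Finally, a residual family of pairs --- those interacting with the truncation scales of $\mathsf R_{\sigma}$ or with the top cube $Q^{0}_{\sigma}$, where this machinery does not apply --- is bounded directly by the \emph{a priori} inequality \eqref{e:N}; upon taking expectation and choosing $r$ large, the smallness of these configurations (cf.\ Proposition~\ref{BadExpectation}) makes their contribution $\lesssim \vartheta\mathscr N\,\lVert f\rVert_\sigma \lVert g\rVert_w$, which accounts for the $\vartheta\mathscr N$ term. The main obstacle is the bookkeeping in classes (I) and (II): one must verify that the error produced by monotonicity really fits the energy framework --- that the Poisson factors are the $\mathsf P^{\textup g}_{\sigma}$, $\mathsf P^{\textup{g}+}_{\sigma}$ ones to which \eqref{e:E} and \eqref{e:A2} are adapted, rather than the reproducing kernel $\mathsf P^{\textup r}$ (whose decay is faster when $d > 1$) --- and that the double sum over $(P,Q)$ reorganizes, via the Whitney cubes, into the partition sums to which the energy inequality applies; the geometric facts about cross-grid goodness used to carry out the four-fold split are routine but need to be laid out with care.
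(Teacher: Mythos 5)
Your high-level decomposition into pairs $(P,Q)$ matches the paper's, and your treatment of classes (I)/(II) --- pulling out $B^{\textup{above}}$ and $B^{\textup{below}}$ and bounding the residual $\langle \mathsf R_\sigma(\Delta^\sigma_P f\,\mathbf 1_{P\setminus P_Q}),\Delta^w_Q g\rangle_w$ by monotonicity, energy and $A_2$ --- is the content of the paper's Lemma~\ref{l:inside} and is on the right track. The genuine gap is in what you call the comparable intersecting pairs of class (III).

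You claim these can be estimated by the testing inequalities \eqref{e:test} together with $A_2$, citing Lemma~\ref{l:nearby}. This is wrong on two counts. First, Lemma~\ref{l:nearby} handles pairs with $2^{4r}\ell(Q)\le\ell(P)$ and $Q\subset 3P\setminus P$ --- not comparable pairs --- so it is the wrong reference; that lemma belongs to your case (IV)-type separated configuration and is itself more delicate than a kernel-smoothness estimate. Second, and more fundamentally, testing cannot bound $\langle\mathsf R_\sigma\mathbf 1_{P_i},\mathbf 1_{Q_j}\rangle_w$ when $P_i\in\mathcal D_\sigma$ and $Q_j\in\mathcal D_w$ are overlapping but not nested cubes of comparable side: \eqref{e:test} is a statement about a single cube, and $P_i\cup Q_j$ is not a cube. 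In the Hilbert transform setting this step is bridged by Muckenhoupt's two weight Hardy inequality; as the introduction stresses, that tool is unavailable here, and its replacement is the surgery argument of Lemma~\ref{l:surgery}. Surgery splits $Q_j\setminus P_i$ into a thin boundary shell $Q_\partial=\{x:\textup{dist}(x,P_i)<\vartheta\ell(Q)\}$ and a separated remainder $Q_{\textup{sep}}$; the remainder is handled by the off-diagonal kernel bound and $A_2$, while $Q_\partial$ is bounded using the \emph{a priori} norm $\mathscr N$, and, crucially, the expectation over $\mathcal D_\sigma$ (holding $\mathcal D_w$ fixed) produces $\mathbb E_{\mathcal D_\sigma}w(Q_\partial)\lesssim\vartheta\,w(Q_j)$. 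That is the one and only source of the term $\vartheta\mathscr N$ in \eqref{e:2above}, and the reason the expectation appears in the statement; it is not, as you write, attributable to ``a residual family of pairs interacting with the truncation scales or with the top cube,'' nor is it a Proposition~\ref{BadExpectation}-type smallness. Without the surgery step your argument leaves the comparable intersecting pairs unbounded, and gives no reason for $\vartheta\mathscr N$ to appear on the right.
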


The proof of this lemma includes several elementary estimates, and critically, the surgery estimate, Lemma~\ref{l:surgery}, 
which requires the expectation above.  It remains to consider the form $ B ^{\textup{above}} (f,g)$, and its dual.  
This is indeed main point.

\begin{lemma}\label{l:above} For almost every choice of $ \mathcal D_\sigma$ and $ \mathcal D _{w}$, there holds 
\begin{equation}\label{e:above<}
\lvert  B ^{\textup{above}} (f,g)\rvert \lesssim \mathscr R \lVert f\rVert_{\sigma } \lVert g\rVert_{w} . 
\end{equation}
The same estimate holds for the dual form $ B ^{\textup{below}} (f,g)$.  
\end{lemma}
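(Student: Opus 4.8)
The plan is to run the corona argument used for the Hilbert transform, with the monotonicity inequality \eqref{e:mono} and the Functional Energy estimate (Theorem~\ref{t:fe}) taking the role that monotonicity and the Poisson--Carleson embedding play there. By the reductions preceding the lemma I may assume $f$ and $g$ are supported in a fixed cube, are of $\sigma$- resp.\ $w$-mean zero, have Haar expansions below cubes $Q^0_\sigma\in\mathcal D_\sigma$ and $Q^0_w\in\mathcal D_w$, and that all cubes appearing are good. First I would build a stopping family $\mathcal F\subset\mathcal D_\sigma$ rooted at $Q^0_\sigma$, adjoining an $\mathcal F$-child $F'$ of $F$ whenever $\lvert[f]^\sigma_{F'}\rvert>4\lvert[f]^\sigma_{F}\rvert$, or whenever the accumulated energy $\sum_{K\in\mathcal V_F,\,K\subset F'}\mathsf P^{\textup{g}}_\sigma(\mathbf 1_F,K)^2\,E(w,K)^2\,w(K)$ exceeds a large fixed multiple of $\sigma(F')$. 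Standard stopping-time estimates then give that $\mathcal F$ is $\sigma$-Carleson as in \eqref{e:F-carleson}, that $\sum_{F}\alpha_F^2\,\sigma(F)\lesssim\lVert f\rVert_\sigma^2$ with $\alpha_F:=\sup_{\pi_{\mathcal F}P=F}\lvert[f]^\sigma_P\rvert$, and --- invoking Lemma~\ref{l:E}, which supplies $\mathscr E\lesssim\mathscr R$ --- that the energy accumulated inside each corona is under control.

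Next I would insert this corona into \eqref{e:above}. For a fixed good $Q\in\mathcal D_w$, the cubes $P$ contributing to \eqref{e:above} are exactly the $\sigma$-ancestors $R^Q_k$ of $Q$ with $k>4r$, and then $P_Q=R^Q_{k-1}$; writing $[\Delta^\sigma_{R^Q_k}f]^\sigma_{R^Q_{k-1}}=[f]^\sigma_{R^Q_{k-1}}-[f]^\sigma_{R^Q_k}$ and summing by parts in $k$ (the topmost term vanishing by the mean-zero normalization of $f$) splits $B^{\textup{above}}(f,g)=B_1+B_2$ with
\begin{gather*}
B_1=\sum_{Q}[f]^\sigma_{\widetilde Q}\,\langle\mathsf R_\sigma\mathbf 1_{\widetilde Q},\Delta^w_Q g\rangle_w,
\\
B_2=\sum_{Q}\sum_{k>4r}[f]^\sigma_{R^Q_k}\,\langle\mathsf R_\sigma\mathbf 1_{R^Q_k\setminus R^Q_{k-1}},\Delta^w_Q g\rangle_w,
\end{gather*}
where $\widetilde Q:=R^Q_{4r}$, of scale $2^{4r}\ell(Q)$. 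By goodness of $Q$, the cube $\widetilde Q$ and each $R^Q_{k-1}$ is separated from $Q$ by much more than $\ell(Q)$, so $\mathbb R^n\setminus\widetilde Q$ and every annulus $R^Q_k\setminus R^Q_{k-1}$ in $B_2$ meet the separation hypothesis of Lemma~\ref{l:monotone}.

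Then $B_1$ will be the paraproduct, controlled by the testing inequality \eqref{e:test}: splitting $\mathbf 1_{\widetilde Q}=\mathbf 1_F-\mathbf 1_{F\setminus\widetilde Q}$ with $F=\pi_{\mathcal F}\widetilde Q$, grouping the $Q$'s first by the value $S=\widetilde Q$ and then by corona, and using that the blocks $\sum_{\widetilde Q=S}\Delta^w_Q g$ are mutually $w$-orthogonal while $\lvert[f]^\sigma_S\rvert\le\alpha_F$ for $S$ in corona $F$, the $\mathbf 1_F$-part collapses to $\sum_{F}\langle\mathsf R_\sigma\mathbf 1_F,h_F\rangle_w$ with each $h_F$ supported in $F$ and $\lVert h_F\rVert_w\le\alpha_F\lVert g_F\rVert_w$, $g_F$ the corresponding corona projection of $g$; then \eqref{e:test} (giving $\lVert\mathsf R_\sigma\mathbf 1_F\rVert_{L^2(F;w)}\le\mathscr T\sigma(F)^{1/2}$), Cauchy--Schwarz, $\sum_F\alpha_F^2\sigma(F)\lesssim\lVert f\rVert_\sigma^2$, and $\sum_F\lVert g_F\rVert_w^2\le\lVert g\rVert_w^2$ give $\mathscr T\lVert f\rVert_\sigma\lVert g\rVert_w$. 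Everything left --- the $\mathbf 1_{F\setminus\widetilde Q}$-part of $B_1$ together with all of $B_2$ --- has the shape $c_Q\langle\mathsf R_\sigma\mathbf 1_A,\Delta^w_Q g\rangle_w$ with $\lvert c_Q\rvert\lesssim\alpha_{F(Q)}$ and $A$ far from $Q$, and I would apply Lemma~\ref{l:monotone} to each. Telescoping the annuli in scale (so the stopping coefficients collapse to honest integrals of $f$), the first term of \eqref{e:mono}, after the substitution $\lvert\langle\tfrac{x}{\ell(Q)},\Delta^w_Q g\rangle_w\rvert\le\lVert\Delta^w_Q\tfrac{x}{\ell(Q)}\rVert_w\lVert\Delta^w_Q g\rVert_w$, a Cauchy--Schwarz in $Q$, regrouping along the corona tree and the Whitney cubes $\mathcal V_F$, and passing the gradient Poisson average from scale $\ell(Q)$ up to the scale of the relevant $K$, reduces to the left side of \eqref{e:P<} for the part of $\sigma$ outside the stopping cube --- hence is $\lesssim\{\mathscr E^2+\mathscr A_2\}\lVert f\rVert_\sigma^2$ by Theorem~\ref{t:fe} --- and, for the part of $\sigma$ inside the stopping cube, is controlled by the raw energy bound \eqref{e:E} via the energy-stopping rule defining $\mathcal F$. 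The second term of \eqref{e:mono} carries the faster-decaying $\mathsf P^{\textup{g}+}_\sigma\lesssim\mathsf P^{\textup{g}}_\sigma$ and the full energy factor $E(w,Q)w(Q)^{1/2}$; Cauchy--Schwarz, \eqref{e:E}, and --- where the relevant $\sigma$-mass sits near $Q$ --- \eqref{e:A2} dominate it by $\mathscr R^2\lVert f\rVert_\sigma^2$ as well. Using $\mathscr E\lesssim\mathscr R$ throughout yields $\lvert B^{\textup{above}}(f,g)\rvert\lesssim\mathscr R\lVert f\rVert_\sigma\lVert g\rVert_w$, and $B^{\textup{below}}$ follows by the symmetric argument with $w$ and $\sigma$ interchanged.

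The hard part will be that last step. One must partition the $\sigma$-mass produced by the telescoped annuli into a \emph{global} piece lying outside the current stopping cube (to which Theorem~\ref{t:fe} applies), a \emph{local} piece lying inside it (handled by \eqref{e:E} and the energy-stopping rule), and a \emph{nearby} piece (absorbed by \eqref{e:A2}); one must pass the gradient Poisson averages faithfully between the scale of $Q$ and the scale of the Whitney cube $K\in\mathcal V_F$, losing no powers; and one must carry out all of this while keeping \emph{both} incomparable Poisson operators of \eqref{e:mono} in play simultaneously, since in the relevant range of $d$ neither of the two terms dominates the other.
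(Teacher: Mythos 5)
Your framework — stopping data with big-average and energy rules, a paraproduct term controlled by testing, and an exchange of the Riesz argument into inside/outside pieces controlled by functional energy and the energy inequality — matches the paper's first-stage reduction (Lemma~\ref{l:global}). But the proposal has a genuine and serious gap exactly where you locate the ``hard part,'' and the diagnosis you give of that hard part is wrong.

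You claim that the piece of $\sigma$-mass lying \emph{inside} the current stopping cube $F$ ``is controlled by the raw energy bound \eqref{e:E} via the energy-stopping rule defining $\mathcal F$.'' That cannot work, and is precisely the point where the two-weight argument becomes delicate. After monotonicity, the inside-$F$ part of $B_2$ produces, for a Whitney cube $K\in\mathcal W_F$ with $Q\subset K$, Poisson averages of the form $\mathsf P^{\textup g}_\sigma(\mathbf 1_{F\setminus\widetilde Q},Q)$ with $\widetilde Q$ of scale $2^{4r}\ell(Q)\ll\ell(K)$. Splitting the support at $C_0K$ gives two pieces. The piece supported in $F\setminus C_0K$ does rescale faithfully to $\mathsf P^{\textup g}_\sigma(\mathbf 1_{F\setminus C_0K},K)\cdot\lVert\Delta^w_Q\frac{x}{\ell(K)}\rVert_w$ and is then handled by \eqref{e:E} and the energy-stopping rule, just as you say. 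But the piece of $\sigma$-mass in $C_0K\setminus\widetilde Q$ --- at distances from $Q$ between $2^{4r}\ell(Q)$ and $\ell(K)$ --- is \emph{not} controlled by the energy stopping condition (that condition is about the Poisson average with a hole, centered at $K$) nor by the functional energy (that is about mass outside $F$) nor, in the range $\textstyle\frac{\ell(Q)}{\ell(K)}$ small, by any single use of the $A_2$ condition. This intermediate-scale mass is exactly what the paper's Stopping Form (\S\ref{s:loc}) is built to handle: one needs the dichotomy at the exceptional scale $Q^e$ of \eqref{e:Qe} to decide which of the two incomparable Poisson terms of \eqref{e:mono} is active, the size functional \eqref{e:size} with its Whitney-tree formulation, the big/small decomposition of Lemma~\ref{l:size}, and the orthogonality Lemma~\ref{l:ortho} to sum the recursion. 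None of these appears in your plan. Asserting that the inside-$F$ part falls to the raw energy bound collapses the entire \S\ref{s:loc} to nothing, and if that worked the original two-weight Hilbert transform theorem would have a one-page proof.

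Two further points. First, your energy-stopping rule uses $\mathsf P^{\textup g}_\sigma(\mathbf 1_F,K)$ with no hole in the argument, whereas both the energy hypothesis \eqref{e:E} and the paper's stopping rule carry the hole $F\setminus C_0K$; without the hole the Carleson property of $\mathcal F$ does not follow from the energy inequality, since close-by point mass can make $\mathsf P^{\textup g}_\sigma(\mathbf 1_F,K)$ arbitrarily large. Second, the Abel summation of \eqref{e:above} into $B_1 + B_2$ is a legitimate cosmetic rearrangement of the paper's exchange argument, but it does not change which estimates are needed: after telescoping, the term you call the local piece of $B_2$ plus the $\mathbf 1_{F\setminus\widetilde Q}$-part of $B_1$ \emph{is} the paper's Stopping Form, and must be estimated as such. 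In summary, your proposal reproduces the global-to-local reduction correctly but omits the Local Estimate (Lemma~\ref{l:local}) and the Stopping Form recursion, which together constitute the hardest and longest part of the argument.
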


In the proof, only the existence of the cube $ Q ^{0}_{\sigma}  $ is required of $ \mathcal D_\sigma$.  
Hence, it suffices to assume that $ f$ and $ g$ are good, that is 
 $  P ^{\sigma } _{\textup{good}} f=f$, 
and moreover that there is an integer 
$ 0\le i_f < 4r$, for which 
\begin{equation} \label{e:Df} 
\Delta ^{\sigma } _{Q} f \not\equiv0 \quad \textup{implies} \quad 
 i_f = \log_2\ell (Q) \mod 4r .  
\end{equation}
Impose the same assumptions on $ g$, with an integer $ 0\le i_g < 4r$.  
By passing to a larger cube, we can  assume that $ \log_2 \ell  (Q ^{0} _{f}) = i_f -1 \mod r$.  
Then, let 
\begin{align}\label{e:4r}
\begin{split}
\mathcal D _{f} ^{4r} & \equiv 
\{ Q \::\:  \log_2 \ell (Q) = i_f \mod 4r\}, 
\\
\mathcal D _{f} ^{r} & \equiv 
\{ Q \::\:  \log_2 \ell (Q) = i_f -1 \mod r\}.  
\end{split}
\end{align}
In particular, the grid $ \mathcal D _{f} ^{r}  $ contains all the children of the cubes in $ \mathcal D _{f} ^{4r}$. 
Let $ \mathcal D _{g} ^{s}$, for $ s=r,4r$, have the corresponding definition.

\subsection{Stopping Data}\label{s:stopping}
Our next task is to make the \emph{global to local reduction}, which is phrased in terms of this important 
stopping time construction.  
We construct $ \mathcal F \subset \mathcal D_f ^{r}$  
in a recursive fashion.  Initialize $ \mathcal F = \{ Q ^{0} _{\sigma}\}$.
Then, in the recursive step, if $ F\in \mathcal F$ is minimal, we add to $ \mathcal F$ the maximal   
dyadic subcubes $ Q\subset F$, with $ Q\in \mathcal D  ^{r}_{f}$ such that either 
\begin{enumerate}
\item   (A big average) $ \left[\,\lvert f\rvert\,\right] _{Q} ^{\sigma } \ge  4 [\, \lvert  f\rvert\,] _{F} ^{\sigma } $,
\item  (Energy is big) $ \sum_{K\in \mathcal W_Q} \mathsf P_{\sigma}^{\textup{g}} (F \setminus C_0 K , K) ^2 E (K, w) ^2 w (K) 
\ge 10  \mathscr R ^2 \sigma (Q)
$. 
\end{enumerate}
In the second condition,  recall that $ \mathscr E \le \mathscr R$.  And, $ \mathcal W _{Q} \subset \mathcal D ^{r} _{f }$ 
are the maximal cubes $ Q'\subset Q$ such that $ \textup{dist}(Q',\partial Q) \ge \ell (Q') ^{\epsilon } \ell (Q) ^{1- \epsilon }$, which have the bounded overlaps property of Proposition~\ref{p:w}.  Namely, the energy inequality \eqref{e:E} will hold.

It is elementary to see that the collection $ \mathcal F$ is $ \sigma $-Carleson: 
\begin{equation*}
\sum_{\substack{F' \in \mathcal F\\ \textup{$ F'$ a child of $ F$}  }} \sigma (F) \le \tfrac 12 \sigma (F), \qquad F\in \mathcal F. 
\end{equation*}
 Define projections 
\begin{align}\label{e:PFs}
\begin{split} 
P ^{\sigma } _{F} f & \equiv 
\sum_{P \;:\; \pi _{\mathcal F} P= F} \Delta ^{\sigma } _{P} f , 
\\
P ^{w } _{F} g & \equiv 
\sum_{Q \;:\; \dot\pi _{\mathcal F} Q= F} \Delta ^{\sigma } _{Q} f . 
\end{split}
\end{align}
In the second line, by $  \dot\pi _{\mathcal F} Q= F$, we mean that $ F \in \mathcal F$ is the minimal element such that 
$ Q \Subset _{r} F$.  

The important quasi-orthogonality bound is 
\begin{equation}\label{e:quasi}
\sum_{F\in \mathcal F} 
\bigl\{  [\, \lvert  f\rvert\,] _{F} ^{\sigma } 
\sigma (F) ^{1/2}  + \lVert P ^{\sigma } _{F} f \rVert _{\sigma } \bigr\}
\lVert  P ^{w } _{F} g\rVert_w 
\lesssim \lVert f\rVert_\sigma \lVert g\rVert_w, 
\end{equation}
as follows from orthogonality proprieties of the projections and the construction of the stopping cubes $ \mathcal F$.  
 
The stopping cubes are used to make a decomposition of the `above' form in \eqref{e:above} into a `global' and a `local' part. 
\begin{align}\label{e:Aglobal}
B ^{\textup{above}} _{\textup{global}} (f,g) \equiv &
\sum_{\substack{P\in \mathcal D_f  }}
\sum_{\substack{Q\in \mathcal D_{g}\\ Q\Subset _{4r} P_Q ,\  \pi _{\mathcal F} Q \subset  P_Q}} 
[ \Delta ^{\sigma } _{P} f ]  ^{\sigma } _{P_Q} 
\langle  \mathsf R _{\sigma }  P_Q , \Delta ^{w} _{Q} g  \rangle_w, 
\\
B ^{\textup{above}} _{\textup{local}} (f,g) \equiv  & 
\sum_{F\in \mathcal F} B ^{\textup{above}} _{\textup{local}, F} (f,g), 
\\  \label{e:Alocal} \textup{where} \qquad 
 B ^{\textup{above}} _{\textup{local}, F} (f,g) \equiv & 
\sum_{\substack{P\in \mathcal D_f  }}
\sum_{\substack{Q\in \mathcal D_{g}\\ Q\Subset _{4r} P_Q   \subsetneqq   \pi _{\mathcal F} Q = F }} 
[ \Delta ^{\sigma } _{P} f ]  ^{\sigma } _{P_Q} 
\langle  \mathsf R _{\sigma }  P_Q , \Delta ^{w} _{Q} g  \rangle_w.  
\end{align}
The global part is when the stopping parent of $ Q$ is itself contained in $ P_Q$,  while for the local part,  
we have $ P_Q$ contained in the stopping parent of $ Q$.  

\begin{lemma}[Global to Local Reduction] 
\label{l:global}
With the notation above,  
\begin{equation}\label{e:global<}
\bigl\lvert 
  B ^{\textup{above}} _{\textup{global}} (f,g) 
\bigr\rvert \lesssim \mathscr R \lVert f\rVert_\sigma  \lVert g\rVert_w.  
\end{equation}
\end{lemma}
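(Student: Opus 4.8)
The plan is to expand $B^{\textup{above}}(f,g)$ according to the grid $\mathcal D_f^r$ and split the double sum over pairs $P\Subset_{4r}P_Q$ according to the location of $P_Q$ relative to the stopping cube $\pi_{\mathcal F}Q$. The ``diagonal'' part, where $P_Q$ and $Q$ have the same stopping cube in the sense that $\pi_{\mathcal F}P_Q=\dot\pi_{\mathcal F}Q=F$, reassembles exactly into $\sum_{F\in\mathcal F}B^{\textup{above}}(P^\sigma_F f,P^w_F g)$, so that the left side of \eqref{e:global<} is the sum of the remaining off-stopping terms. There are two types of remaining terms: (i) those where $Q$ lies strictly below a stopping child of the cube containing $P_Q$ (so the average $[\,\lvert f\rvert\,]^\sigma_{P_Q}$ that multiplies $\langle\mathsf R_\sigma P_Q,\Delta^w_Q g\rangle_w$ is replaced by a ``frozen'' stopping value), and (ii) the bookkeeping terms coming from the difference between $\Delta^\sigma_P f$ summed over $\pi_{\mathcal F}P=F$ versus the raw Haar projection. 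The strategy is to bound each type by $\mathscr R\lVert f\rVert_\sigma\lVert g\rVert_w$ using quasi-orthogonality \eqref{e:quasi}, the monotonicity inequality \eqref{e:mono}, and the functional energy inequality, Theorem~\ref{t:fe}.

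First I would handle the main off-diagonal block. Fix $F\in\mathcal F$ and consider the pairs with $\dot\pi_{\mathcal F}Q=F$ but $P_Q\not\subset F$, equivalently $P_Q\supset F$ properly, together with $Q\Subset_{4r}P_Q$. Here we use that on $Q$ the function $\mathsf R_\sigma(P_Q)$ can be compared, via telescoping of the martingale differences of $f$ along the chain from $P_Q$ down to $F$, to $\mathsf R_\sigma$ applied to $\mathbf 1$ times stopping averages, and the contribution of the ``non-frozen'' part is controlled by applying Lemma~\ref{l:monotone} with the cube $P=P_Q$ and the mean-zero function $g$ restricted appropriately. This produces two terms: one with $\mathsf P^{\textup g}_\sigma(\lvert f\rvert,Q)\,\lvert\langle x/\ell(Q),\Delta^w_Q g\rangle_w\rvert$ and one with $\mathsf P^{\textup{g}+}_\sigma(\lvert f\rvert,Q)E(w,Q)w(Q)^{1/2}\lVert\Delta^w_Q g\rVert_w$. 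Summing the first over $Q$ with $\dot\pi_{\mathcal F}Q=F$ and then over $F$ is precisely the left side of the functional energy inequality \eqref{e:P<} (after organizing the $Q$'s into the Whitney cubes $\mathcal V_F$ and using that the argument of the Poisson operator is $\mathbf 1_{\mathbb R^n\setminus F}f$ up to errors absorbed by the energy stopping rule (2)); the second is handled the same way since $\mathsf P^{\textup{g}+}$ has faster decay than $\mathsf P^{\textup g}$ and hence is dominated by it, so it too is controlled by $\{\mathscr E^2+\mathscr A_2\}^{1/2}\lVert f\rVert_\sigma\lVert g\rVert_w$. The big-average stopping rule (1) is what makes the replacement of $[\,\lvert f\rvert\,]^\sigma_{P_Q}$ by $[\,\lvert f\rvert\,]^\sigma_F$ permissible with controlled error, and rule (2) guarantees the energy inequality \eqref{e:E} holds on the relevant collections with constant $\mathscr R$.

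Next I would dispose of the bookkeeping terms. The difference between $\sum_{\pi_{\mathcal F}P=F}\Delta^\sigma_P f$ and the stopping-frozen average involves only boundary cubes $P$ with $\pi_{\mathcal F}P=F$ and $P_Q$ a stopping cube, for which $[\Delta^\sigma_P f]^\sigma_{P_Q}$ is either $0$ or can be bounded by $[\,\lvert f\rvert\,]^\sigma_F$ plus an orthogonal Haar piece; pairing against $\langle\mathsf R_\sigma P_Q,\Delta^w_Q g\rangle_w$ and invoking the testing inequality \eqref{e:test} (when $P_Q\subset F$) or monotonicity (when $P_Q\supset F$) reduces everything to the quasi-orthogonality bound \eqref{e:quasi}. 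Summing all contributions and applying \eqref{e:quasi} together with Theorem~\ref{t:fe} (which under the uniformly full dimension hypothesis gives $\mathscr E\lesssim\mathscr A_2^{1/2}+\mathscr T$) yields the bound $\mathscr R\lVert f\rVert_\sigma\lVert g\rVert_w$.

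The main obstacle, I expect, is the careful bookkeeping in the first step: correctly identifying which pairs $(P,Q)$ land in the diagonal block that reassembles into $\sum_F B^{\textup{above}}(P^\sigma_F f,P^w_F g)$ versus the genuinely off-stopping pairs, and then verifying that the off-stopping pairs, after telescoping the $f$-martingale along the stopping tree, present the Poisson operator with argument supported \emph{outside} the relevant cube $F$ (so that \eqref{e:P<} applies) rather than inside it. The $\mathsf P^{\textup{g}+}$ term from \eqref{e:mono} is a new feature compared to the Hilbert transform proof, but since it decays faster it never genuinely competes; the real work is organizing the energy stopping rule (2) so that the error from replacing $\mathsf P^{\textup g}_\sigma(F\setminus C_0K,K)$ by $\mathsf P^{\textup g}_\sigma(Q_0\setminus C_0K,K)$, and the error from frozen averages, are both dominated termwise by $10\mathscr R^2\sigma(Q)$ and hence telescope to $\mathscr R^2\lVert f\rVert_\sigma^2$.
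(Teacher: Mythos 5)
Your overall strategy matches the paper's: decompose $B^{\textup{above}}(f,g)$ over pairs of stopping cubes, observe that $Q\Subset_{4r}P_Q$ forces containment between $\pi_{\mathcal F}P$ and $\dot\pi_{\mathcal F}Q$ so only the nested case $F\subset F'$ contributes, peel off the diagonal $F=F'$, and treat the off-diagonal $F\subsetneq F'$ by the exchange argument: telescope the $f$-martingale differences along the stopping tree, split the Riesz argument as $F+(P_F\setminus F)$, use testing and quasi-orthogonality for the $F$ part, and apply the monotonicity inequality to the remainder.

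However, there is a genuine gap in your treatment of the $\mathsf P^{\textup{g}+}$ term. You assert that because $\mathsf P^{\textup{g}+}$ has faster decay than $\mathsf P^{\textup g}$, the second term from \eqref{e:mono} ``is dominated by'' the first and hence ``is handled the same way'' by the functional energy inequality. This is incorrect, and the paper explicitly warns against exactly this: the introduction states the two monotonicity terms ``are, by example, incomparable.'' The point is not the Poisson kernels alone but what multiplies them. The $\mathsf P^{\textup g}$ term comes paired with the \emph{restricted} energy $\bigl\lvert\bigl\langle x/\ell(K),\sum_{\dot\pi_{\mathcal F}Q=F,\,Q\subset K}\Delta^w_Q g\bigr\rangle_w\bigr\rvert$, while the $\mathsf P^{\textup{g}+}$ term comes with the \emph{full} energy $E(w,K)w(K)^{1/2}\lVert\cdot\rVert_w$, which can be much larger. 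The functional energy inequality, Theorem~\ref{t:fe}, controls only the restricted energy. Dominating $\mathsf P^{\textup{g}+}\lesssim\mathsf P^{\textup g}$ does not let you trade full energy for restricted energy, so Theorem~\ref{t:fe} cannot be cited directly for the second term as you propose.

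The paper handles this term with a separate argument to establish \eqref{e:FF}: fix $F$ and a generation index $t$, and for cubes $K\in\mathcal W_{F'}$ with $\pi^t_{\mathcal F}F'=F$, use the faster decay of $\mathsf P^{\textup{g}+}$ together with the separation of $K$ from $F\setminus\pi^{t-1}_{\mathcal F}F'$ to gain a geometric factor $2^{-t}$ when converting $\mathsf P^{\textup{g}+}$ into $\mathsf P^{\textup g}$ (this is \eqref{e:22}); then apply the \emph{local} energy inequality \eqref{e:E}, not functional energy, to bound the inner sum by $\mathscr R^2 \sigma(F)$ at each generation; finally sum in $t$ and use the Carleson property of $\mathcal F$. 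Your proposal omits all of this, and the claim that the $\mathsf P^{\textup{g}+}$ term ``never genuinely competes'' because of faster decay is the precise point the paper is careful to refute.

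A smaller point: the ``bookkeeping terms'' you worry about, coming from ``the difference between $\Delta^\sigma_P f$ summed over $\pi_{\mathcal F}P=F$ versus the raw Haar projection,'' are not actually present. The decomposition $f=\sum_F P^\sigma_F f$ is exact, and the constraint $Q\Subset_{4r}P_Q$ together with goodness directly forces $\dot\pi_{\mathcal F}Q\subset\pi_{\mathcal F}P$; there is no residual boundary correction of the kind you describe.
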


We prove this just below, using the functional energy inequality of Theorem~\ref{t:fe}. 
Observe that this Lemma shows that the control of the form 
$ B ^{\textup{above}} (f,g)$ is then reduced to a class of \emph{local estimates}.

\begin{lemma}[Local Estimate] 
\label{l:local}
Uniformly in $ F\in \mathcal F$, we have
\begin{equation}\label{e:local<}
\lvert   B ^{\textup{above}} _{\textup{local}, F} (f,g)\rvert 
\lesssim \mathscr R 
\bigl\{ [\,\lvert f\rvert\,]_F^{\sigma} \sigma (F) ^{1/2}  + \lVert P ^{\sigma } _{F} f \rVert _{\sigma } \bigr\}
\lVert  P ^{w } _{F} g\rVert_w. 
\end{equation}
\end{lemma}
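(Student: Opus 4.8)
The plan is to prove the Local Estimate \eqref{e:local<} by decomposing the bilinear form $B^{\textup{above}}(P^{\sigma}_F f, P^{w}_F g)$ according to the relative position of the cube $Q\in\mathcal D_g$ (carrying a martingale difference of $P^w_F g$) and the stopping-free structure inside $F$. Write $f_F \equiv P^{\sigma}_F f$ and $g_F \equiv P^{w}_F g$, and note that by construction every cube $P$ contributing to $f_F$ has $\pi_{\mathcal F}P = F$, so that no stopping cube is strictly between $P$ and $F$; in particular the average $[\,\lvert f\rvert\,]^{\sigma}_{P}$ is comparable to $[\,\lvert f\rvert\,]^{\sigma}_F$ by stopping rule (1), and the energy condition (2) fails on all such $P$, which is exactly what will let us invoke the energy constant $\mathscr E\le\mathscr R$ with impunity. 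The first step is therefore to record these two consequences of the stopping construction, plus the quasi-orthogonality bound \eqref{e:quasi} restricted to the single datum $F$.

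The core of the argument is to expand $\langle \mathsf R_\sigma P_Q, \Delta^w_Q g\rangle_w$ using the monotonicity inequality, Lemma~\ref{l:monotone}. Here is where the bilinear form naturally splits. For a fixed $Q\in\mathcal D_g$ with $Q\Subset_{4r}P_Q$ and $\dot\pi_{\mathcal F}Q = F$, apply Lemma~\ref{l:monotone} with the cube called ``$P$'' there taken to be $P_Q$ (after checking $10Q\subset P_Q$, which follows from goodness and $Q\Subset_{4r}P_Q$) and $f$ there taken to be the indicator $\mathbf 1_{P_Q}$ (which is not supported on a neighborhood of $Q$ up to the good separation). This bounds the inner product by two terms: a ``first-gradient'' term
\begin{equation*}
\mathsf P^{\textup{g}}_\sigma(\mathbf 1_{P_Q},Q)\,\bigl\lvert \langle x/\ell(Q), \Delta^w_Q g\rangle_w\bigr\rvert,
\end{equation*}
and a ``second-gradient'' term
\begin{equation*}
\mathsf P^{\textup{g}+}_\sigma(\mathbf 1_{P_Q},Q)\,E(w,Q)\,w(Q)^{1/2}\,\lVert \Delta^w_Q g\rVert_w.
\end{equation*}
Each gets multiplied by $[\Delta^\sigma_P f]^{\sigma}_{P_Q}$ and summed over the admissible pairs $(P,Q)$. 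The plan is: (i) for the first-gradient term, group the sum over $Q$ according to the maximal good cubes $K\in\mathcal V_F$ containing $Q$, recognize $\sum_{Q\subset K}\langle x/\ell(K),\Delta^w_Q g\rangle_w$ type sums as the projected energy quantity $\tilde\eta_K$, replace $\mathbf 1_{P_Q}$ by $\mathbf 1_F$ (legitimate since $P_Q\subset F$ and Poisson averages are monotone), bound $[\Delta^\sigma_P f]^{\sigma}_{P_Q}$ by $\lVert\Delta^\sigma_P f\rVert_\sigma/\sqrt{\sigma(P_Q)}$ plus the stopping-controlled $[\,\lvert f\rvert\,]^{\sigma}_F$, and finally Cauchy–Schwarz against the Functional Energy Theorem~\ref{t:fe}; (ii) for the second-gradient term, do the analogous grouping, but now the full energy $E(w,K)^2w(K)$ appears, so we Cauchy–Schwarz in $K$ and invoke the \emph{local} energy inequality \eqref{e:E} restricted to $F$ with the Whitney partition $\mathcal W_{\cdot}$, crucially using that the energy stopping rule (2) has \emph{not} been triggered below $F$, so $\sum_K \mathsf P^{\textup{g}}(\ldots)^2 E(w,K)^2 w(K)\lesssim\mathscr R^2\sigma(F)$. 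Both pieces then collapse to the right-hand side of \eqref{e:local<}.

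Two bookkeeping points need care and will occupy most of the routine labor. First, the Poisson term produced by monotonicity uses the argument cube $Q$, not the Whitney cube $K\supset Q$; passing from $\mathsf P^{\textup{g}}_\sigma(\mathbf 1_F,Q)$ to $\mathsf P^{\textup{g}}_\sigma(\mathbf 1_{F\setminus C_0 K},K)$ requires the good separation of $Q$ from $\partial K$ and a comparison of the Poisson kernels at scales $\ell(Q)$ and $\ell(K)$ — this is where the $A_2$ condition \eqref{e:A2} enters to absorb the ``nearby'' part $F\cap C_0 K$, exactly as flagged in the introduction's remark about Lemma~\ref{l:nearby}. Second, one must check that the replacement of $P_Q$ by its parent structure does not lose the factor $[\Delta^\sigma_P f]^{\sigma}_{P_Q}$: here the telescoping $\sum_{P:\pi_{\mathcal F}P=F,\,P\supsetneq K}[\Delta^\sigma_P f]^{\sigma}_{P_Q}$ over the chain of ancestors of $K$ inside $F$ is controlled, by the stopping rule (1) and a standard estimate, by $[\,\lvert f\rvert\,]^{\sigma}_F + \bigl(\sum_P \lVert\Delta^\sigma_P f\rVert_\sigma^2/\sigma(\pi_{\mathcal F}\text{-chain})\bigr)^{1/2}$, which is precisely the shape of the first factor in \eqref{e:local<}.

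The main obstacle I anticipate is the second-gradient term: it carries the extra Poisson degree $\mathsf P^{\textup{g}+}$ and the \emph{full} energy $E(w,Q)$, so it cannot be handled by the clean projected-energy/functional-energy mechanism that dispatches the first-gradient term. The resolution must route through the \emph{local} energy inequality \eqref{e:E} applied with $Q_0 = F$ and the Whitney partition, and this is only available because we are below the stopping cube $F$ where the energy rule (2) did not fire — so the whole argument is genuinely tied to the stopping construction of \S\ref{s:stopping}, and one has to verify that the partition $\bigcup_{Q}\mathcal W_{\cdot}$ assembled from the relevant cubes satisfies the bounded-overlap hypothesis \eqref{e:bounded} with the constant $C_1$ from Proposition~\ref{p:w}. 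Handling the incomparability of the two monotonicity terms (noted in the introduction) cleanly, without double-counting, is the delicate part; everything else is Cauchy–Schwarz plus the already-established Theorems~\ref{t:fe} and the energy bound $\mathscr E\lesssim\mathscr A_2^{1/2}+\mathscr T$.
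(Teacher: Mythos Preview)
Your proposal has a fatal gap at its very first substantive step. You want to apply the monotonicity inequality, Lemma~\ref{l:monotone}, to $\langle \mathsf R_\sigma P_Q,\Delta^w_Q g\rangle_w$ with ``$f$ there taken to be the indicator $\mathbf 1_{P_Q}$.'' But the hypothesis of Lemma~\ref{l:monotone} is that the function is \emph{not} supported on the cube containing $10Q$; here $\mathbf 1_{P_Q}$ is supported precisely on $P_Q\supset Q$, so the lemma does not apply. Your parenthetical ``which is not supported on a neighborhood of $Q$ up to the good separation'' is simply false: $Q\subset P_Q$, so $\mathbf 1_{P_Q}\equiv 1$ on all of $Q$. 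Monotonicity is an off-diagonal device and cannot touch the on-diagonal part of $\mathsf R_\sigma P_Q$. Relatedly, your appeal to Functional Energy, Theorem~\ref{t:fe}, is misplaced: that theorem concerns Poisson averages of $f\cdot(\mathbb R^n\setminus F)$, i.e.\ mass \emph{outside} $F$, and is used in the global-to-local reduction, not here where everything lives inside $F$.

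The paper's proof fixes exactly this by writing $P_Q=F-(F\setminus P_Q)$. The $F$ piece is handled \emph{not} by monotonicity but by telescoping the coefficients $\sum_{P}[\Delta^\sigma_P f]^\sigma_{P_Q}=\varepsilon_Q[\,|f|\,]^\sigma_F$ with $|\varepsilon_Q|\lesssim 1$ and then applying the \emph{testing} inequality to $\langle \mathsf R_\sigma F,\cdot\rangle_w$. What remains is the \emph{stopping form} with argument $F\setminus P_Q$, which is now genuinely off-diagonal so monotonicity applies---but this term is the hardest part of the whole paper. It is \emph{not} dispatched by a single Cauchy--Schwarz against the energy inequality as you outline. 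The paper introduces the auxiliary cube $Q^e$ (see \eqref{e:Qe}) to split the stopping form into ``below $Q^e$'' (where only the $\mathsf P^{\textup{g}+}$ term survives and the energy-stopping rule gives geometric decay in scale) and ``above $Q^e$'' (where only the $\mathsf P^{\textup{g}}$ term survives); the latter then requires the full recursive machinery of admissible collections, the notion of \emph{size}, the orthogonality lemma, and the Size Lemma~\ref{l:size} to close. Your plan contains none of this structure, and the ``main obstacle'' you anticipate---controlling the second-gradient term by one application of \eqref{e:E}---is not the real obstacle; the first-gradient term in the stopping form is where the deep recursion lives.
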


In view of the quasi-orthogonality bound \eqref{e:quasi}, this clearly completes the control of the 
form $ B ^{\textup{above}} (f,g)$.  The delicate proof is taken up in  \S\ref{s:loc}.

\begin{proof}[Proof of Lemma~\ref{l:global}] 
The global part is defined in \eqref{e:Aglobal}, and the sum is further reorganized over the stopping cubes, 
thus 
\begin{equation}  \label{e:reorg}
\sum_{F\in \mathcal F} 
\sum_{\substack{P\in \mathcal D_f  \;:\; F\subsetneqq P }}
\sum_{\substack{Q\in \mathcal D_{g}\\ Q\Subset _{4r} P_Q ,\  \pi _{\mathcal F} Q  =F }} 
[ \Delta ^{\sigma } _{P} f ]  ^{\sigma } _{P_Q} 
\langle  \mathsf R _{\sigma }  P_Q , \Delta ^{w} _{Q} g  \rangle_w. 
\end{equation}
We first argue in the case that the conditions on $ Q$ above are replaced by the stronger condition 
\begin{equation}\label{e:stronger}
 Q\Subset _{4r} P_Q ,\  \dot \pi _{\mathcal F} Q = F. 
\end{equation}
See \eqref{e:PFs} for the definition of $\dot  \pi _{\mathcal F} Q$.

We invoke the \emph{exchange argument}, which entails the following steps.  (a) Use the stopping data to sum the martingale differences on $ f$. 
(b) Restrict the argument of the Riesz transforms to stopping cubes, invoke the testing inequality on these cubes, and 
quasi-orthogonality. (c) The remaining term has the argument of the Riesz transform and the martingale difference on $ g$  
will be in off-diagonal position, letting the monotonicity principles come into play. (d) The remaining parts of the argument 
use either the energy inequality, or its functional variants (Theorem~\ref{t:fe}), to control these terms.  

For each $ F\in \mathcal F$,  
\begin{equation} \label{e:sumF}
 \biggl\lvert \sum_{P \::\:  P\supsetneqq F} [ \Delta ^{\sigma } _{P} f] _{P_F} ^{\sigma } \biggr\rvert \lesssim 
 [\, \lvert  f\rvert\,] _{F} ^{\sigma } .
\end{equation}
This is relevant to (a) above. 
Concerning (b), for $ P\supsetneq F$, we write the argument of the Riesz transform as $ P_F = F + (P_F \setminus F)$. With the argument of the 
Riesz transform being $ F$, we have 
\begin{align*}
\biggl\lvert 
\sum_{F\in \mathcal F} \sum_{P \::\:  P\supsetneq F}
[ \Delta ^{\sigma } _{P} f] _{P_F} ^{\sigma }  
\langle   \mathsf R _{\sigma } F,    P ^{w} _{F} g  \rangle_w 
\biggr\rvert
& \lesssim 
\sum_{F\in \mathcal F}  [\,\lvert f\rvert\,]_F^{\sigma} \lvert \langle  \mathsf R _{\sigma } F,   P ^{w} _{F} g \rangle _{w}\rvert 
\\
& \lesssim \mathscr R 
\sum_{F\in \mathcal F}  [\,\lvert f\rvert\,]_F^{\sigma}  \sigma (F)^{1/2}   \lVert   P ^{w} _{F} g \rVert_{w} 
\lesssim \mathscr R \lVert f\rVert_{\sigma } \lVert g\rVert_{w} .
\end{align*}
The notation \eqref{e:PFs} is used to abbreviate the sum over $ Q$.  
We first use \eqref{e:sumF}, then the testing inequality, followed by the quasi-orthogonality bound \eqref{e:quasi}.  

It remains to bound the sum below, in which we are at point  (c) of the exchange argument,  
\begin{equation}
\sum_{F\in \mathcal F} \sum_{P \::\:  P\supsetneq F} [ \Delta ^{\sigma } _{P} f] _{P_F} ^{\sigma }  
\langle   \mathsf R _{\sigma } (P_F \setminus F),   P ^{w} _{F} g  \rangle_w . 
\end{equation}
Again, we appeal to the stopping data, estimating the argument of the Riesz transform by 
\begin{equation*}
\biggl\lvert 
\sum_{P \::\:  P\supsetneq F} [ \Delta ^{\sigma } _{P} f] _{P_F} ^{\sigma }  (P_F \setminus F) 
\biggr\rvert \lesssim 
\Phi \equiv  \sum_{F\in \mathcal F} [\,\lvert f\rvert\,]_F^{\sigma} \cdot F .
\end{equation*}
By the $ \sigma $-Carleson property of $ \mathcal F$, we have that $\left\Vert \Phi \right\Vert_{\sigma}\lesssim \left\Vert f\right\Vert_{\sigma}$. Then, monotonicity  \eqref{e:mono} applies to show that 
\begin{align}
\left\lvert 
\sum_{P \::\:  P\supsetneq F} [ \Delta ^{\sigma } _{P}f ] _{P_F} ^{\sigma }  
\langle \mathsf R _{\sigma } (P_F \setminus F), P ^{w} _{F} g  \rangle_w 
\right\rvert
& \lesssim 
\sum_{K\in \mathcal W_F}  
\mathsf P_{\sigma}^{\textup{g}} ( \Phi  \cdot F^c  , K) 
\sum_{\substack{Q \;:\; \dot\pi _{\mathcal F} Q = F, Q\subset K} } 
\Bigl\lVert \Delta ^{w} _{Q}  \frac x {\ell (K)}  \Bigr\rVert _{w} 
\lVert \Delta ^{w} _{Q} g\rVert_w 
\\  \label{e:PPgg}
&+  
\mathsf P_{\sigma}^{\textup{g}+} ( \Phi  \cdot F ^{c}, K) E (w, K) w (K) ^{1/2} 
\Biggl[ \sum_{\substack{Q \;:\; \dot\pi _{\mathcal F} Q = F, Q\subset K} } 
\lVert \Delta ^{w} _{Q} g\rVert_w ^2 \Biggr] ^{1/2} 
\\ & \equiv A_F + B_F. 
\end{align}

We are at point (d) of the exchange argument. 
Let us consider the first term on the right above.  Summing over  $ F\in \mathcal F$, 
and applying Cauchy-Schwarz, 
\begin{align*}
\sum_{F\in \mathcal F} A_F 
&\le
\Biggl[ 
\sum_{F\in \mathcal F} \sum_{K\in \mathcal W_F} 
\mathsf P_{\sigma}^{\textup{g}} ( \Phi  \cdot F^c , K)^2   
\sum_{\substack{Q \;:\; \dot\pi _{\mathcal F} Q = F, Q\subset K} } 
\Bigl\lVert \Delta ^{w} _{Q}  \frac x {\ell (K)}  \Bigr\rVert _{w} ^2 
\times 
\sum_{F\in \mathcal F} \lVert P ^{w} _{F} g\rVert_ w ^2 
\Biggr] ^{1/2} 
\\& \lesssim 
 \mathscr R \lVert f\rVert_\sigma \lVert g\rVert_w. 
\end{align*}
Here,   appeal to the functional  energy estimate of Theorem~\ref{t:fe}.  But, by inspection, control of the term $ \sum_{F\in \mathcal F} B_F $ follows in a similar way from \eqref{e:P+<}.

\bigskip 
The proof this point has controlled the sum in \eqref{e:reorg}, with the   stronger condition on $ Q$ \eqref{e:stronger} imposed.  Assuming \eqref{e:stronger} fails, we have $ Q\Subset _{4r}P_Q$,  $Q\subset F $ but $ \dot \pi _{\mathcal F} Q \neq F$. This means that $ F^{(s)}=P$ for some $ 3r \leq s \leq 4r$.  Holding $ s$ fixed,  the sum we need to control is 
\begin{align*}
\sum_{F\in \mathcal F}
\bigl\lvert  [ \Delta ^{\sigma } _{F ^{ (s)}}] ^{\sigma }_{F ^{s-1}} 
\langle \mathsf R _{\sigma } (F ^{s-1}),  \tilde P ^{w} _{F} g  \rangle_w\bigr\rvert 
& \lesssim \mathscr R
\sum_{F\in \mathcal F} 
\bigl\lvert  [ \Delta ^{\sigma } _{F ^{ (s)}}]  ^{\sigma }_{F ^{s-1}} \bigr\rvert 
\sigma (F ^{(s-1)}) ^{1/2} \lVert\tilde P ^{w} _{F} g  \rVert _{w} 
\\
& \lesssim \mathscr R \lVert f\rVert _{\sigma } \Bigl[ \sum_{F\in \mathcal F}  \lVert\tilde P ^{w} _{F} g  \rVert _{w}  ^2  \Bigr] ^{1/2} , 
\\
\textup{where } \qquad \tilde P ^{w} _{F} g &= \sum_{\substack{Q \;:\; Q\Subset _{4r}P_Q \\ Q\subset F,\  \dot \pi _{\mathcal F} Q \neq F}} \Delta ^{w} _{Q} g .  
\end{align*}
Here, we have only appealed to the cube testing condition on $ \mathsf R _{\sigma }$.  It is clear that the last sum above is dominated by $ \lVert g\rVert_w ^2 $, so our proof of the Global to Local reduction is complete.

\end{proof}

\section{The Local Estimate} \label{s:loc}

We prove Lemma~\ref{l:local}.  In so doing, we hold $ F\in \mathcal F$ 
fixed throughout the proof, and we assume that  $ g= P ^{w} _{F} g$, to reduce notation.  
We bound the term in \eqref{e:Alocal}. 

The first step is a repetition of the exchange argument.  The argument 
of the Riesz transform is $ P_Q$, where $ \pi _{\mathcal F} P=F$ and $ Q\Subset _{4r} P$, and $ P_Q \subsetneqq F$. 
The conditions $ \pi _{\mathcal F} P=F$ and $ P_Q \subsetneqq F$ are present throughout this argument,  and frequently suppressed. 
Write   $P_Q =  F + (P_Q-F)$. Define a real number $ \varepsilon _Q$ by 
\begin{equation*}
\varepsilon _Q [\,\lvert f\rvert\,]_F^{\sigma} \equiv \sum_{P \;:\; Q\Subset_{4r} P} 
[ \Delta ^{\sigma } _{P} f] _{P_Q} ^{\sigma } . 
\end{equation*}  
It follows from the construction of the stopping data that we have $ \lvert  \varepsilon _Q\rvert \lesssim 1 $. 
Thus, by reordering the sum below, an appeal to the testing condition can be made to see that 
\begin{align*}
\Biggl\lvert\sum_{P } \sum_{\substack{Q \;:\; \dot\pi_{\mathcal F}Q=F \\ Q\Subset_{4r} P}}
[ \Delta ^{\sigma } _{P} f] _{P_Q} ^{\sigma }  \langle \mathsf R _{\sigma } F , \Delta ^{w} _{Q} g \rangle _{w}
\Biggr\rvert
& = 
[\,\lvert f\rvert\,]_F^{\sigma} \Bigl\lvert
\Bigl\langle \mathsf R _{\sigma } F , \sum_{\substack{Q \;:\; \dot\pi_{\mathcal F}Q=F }} \varepsilon _Q  \Delta ^{w} _{Q} g \Bigr\rangle _{w}\Bigr\rvert
\\
& \lesssim 
[\,\lvert f\rvert\,]_F^{\sigma}  \sigma (F) ^{1/2} 
\biggl\lVert  
\sum_{\substack{Q \;:\; \dot\pi_{\mathcal F}Q=F }} \varepsilon _Q  \Delta ^{w} _{Q} g 
\biggr\rVert_w 
\\&\lesssim [\,\lvert f\rvert\,]_F^{\sigma}  \sigma (F) ^{1/2} \lVert g\rVert_w.  
\end{align*}

Thus, it remains to consider the sum when the argument of the Riesz transform is $F\setminus P_Q $.  This is the \emph{stopping form}, and it will require a subtle, recursive analysis.

\begin{lemma}[Stopping Form]
\label{l:stopping} The following estimate holds 
\begin{equation} \label{e:stopping}
\Biggl\lvert   
\sum_{ \substack{ P \;:\;  \pi _{\mathcal F} P=F \\ P_Q \subsetneqq F}}  \sum_{\substack{Q \;:\; \dot\pi_{\mathcal F}Q=F \\ Q\Subset_{4r} P}}
[ \Delta ^{\sigma } _{P} f] _{P_Q} ^{\sigma }  \langle \mathsf R _{\sigma } (F \setminus P_Q) , \Delta ^{w} _{Q} g \rangle _{w}
\Biggr\rvert 
\lesssim \mathscr R \lVert f\rVert_\sigma \lVert g\rVert_w. 
\end{equation}
\end{lemma}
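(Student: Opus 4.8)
The plan is to dominate the stopping form by two positive bilinear forms via the monotonicity principle, then to dispose of the ``second gradient'' form by a geometric‑series argument of the same type already used for \eqref{e:FF}, and to control the ``first gradient'' form by the recursive stopping‑tree argument of \cite{part2}. Fix a pair $P,Q$ occurring in \eqref{e:stopping}. Since $\pi_{\mathcal F}P=F$ and $Q\Subset_{4r}P$ with $Q$ good, the cube $Q$ lies deep inside the child $P_Q$, so $10Q\subset P_Q$ and $\mathbf 1_{F\setminus P_Q}$ is supported off $10Q$; Lemma~\ref{l:monotone}, applied with outer cube $P_Q$, inner cube $Q$, and $g$ replaced by $\Delta^w_Q g$, bounds $\lvert\langle\mathsf R_\sigma(F\setminus P_Q),\Delta^w_Q g\rangle_w\rvert$ by the sum of $\mathsf P^{\textup{g}}_\sigma(\mathbf 1_{F\setminus P_Q},Q)\bigl\lvert\langle x/\ell(Q),\Delta^w_Q g\rangle_w\bigr\rvert$ and $\mathsf P^{\textup{g}+}_\sigma(\mathbf 1_{F\setminus P_Q},Q)\,E(w,Q)w(Q)^{1/2}\lVert\Delta^w_Q g\rVert_w$. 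After an Abel summation over the tower of cubes $P$ with $Q\Subset_{4r}P$, so that the $\sigma$‑coefficients become the partial sums $\sum_{P'\supseteq P}[\Delta^\sigma_{P'}f]^\sigma_{P'_Q}$ — which are $\lesssim[\,\lvert f\rvert\,]_F^{\sigma}$ by the big‑average stopping rule, cf.\ \eqref{e:sumF} — the left side of \eqref{e:stopping} is at most $\mathsf B^{\textup{g}}+\mathsf B^{\textup{g}+}$, the two forms assembled from these two monotonicity terms.

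The form $\mathsf B^{\textup{g}+}$ is handled exactly as in the passage culminating in \eqref{e:FF}: organize the sum along the $\mathcal F$‑tower by generation $t\ge1$, use the extra order of decay of $\mathsf P^{\textup{g}+}$ over $\mathsf P^{\textup{g}}$ together with the goodness of the cubes to extract a factor $2^{-t}$ at the price of replacing $\mathsf P^{\textup{g}+}$ by $\mathsf P^{\textup{g}}$, invoke the energy inequality \eqref{e:E} on the relevant Whitney partitions (with constant $\mathscr E\le\mathscr R$ from Lemma~\ref{l:E}), and sum the geometric series in $t$ against the $\sigma$‑Carleson property of $\mathcal F$. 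This yields $\mathsf B^{\textup{g}+}\lesssim\mathscr R\lVert f\rVert_\sigma\lVert g\rVert_w$.

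The substantive point is $\mathsf B^{\textup{g}}$, treated by the bottom‑up recursion of \cite{part2}. For a subcollection $\mathcal Q$ of the good cubes with $\dot\pi_{\mathcal F}Q=F$, introduce an energy size, roughly $\alpha(\mathcal Q)^2\equiv\sup_{Q_0}\sigma(Q_0)^{-1}\sum_{Q\in\mathcal Q:\,Q\subset Q_0}\mathsf P^{\textup{g}}_\sigma(\mathbf 1_{F\setminus P_Q},Q)^2\lVert\Delta^w_Q(x/\ell(Q_0))\rVert_w^2$. Comparing $\mathsf P^{\textup{g}}_\sigma(\mathbf 1_{F\setminus P_Q},Q)$ with the Whitney quantity $\mathsf P^{\textup{g}}_\sigma(\mathbf 1_{F\setminus C_0K},K)$ for the cube $K\ni Q$ and then applying \eqref{e:E} together with stopping rule (2) of \S\ref{s:stopping} shows $\alpha\lesssim\mathscr R$ for the full collection. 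One then peels off the maximal subcubes on which $\alpha$ roughly doubles, producing an auxiliary family of energy‑stopping cubes; the energy inequality forces this family to be $\sigma$‑Carleson, so across its generations the sizes decay geometrically and their contributions sum to a total $\lesssim\mathscr R^2\sigma(F)$. On each corona piece, Cauchy--Schwarz in $Q$ separates the $\sigma$‑side — controlled by $\lVert f\rVert_\sigma$ through the Carleson property and the bounded coefficients — from the $w$‑side — the partial energies $\bigl\lvert\langle x/\ell(Q),\Delta^w_Q g\rangle_w\bigr\rvert$, controlled by $\lVert g\rVert_w$ — leaving a factor of the local size; summing over generations gives $\mathsf B^{\textup{g}}\lesssim\mathscr R\lVert f\rVert_\sigma\lVert g\rVert_w$, which together with the bound on $\mathsf B^{\textup{g}+}$ completes the proof.

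I expect the recursion in this last step to be the main obstacle. The size functional must be chosen so that it is at once bounded by $\mathscr R$ via the energy inequality — which is stated with the hole $F\setminus C_0K$, forcing the comparison $\mathsf P^{\textup{g}}_\sigma(\mathbf 1_{F\setminus P_Q},Q)\lesssim\mathsf P^{\textup{g}}_\sigma(\mathbf 1_{F\setminus C_0K},K)$, valid only because $Q$ is good and far inside $P_Q$, plus separate control of the residual $\mathsf P^{\textup{g}}_\sigma(\mathbf 1_{P_Q\setminus C_0K},K)$ — and monotone enough under restriction that the auxiliary stopping cubes pack. Because the two monotonicity terms in \eqref{e:mono} are genuinely incomparable, neither can be absorbed into the other, so both the $\mathsf B^{\textup{g}}$ and $\mathsf B^{\textup{g}+}$ mechanisms are indispensable; this is the structural departure from the Hilbert‑transform proof.
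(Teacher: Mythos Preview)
Your high-level picture is right — the two monotonicity terms require different mechanisms, and a recursion of stopping-tree type is the engine for the gradient piece — but the execution has a genuine gap in the $\mathsf B^{\textup g}$ part and a misdescription in the $\mathsf B^{\textup{g}+}$ part.

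For $\mathsf B^{\textup{g}+}$: the appeal to \eqref{e:FF} and ``the $\mathcal F$-tower by generation $t$'' is misplaced. In the stopping form $F$ is \emph{fixed}; there is no $\mathcal F$-tower to climb. The decay one actually exploits is in the scale separation $s=\log_2(\ell(P)/\ell(Q))$, exactly as in the paper's Lemma~\ref{l:BelowQ}: the extra power in $\mathsf P^{\textup{g}+}$ buys a factor $2^{-s/2}$, and the failure of the energy stopping rule on each $P'=P_Q$ controls the resulting Whitney sum by $\mathscr R^2\sigma(P')$. That argument does go through for \emph{all} pairs, so this piece is repairable.

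The real problem is $\mathsf B^{\textup g}$. By applying monotonicity and then Abel-summing to replace the signed coefficients $[\Delta^\sigma_P f]^\sigma_{P_Q}$ by partial sums bounded by $[\,|f|\,]^\sigma_F$, you collapse the $(P,Q)$-pair structure to a single sum over $Q$. A one-sided recursion on $\mathcal Q$ cannot then succeed: in the simplest Lebesgue example the ``size'' you define is attained at \emph{every} scale with the same value, so the peeling never reduces it. The paper proceeds quite differently. It keeps the \emph{signed} bilinear form $\sum [\Delta^\sigma_{P_1}f]^\sigma_{\tilde P_1}\langle\mathsf R_\sigma(F\setminus\tilde P_1),\Delta^w_{P_2}g\rangle_w$ intact and recurses on \emph{admissible collections of pairs} $(P_1,P_2)$ (Definition~\ref{d:admiss}), with orthogonality in both coordinates (Lemma~\ref{l:ortho}) and a size functional \eqref{e:size} tested over the auxiliary family $\mathcal T_{\mathcal P}$. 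Inside the recursion, at the ``big'' leaves (Lemmas~\ref{l:1big} and \ref{l:big-t}), the paper \emph{re}-applies monotonicity and uses the telescoping \eqref{e:tele} against a fresh stopping tree $\mathcal G$ built from $f$; this is what delivers $\lVert f\rVert_\sigma$, via quasi-orthogonality, rather than merely the stopping value. For that re-application to produce only the gradient term, the paper first introduces the cube $Q^e$ of \eqref{e:Qe} and restricts the recursion to pairs with $(P_2)^e\subset\tilde P_1$; the complementary pairs are exactly those handled by the Lemma~\ref{l:BelowQ} argument. This $Q^e$-splitting is the device you are missing, and without it the two-term monotonicity re-enters the recursion and the size bound no longer closes.
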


The analysis will combine on the one hand, a variant of an argument related to the so-called pivotal technique 
\cite{V}, 
and the other, the subtle recursion that was identified in \cite{MR3285858}. 
In neither case is the bounded averages property of the function $ f$ irrelevant. 
(We used the bounded averages property in the exchange argument.) 
Rather, it is the fact that the energy stopping condition is incorporated into the stopping data that 
is the crucial point.

The main tool is the monotonicity principle \eqref{e:mono}, which has two terms, the `gradient' and the `gradient plus' terms. 
The `easy' term is the second, and we address it in the next subsection.

\subsection{The Gradient Plus Term Dominant in Monotonicity}

\begin{lemma}\label{l:gradient} There holds 
\begin{equation}\label{e:gradientLocal}
\Biggl\lvert  
\sum_{ \substack{ P \;:\;  \pi _{\mathcal F} P=F \\ P_Q \subsetneqq F}} \sideset { } {'} \sum_{\substack{Q \;:\; \dot\pi_{\mathcal F}Q=F \\ Q\Subset_{4r} P ,\  P_Q \subset Q ^{e}}} 
[ \Delta ^{\sigma } _{P} f] _{P_Q} ^{\sigma }  \langle \mathsf R _{\sigma } (F \setminus P_Q) , \Delta ^{w} _{Q} g \rangle _{w}
\Biggr\rvert 
\lesssim \mathscr R \lVert f\rVert_\sigma \lVert g\rVert_w. 
\end{equation}
Above, by $ \sideset { } {'} \sum_ { \cdots }$, we impose the restriction that 
\begin{equation*}
 \mathsf P_{\sigma}^{\textup{g}}(F\setminus Q ^{e}, Q) ^2  
\Bigl\lVert  \Delta ^{w} _{Q}  \frac {x} { \ell (Q)}\Bigr\rVert_{w} ^2
\leq 
\mathsf P_{\sigma}^{\textup{g}+} (F \setminus Q ^{e}, Q) ^2 E (w, Q) ^2 w (Q) . 
\end{equation*}
\end{lemma}

\begin{proof}  
For each individual summand in \eqref{e:gradientLocal}, we have by assumption,  monotonicity \eqref{e:mono} and \eqref{e:g+<g}, 
\begin{align*}
\langle \mathsf R _{\sigma } (F\setminus P_Q) , \Delta ^{w} _{Q} g \rangle _{w}
\lesssim &
\mathsf P_{\sigma}^{\textup{g}+} (F \setminus P_Q, Q)  
E (w,Q) w (Q) ^{1/2} \lVert \Delta ^{w} _{Q} g\rVert_{w} 
\\ \lesssim   & 
 \Bigl[\frac {\ell (Q)} {\ell (P)}   \Bigr] ^{1- \epsilon }\mathsf P_{\sigma}^{\textup{g}} (F \setminus P_Q, Q)  
E (w,Q) w (Q) ^{1/2} \lVert \Delta ^{w} _{Q} g\rVert_{w} . 
\end{align*}
That is, we have an extra geometric decay times an energy inequality term.  Thus, we can hold the relative length of $ Q$ fixed.

And, we can then estimate as below, where we add restrictions on the relative length of $ Q$ and $ P$. 
Below, fix a choice $ P'$ of dyadic child of $ P$, which is itself not a stopping cube.  (If it were, it makes no contribution to the local form.)
For integers $ s \geq r$, and by goodness of $ Q$, 
\begin{align}
\Biggl\lvert  &
\sum_{\substack{Q \;:\;   2^{s} \ell (Q)=\ell (P) ,\  P_Q =P'}} 
[ \Delta ^{\sigma } _{P} f ] _{P'}
\langle \mathsf R _{\sigma } (F\setminus P') , \Delta ^{w} _{Q} g \rangle _{w}
\Biggr\rvert
\\& 
\lesssim  \bigl\lvert  [ \Delta ^{\sigma } _{P} f ] _{P'}\bigr\rvert 
\sum_{\substack{Q \;:\;   2^{s} \ell (Q)=\ell (P) , P_Q = P'}} 
\mathsf P_{\sigma}^{\textup{g}+} (F \setminus P', Q)  
E (w,Q) w (Q) ^{1/2} \lVert \Delta ^{w} _{Q} g\rVert_{w} 
\\ \label{e:sdecay}
& \lesssim   
2 ^{-s/2}    \bigl\lvert  [ \Delta ^{\sigma } _{P} f ] _{P'}\bigr\rvert  
\sum_{\substack{Q \;:\;   2^{s} \ell (Q)=\ell (P) \\ P_Q = P' }} 
\mathsf P_{\sigma}^{\textup{g}} (F \setminus P', Q)  
E (w,Q) w (Q) ^{1/2} \lVert \Delta ^{w} _{Q} g\rVert_{w} .  
\end{align}

By construction, the stopping parent of $ P'$ is $ F$, so that  it fails the stopping conditions given at the beginning of \S~\ref{s:stopping}.  Therefore, after an application of Cauchy-Schwarz, the sum above is at most 
\begin{align*}
\sum_{\substack{Q \;:\;   2^{s} \ell (Q)=\ell (P) \\ P_Q = P' }} &
\mathsf P_{\sigma}^{\textup{g}} (F \setminus P', Q)  
E (w,Q) w (Q) ^{1/2} \lVert \Delta ^{w} _{Q} g\rVert_{w}
\\&
\lesssim \Biggl[
\sum_{\substack{Q \;:\;   2^{s} \ell (Q)=\ell (P) \\ P_Q = P' }} 
\mathsf P_{\sigma}^{\textup{g}} (F \setminus P', Q) ^2  
E (w,Q) ^2  w (Q) 
\times \underbrace{
\sum_{\substack{Q \;:\;   2^{s} \ell (Q)=\ell (P) \\ P_Q = P' }}  \lVert \Delta ^{w} _{Q} g\rVert_{w} ^2 } _{ \equiv \gamma (P,s) ^2 }
\Biggr] ^{1/2} 
\\
& \lesssim 
\mathscr R \sigma (P') ^{1/2} \gamma (P,s).  
\end{align*}
Combining this estimate with \eqref{e:sdecay}, we have 
\begin{align*}
 2 ^{-s/2}\mathscr R \sum_{P}  
  \bigl\lvert  [ \Delta ^{\sigma } _{P} f ] _{P'}\bigr\rvert  \sigma (P') ^{1/2} \gamma (P,s) 
  & \lesssim 
   2 ^{-s/2}\mathscr R  \Bigl[ \sum_{P} \lVert  \Delta ^{\sigma } _{P} f\rVert _{\sigma } ^2 
   \times \sum_{P} \gamma (P,s) ^2 \Bigr] ^{1/2} 
   \\
   & \lesssim 2 ^{-s/2} \mathscr R \lVert f\rVert _{\sigma } \lVert g\rVert_ w . 
\end{align*}
This is summed over $ s$ to complete the proof.  

\end{proof}

\subsection{The Gradient Term Dominant}

The form that we have yet to control is 
\begin{equation}  \label{e:zabove}
\Biggl\lvert  
\sum_{P \;:\; \pi _{\mathcal F} P} \sideset {} {''} \sum_{\substack{Q \;:\; \dot\pi_{\mathcal F}Q=F \\ Q\Subset_{4r} P  }} 
[ \Delta ^{\sigma } _{P} f] _{P_Q} ^{\sigma }  \langle \mathsf R _{\sigma } (F\setminus P_Q) , \Delta ^{w} _{Q} g \rangle _{w}
\Biggr\rvert 
\lesssim \mathscr R \lVert f\rVert_\sigma \lVert g\rVert_w, 
\end{equation}
where the notation $  \sideset {} {''} \sum_ { \cdots }$ means that 
\begin{equation}\label{e:monoG}
\lvert    \langle \mathsf R _{\sigma } (F\setminus P_Q) , \Delta ^{w} _{Q} g \rangle _{w}\rvert 
\lesssim 
\mathsf P_{\sigma} ^{\textup{g}} (  F\setminus P_Q , Q )\biggl\vert  
\biggl\langle  \frac {x} {\ell (Q)}, \Delta ^{w} _{Q} g \biggr\rangle_{w} \biggr\vert. 
\end{equation} 
But, the notation above will be suppressed throughout this section.

This case is far more subtle, requiring a delicate recursion, which in turn requires 
a more elaborate notation to explain.  
The recursion is expressed in the decomposition of the bilinear form, according to the pairs of cubes.  
For this, we need this definition. 

\begin{definition}\label{d:admiss} We call a collection $ \mathcal P \subset \mathcal D ^{4r} _{f} \times \mathcal D ^{4r} _{g}$ of pairs of cubes $ (P_1, P_2) $ 
\emph{admissible} if these conditions are met. 
\begin{enumerate}
\item   $ P_2\Subset_{4r} P_1$, with $ \dot \pi _{\mathcal F} P_2 = \pi _{\mathcal F} P_2 = F$, $ P_1$ and $ P_2$ are good, and $ (P_1) _{P_2}\supset (P_2) ^{e}$. 
\item  (Convexity in $ P_1$.) 
For each $ P_2$, if $ (P_1, P_2), (P''_1, P_2) \in \mathcal P $, and $ P_1 \subset P_1' \subset P_1''$, 
with $ P_1'$ good, then $ (P_1', P_2) \in \mathcal P$.  
\end{enumerate}
We then set $ \tilde P_1 \equiv (P_1) _{P_2} $, and also set $ \mathcal P_1 \equiv 
\{ P_1 \;:\; (P_1, P_2) \in \mathcal P, \textup{for some $ P_2$}\}$, 
and we define $ \widetilde {\mathcal P}_1$ and $ \mathcal P_2$ similarly.  
\end{definition}

We then define 
\begin{equation*}
B _{\mathcal P} (f,g) \equiv 
\sum_{ (P_1, P_2) \in \mathcal P} 
[ \Delta ^{\sigma } _{P} f] _{P_Q} ^{\sigma }  \langle \mathsf R _{\sigma } (F\setminus P_Q) , \Delta ^{w} _{Q} g \rangle _{w}. 
\end{equation*}
The form in \eqref{e:zabove} is equal to $  B _{\mathcal P} (f,g)$ for an admissible choice of $ \mathcal P$. 
Set $ \mathscr B _{\mathcal P} $ to be the best constant in the inequality 
\begin{equation*}
\lvert B _{\mathcal P} (f,g)\rvert \le \mathscr B _{\mathcal P} \lVert f\rVert _{\sigma } \lVert g\rVert _{w}. 
\end{equation*}

This preparation will be useful throughout the analysis of the local term.  

\begin{lemma}\label{l:K} For $ P \in \mathcal D_f^r$, define $ \mathcal K_P$ to be the 
maximal elements of $ \mathcal D^r_f$ such that $ 10 K\subset P$. Then, each good 
cube $ Q\in \mathcal D_ w ^{4r}$ with $ Q\Subset_{4r} \pi P$, is  satisfies $ Q\Subset _{r} K$ for some $ K\in \mathcal K_P$.
\end{lemma}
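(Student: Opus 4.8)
The plan is to unwind the definitions carefully and reduce everything to elementary dyadic geometry. Fix $P\in\mathcal D_f^r$ and fix a good cube $Q\in\mathcal D_w^{4r}$ with $Q\Subset_{4r}\pi P$. By definition of $\Subset_{4r}$ this means $Q\subset\pi P$ and $2^{4r}\ell(Q)\le\ell(\pi P)$; since $\pi P$ is the $\mathcal D_f^r$-parent of $P$, which lies one $r$-step up, we have $\ell(\pi P)=2^r\ell(P)$, so in fact $2^{3r}\ell(Q)\le\ell(P)$. Moreover, by goodness of $Q$ (recall $2^{r(1-\epsilon)}>4C_0$-type inequalities, and more to the point the standard consequence recorded in the conventions section: a good $Q$ with $2^r\ell(Q)\le\ell(P')$ and $Q\cap P'\ne\emptyset$ forces $Q\subset P'$), and since $Q\subset\pi P\supset P$ with $Q$ much smaller than $P$, either $Q\subset P$ or $Q$ is disjoint from $P$ or lies in a sibling of $P$ inside $\pi P$. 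The first reduction I would make is to argue that $Q\subset P$: indeed, $Q$ participates in the form $B^{\textup{above}}$ only through the child $P_Q$ of $P$ containing it, so the relevant statement is really about $Q\subset P$, and in the ambient application this is how $\mathcal K_P$ is used; if the lemma is meant literally for all such $Q$ one notes that $Q\subset\pi P$ and $10K\subset P$ for the maximal $K$, so one must track which $P$ one is inside — but I will take the intended reading $Q\subset P$.

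Next I would produce the cube $K$. Let $K$ be the unique element of $\mathcal D_f^r$ with $K\supset Q$ and $\ell(K)=2^r\ell(Q)$ — i.e. walk up exactly one $r$-step in the $f$-grid from the smallest $\mathcal D_f^r$-cube containing $Q$; more precisely, let $K'$ be the minimal cube of $\mathcal D_f^r$ containing $Q$, so $\ell(K')\le 2^{?}$ — here one must be slightly careful because $Q\in\mathcal D_w^{4r}$ lives in a different grid, so $Q$ need not be contained in any small $\mathcal D_f^r$-cube; this is exactly why goodness is invoked. The clean route: since $Q$ is good and $2^{r}\ell(Q)\le\ell(P)$ with $Q\subset P$, the goodness/separation property from the Conventions (the parenthetical remark: for cubes in distinct grids, $2^r\ell(Q)\le\ell(P')$ and $Q\cap P'\ne\emptyset$ imply $Q\subset P'$) guarantees that $Q$ is contained in some $\mathcal D_f^r$-cube of scale just above $\ell(Q)$, and then $10K\subset P$ will follow for a slightly larger ancestor. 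Concretely, choose $K$ to be the maximal element of $\mathcal D_f^r$ with $Q\subset K$ and $10K\subset P$; such $K$ exists because the $\mathcal D_f^r$-cube containing $Q$ of scale $2^{m}\ell(Q)$ eventually satisfies $10K\subset P$ once $2^{m}\ell(Q)$ is comparable to $\ell(P)$ but still, say, $\le 2^{-2}\ell(P)$ — and $2^{3r}\ell(Q)\le\ell(P)$ with $r$ large gives plenty of room. Then $K\in\mathcal K_P$ by maximality, and $2^r\ell(K)\le\ell(\pi P)$ fails only if $K$ is too big, which the gap $2^{3r}$ prevents; to get $Q\Subset_r K$ I need $2^r\ell(Q)\le\ell(K)$, i.e. $K$ is at least $r$ steps above $Q$, which again follows by choosing $K$ large enough within the allowed range $[2^r\ell(Q),\,2^{-2}\ell(P)]$ — nonempty because $2^{3r}\ell(Q)\le\ell(P)$.

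So the key steps, in order: (1) translate $Q\Subset_{4r}\pi P$ into the scale inequality $2^{3r}\ell(Q)\le\ell(P)$ and $Q\subset P$ (using goodness to upgrade $Q\subset\pi P$ to $Q\subset P$, or accepting it as the intended hypothesis); (2) use goodness of $Q$ to place $Q$ inside a $\mathcal D_f^r$-cube of scale just above $\ell(Q)$; (3) among the $\mathcal D_f^r$-ancestors of that cube, pick the maximal one $K$ with $10K\subset P$, which is automatically in $\mathcal K_P$; (4) verify the scale gap $2^{3r}\ell(Q)\le\ell(P)$ leaves room for $\ell(K)\ge 2^r\ell(Q)$, giving $Q\Subset_r K$. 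I expect the main obstacle to be the bookkeeping in step (2)–(3): because $Q$ and $K$ live in different grids $\mathcal D_w$ and $\mathcal D_f$, one cannot simply take the $\mathcal D_f^r$-parent of $Q$, and one must lean on the separation/goodness property (the parenthetical in the Conventions about distinct grids) to guarantee nestedness $Q\subset K$ rather than $Q$ straddling the boundary of $K$; ensuring simultaneously that $K$ is large enough for $Q\Subset_r K$ yet small enough for $10K\subset P$ is where the choice of the goodness parameter $r$ (large relative to $C_0$ and the jump size) is consumed. Everything else is routine dyadic arithmetic.
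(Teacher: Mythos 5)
Your constructive plan --- locate a $\mathcal D_f^r$-cube $K$ with $Q\subset K$, $\ell(K)\ge 2^{r}\ell(Q)$ and $10K\subset P$, then pass to the maximal such ancestor, which lies in $\mathcal K_P$ --- is the natural contrapositive of the paper's proof by contradiction, and it can be made to work. But as written, the step that actually proves something is missing. Your stated reason for the existence of $K$, namely that ``the $\mathcal D_f^r$-cube containing $Q$ of scale $2^{m}\ell(Q)$ eventually satisfies $10K\subset P$ once $2^{m}\ell(Q)$ is comparable to $\ell(P)$ but still $\le 2^{-2}\ell(P)$,'' is backward and in fact false: a cube of scale $2^{-2}\ell(P)$ has $10K$ of side $2.5\,\ell(P)$, so $10K\not\subset P$ regardless of position, and in general \emph{enlarging} $K$ makes $10K\subset P$ harder, not easier. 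If $Q$ hugs $\partial P$, no cube $K\supset Q$ of scale $\ge 2^{r}\ell(Q)$ can satisfy $10K\subset P$, and the scale gap alone ($2^{3r}\ell(Q)\le\ell(P)$ in your reading, or the stronger $2^{4r-1}\ell(Q)\le\ell(P)$ that the paper actually uses --- note $\pi P$ is the dyadic parent, $\ell(\pi P)=2\ell(P)$, not the $\mathcal D_f^r$-parent) does not rule this out.

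What is missing is the quantitative half of goodness. You invoke only the nestedness consequence ($Q\cap K\neq\emptyset$ and $2^{r}\ell(Q)\le\ell(K)$ force $Q\subset K$), never the distance-to-boundary estimate, which is the real engine of the lemma: since $Q$ is good and $\ell(P)>2^{r}\ell(Q)$,
\begin{equation*}
\textup{dist}(Q,\partial P)\;\ge\;\ell(Q)^{\epsilon}\ell(P)^{1-\epsilon}\;\ge\;2^{(4r-1)(1-\epsilon)}\ell(Q),
\end{equation*}
which is $\gg 2^{r}\ell(Q)$ for $r$ large and $\epsilon$ small. This lower bound is exactly what opens a nonempty window of scales between $2^{r}\ell(Q)$ and a constant times $\textup{dist}(Q,\partial P)$, in which a $\mathcal D_f^r$-cube $K\supset Q$ with $10K\subset P$ must live; once you have it, steps (3)--(4) of your outline are routine. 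The paper runs the same mechanism in the contrapositive: it takes the $K\in\mathcal K_P$ meeting $Q$, assumes $\ell(K)<2^{r}\ell(Q)$, uses maximality of $K$ to get $\textup{dist}(K,\partial P)\le 10\cdot 2^{r}\ell(K)$, and lets the displayed goodness estimate contradict $2^{4r}\ell(Q)\le\ell(\pi P)$. Without that distance estimate, neither version of the argument closes; that is the gap in your proposal.
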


\begin{proof}
One should note that many cubes in $ \mathcal K_P$ are of side length $ 2 ^{-r} \ell (P)$, 
because the maximal side length of $ K\in \mathcal K_P$ is $ 2 ^{-r} \ell (P)$.  
But, the cube $ Q$ is much smaller than this bound: $ \ell (Q) \le 2 ^{-4r+1} \ell (P)$. 
It follows from goodness that if $ Q$ and $ K\in \mathcal K_P$ intersect, and $ 2 ^{r} \ell (Q) \le \ell (K)$, 
then  we must have $ Q\Subset_r K$.  
Thus, we can assume that $ \ell (K)\le 2 ^{-3r+1} \ell (P)$ below, and this implies that 
$ \textup{dist}(K, \partial P)\le 10 \cdot 2 ^{r} \ell (K)$, by construction of $ \mathcal K_P$.

Thus, we have  (a) $ Q\cap K\neq \emptyset $; (b) $ \textup{dist}(K, \partial P)\le 10 \cdot 2 ^{r} \ell (K)$;  
(c)  $ Q\not\Subset K$ which implies $ 2 ^{r} \ell (Q) \ge \ell (K)$, and (d) $ 2 ^{4r} \ell(Q) \le \ell (P)$.  
From goodness of $ Q$ it follows that 
\begin{equation*}
\ell (Q) ^{\epsilon } \ell (P) ^{1- \epsilon } \le 
\textup{dist}(Q,\partial P) . 
\end{equation*}
This would contradict (b) if $ \ell (Q)\ge \ell (K)$, so $ \ell (Q)\le \ell (K)$, in which case we derive 
\begin{equation*}
\ell (Q) ^{\epsilon } \ell (P) ^{1- \epsilon } \le (10 \cdot 2 ^{r}  +1) \ell (K) \le (10 \cdot 2 ^{r} +1)2 ^r \ell (Q). 
\end{equation*}
But this contradicts (d).  So the proof is complete. 

\end{proof}

Next, we define the \emph{size} of $ \mathcal P$, which must be formulated with some care. 
For a cube $ P\in \mathcal D_f^r$, set $ \mathcal K_P$ to be the maximal cubes $ K\in \mathcal D_f^r$ 
such that $ 10K\subset P$. 

\begin{proposition}\label{p:Kp} A  cube $ P_2 \in \mathcal P_2$, with $ P_2\Subset_{4r} P$ 
satisfies $ P_2\Subset _{r} K$ for some $ K\in \mathcal K _{P}$.  
\end{proposition}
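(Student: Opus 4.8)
The plan is to deduce Proposition~\ref{p:Kp} directly from Lemma~\ref{l:K}, of which it is essentially a restatement tailored to admissible collections. First I would unpack what membership in $ \mathcal P_2$ supplies: by Definition~\ref{d:admiss}, part (1), the cube $ P_2$ is a good cube of $ \mathcal D _{g} ^{4r} = \mathcal D _{w} ^{4r}$. Combined with the standing hypothesis $ P_2 \Subset _{4r} P$ — i.e.\ $ P_2 \subset P$ and $ 2 ^{4r} \ell (P_2) \le \ell (P)$ — this gives $ P_2 \subset \pi P$ and $ 2 ^{4r} \ell (P_2) \le \ell (P) \le \ell (\pi P)$, so that $ P_2 \Subset _{4r} \pi P$ in the sense required by Lemma~\ref{l:K} (with $ Q = P_2$). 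Since $ \mathcal K_P$ is defined the same way in both places, the conclusion $ P_2 \Subset _{r} K$ for some $ K\in \mathcal K_P$ is exactly the conclusion of that lemma.

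If one prefers a self-contained argument, the proof is to rerun the case analysis of Lemma~\ref{l:K} verbatim with $ Q$ replaced by $ P_2$. Since $ P_2$ is much smaller than $ P$ (here $ \ell (P_2) \le 2 ^{-4r} \ell (P)$), goodness forces $ P_2 \Subset _{r} K$ whenever $ P_2$ meets some $ K \in \mathcal K_P$ with $ 2 ^{r} \ell (P_2) \le \ell (K)$; and if $ P_2$ met a smaller $ K$, the Whitney bound $ \textup{dist}(K,\partial P) \le 10\cdot 2 ^{r} \ell (K)$ would make $ \textup{dist}(P_2, \partial P)$ too small, contradicting $ \ell (P_2) ^{\epsilon } \ell (P) ^{1- \epsilon } \le \textup{dist}(P_2, \partial P)$ once $ r$ is large and $ \epsilon $ small. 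The only arithmetic to verify is the inequality $ 2 ^{4r (1- \epsilon )} > (10\cdot 2 ^{r}+1) 2 ^{r}$ already used there, which holds with the present (even slightly sharper) scale separation.

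I do not anticipate a genuine obstacle: the entire content of the proposition is the interaction between good cubes and a Whitney decomposition, and that geometry has already been established in Lemma~\ref{l:K}. The one point deserving a sentence of care in the write-up is to confirm that whatever cube plays the role of $ P$ in the applications that follow — be it $ P_1$ itself or its child $ (P_1) _{P_2}$ — still dominates $ P_2$ by a factor of at least $ 2 ^{4r-1} \gg 2 ^{r}$, so that Lemma~\ref{l:K} applies with the asserted conclusion. Thus the final write-up reduces to recording the hypotheses of Lemma~\ref{l:K} for $ P_2$ and invoking that lemma.
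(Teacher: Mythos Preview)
Your proposal is correct. The paper opts for the self-contained route you sketch in your second paragraph, essentially rerunning the case analysis of Lemma~\ref{l:K} with cosmetically different constants, rather than invoking that lemma directly; your observation that $P_2 \Subset_{4r} P \Rightarrow P_2 \Subset_{4r} \pi P$ makes Lemma~\ref{l:K} apply verbatim is a cleaner way to the same conclusion.
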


\begin{proof}
Observe that the conclusion is obvious if $ K$ and $ P_2$ intersect, and $ 2 ^{r}\ell (P_2) \le \ell (K)$.  
But, also, many cubes $ K\in \mathcal K_P$ satisfy $ 2 ^{r} \ell (K) = \ell (P)$, due to the fact that 
$ K\in \mathcal D ^{r} _{f}$.  
And, $ 2 ^{4r-1}\ell (P_2) \le \ell (P)$, hence the conclusion is clear for $ 2 ^{3r-1}\ell (K)\ge \ell (P)$.  

We have $ 2 ^{3r-1} \ell (K) \le \ell (P)$, which implies $ \textup{dist}(K, \partial P) \le 20 \cdot 2 ^{r} \ell (K)$. 
Then, if $ \ell (K) \le \ell (P_2)$, it follows that $ P_2 \subset K$ and 
\begin{equation*}
20 \cdot 2 ^{r} \ell (K) \ge \textup{dist}(K, \partial P) \ge \ell (K) ^{\epsilon } \ell (P) ^{1-\epsilon }
\end{equation*}
by goodness of $ P_2$.  
But this contradicts $ 2 ^{3r-1} \ell (K) \le \ell (P)$.  

Thus, $ P_2 \subsetneq K$.  And if $ P_2 \not\Subset _{r} K$, that means $ 2 ^{r } \ell (P_2) \ge \ell (K)$, 
whence, again by goodness, 
\begin{equation*}
2 ^{-r} \ell (K) ^{\epsilon } \ell (P) ^{1- \epsilon } \le 20 \cdot 2 ^{r} \ell (K). 
\end{equation*}
That means $ \ell (P) \le [ 20 \cdot 2 ^{r (1 + \epsilon )}] ^{1/ (1-\epsilon )} \ell (K)$, which again is a contradiction. 
\end{proof}

Notice that we incorporate the previous proposition into the important definition of \emph{size}.  
\begin{align} \label{e:size}
 \textup{size} (\mathcal P) ^2  &\equiv 
\sup _{\substack{K\in  {\mathcal T}_ {\mathcal P} \\ \sigma (K) >0 }} 
 \frac { \mathsf P_{\sigma}^{\textup{g}} (F \setminus K , K) ^2 } {\sigma (K)}  
 \sum_{P_2 \in \mathcal P_2 \;:\; P_2 \Subset_{r} K} 
\Bigl\lVert \Delta ^{w} _{P_2} \frac {x} {\ell (K)}  \Bigr\rVert_w^2 , 
\\ \label{e:KP}
{\mathcal T}_ {\mathcal P} & \equiv 
\bigcup \{ \mathcal K _{\tilde P_1} \;:\; \tilde P_1\in \widetilde {\mathcal P}_1\}.  
\end{align}
Note that we only `test' the size of the collection by forming a supremum over the collection 
$ \mathcal T _{\mathcal P}$.  Our care about $ \Subset _{4r}$ and $ \Subset _{r}$ has been 
designed for this proposition.

\begin{proposition}\label{p:size} There holds 
$  \textup{size} (\mathcal P) \lesssim \mathscr R$. 
\end{proposition}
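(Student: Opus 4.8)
The plan is to establish, for every $K\in\mathcal T_{\mathcal P}$ with $\sigma(K)>0$ for which the inner sum is nonempty (when it is empty the summand is zero), the pointwise bound
\[
\mathsf P_{\sigma}^{\textup{g}}(F\setminus K,K)^2\sum_{P_2\in\mathcal P_2\;:\;P_2\Subset_r K}\Bigl\lVert\Delta^{w}_{P_2}\tfrac{x}{\ell(K)}\Bigr\rVert_w^2\ \lesssim\ \mathscr R^2\,\sigma(K);
\]
dividing by $\sigma(K)$ and taking the supremum over such $K$ then gives $\textup{size}(\mathcal P)\lesssim\mathscr R$.

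The first task is to place $K$ in the stopping tree. By \eqref{e:KP}, $K\in\mathcal K_{\tilde P_1}$ for some $\tilde P_1=(P_1)_{P_2}\in\widetilde{\mathcal P}_1$ with $(P_1,P_2)\in\mathcal P$, and the construction of $B_{\mathcal P}$ from \eqref{e:zabove} together with admissibility (Definition~\ref{d:admiss}) gives $\pi_{\mathcal F}P_1=F$ and $\dot\pi_{\mathcal F}P_2=F$; in particular $P_1\subseteq F$, so $\tilde P_1\subsetneq F$ and hence $K\subseteq\tilde P_1\subsetneq F$. Since $P_2\Subset_{4r}P_1$ forces $P_2\Subset_r\tilde P_1$, if $\tilde P_1$ were a stopping cube we would get $\dot\pi_{\mathcal F}P_2\subseteq\tilde P_1\subsetneq F$, impossible; likewise, for any $P_2'$ contributing to the inner sum, $P_2'\Subset_r K$ forces $P_2'\Subset_r F^{\ast}$ for every $F^{\ast}\in\mathcal F$ with $K\subseteq F^{\ast}$, so minimality of $\dot\pi_{\mathcal F}P_2'=F$ gives $F\subseteq F^{\ast}$; taking $F^{\ast}=\pi_{\mathcal F}K$ and recalling $K\subseteq F\in\mathcal F$ we conclude $\pi_{\mathcal F}K=F$, while $K\notin\mathcal F$ since $K\subsetneq F$. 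Thus $K$ is a proper, non‑stopping descendant of $F$, so it fails the energy stopping rule in the construction of $\mathcal F$:
\[
\sum_{K'\in\mathcal W_K}\mathsf P_{\sigma}^{\textup{g}}(F\setminus C_0K',K')^2\,E(w,K')^2\,w(K')\ <\ 10\,\mathscr R^2\,\sigma(K).
\]

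Next I would transfer the quantity attached to $K$ onto its Whitney cubes $\mathcal W_K$. Each good cube $P_2\Subset_r K$ in the sum sits in a unique $K'\in\mathcal W_K$ (this is the role of $\mathcal W_K$, using the cross‑grid convention that $2^r\ell(P_2)\le\ell(K')$ and $P_2\cap K'\ne\emptyset$ imply $P_2\subseteq K'$), and rescaling the $w$‑martingale differences together with the estimate following \eqref{e:xe} gives $\sum_{P_2\subseteq K',\,P_2\Subset_r K}\lVert\Delta^w_{P_2}\tfrac{x}{\ell(K)}\rVert_w^2\le\tfrac{\ell(K')^2}{\ell(K)^2}E(w,K')^2w(K')$. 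The decisive ingredient is a comparison of gradient‑Poisson averages across scales: since $K'\subseteq K$ one has $\ell(K)^{d+1}+\textup{dist}(x,K)^{d+1}\gtrsim\ell(K')^{d+1}+\textup{dist}(x,K')^{d+1}$ for all $x$ (because $\textup{dist}(x,K')\le\textup{dist}(x,K)+\sqrt n\,\ell(K)$), while on $C_0K'$ one has $\textup{dist}(x,K)\lesssim\ell(K')$; splitting $F\setminus K$ according to membership in $C_0K'$ therefore yields $\tfrac{\ell(K')}{\ell(K)}\mathsf P_{\sigma}^{\textup{g}}(F\setminus K,K)\lesssim\mathsf P_{\sigma}^{\textup{g}}(F\setminus C_0K',K')+\sigma(C_0K')\ell(K')^{-d}$. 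Squaring, multiplying by $E(w,K')^2w(K')\le w(K')$, and summing over $K'\in\mathcal W_K$, the main term is $\lesssim\mathscr R^2\sigma(K)$ by the stopping inequality above, while the error term is $\lesssim\sum_{K'}\sigma(C_0K')^{2}w(C_0K')\ell(K')^{-2d}\lesssim\mathscr A_2\sum_{K'}\sigma(C_0K')\lesssim\mathscr A_2\,\sigma(K)$, using the $A_2$ inequality \eqref{e:A2} and the bounded overlap of Proposition~\ref{p:w}. Altogether the displayed pointwise bound follows.

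The main obstacle is precisely this cross‑scale Poisson comparison: the gradient‑Poisson mass of $F\setminus K$ seen from $K$ strictly dominates the mass seen from a small Whitney sub‑cube $K'$, so a summand of $\textup{size}(\mathcal P)$ is \emph{not} itself a summand of the energy stopping inequality; the factor $\ell(K')/\ell(K)$ produced by renormalizing the energy of $K'$ back to the scale of $K$ is exactly what repairs the comparison, the residual contribution near $K'$ being absorbed by $A_2$. A subsidiary difficulty is the combinatorial verification that $\pi_{\mathcal F}K=F$ and $K\notin\mathcal F$, which must be extracted from the definitions of $\Subset_{4r}$, $\dot\pi_{\mathcal F}$, $\widetilde{\mathcal P}_1$, $\mathcal T_{\mathcal P}$, and $\mathcal K_{\tilde P_1}$.
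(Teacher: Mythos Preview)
Your proof is correct and follows essentially the same route as the paper: verify that the test cube $K\in\mathcal T_{\mathcal P}$ satisfies $\pi_{\mathcal F}K=F$ (so it fails the energy stopping rule), distribute the $P_2$'s among the Whitney cubes $K'\in\mathcal W_K$, and use the cross-scale Poisson comparison together with the failure of stopping condition (2) to bound by $\mathscr R^2\sigma(K)$. One simplification: your splitting of $F\setminus K$ according to $C_0K'$ is unnecessary, because Proposition~\ref{p:w} (with $r$ chosen so that $2^{r(1-\epsilon)}>4C_0$) guarantees $C_0K'\subset K$ for every $K'\in\mathcal W_K$, hence $(F\setminus K)\cap C_0K'=\emptyset$ and your $A_2$ error term vanishes identically; the Poisson comparison $\tfrac{\ell(K')}{\ell(K)}\mathsf P_\sigma^{\textup g}(F\setminus K,K)\lesssim\mathsf P_\sigma^{\textup g}(F\setminus C_0K',K')$ holds directly since $F\setminus K\subset F\setminus C_0K'$ and $\ell(K)^{d+1}+\textup{dist}(x,K)^{d+1}\gtrsim\ell(K')^{d+1}+\textup{dist}(x,K')^{d+1}$ for $x\notin K$.
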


\begin{proof}
Consider $ P\in \mathcal T _{\mathcal P}$ for which $ \pi _{\mathcal F}P=F$.  Then, the cube $P$ must \emph{fail} the energy stopping condition 
of \S\ref{s:stopping}.  Therefore, we can estimate first for the Poisson operator with a hole in the argument, 
\begin{align}
\mathsf P_{\sigma}^{\textup{g}} (F\setminus P, P) ^2 
\sum_{\substack{P_2 \in \mathcal P_2 \\ P_2 \Subset_{r} P} }
\Bigl\lVert \Delta ^{w} _{P_2} \frac {x} {\ell (P)}  \Bigr\rVert_w^2  
& \lesssim   
\Biggl[ \int _{F \setminus P} 
\frac 1 { \ell (P) ^{d+1}  + \textup{dist}(x,P) ^{d+1} } \;  \sigma (dx)
\Biggr] ^2  \sum_{K\in \mathcal W_P}  \sum_{\substack{P_2 \in \mathcal P_2\\ P_2 \subset K }} \lVert \Delta ^{w}_{P_2} x\rVert_{w} ^2 
\\
& \lesssim 
  \sum_{K\in \mathcal W_P}  
  \Biggl[ \int _{F\setminus P} 
\frac {\ell (K)} { \ell (K) ^{d+1}  + \textup{dist}(x,K) ^{d+1} }\; \sigma (dx) 
\Biggr] ^2 
\sum_{\substack{P_2 \in \mathcal P_2\\ P_2 \subset K }} \Bigl\lVert \Delta ^{w}_{P_2} 
\frac x {\ell (K)}\Bigr\rVert_{w} ^2 
\\
& \lesssim \textup{size} (\mathcal P) ^2 \sigma (P).  
\end{align}

If $ \pi _{\mathcal F}P\subsetneq F$, then, by admissibility, there is no cube $ P_2 \in \mathcal P_2$ 
with   $P_2 \Subset _{r} K$.  This is because otherwise we would have $ \dot\pi _{\mathcal F}P_2 \neq F$.  
Then, the inequalities above are trivial.

\end{proof}

Our task is then to show that
\begin{lemma}\label{l:2size} 
 For all admissible $ \mathcal P$,  
\begin{equation}  \label{e:2size}
\lvert B _{\mathcal P} (f,g)\rvert 
\lesssim \textup{size} (\mathcal P)\lVert f\rVert_\sigma \lVert g\rVert_w . 
\end{equation}
\end{lemma}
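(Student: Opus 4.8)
The plan is to prove Lemma~\ref{l:2size} by a recursion on the \emph{size} of admissible collections, following the scheme of \cite{part2}. The central claim is that there is a small absolute constant $\varepsilon_0 >0$ such that any admissible $\mathcal P$ decomposes as $\mathcal P = \mathcal P^{\textup{stop}} \sqcup \bigsqcup_{\ell} \mathcal P_\ell$, where $\mathcal P^{\textup{stop}}$ is a collection for which $B_{\mathcal P^{\textup{stop}}}$ can be bounded \emph{directly} by $\textup{size}(\mathcal P)\lVert f\rVert_\sigma\lVert g\rVert_w$, each $\mathcal P_\ell$ is again admissible, $\textup{size}(\mathcal P_\ell)\le \tfrac12\textup{size}(\mathcal P)$ (or $\le \varepsilon_0^{-1}2^{-\ell}\textup{size}(\mathcal P)$ with geometric gain in a level parameter $\ell$), and the collections $\mathcal P_\ell$ have a quasi-orthogonality/Carleson packing property so that $\sum_\ell \mathscr B_{\mathcal P_\ell}$-type sums telescope. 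Iterating, $\mathscr B_{\mathcal P}\lesssim \sum_{k\ge 0} 2^{-k}\textup{size}(\mathcal P)\lesssim \textup{size}(\mathcal P)$, which with Proposition~\ref{p:size} gives $\mathscr B_{\mathcal P}\lesssim \mathscr R$, proving both this lemma and (taking $\mathcal P$ the collection of \eqref{e:zabove}) the bound \eqref{e:zabove}.

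Concretely, the steps are: (1) Fix the top cubes of $\mathcal P_1$ — the maximal elements of $\widetilde{\mathcal P}_1$ — and for each such top cube $P^\ast$ run a stopping-time construction \emph{inside} it on the Whitney-type cubes $K\in \mathcal T_{\mathcal P}$, selecting the maximal $K$ at which the local Poisson--energy ratio appearing in \eqref{e:size} exceeds $\tfrac12\textup{size}(\mathcal P)$; call these stopping cubes $\mathcal S$. (2) The pairs $(P_1,P_2)$ with $P_2$ not trapped below any $K\in\mathcal S$ form $\mathcal P^{\textup{stop}}$; here the size bound is available at scale $\widetilde P_1$ itself (or its parent in the $\mathcal S$-tree), so applying the monotonicity principle Lemma~\ref{l:monotone} — in the form licensed by \eqref{e:Qe}, i.e.\ using only the gradient Poisson term since we are above $Q^e$ — together with the telescoping of $[\Delta^\sigma_P f]_{P_Q}$ via the bounded-averages stopping data and Cauchy--Schwarz in the $P_2$'s (using the $L^2(w)$-orthogonality of the $\Delta^w_{P_2}g$), bounds $|B_{\mathcal P^{\textup{stop}}}(f,g)|$ by $\textup{size}(\mathcal P)\lVert f\rVert_\sigma\lVert g\rVert_w$. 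The energy constant $\mathscr E\le\mathscr R$ enters exactly here, to sum the Poisson--energy terms over the Whitney cubes via \eqref{e:E}. (3) For each $S\in\mathcal S$ restrict to pairs with $P_2\Subset_r S$; after splitting the argument $F\setminus P_Q$ of the Riesz transform as $(F\setminus S)\sqcup(S\setminus P_Q)$, the $F\setminus S$ part is again of "testing/size" type and goes into the directly-estimated piece, while the $S\setminus P_Q$ part is a new admissible form $B_{\mathcal P_S}$ with $\textup{size}(\mathcal P_S)\le\tfrac12\textup{size}(\mathcal P)$ by maximality of $S$. (4) The stopping cubes $\mathcal S$ satisfy a $\sigma$-Carleson packing estimate $\sum_{S\in\mathcal S}\sigma(S)\lesssim\sigma(\text{top})$ — this is where one must check that failing the size test forces a definite amount of $\sigma$-mass, which follows from the energy stopping condition already built into $\mathcal F$ and from the lower bound $\textup{size}(\mathcal P)\gtrsim$ (local ratio at $S$) combined with the $A_2$ hypothesis — so a Cauchy--Schwarz across $\mathcal S$ assembles $\sum_{S}\mathscr B_{\mathcal P_S}^2$-contributions without loss. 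Iterating (1)--(4) and summing the geometric series completes the proof.

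The main obstacle I expect is step (4) together with the bookkeeping in step (2): one must verify that the recursion genuinely halves the size while the "remainder" collections pack in a Carleson sense, so that the iterated sum converges. The subtlety is that \textit{size} is not a measure-type quantity but a supremum of Poisson-weighted energy ratios over the auxiliary tree $\mathcal T_{\mathcal P}$, so the packing must be extracted indirectly: a stopping cube $S$ contributes to the size only because $\mathsf P^{\textup{g}}_\sigma(F\setminus S,S)^2\sum_{P_2\Subset_r S}\lVert\Delta^w_{P_2}\tfrac{x}{\ell(S)}\rVert_w^2$ is large relative to $\sigma(S)$, and one needs this to be incompatible with $S$ lying inside a cube that \emph{passed} the energy rule of the stopping construction of \S\ref{s:stopping} — i.e.\ one is quantitatively exploiting that the global energy stopping data was chosen generously (the factor $10\mathscr R^2$ in rule (2)). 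Handling the interaction of the two Poisson terms correctly — ensuring we never need $\mathsf P^{\textup{g}+}$ in the above-$Q^e$ regime, as guaranteed by \eqref{e:Qe} and the remark following it that the inequality persists with $Q^e$ replaced by $P_Q\supset Q^e$ — is the technical linchpin that makes the monotonicity estimate in step (2) clean enough to iterate.
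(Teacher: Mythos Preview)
Your overall architecture---a size recursion in the spirit of \cite{part2}---is correct, and the paper proceeds the same way via its Size Lemma~\ref{l:size} and the orthogonality Lemma~\ref{l:ortho}. But step (4) has a genuine gap, and it is precisely the one you flag as ``the main obstacle''.

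The Carleson packing $\sum_{S\in\mathcal S}\sigma(S)\lesssim\sigma(\textup{top})$ does \emph{not} follow from your stopping rule with a constant independent of the level of the recursion. Your stopping cubes $S$ are selected where the ratio in \eqref{e:size} exceeds $\tfrac12\,\textup{size}(\mathcal P)$; inverting this and summing, using the energy inequality built into the $\mathcal F$-stopping data, only gives
\[
\sum_{S\in\mathcal S}\sigma(S)\ \lesssim\ \frac{\mathscr R^2}{\textup{size}(\mathcal P)^2}\,\sigma(\textup{top}).
\]
After $k$ iterations of halving the size this packing constant is $\simeq 4^{k}$, which exactly cancels the $4^{-k}$ gain you get from the size reduction, and the recursion does not converge. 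The appeal to the $A_2$ hypothesis does not rescue this: $A_2$ controls products of densities, not the ratio $\mathscr R/\textup{size}(\mathcal P)$. This is not a bookkeeping issue---it is the reason the argument of \cite{part2} does \emph{not} use a $\sigma$-packing of size-stopping cubes.

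The paper's route avoids this entirely by replacing packing with \emph{orthogonality}. One introduces the auxiliary measure $\lambda=\sum_{P_2}\lVert\Delta^w_{P_2}x\rVert_w^2\,\delta_{\tilde x_{P_2}}$ on $\mathbb R^{n+1}_+$ (see \eqref{e:lam-def}) and builds $\mathcal L$ not by thresholding the size ratio but by a geometric-growth rule on $\lambda(\textup{Tent}_L)$, namely \eqref{e:L0}--\eqref{e:recurse}. The ``small'' collections $\mathcal P_{\textup{small}}^L$ indexed by $L\in\mathcal L$ are then \emph{mutually orthogonal} in the sense that the $\widetilde{\mathcal P}_1$'s and the $\mathcal P_2$'s are pairwise disjoint; Lemma~\ref{l:ortho} then gives $\mathscr B_{\cup_L\mathcal P^L}\le\sqrt2\,\sup_L\mathscr B_{\mathcal P^L}$ with \emph{no} packing constant at all. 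That is what survives iteration. The ``big'' collection is not handled by a single testing/size bound but is split by the $\mathcal L$-distance $t$ between $\tilde P_1$ and $P_2$; the geometric decay $(1+c)^{-t/6}$ in Lemma~\ref{l:big-t} comes from iterating the $\lambda$-growth condition \eqref{e:recurse}, not from any $\sigma$-Carleson estimate.

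A secondary issue: in your step (3) you split the argument of the Riesz transform as $(F\setminus S)\sqcup(S\setminus P_Q)$. The resulting form with argument $S\setminus\tilde P_1$ is no longer $B_{\mathcal P'}$ for any admissible $\mathcal P'$ in the sense of Definition~\ref{d:admiss}, since size is defined with the fixed argument $F\setminus K$; so your recursion is not on the same class of objects. The paper never changes the argument of $\mathsf R_\sigma$ inside the stopping form---the decomposition is purely at the level of which pairs $(P_1,P_2)$ are retained.
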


The main step in the proof is phrased in terms of this variant of orthogonality. 
We say that  an enumeration of admissible collections  $ \{\mathcal P^j \;:\; j\in \mathbb N \}$  is 
\emph{orthogonal} if and only if  (a) the collections of cubes $ \mathcal P _{2} ^{j}$ are pairwise disjoint, 
and (b) the collections of cubes $ \widetilde {\mathcal P} _1 $ are pairwise disjoint.  One 
should note the asymmetry in the definition, which comes from a corresponding asymmetry in the 
roles of $ P_1$ and $ P_2$ in the definition of $ B _{\mathcal P} (f,g)$. 

\begin{lemma}\label{l:ortho} Let $ \{\mathcal P^j \;:\; j\in \mathbb N \}$ be admissible and orthogonal.  
Then, there holds 
\begin{equation}\label{e:ortho}
\lvert B _{\bigcup _{j}\mathcal P_j} (f,g)\rvert \le 
\sqrt 2 \sup _{j} \mathscr B _{\mathcal P_j} \cdot \lVert f\rVert _{\sigma } \lVert g\rVert _{w}. 
\end{equation}
\end{lemma}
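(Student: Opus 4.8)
The plan is a quasi-orthogonality argument, carried out asymmetrically in the two variables so as to mirror the asymmetric definition of an orthogonal family.

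First I would note that, since the second coordinate of a pair already determines which $\mathcal P^j$ contains it (the $\mathcal P_2^j$ being pairwise disjoint), the collections of pairs $\mathcal P^j$ are themselves pairwise disjoint, hence $B_{\bigcup_j\mathcal P^j}(f,g)=\sum_j B_{\mathcal P^j}(f,g)$. Then I would localize by setting $g_j\equiv\sum_{P_2\in\mathcal P_2^j}\Delta^w_{P_2}g$ and $f_j\equiv\sum_{P_1\in\mathcal P_1^j}\Delta^\sigma_{P_1}f$. Inspecting the definition of $B_{\mathcal P}$, the form sees $g$ only through the martingale differences $\Delta^w_{P_2}g$ with $P_2\in\mathcal P_2^j$, and sees $f$ only through the scalars $[\Delta^\sigma_{P_1}f]^\sigma_{\tilde P_1}$ over pairs $(P_1,P_2)\in\mathcal P^j$; since $\Delta^\sigma_{P_1}f_j=\Delta^\sigma_{P_1}f$ for $P_1\in\mathcal P_1^j$, this gives $B_{\mathcal P^j}(f,g)=B_{\mathcal P^j}(f_j,g_j)$. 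Consequently, by the definition of the constants $\mathscr B_{\mathcal P^j}$ and Cauchy--Schwarz in $j$,
\begin{equation*}
\bigl\lvert B_{\bigcup_j\mathcal P^j}(f,g)\bigr\rvert
\le\sum_j\mathscr B_{\mathcal P^j}\lVert f_j\rVert_\sigma\lVert g_j\rVert_w
\le\Bigl(\sup_k\mathscr B_{\mathcal P^k}\Bigr)\Bigl(\sum_j\lVert f_j\rVert_\sigma^2\Bigr)^{1/2}\Bigl(\sum_j\lVert g_j\rVert_w^2\Bigr)^{1/2}.
\end{equation*}

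Next I would bound the two square functions. On the $g$-side this is immediate: since the $\mathcal P_2^j$ are pairwise disjoint, the martingale differences entering different $g_j$ are mutually orthogonal in $L^2(\mathbb R^n;w)$, so $\sum_j\lVert g_j\rVert_w^2=\bigl\lVert\sum_j g_j\bigr\rVert_w^2\le\lVert g\rVert_w^2$. On the $f$-side, $\sum_j\lVert f_j\rVert_\sigma^2=\sum_{P_1}\#\{j:P_1\in\mathcal P_1^j\}\,\lVert\Delta^\sigma_{P_1}f\rVert_\sigma^2$, and here the hypothesis enters: each $j$ with $P_1\in\mathcal P_1^j$ pins a distinct child of $P_1$ inside $\widetilde{\mathcal P}_1^j$, since the $\widetilde{\mathcal P}_1^j$ are pairwise disjoint. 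After a modest refinement of the definition of $f_j$ (recorded below), which exploits the per-child identity $\sum_{C\in\textup{ch}(P_1)}\lVert\mathbf 1_C\Delta^\sigma_{P_1}f\rVert_\sigma^2=\lVert\Delta^\sigma_{P_1}f\rVert_\sigma^2$ together with that disjointness, one arrives at $\sum_j\lVert f_j\rVert_\sigma^2\le2\lVert f\rVert_\sigma^2$. Substituting the two bounds yields \eqref{e:ortho}.

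The one delicate point is this $f$-side refinement. Unlike the $g$-variable, where orthogonality over the disjoint sets $\mathcal P_2^j$ is automatic, the hypothesis supplies disjointness only of the \emph{child} collections $\widetilde{\mathcal P}_1^j$ and not of the $\mathcal P_1^j$ themselves, so a single martingale difference $\Delta^\sigma_{P_1}f$ can recur in several $f_j$; to reach the sharp constant one must instead carry into $f_j$ only the part of $\Delta^\sigma_{P_1}f$ supported on the children of $P_1$ that actually belong to $\widetilde{\mathcal P}_1^j$, plus the mean-zero constant on $P_1$ needed to keep a martingale difference, and then check that the resulting change in each $B_{\mathcal P^j}(f,g)$ — governed by the complementary children of $P_1$ — is absorbed by a second copy of the same square function. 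Keeping track of those compensating constants is the step that requires care; everything else is soft.
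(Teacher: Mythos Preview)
Your approach is essentially the same as the paper's: define the projections $f_j=\sum_{P_1\in\mathcal P_1^j}\Delta^\sigma_{P_1}f$ and $g_j=\sum_{P_2\in\mathcal P_2^j}\Delta^w_{P_2}g$, observe $B_{\mathcal P^j}(f,g)=B_{\mathcal P^j}(f_j,g_j)$, apply Cauchy--Schwarz in $j$, and bound the two square functions using the two disjointness hypotheses. The paper's proof is terse---it simply asserts $\sum_j\lVert f_j\rVert_\sigma^2\le 2\lVert f\rVert_\sigma^2$ with the one-line remark that ``a given cube $P_1$ can be in two different collections''---whereas you correctly identify that the naive count gives one occurrence of $P_1$ per child lying in some $\widetilde{\mathcal P}_1^j$, hence up to $2^n$ rather than $2$; your proposed child-by-child refinement of $f_j$ to recover the sharp $\sqrt 2$ is plausible but not fully carried out, and in any case the exact constant is irrelevant for the application (the recursion in the Size Lemma only needs a finite multiplicative loss).
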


\begin{proof}
Notice that a give cube $ P_1$ can be in two different collections $ \widetilde {\mathcal P} ^{j}_1$, 
which fact explains the $ \sqrt 2$ above.  
Let $ \Pi ^{j} _{\sigma } f = \sum_{P_1 \in \mathcal P ^{j}_1} \Delta ^{\sigma } _{P_1} f $, 
and define 
 $ \Pi ^{j} _{w } g= \sum_{P_2 \in \mathcal P ^{j}_2} \Delta ^{w } _{P_g} g $.  
 Then, we have 
 \begin{equation*}
\sum_{j\in \mathbb N } \lVert \Pi ^{j} _{\sigma } f \rVert _{\sigma } ^2 \le 2 \lVert f\rVert _{\sigma } ^2 ,
\end{equation*}
while the same inequality holds with constant one in $ L ^2 (\mathbb{R}^n;w)$.  
Then, there holds 
\begin{align*}
\left\lvert B _{\bigcup _{j}\mathcal P_j} (f,g)\right\rvert &\le 
\sum_{j\in \mathbb N } \lvert B _{\mathcal P_j} (f,g)\rvert 
\\
& \le \mathscr B _{\mathcal P_j} \sum_{j\in \mathbb N } \lVert \Pi ^{j} _{\sigma } f\rVert _{\sigma }    
\lVert \Pi ^{w} _{j}g\rVert_w
\\
& \le 
\sup _{j} \mathscr B _{\mathcal P_j}
\Biggl[ 
\sum_{j\in \mathbb N } \lVert \Pi ^{j} _{\sigma } f\rVert _{\sigma }  ^2 
\times 
\sum_{j\in \mathbb N } \lVert \Pi ^{j} _{\sigma } 
\lVert \Pi ^{w} _{j}g\rVert_w ^2 
\Biggr] ^{1/2} 
\\
& \le \sqrt 2 
\sup _{j} \mathscr B _{\mathcal P_j} \cdot \lVert f\rVert _{\sigma } \lVert g\rVert_w . 
\end{align*}
\end{proof}

With that preparation, our main lemma provides us with a decomposition of an arbitrary admissible 
collection into `big' and `small' collections.  The big collections have a control on their operator 
norm, and we can recurse on the small collections.  

\begin{lemma}[Size Lemma]
\label{l:size}  
For any admissible $ \mathcal P$, there is 
a decomposition $ \mathcal P = \mathcal P _{\textup{big}} \cup \mathcal P _{\textup{small}}$ such that 
\begin{equation}\label{e:big}
\lvert  B _{\mathcal P _{\textup{big}}} (f,g)\rvert  
\lesssim \textup{size} (\mathcal P) \lVert f\rVert _{\sigma } \lVert g\rVert_w , 
\end{equation}
and, $ \mathcal P _{\textup{small}}$ is the union of admissible  collections 
$ \{\mathcal P _{\textup{small}} ^{j} \;:\; j\in \mathbb N \}$, with 
\begin{equation}\label{e:small}
\sup _{j\in \mathbb N } \textup{size} (\mathcal P ^{j} _{\textup{small}}) \le 
\tfrac 14 \textup{size} (\mathcal P).  
\end{equation}
Moreover, the collections $ \{\mathcal P _{\textup{small}} ^{j} \;:\; j\in \mathbb N  \}$ 
are orthogonal.  
\end{lemma}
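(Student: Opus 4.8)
The plan is to build the decomposition by a stopping-time argument on the collection $\mathcal T_{\mathcal P}$ of ``testing cubes'' introduced in \eqref{e:KP}. Set $s = \textup{size}(\mathcal P)$. We declare a cube $K\in \mathcal T_{\mathcal P}$ to be \emph{stopping} if $K$ is maximal in $\mathcal T_{\mathcal P}$ with respect to the property
\begin{equation*}
\frac { \mathsf P_{\sigma}^{\textup{g}} (F \setminus K , K) ^2 } {\sigma (K)}
 \sum_{P_2 \in \mathcal P_2 \;:\; P_2 \Subset_{r} K}
\Bigl\lVert \Delta ^{w} _{P_2} \frac {x} {\ell (K)}  \Bigr\rVert_w^2
> \bigl( \tfrac 14 s\bigr)^2 .
\end{equation*}
Let $\mathcal S$ be the collection of stopping cubes. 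Since each individual summand contributing to the size is bounded (by the very definition of size applied at a single $K$), the stopping cubes form a Carleson-type family: one shows $\sum_{S'\in \mathcal S, S'\subsetneq S}\sigma(S') \le \tfrac12 \sigma(S)$, using that $16 s^2 \ge (\tfrac14 s)^{-2}\cdot (\tfrac14 s)^2\cdot\bigl(\text{quantity}\bigr)$ bounds how much total ``mass'' can hide below a given $S$; this is the standard argument that a super-level set of an averaged quantity controlled by the supremum is sparse. The pair $(P_1,P_2)\in\mathcal P$ is placed in $\mathcal P_{\textup{big}}$ if no stopping cube $S\in\mathcal S$ satisfies $P_2 \Subset_r S$ with $S \subsetneq \tilde P_1$; otherwise, letting $S$ be the (unique, by maximality) such stopping cube, assign $(P_1,P_2)$ to $\mathcal P^S_{\textup{small}}$, indexing the small collections by $S\in\mathcal S$. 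Convexity in $P_1$ is inherited because passing from $P_1$ to a larger good $P_1'\subset P_1''$ does not change $P_2$ nor the stopping cube $S$ attached to it, and $\tilde P_1'$ still contains $(P_2)^e$ and $S$; so each $\mathcal P^S_{\textup{small}}$ is admissible.

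For \eqref{e:small}: inside a fixed small collection $\mathcal P^S_{\textup{small}}$, every pair has $P_2 \Subset_r S$ for the single attached stopping cube $S$, so the testing cubes $K$ relevant to computing $\textup{size}(\mathcal P^S_{\textup{small}})$ are exactly those $K\in\mathcal T_{\mathcal P^S_{\textup{small}}}$ that sit \emph{strictly below} $S$ (since $S$ itself is stopping, no $K\supseteq S$ survives as a testing cube for the sub-collection by maximality of $S$). By definition of the stopping rule, every such $K$ fails the super-level inequality, i.e. satisfies
\begin{equation*}
\frac { \mathsf P_{\sigma}^{\textup{g}} (F \setminus K , K) ^2 } {\sigma (K)}
 \sum_{P_2 \in \mathcal P_2 \;:\; P_2 \Subset_{r} K}
\Bigl\lVert \Delta ^{w} _{P_2} \frac {x} {\ell (K)}  \Bigr\rVert_w^2
\le \bigl( \tfrac 14 s\bigr)^2,
\end{equation*}
and the sum over $P_2$ restricted to $\mathcal P^S_{\textup{small}}$ is no larger. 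Taking the supremum over such $K$ gives $\textup{size}(\mathcal P^S_{\textup{small}}) \le \tfrac14 s$, which is \eqref{e:small}. Orthogonality of $\{\mathcal P^S_{\textup{small}} : S\in\mathcal S\}$: condition (a) holds since the $\mathcal P_2$-cubes of $\mathcal P^S_{\textup{small}}$ all lie strictly inside $S$ (as $P_2\Subset_r S$), and distinct stopping cubes in $\mathcal S$ are either nested or disjoint — but if $S\subsetneq S'$ were both in $\mathcal S$ that would contradict maximality in the stopping construction unless they are incomparable; the cubes $P_2$ assigned to $S$ are those for which $S$ is the \emph{minimal} stopping cube above $P_2$ inside $\tilde P_1$, so each $P_2$ is assigned to exactly one $S$. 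Condition (b) for the $\widetilde{\mathcal P}_1$ cubes is handled the same way: $\tilde P_1 \supset S$ and we can further refine the assignment so the $\tilde P_1$'s attached to a given $S$ are exactly those with $S$ minimal below $\tilde P_1$ among stopping cubes, making them disjoint across distinct $S$ — if necessary we split $\mathcal P^S_{\textup{small}}$ into boundedly many pieces to enforce this, absorbing the constant into $\sqrt 2$ via Lemma~\ref{l:ortho} (or simply noting a given cube lies in at most two such families, as in the proof of Lemma~\ref{l:ortho}).

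For \eqref{e:big}: in $\mathcal P_{\textup{big}}$ no stopping cube separates $P_2$ from $\tilde P_1$, which means that every testing cube $K\in\mathcal T_{\mathcal P_{\textup{big}}}$ with $K\subseteq\tilde P_1$ is \emph{non-stopping}, hence satisfies the size bound at threshold $s$ (in fact this is automatic from $\textup{size}(\mathcal P)\le s$, but the point is that there is no further subdivision needed). So $B_{\mathcal P_{\textup{big}}}(f,g)$ is a form over an admissible collection whose testing cubes all obey the size-$s$ bound all the way up; this is precisely the situation in which one runs the direct (non-recursive) estimate: expand $\mathsf R_\sigma(F\setminus P_Q)$ via the monotonicity principle Lemma~\ref{l:monotone} applied at $K\in\mathcal K_{\tilde P_1}$ (using Proposition~\ref{p:Kp} to place each $P_2$ inside some $K$), and because we are above $Q^e$ the monotonicity expression is controlled solely by the gradient Poisson term $\mathsf P^{\textup{g}}_\sigma$ paired against $\bigl\langle x/\ell(K), \Delta^w_{P_2} g\bigr\rangle_w$, i.e. \eqref{e:Qe} upgrades the estimate to drop the $\mathsf P^{\textup{g}+}$ piece. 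Summing over $K$, Cauchy--Schwarz in the pairs, the defining bound of $\textup{size}$ on each $K$, the Carleson property of $\mathcal K_{\tilde P_1}$-type collections (bounded overlap, Proposition~\ref{p:w}-style), the quasi-orthogonality of the $\Delta^\sigma_{P_1}f$ telescoping to bounded averages as in the exchange argument, and orthogonality of the $\Delta^w_{P_2}g$, yields \eqref{e:big} with implied constant depending only on $n,d$. The main obstacle is this last estimate on $\mathcal P_{\textup{big}}$: one must carefully organize the double sum over $(P_1,P_2)$ so that the $P_1$-variable is summed first (using admissibility/convexity to telescope the martingale differences of $f$ into a bounded multiple of $[\,|f|\,]^\sigma_{\cdot}$, exactly as in \S\ref{s:loc}'s exchange step) and only then apply Cauchy--Schwarz in $(P_2, K)$; getting the bookkeeping of $\Subset_{4r}$ vs.\ $\Subset_r$ right, so that each $P_2$ lands in a unique $K\in\mathcal K_{\tilde P_1}$ and the overlaps stay bounded, is where the delicacy lies, but all the needed ingredients (Propositions~\ref{p:Kp}, \ref{p:w}, Lemma~\ref{l:monotone}, and the definition \eqref{e:size}) are already in place.
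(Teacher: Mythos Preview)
Your stopping construction does not deliver the size reduction \eqref{e:small}. You take $\mathcal S$ to be the \emph{maximal} cubes in $\mathcal T_{\mathcal P}$ where the size quantity exceeds $(s/4)^2$, so $\mathcal S$ is a pairwise-disjoint family. For the small collection $\mathcal P^S_{\textup{small}}$ you then need to bound the size at every testing cube $K\in\mathcal T_{\mathcal P^S_{\textup{small}}}$, but your argument only handles $K\supsetneq S$ (where maximality of $S$ gives the bound). For $K=S$ the quantity exceeds $(s/4)^2$ by the very definition of $S$ being stopping, and the restricted sum over $P_2\in(\mathcal P^S_{\textup{small}})_2$ can equal the full sum. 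For $K\subsetneq S$ there is no constraint at all: a cube strictly inside a maximal stopping cube may well exceed the threshold --- maximality says nothing about smaller cubes. Your sentence ``every such $K$ fails the super-level inequality'' is simply false for $K\subseteq S$. The Carleson claim $\sum_{S'\subsetneq S}\sigma(S')\le\tfrac12\sigma(S)$ is vacuous since $\mathcal S$ is one disjoint layer, which is a sign that the construction is not the right one. Orthogonality (b) also fails: a single $\tilde P_1$ can contain two disjoint cubes $S,S'\in\mathcal S$, and then $\tilde P_1$ lies in both $\widetilde{\mathcal P^S_1}$ and $\widetilde{\mathcal P^{S'}_1}$; there is no ``minimal below $\tilde P_1$'' to refine by, since the $S$'s are incomparable.

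The paper's construction is different in an essential way. One takes \emph{minimal} cubes $\mathcal L_0$ satisfying a threshold, then iterates: $\mathcal L_n$ consists of minimal cubes above $\mathcal L_{n-1}$ where the $\lambda$-measure (encoding the $P_2$-energy) has grown by a fixed factor $(1+c)$. The small collections are indexed by $L\in\mathcal L$ with $\pi_{\mathcal L}\tilde P_1=\tilde\pi_{\mathcal L}P_2=L$ and $\tilde P_1\subsetneq L$; smallness follows because any testing cube for such a collection must \emph{fail} the growth condition \eqref{e:recurse}, and one reduces to the initial minimality condition. The big collection then splits into two pieces: pairs with $\tilde P_1=L\in\mathcal L$ (orthogonal in $L$, with a direct estimate since $\tilde P_1$ is frozen), and pairs with $P_2$ and $\tilde P_1$ separated by $t\ge 2$ levels of $\mathcal L$; for the latter one proves geometric decay $(1+c)^{-t/6}$ in $t$ by iterating the growth condition. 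Your sketch for the big part does not isolate either mechanism --- in particular, without $\tilde P_1$ fixed you cannot simply ``telescope then Cauchy--Schwarz'' because the nesting of the $\tilde P_1$'s is uncontrolled.
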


\begin{proof}[Proof of \eqref{e:2size}]  
By recursive application of the Size Lemma, we can write $ \mathcal P = \bigcup _{t=1} ^{\infty } \mathcal P _{t}$ 
where $ \mathcal P _{t}$ is the union of orthogonal collections $ \{ \mathcal P _{t} ^{j} \;:\; j\in \mathbb N \}$, 
which satisfy $ \mathscr B _{\mathcal P _t ^{j}} \lesssim 4 ^{-t} \textup{size} (\mathcal P)$.  Thus, 
\begin{equation*}
\mathscr B _{\mathcal P} \le \sum_{t=1} ^{\infty }  \mathscr B _{\mathcal P_t} 
\lesssim \textup{size} (\mathcal P) \sum_{t=1} ^{\infty } 2 \cdot 4 ^{-t} \lesssim \textup{size} (\mathcal P). 
\end{equation*}
\end{proof}

\subsection{Decomposition of \texorpdfstring{$ \mathcal P$}{the collection}}

 Define the measure on $ \mathbb R ^{n+1}_+$ by 
\begin{equation}  \label{e:lam-def}
\lambda = \lambda _{\mathcal P} \equiv \sum_{P_2 \in \mathcal P_2} \lVert \Delta ^{w} _{P_2} {x} \rVert_w  ^2 \delta_{\tilde x_{P_2}}
\end{equation}
where $ \tilde x _{P} = (x_P, \ell (P))$.   The main condition we have is this reformulation of the definition of size:
\begin{gather}\label{e:Nsize}
\sup _{Q\in {\mathcal T}_ {\mathcal P}}  
\mathsf P_{\sigma}^{\textup{g}} (F \setminus Q ,Q) ^2 \frac {\lambda (\textup{Tent} _{Q}) } {  \sigma (Q) \ell (Q) ^{2}} 
= \mathbf S ^2 \equiv \textup{size} (\mathcal P) ^2  ,
\\
\textup{Tent} _{Q} \equiv \bigcup _{ K\in \mathcal W _{Q}}  \textup{Box}_K .  
\end{gather}
Here, we are using the notation $ \textup{Box}_K\equiv K \times [0, \ell (K))$, as it is 
used in the functional energy inequality. 

This collection is used to make the decomposition of $ \mathcal P$.  Set $ \mathcal L_0$ to be the minimal elements $Q\in {\mathcal T}_ {\mathcal P}$ such that 
\begin{equation} \label{e:L0}
\mathsf P_{\sigma}^{\textup{g}} (F\setminus Q ,Q) ^2 \frac {\lambda (\textup{Tent} _{Q}) } {   \ell (Q) ^{2}} \ge c \mathbf S ^2  \sigma (Q).  
\end{equation}
Here $ c = \frac 1 {32}$.   Such $ Q$ exist, by definition of size.  Then, for $ n\ge 1$, inductively define 
$ \mathcal L _{n}$ to be the minimal elements $ L \in {\mathcal T}_ {\mathcal P}$ such that 
\begin{equation}\label{e:recurse}
\lambda (\textup{Tent} _{L}) \ge (1+ c) 
 \sideset {} {^{\ast }}\sum_{L' \::\: L'\subsetneq  L} \lambda (\textup{Tent} _{L'}).  
\end{equation}
where the last sum is performed over the maximal elements $L' \in \bigcup _{m=0} ^{n-1} \mathcal L _{m} $, with $ L'\subsetneq L$ (this is designated by the $\ast$ appearing on the sum). 
Then set $ \mathcal L \equiv \bigcup _{n \ge 0} \mathcal L_n$.  

The collection $ \mathcal P _{\textup{small}}$ is then defined this way.  Set $ \mathcal P _{\textup{small}} ^{0}$ to be the 
collection of pairs $(P_1, P_2) \in \mathcal P  $ such that $\tilde P_1$ does not have a parent in $ \mathcal L$. 
And, for each $ L\in \mathcal L$, define 
\begin{equation} \label{e:small-L}
\mathcal P _{\textup{small}} ^{L}  \equiv \{ (P_1, P_2) \::\:  \tilde \pi _{\mathcal L} P_2 = \pi _{\mathcal L} \tilde P_1 = L,\ \tilde P_1\subsetneq L\}. 
\end{equation}
Here and below, $ \tilde \pi _{\mathcal L} P_2  $ is the minimal element of $ \mathcal L$ such that $ P_2 \subset L$ 
and $ P_2\Subset_{r} \pi L$.

\begin{lemma}\label{l:small} The collections $\mathcal P _{\textup{small}} ^{0}  $ and $ \mathcal P _{\textup{small}} ^{L}   $, 
for $ L\in \mathcal L$ are admissible, have size at most $ \tfrac 14 \textup{size} (\mathcal P) \le \tfrac 14 \mathbf S$.  Moreover, the collections $ \{ \mathcal P _{\textup{small}} ^{L} \;:\; L\in \mathcal L\}$ are orthogonal. 
\end{lemma}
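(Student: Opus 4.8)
The plan is to verify in turn the three assertions of the Lemma --- admissibility, the size bound, and orthogonality of the family $ \{\mathcal P ^{L} _{\textup{small}}\}$ --- the size bound being the one requiring real work.

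\emph{Admissibility.} Each of $ \mathcal P ^{0} _{\textup{small}}$ and $ \mathcal P ^{L} _{\textup{small}}$ is a subcollection of the admissible $ \mathcal P$, so condition (1) of Definition~\ref{d:admiss} is inherited verbatim; only convexity in $ P_1$ must be checked. Suppose $ (P_1, P_2)$ and $ (P_1'', P_2)$ lie in one of the small collections, $ P_1 \subseteq P_1' \subseteq P_1''$, and $ P_1'$ is good, so that $ (P_1', P_2) \in \mathcal P$ by convexity of $ \mathcal P$. Since each of $ P_1, P_1', P_1''$ contains $ P_2$, the children $ \tilde P_1 = (P_1) _{P_2}$, $ \tilde P_1' = (P_1') _{P_2}$, $ \tilde P_1'' = (P_1'') _{P_2}$ are dyadic cubes all containing $ P_2$, hence nested, and (comparing side lengths) $ \tilde P_1 \subseteq \tilde P_1' \subseteq \tilde P_1''$. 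The assignment of a pair to $ \mathcal P ^{0} _{\textup{small}}$ only sees whether $ \tilde P_1$ has an $ \mathcal L$-parent, and an $ \mathcal L$-cube strictly containing $ \tilde P_1'$ would strictly contain $ \tilde P_1$; so ``$ \tilde P_1$ has no $ \mathcal L$-parent'' passes to $ \tilde P_1'$. Membership of the pair in $ \mathcal P ^{L} _{\textup{small}}$ requires $ \pi _{\mathcal L} \tilde P_1 = L$ and $ \tilde P_1 \subsetneq L$; from $ \tilde P_1 \subseteq \tilde P_1' \subseteq \tilde P_1'' \subsetneq L$ and the monotonicity $ \pi _{\mathcal L} \tilde P_1 \subseteq \pi _{\mathcal L} \tilde P_1' \subseteq \pi _{\mathcal L} \tilde P_1''$ one gets $ \pi _{\mathcal L} \tilde P_1' = L$; and $ \tilde \pi _{\mathcal L} P_2$ is untouched. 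Hence $ (P_1', P_2)$ lies in the same small collection.

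\emph{Orthogonality.} Let $ L \neq L'$ in $ \mathcal L$. A cube $ P_2 \in (\mathcal P ^{L} _{\textup{small}}) _2$ has $ \tilde \pi _{\mathcal L} P_2 = L$, a single-valued function of $ P_2$, so it cannot also lie in $ (\mathcal P ^{L'} _{\textup{small}}) _2$; thus the collections $ (\mathcal P ^{L} _{\textup{small}}) _2$ are pairwise disjoint. Likewise a cube $ \tilde P_1 \in \widetilde{(\mathcal P ^{L} _{\textup{small}})}_1$ has $ \pi _{\mathcal L} \tilde P_1 = L$, so the collections $ \widetilde{(\mathcal P ^{L} _{\textup{small}})}_1$ are pairwise disjoint. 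These are exactly the two conditions in the definition of an orthogonal family (cf. the proof of Lemma~\ref{l:ortho}); the asymmetric use of $ \tilde \pi _{\mathcal L}$ on $ P_2$ and $ \pi _{\mathcal L}$ on $ \tilde P_1$ is precisely what makes this work.

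\emph{The size bound.} Here I use the reformulation \eqref{e:Nsize} of size, the measure $ \lambda = \lambda _{\mathcal P}$ of \eqref{e:lam-def}, the trivial domination $ \lambda _{\mathcal P ^{\bullet} _{\textup{small}}} \le \lambda$, and the two-tier definition of $ \mathcal L$. For a cube $ Q \in {\mathcal T} _{\mathcal P}$ that contains no member of $ \mathcal L_0$, minimality in \eqref{e:L0} gives $ \mathsf P _{\sigma} ^{\textup{g}} (F \setminus Q, Q) ^2 \, \lambda (\textup{Tent} _Q) < c\, \mathbf S ^2 \sigma (Q) \ell (Q) ^2$ with $ c = \tfrac 1{32}$, so the size quantity at $ Q$ is $ < c \mathbf S ^2$. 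For $ \mathcal P ^{L} _{\textup{small}}$: every $ P_2 \in (\mathcal P ^{L} _{\textup{small}}) _2$ has $ \tilde \pi _{\mathcal L} P_2 = L$, so $ P_2$ is contained in no $ \mathcal L$-cube strictly below $ L$, whence $ \tilde x _{P_2} \notin \textup{Tent} _{M}$ for every maximal $ M \in \mathcal L$ with $ M \subsetneq L$. Consequently, for $ Q \in {\mathcal T} _{\mathcal P ^{L} _{\textup{small}}}$,
\begin{equation*}
\lambda _{\mathcal P ^{L} _{\textup{small}}} (\textup{Tent} _Q) \le \lambda (\textup{Tent} _Q) - \sideset {} {^{\ast }}\sum _{M \subseteq Q} \lambda (\textup{Tent} _M),
\end{equation*}
the sum over the maximal $ M \in \mathcal L$ contained in $ Q$. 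If $ Q$ contains no such $ M$, then, since every $ \mathcal L$-cube contains a member of $ \mathcal L_0$, the cube $ Q$ contains no member of $ \mathcal L_0$ and the bound of the previous sentence applies. Otherwise $ Q \notin \mathcal L$, so $ Q$ violates the doubling rule \eqref{e:recurse} at the appropriate generation, giving $ \lambda (\textup{Tent} _Q) - \sideset {} {^{\ast }}\sum _{M \subseteq Q} \lambda (\textup{Tent} _M) < c \, \lambda (\textup{Tent} _Q)$, which with $ \mathsf P _{\sigma} ^{\textup{g}} (F \setminus Q, Q) ^2 \lambda (\textup{Tent} _Q) \le \mathbf S ^2 \sigma (Q) \ell (Q) ^2$ again bounds the size quantity at $ Q$ by $ c \mathbf S ^2$; the remaining cases ($ Q \in \mathcal L$, or $ Q$ strictly inside a maximal $ \mathcal L$-cube below $ L$) carry no $ \mathcal P ^{L} _{\textup{small}}$-mass. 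The collection $ \mathcal P ^{0} _{\textup{small}}$ is handled the same way, with ``the maximal $ \mathcal L$-cubes below $ L$'' replaced by the maximal $ \mathcal L$-cubes below the relevant $ \tilde P_1$. In all cases $ \textup{size} (\mathcal P ^{\bullet} _{\textup{small}}) \le \sqrt c\, \mathbf S < \tfrac14 \mathbf S$, the value $ c = \tfrac1{32}$ being calibrated so that, after absorbing the bounded-overlap constants of Proposition~\ref{p:Kp} and of $ \mathcal W _Q$, one still lands at $ \tfrac 14$.

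\emph{Main obstacle.} The substantive point is the size bound, and within it the bookkeeping: deciding exactly which $ P_2$ survive into $ \mathcal P ^{0} _{\textup{small}}$ and $ \mathcal P ^{L} _{\textup{small}}$, translating $ P_2 \Subset _{r} Q$ into $ \tilde x _{P_2} \in \textup{Tent} _Q$ via Proposition~\ref{p:Kp} and the Whitney structure of $ \mathcal W _Q$, and then showing that the surviving $ \lambda$-mass on $ \textup{Tent} _Q$ either forces failure of the $ \mathcal L_0$-threshold or is a fraction $ < c$ of the full mass through \eqref{e:recurse}. Admissibility and orthogonality are, by comparison, formal consequences of the monotonicity and single-valuedness of the stopping projections $ \pi _{\mathcal L}$ and $ \tilde \pi _{\mathcal L}$.
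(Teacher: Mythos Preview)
Your proof is correct and follows essentially the same route as the paper's own argument. You supply more detail on admissibility and orthogonality (which the paper dismisses in one line each as ``inherited'' and ``clear''), and your size-bound argument is structurally identical to the paper's: cubes $Q$ containing no $\mathcal L_0$-element fail \eqref{e:L0} directly, while cubes containing $\mathcal L$-descendants but not themselves in $\mathcal L$ fail the doubling rule \eqref{e:recurse}, producing the factor $c$; the paper makes the same contradiction argument with the same constants.
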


\begin{proof}
Admissibility is inherited from $ \mathcal P$ and the construction of the collections.  
Orthogonality is also clear from the construction in terms of $ \mathcal L $. 
Thus, it remains to check that the collections have small size.  For $ \mathcal P _{\textup{small}} ^{0}$, 
suppose there is  a  cube $ Q\in {\mathcal T}_ {\mathcal P_{\textup{small}} ^{0}}$ such that 
\begin{align} 
 \tfrac 1 {16} \mathbf S ^2 
& \le 
\frac { \mathsf P_{\sigma}^{\textup{g}} (F\setminus Q ,Q) ^2 } {  \sigma (Q) \ell (Q) ^{2}} 
\times  \lambda _{\mathcal P_{\textup{small}} ^{0}} (\textup{Tent}_Q).  
\end{align}
If $ Q$ does not contain any element of $ \mathcal L$, we would contradict the construction of 
that collection.  Hence, it does contain elements of $ \mathcal L$, and hence summing over 
the maximal such $ L\in \mathcal L$ below, there holds 
\begin{align*}
 \tfrac 1 {16} \mathbf S ^2 
& \le c 
\frac { \mathsf P_{\sigma}^{\textup{g}} (F\setminus Q ,Q) ^2 } {  \sigma (Q) \ell (Q) ^{2}} 
\times 
\sideset {} {^{\ast} }
\sum_{ L\in \mathcal L \::\: L\subset Q}
\lambda_{\mathcal P} ( \textup{Tent})
 \le c \mathbf S ^2 .  
\end{align*}
Notice that the constant $ c$ enters in because of construction, see \eqref{e:recurse}.  We see a contradiction 
since $ c= \frac 1 {32}$. 
Thus, $ \mathcal P _{\textup{small}} ^{0}$ has small size.  

Turn to the collections $ \mathcal P _{\textup{small}} ^{L} $ as defined in \eqref{e:small-L}. 
Again, if the size is more than $ \tfrac 14 \mathbf S$, then there is a cube $ Q\in  {\mathcal T}_ {\mathcal P _{\textup{small}} ^{L}}$ such that 
\begin{align*}
\tfrac 1 {16} \mathbf S ^2 
& \le 
\frac { \mathsf P_{\sigma}^{\textup{g}} (F \setminus Q,Q) ^2 } {  \sigma (Q) \ell (Q) ^{2}} 
\lambda _{\mathcal P _{\textup{small}} ^{L}} (\textup{Tent}_Q). 
\end{align*}
Moreover, $ Q$ cannot be contained in any $ L' \in \mathcal L$ which is a descendant of $ L$ in the $ \mathcal L$-tree, 
otherwise the term on the right above is zero. It follows that the cube $ Q$ must \emph{fail} the inequality \eqref{e:recurse}. 
Hence, we can repeat the previous argument to deduce that the size of this collection is also small. 
\end{proof}

\subsection{Controlling the Big Collection}
We have completed the proof of \eqref{e:small}.  
By definition, the collection $ \mathcal P _{\textup{big}}$ is the  (non-admissible) complementary collection.  
We decompose it into the union of  two collections $ \mathcal P ^{j} _{\textup{big}}$, for $ j=1,2$, 
with the appropriate bound on the norm  of the bilinear form $ \mathscr B _{\mathcal P _{\textup{big}} ^{j}}$ in each case. 
The essential point is that in each of the big collections, the intricate relationship between $ P_1$ and $ P_2$ is moderated by 
a `separating' collection of cubes, uniformly over pairs in the big collection.  This permits the estimation of the operator norm.  

Set $ \mathcal P ^{1} _{\textup{big}} \equiv \bigcup _{L\in \mathcal L}  \mathcal P ^{1, L} _{\textup{big}}$, where the latter collection is 
\begin{equation*}
 \mathcal P ^{1, L} _{\textup{big}} \equiv \{  (P_1, P_2 ) \in \mathcal P \::\:  \tilde P_1 = L,\ \tilde \pi _{\mathcal L} P_2 = L\}.  
\end{equation*}
Observe that these collections are admissible and orthogonal.  
Moreover,   the structure of these collections is quite rigid, since $ \tilde P_1$ is a fixed interval. 

\begin{lemma}\label{l:1big} There holds, uniformly over $ L\in \mathcal L$, 
that  $ \mathscr B _{\mathcal P ^{1, L} _{\textup{big}} } \lesssim \mathbf S$.  
\end{lemma}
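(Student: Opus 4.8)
The plan is to exploit the rigidity of $\mathcal P^{1,L}_{\textup{big}}$: since $\tilde P_1 = L$ for every pair $(P_1,P_2)$ in this collection, the cube $L$ separates the argument of the Riesz transform from the Haar support of $g$ in a uniform way, which is exactly the situation where the monotonicity principle (Lemma~\ref{l:monotone}) and the energy inequality \eqref{e:E} combine cleanly. First I would sum the martingale differences on $f$ using the stopping data: for each $P_2\in\mathcal P^{1,L}_{\textup{big},2}$, the pairs $(P_1,P_2)\in\mathcal P^{1,L}_{\textup{big}}$ run over good cubes $P_1$ with $(P_1)_{P_2}=L$, and by convexity of $\mathcal P$ these $P_1$ form an interval in the tree; hence $\bigl|\sum_{P_1}[\Delta^\sigma_{P_1}f]^\sigma_L\bigr|\lesssim [\,|f|\,]^\sigma_F$, or more precisely the telescoping sum is controlled by $[\,|f|\,]^\sigma_{F}$ plus a genuine martingale-difference contribution on $f$ restricted to $L$ that is handled by quasi-orthogonality. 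This collapses $B_{\mathcal P^{1,L}_{\textup{big}}}(f,g)$ into a sum over $L\in\mathcal L$ of terms of the form $\varepsilon_L\,[\,|f|\,]^\sigma_F\,\langle \mathsf R_\sigma(F\setminus L),\,P^{w,L}g\rangle_w$ with $|\varepsilon_L|\lesssim 1$, where $P^{w,L}g\equiv\sum_{P_2:\ \tilde\pi_{\mathcal L}P_2=L}\Delta^w_{P_2}g$ is a good-cube projection of $g$ supported essentially on $L$.

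Next I would apply the monotonicity principle with $P=L$ and $Q$ running over the cubes in $\mathcal K_L$ (invoking Proposition~\ref{p:Kp} so that each $P_2$ lies $\Subset_r$ inside some $K\in\mathcal K_L$), which gives
\begin{align*}
\bigl|\langle \mathsf R_\sigma(F\setminus L), P^{w,L}g\rangle_w\bigr|
&\lesssim \sum_{K\in\mathcal W_L}\mathsf P^{\textup g}_\sigma(F\setminus C_0K,K)
\Bigl\lVert\sum_{\substack{P_2:\ \tilde\pi_{\mathcal L}P_2=L\\ P_2\subset K}}\Delta^w_{P_2}\tfrac{x}{\ell(K)}\Bigr\rVert_w
\lVert P^{w,L}g\rVert_w\\
&\quad+\sum_{K\in\mathcal W_L}\mathsf P^{\textup g+}_\sigma(F\setminus C_0K,K)\,E(w,K)\,w(K)^{1/2}\lVert P^{w,L}g\rVert_w,
\end{align*}
where the second, $\mathsf P^{\textup g+}$ term is the one to worry about — but since we are on the `big' side, i.e.\ $\tilde P_1=L$ is fixed, for each $P_2$ the minimal $(P_2)^e$ satisfies $(P_2)^e\subset L$, so by the remark following \eqref{e:Qe} the defining inequality of $Q^e$ upgrades to $L$, and the $\mathsf P^{\textup g+}$ term is pointwise dominated by the $\mathsf P^{\textup g}$ term. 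Thus only the first, economical term survives, and after Cauchy--Schwarz in $K$ it is bounded by $\mathsf S\cdot\sigma(L)^{1/2}$ times $\lVert P^{w,L}g\rVert_w$, using precisely the reformulation \eqref{e:Nsize} of $\textup{size}(\mathcal P)$: the supremum there controls $\mathsf P^{\textup g}_\sigma(F\setminus K,K)^2\,\lambda(\textup{Tent}_K)/(\sigma(K)\ell(K)^2)\le\mathbf S^2$ for $K\in\mathcal T_{\mathcal P}\supset\mathcal W_L$, and $\lambda(\textup{Tent}_K)=\sum_{P_2\subset K}\lVert\Delta^w_{P_2}x\rVert_w^2$ is exactly the energy sum appearing above.

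Finally I would sum over $L\in\mathcal L$ by Cauchy--Schwarz:
\begin{equation*}
\bigl|B_{\mathcal P^1_{\textup{big}}}(f,g)\bigr|
\lesssim \mathbf S\sum_{L\in\mathcal L}[\,|f|\,]^\sigma_F\,\sigma(L)^{1/2}\lVert P^{w,L}g\rVert_w
\lesssim \mathbf S\Bigl(\sum_{L\in\mathcal L}([\,|f|\,]^\sigma_F)^2\sigma(L)\Bigr)^{1/2}
\Bigl(\sum_{L\in\mathcal L}\lVert P^{w,L}g\rVert_w^2\Bigr)^{1/2}\lesssim \mathbf S\,\lVert f\rVert_\sigma\lVert g\rVert_w,
\end{equation*}
where the first bracket is controlled by $\lVert f\rVert_\sigma^2$ because $\mathcal L\subset\mathcal T_{\mathcal P}$ refines $\mathcal F$ in a Carleson fashion (the recursion \eqref{e:recurse} makes $\sum_L\lambda(\textup{Tent}_L)$ Carleson, and combined with the $\sigma$-Carleson property of $\mathcal F$ this bounds $\sum_L\sigma(L)([\,|f|\,]^\sigma_F)^2$ by $\lVert f\rVert_\sigma^2$), and the second bracket is $\lesssim\lVert g\rVert_w^2$ since the projections $P^{w,L}g$ have pairwise disjoint Haar supports by the orthogonality of $\{\mathcal P^{1,L}_{\textup{big}}\}$. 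The part of $f$ coming from genuine martingale differences on $L$ (rather than the telescoped average) is absorbed by the same quasi-orthogonality argument used in \eqref{e:quasi}. The main obstacle is the bookkeeping in the first step — verifying that the convexity clause in Definition~\ref{d:admiss} really does let one telescope the $f$-sum with a bounded constant and identify the residual term as a controllable martingale-difference projection — together with checking that $\mathcal L$ is genuinely Carleson over $\mathcal F$ so that the final double sum closes; the analytic heart, namely turning the $\mathsf P^{\textup g+}$ term into the $\mathsf P^{\textup g}$ term via the $Q^e$ remark, is exactly the point for which the `big' collection was defined and is essentially automatic here.
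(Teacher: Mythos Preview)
Your core analytic step---applying monotonicity, using the remark after \eqref{e:Qe} to suppress the $\mathsf P^{\textup g+}$ term because $(P_2)^e\subset \tilde P_1=L$, and then invoking the definition of size---is exactly the paper's argument. The problems are in the scaffolding around it.

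First, you are misreading the statement. The lemma is about a \emph{single fixed} $L\in\mathcal L$: it asserts $\mathscr B_{\mathcal P^{1,L}_{\textup{big}}}\lesssim\mathbf S$ uniformly in $L$, not a bound on $B_{\mathcal P^1_{\textup{big}}}=\sum_L B_{\mathcal P^{1,L}_{\textup{big}}}$. The summation over $L$ is carried out elsewhere, via the orthogonality of the family $\{\mathcal P^{1,L}_{\textup{big}}:L\in\mathcal L\}$ and Lemma~\ref{l:ortho}. Your final paragraph is therefore unnecessary, and the justification you give for it is wrong: the recursion \eqref{e:recurse} makes $\mathcal L$ Carleson with respect to the measure $\lambda$, not $\sigma$, so there is no reason that $\sum_{L\in\mathcal L}\sigma(L)$ should be controlled by $\sigma(F)$, and hence no reason that $\sum_L([\,|f|\,]^\sigma_F)^2\sigma(L)\lesssim\|f\|_\sigma^2$.

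Second, you overcomplicate the fixed-$L$ case. The condition $\tilde P_1=(P_1)_{P_2}=L$ forces $P_1$ to be the unique dyadic parent of $L$: there is exactly one cube $P_1$ in $\mathcal P^{1,L}_{\textup{big},1}$, and the ``interval in the tree'' you mention is a single point. There is nothing to telescope, no need for convexity, and no need to bring in the stopping average $[\,|f|\,]^\sigma_F$. The paper simply pulls out the constant $[\Delta^\sigma_{P_1}f]^\sigma_L$, applies monotonicity and the size bound over $K\in\mathcal K_L$ (note: elements of $\mathcal T_{\mathcal P}$, which is what the size definition sees), and finishes with $\bigl|[\Delta^\sigma_{P_1}f]^\sigma_L\bigr|\,\sigma(L)^{1/2}\le\|\Delta^\sigma_{P_1}f\|_\sigma\le\|f\|_\sigma$. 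Once you drop the telescoping and the sum over $L$, your argument collapses to exactly this.
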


\begin{proof}
Set $ \mathcal Q \equiv \mathcal P ^{1, L} _{\textup{big}} $. 
We can estimate for each $ K\in \mathcal W_L$, 
\begin{align*}
\Biggl\lvert 
\sum_{ \substack{(P_1, P_2) \in \mathcal Q\\ P_2 \Subset _r K }} &
[ \Delta ^{\sigma } _{P_1}f] _{\tilde P_1} ^{\sigma }  \langle  \mathsf R _{\sigma } (F\setminus \tilde P_1), \Delta ^{w} _{P_2} g \rangle_w 
\Biggr\rvert
\\
& \lesssim \bigl\lvert [ \Delta ^{\sigma } _{P_1}f] _{\tilde P_1} ^{\sigma } \bigr\rvert  \mathsf P_{\sigma}^{\textup{g}} (F\setminus K, K)  \sum_{ \substack{P_2 \in \mathcal Q_2\\  P_2 \Subset_r K} }
\Bigl\lVert  \Delta ^{w} _{P_2}\frac x { \ell (K)} \Bigr\rVert_{w} \lVert \Delta ^{w} _{P_2}g \rVert_{w} 
\\
& \lesssim \mathbf S \, \bigl\lvert [ \Delta ^{\sigma } _{P_1}f] _{\tilde P_1} ^{\sigma } \bigr\rvert \sigma (K) ^{1/2} 
\Biggl[\,
\sum_{ \substack{P_2 \in \mathcal Q_2\\  P_2 \Subset_r K} }
\lVert \Delta ^{w} _{P_2}g \rVert_{w} ^2 
\Biggr] ^{1/2} 
\lesssim \mathbf S \lVert f\rVert _{\sigma } \lVert g\rVert_w . 
\end{align*}
The monotonicity principle applies, as formulated in \eqref{e:monoG}. 
which has both the gradient and gradient-plus Poisson terms.  
 Then one appeals to Cauchy--Schwarz, and importantly, the definition of $ \textup{size} (\mathcal P) $ to gain the term 
$ \mathbf S \sigma (K) ^{1/2} $ above.   The final inequality above is trivial.  
\end{proof}

The second collection is built of pairs that are `separated' by $ \mathcal L$.  
Now, we set $ \tilde\pi^{1} _{\mathcal L} P_2 = \tilde \pi _{\mathcal L} P_2$, and inductively define $ \tilde \pi^{t+1} _{\mathcal L} P_2$ to be the 
minimal element of $\mathcal L$ that \emph{strictly} contains $ \tilde \pi^{t} _{\mathcal L} P_2$. Then, 
$  \mathcal P ^{2} _{\textup{big}} \equiv \bigcup _{t=2} ^{\infty }   \mathcal P ^{2,t} _{\textup{big}}  $, where 
\begin{equation}\label{e:tL}
\mathcal P ^{2,t} _{\textup{big}} \equiv \bigcup _{L\in \mathcal L} \mathcal P ^{2,t,L} _{\textup{big}} 
\equiv 
\bigcup _{L\in \mathcal L} 
\{  (P_1, P_2) \in \mathcal P \::\:    \pi _{\mathcal L} \tilde P_1 = \tilde\pi _{\mathcal L} ^{t} P_2 = L \}.  
\end{equation}
These collections are admissible, a property inherited from $ \mathcal P$.  
For fixed $ t\ge 2$,  the collections $ \{\mathcal P ^{2,t,L} _{\textup{big}} \::\: L\in \mathcal L\}$ are orthogonal, 
as is very easy to see from the definition.  
We prove 

\begin{lemma}\label{l:big-t} For all $ t\ge 2$ and $ L\in \mathcal L$, there holds 
\begin{equation}\label{e:big-t}
\mathscr B _{\mathcal P ^{2,t,L} _{\textup{big}} } \lesssim (1+c) ^{-t/6} \mathbf S . 
\end{equation}
\end{lemma}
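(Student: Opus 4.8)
The plan is to exploit the extra factor of $(1+c)^{-t}$ that the recursive construction of $\mathcal L$ builds into the measure $\lambda$, trading it against the geometric blow-up coming from the $\mathsf P^{\textup{g}}$ versus $\mathsf P^{\textup{g}+}$ discrepancy and the $\Subset_{r}$ separation. Fix $L\in\mathcal L$ and set $\mathcal Q\equiv\mathcal P^{2,t,L}_{\textup{big}}$. For a pair $(P_1,P_2)\in\mathcal Q$ we have $\pi_{\mathcal L}\tilde P_1=\tilde\pi^t_{\mathcal L}P_2=L$, so $P_2$ lies $t$ generations of $\mathcal L$ below $L$, while $\tilde P_1$ is a child of $L$ in the $\mathcal L$-tree; in particular there is an intermediate cube $L'\equiv\tilde\pi^{t-1}_{\mathcal L}P_2\in\mathcal L$ with $L'\subsetneq L$, and $F\setminus\tilde P_1\supset F\setminus L'$ while the argument $F\setminus(P_1)_{P_2}$ of the Riesz transform can be written as $(F\setminus L')+(L'\setminus(P_1)_{P_2})$. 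The key structural fact, exactly as in the proof of Lemma~\ref{l:BelowQ} and the estimate \eqref{e:22}, is that because the cubes $P_2$ are good and $P_2\Subset_r$ the relevant Whitney cube $K\in\mathcal W_{P_2}$, one gains a geometric decay in the ratio $\ell(L')/\ell(L)$ (equivalently in the $\mathcal L$-generation count) when one replaces $\mathsf P^{\textup{g}+}$ by $\mathsf P^{\textup{g}}$ — and since $\mathcal L$-generations do not collapse, a bounded number of dyadic scales separate consecutive $\mathcal L$-cubes on average, producing the factor $2^{-ct/\text{const}}$, hence the stated $(1+c)^{-t/6}$ after bookkeeping.

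Concretely, I would first apply the monotonicity principle \eqref{e:mono} to each inner product $\langle\mathsf R_\sigma(F\setminus(P_1)_{P_2}),\Delta^w_{P_2}g\rangle_w$. Because of the condition $(P_2)^e\subset(P_1)_{P_2}=\tilde P_1$ encoded in admissibility (Definition~\ref{d:admiss}(1)) together with \eqref{e:Qe}, only the gradient Poisson term survives in \eqref{e:mono}, giving a bound by
\begin{equation*}
\bigl\lvert[\Delta^\sigma_{P_1}f]^\sigma_{\tilde P_1}\bigr\rvert\,\mathsf P^{\textup{g}}_\sigma(F\setminus\tilde P_1,P_2)\,\Bigl\lVert\Delta^w_{P_2}\tfrac{x}{\ell(P_2)}\Bigr\rVert_w\lVert\Delta^w_{P_2}g\rVert_w.
\end{equation*}
Next I would sum over $P_2\Subset_r K$ for $K$ ranging over Whitney cubes inside $L'$, pass from the discrete Poisson back to $\tilde{\mathsf P}_\sigma$, and use that $F\setminus\tilde P_1=F\setminus L'$ is supported away from $K$ at scale comparable to $\ell(L')$ or larger: this is where the decay enters, because $\mathsf P^{\textup{g}}_\sigma(F\setminus L',K)\lesssim(\ell(K)/\ell(L'))^{\text{small power}}$ times $\mathsf P^{\textup{g}}_\sigma(F\setminus L',L')$, and the good-cube separation $\Subset_r$ gives an extra $2^{-r\epsilon\cdot(\text{generation})}$. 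Then I would invoke the size bound \eqref{e:Nsize}: $\sum_{P_2\Subset_r L'}\lVert\Delta^w_{P_2}\tfrac{x}{\ell(L')}\rVert_w^2=\ell(L')^{-2}\lambda(\text{Tent}_{L'})$, and the defining inequality of $\mathcal L$ \eqref{e:recurse} forces $\lambda(\text{Tent}_{L'})$ over the $\mathcal L$-descendants to decay like $(1+c)^{-t}$ relative to $\lambda(\text{Tent}_L)$ — this is precisely the point of the Carleson-type recursion. Combining, $\mathsf P^{\textup{g}}_\sigma(F\setminus L',L')^2\ell(L')^{-2}\lambda(\text{Tent}_{L'})\lesssim(1+c)^{-t}\mathbf S^2\sigma(L')$, and summing over the (orthogonal in $L'$) pieces and applying Cauchy--Schwarz in $(P_1,P_2)$ as in Lemma~\ref{l:1big} yields $\lvert B_{\mathcal Q}(f,g)\rvert\lesssim(1+c)^{-t/6}\mathbf S\lVert f\rVert_\sigma\lVert g\rVert_w$, the exponent $1/6$ absorbing the losses from distributing one power of $(1+c)^{-t}$ across the two Cauchy--Schwarz steps and the Poisson-scale comparison.

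The main obstacle I anticipate is making the decay mechanism airtight: one must verify that the $\mathcal L$-stopping construction genuinely forces $\lambda$ to decay geometrically in $\mathcal L$-generations — this is the content of a standard but slightly delicate Carleson-embedding / stopping-time argument using \eqref{e:recurse} — and simultaneously that the Poisson kernel $\mathsf P^{\textup{g}}_\sigma(F\setminus L',L')$ restricted away from $L'$ is comparable, up to acceptable geometric losses, to its value at the scale of the separating cube, so that the two decays genuinely compound rather than merely coexist. A secondary technical point is correctly tracking which cube plays the role of $K$ in the Whitney decomposition (the $\mathcal W_L$ versus $\mathcal W_{L'}$ distinction) and ensuring the overlap bound from Proposition~\ref{p:w} is invoked with the right $C_0$; and finally, the bookkeeping that turns the raw exponent into $t/6$ should be stated but need not be optimized, since any fixed positive power of $(1+c)^{-t}$ suffices to sum the series when combined with Lemma~\ref{l:ortho} over the orthogonal family $\{\mathcal P^{2,t,L}_{\textup{big}}:L\in\mathcal L\}$.
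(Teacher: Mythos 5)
Your proposal captures two of the right ingredients---the monotonicity principle collapses to the gradient Poisson term because $(P_2)^e\subset\tilde P_1$, and the geometric decay in $t$ ultimately comes from iterating the defining inequality \eqref{e:recurse} for $\mathcal L$---but the execution has real gaps that would need to be filled, and one piece of your decay mechanism is not what the paper uses and is likely not needed.

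First, a structural gap: you invoke ``Cauchy--Schwarz in $(P_1,P_2)$ as in Lemma~\ref{l:1big},'' but that lemma works because $\tilde P_1=L$ is a single fixed cube, so the coefficient $[\Delta^\sigma_{P_1}f]^\sigma_{\tilde P_1}$ factors out trivially. Here $\tilde P_1$ ranges over the cubes strictly between $L$ and its $\mathcal L$-children, so one must first sum the Haar coefficients over $P_1$ with the argument $F\setminus\tilde P_1$ attached; this requires the convexity built into admissibility (Definition~\ref{d:admiss}(2)) to get the telescoping collapse \eqref{e:tele}, and then a \emph{second} family of stopping data $\mathcal G$ for $f$ together with quasi-orthogonality to control the resulting stopping averages $[\,\lvert f\rvert\,]^\sigma_G$. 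The paper isolates all of this by introducing the intermediate quantity $\mathbf T$ in \eqref{e:bfT} and showing $\mathscr B_{\mathcal Q}\lesssim\mathbf T$ separately, before turning to the decay of $\mathbf T$; your sketch skips this step entirely.

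Second, a mechanism gap: you propose to generate part of the decay from a Poisson-kernel scale comparison, $\mathsf P^{\textup{g}}_\sigma(F\setminus L',K)\lesssim(\ell(K)/\ell(L'))^{\text{small power}}\mathsf P^{\textup{g}}_\sigma(F\setminus L',L')$. That comparison is not part of the paper's argument, and attempting to cash it in would force you to relate $\mathbf T$ to the size $\mathbf S$ via Poisson averages at mismatched scales, which do not obviously compare once the $\ell(\cdot)^{-2}$ normalizations are inserted. In the paper, $\mathbf T$ and $\mathbf S$ have the \emph{same} Poisson factor $\mathsf P^{\textup{g}}_\sigma(F\setminus K,K)^2/\ell(K)^2$ evaluated at the same cube $K$; the entire decay is carried by the measures $\lambda_{\mathcal Q}(\textup{Tent}_K)$ versus $\lambda(\textup{Tent}_K)$. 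What makes this comparison effective is Proposition~\ref{p:close}, which guarantees that $K$ is within a bounded number (at most $5$) of $\mathcal L$-generations of $L$; since $P_2$ sits exactly $t$ generations below $L$, the tent measure $\lambda_{\mathcal Q}(\textup{Tent}_K)$ is supported at least $t-5$ generations below $K$, and iterating \eqref{e:recurse} that many times yields $\lambda_{\mathcal Q}(\textup{Tent}_K)\lesssim(1+c)^{-(t-5)}\lambda(\textup{Tent}_K)$, hence $\mathbf T\lesssim(1+c)^{-t/2}\mathbf S$. Your write-up does not cite Proposition~\ref{p:close} and therefore does not control how many times \eqref{e:recurse} can actually be iterated. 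Finally, note that $\tilde P_1\supsetneq L'$ (both contain $P_2$, and $\tilde P_1$ cannot be inside an $\mathcal L$-child of $L$), so the inclusion $F\setminus\tilde P_1\supset F\setminus L'$ in your first paragraph is reversed; fortunately it plays no essential role once the correct argument is in place.
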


In view of the orthogonality, and Lemma~\ref{l:ortho}, 
this estimate is clearly summable in $ t\ge 2$  and $ L\in \mathcal L$ to the estimate we need.   
Namely, we will have 
\begin{align*}
\sum_{t=2} ^{\infty } 
\biggl\lvert  \sum_{L\in \mathcal L} B _{\mathcal P ^{2,t,L} _{\textup{big}}}  (f,g) \biggr\rvert
& \lesssim \lVert f\rVert _{\sigma } \lVert g\rVert_w
\sum_{t=2} ^{\infty } 
\sup _{L\in \mathcal L} \mathscr B _{\mathcal P ^{2,t,L} _{\textup{big}}}  
\\
& \lesssim \mathbf S \lVert f\rVert _{\sigma } \lVert g\rVert_w 
\sum_{t=2} ^{\infty } (1+c) ^{-t/6} \lesssim \mathbf S \lVert f\rVert _{\sigma } \lVert g\rVert_w .  
\end{align*}

\begin{proof}[Proof of Lemma~\ref{l:big-t}] 
There are two parts of the proof. First, identifying a quantity that controls the norm of the bilinear form, and second that
this quantity decreases geometrically in $ t$.   Define $ \mathcal C _{L}$ to be the $ \mathcal L$-children of $ L$, and define 
\begin{equation} \label{e:bfT}
\mathbf T ^2 \equiv 
\sup _{L' \in \mathcal C_L} 
\sup _{K\in \mathcal K _{L'}}
\mathsf P_{\sigma}^{\textup{g}} (F\setminus K,K) ^2 \frac {\lambda _{\mathcal Q} (\textup{Tent} _{K}) } {  \sigma (K) \ell (K) ^{2}} , 
\end{equation}
The first part is to show that $ \mathscr B _{\mathcal Q} \lesssim \mathbf T$, where 
to ease notation, set $ \mathcal Q = \mathcal P ^{2,t,L} _{\textup{big}} $.  

Let $ \mathcal G$ be standard stopping data for $ f$.  Since we can assume that the Haar support of $ f$ is contained in 
$ \mathcal P_2$, we can write $ \mathcal G = \bigcup _{t\ge 0} \mathcal G_t$, where $ \mathcal G_0 = \{F\}$, 
and in the inductive stage, for $ G\in \mathcal G_t$, add to $ \mathcal G_t$ the maximal cubes $ Q \subset G$, 
$ Q$ a child of a $ P_1 \in \mathcal P_1$,  
such that $ [\,\lvert f\rvert\,] ^{\sigma } _{Q} \ge 10 [\,\lvert f\rvert\,] ^{\sigma } _{G}  $.  

With this version of the stopping data, a variant of the quasi-orthogonality bound holds, which we will use. In addition, the 
collapse of telescoping sums is given by this formula.  For each $ P_2 \in \mathcal P_2$, and $ G\in \mathcal G$, there holds 
\begin{equation}\label{e:tele}
\Bigl\lvert 
\sum_{\substack{ P_1 \::\: (P_1, P_2) \in \mathcal P  }}  [ \Delta ^{\sigma } _{P_1}] ^{\sigma  } _{ \tilde P_1} (F \setminus \tilde P_1)
\Bigr\rvert 
\lesssim  [\,\lvert f\rvert\,] ^{\sigma } _{ \pi _{\mathcal G}P_2 }  \cdot F .  
\end{equation}
This depends upon admissibility, namely the  assumption of convexity in $ P_1$, holding $ P_2$ fixed.  

For  a cube $ G$ set   $ \Pi ^{w} _{G} \equiv \sum_{\substack{P_2 \in \mathcal P_2\\  P_2 \subset G}} \Delta ^{w } _{P_2}  $.  Observe that we can estimate as follows for $ G\in \mathcal G$, and $ L'\in \mathcal C_L$ such that $ \pi _{\mathcal G}L'=G$,  
\begin{align}
\Biggl\lvert 
\sum_{\substack{ (P_1, P_2 ) \in \mathcal Q\\  P_2 \Subset _{r} L' }}  &
[ \Delta ^{\sigma } _{P_1} f] ^{\sigma } _{\tilde P_1} 
\langle \mathsf R _{\sigma }  (F\setminus \tilde P_1),  \Delta ^{w} _{P_2} g \rangle_w 
\Biggr\rvert
\\
\lesssim & \sum_{K\in \mathcal K _{L'}} 
\sum_{ \substack{ P_2 \in \mathcal P_2 \\  P_2 \subset K}}  
\Biggl\lvert 
\sum_{\substack{ P_1  \in \mathcal Q\\  P_1 \supsetneq G }}  
[ \Delta ^{\sigma } _{P_1} f] ^{\sigma } _{\tilde P_1} 
\langle \mathsf R _{\sigma }  (F\setminus \tilde P_1),  \Delta ^{w} _{P_2} g \rangle_w 
\Biggr\rvert
\\  
& \lesssim 
 [\,\lvert f\rvert\,] ^{\sigma } _{G} 
 \sum_{K\in \mathcal K _{L'}} 
\sum_{ \substack{ P_2 \in \mathcal P_2 \\  P_2 \subset K}}  
\mathsf P_{\sigma}^{\textup{g}}(F \setminus\tilde P_1 , P_2) 
\biggl\lvert \biggl\langle \frac {x}{\ell (Q_2)} , \Delta ^{w} _{Q_2} g  \biggr\rangle _w\biggr\rvert
\\  \label{e:aboveQe} 
& \lesssim \mathbf T 
 [\,\lvert f\rvert\,] ^{\sigma } _{G} 
 \sum_{K\in \mathcal K _{L'}} 
\sigma (K) ^{1/2}  \lVert \Pi ^{w} _{K} g\rVert_{w} \lesssim \mathbf T 
 [\,\lvert f\rvert\,] ^{\sigma } _{G} \sigma (L') ^{1/2} \lVert \Pi ^{w} _{L'} g\rVert_{w} .
\end{align}
We are combining several observations: (a) for each $ P_2 \in \mathcal Q_2$, we have $ P_2 \Subset _{r} L'$ for some $ L'\in \mathcal C_L$, 
and $ \pi _{\mathcal G} P_2 = \pi _{\mathcal G} L'$;  
(b) and then, $ P_2 \subset K$ for some $ K\in \mathcal K _{L'}$ thus the first inequality above holds; (c) 
we have convexity in $ P_1$, hence we can use \eqref{e:tele}, giving us the stopping value above;  (d) we can also apply the 
monotonicity principle  \eqref{e:monoG}; (e) 
and a trivial use of Cauchy--Schwarz in $ P_2$, with the definition of $ \mathbf T$ in \eqref{e:bfT} gives us the concluding inequality above.  

Notice that an application of Cauchy--Schwarz, and quasi-orthogonality shows that 
\begin{equation*}
\sum_{G\in \mathcal G} \sum_{L' \in \mathcal C \::\: \pi _{\mathcal G} L'=G}   \eqref{e:aboveQe} 
\lesssim \mathbf T \lVert f\rVert_{\sigma } \lVert g\rVert_{w} .  
\end{equation*}

\bigskip 
We turn to the second half of the proof of \eqref{e:big-t}, namely that $ \mathbf T \lesssim (1+c) ^{-t/2} \mathbf S$, 
where $ \mathbf T$ is as in \eqref{e:bfT}.   It suffices to assume that $ t \ge 10$ say. 
Take a cube $ K\in \mathcal K _{L'}$, for $ L'\in \mathcal C _{L}$.  
It follows from Proposition~\ref{p:close} below, that $ \pi _{\mathcal L} ^{u} K= L$, for $ u\le 5$. 
Notice that by construction of $ \mathcal L$, and simple induction, that for  
 integer $ s = r - 5$,  
\begin{align*}
\lambda _{\mathcal Q} (K) 
& \le 
\sum_{\substack{L''\in \mathcal L \::\:  L''\subset K \\ \dot\pi^{s} _{\mathcal L}L''=L}} \lambda (\textup{Tent} _{L''}) 
\\
&\le (1+ c ) ^{-1} 
\sum_{\substack{L''\in \mathcal L \::\:  L''\subset K \\ \dot\pi^{s-1} _{\mathcal L}L''=L}}\lambda (\textup{Tent} _{L''}) 
\\&\;\;\vdots\\& 
\le (1+ c  ) ^{-s+1} 
\sum_{\substack{L''\in \mathcal L \::\:  L''\subset K \\ \dot\pi^{1} _{\mathcal L}L''=L}}
\lambda (\textup{Tent} _{L''}) 
\lesssim  (1+ c  ) ^{-t/2} \lambda (\textup{Tent}_K).  
\end{align*}
Note that we have used \eqref{e:recurse} a total of $ s-1$ times to get the conclusion above.  Therefore, 
by Proposition~\ref{p:close}, 
\begin{align*}
(1+ c ) ^{s}  \frac {\mathsf P_{\sigma}^{\textup{g}}(F \setminus K,K)^2} {\ell (K)^2}  \lambda _{\mathcal Q} (K)
\lesssim  
 \frac {\mathsf P_{\sigma}^{\textup{g}}(F \setminus K,K)^2} {\ell (K)^2}  
 {\lambda (\textup{Tent}_K)}   \lesssim \mathbf S ^2 \sigma(K) \,. 
\end{align*}
This concludes our proof. 
\end{proof}

The Lemma above  depends upon properties of the collection $ {\mathcal T}_ {\mathcal P} $.  
\begin{proposition}\label{p:close}
Let $ Q\in \widetilde {\mathcal P}_1$, with $ \pi _{\mathcal L}Q=L$, 
and that $ K\in K_Q$  
is a cube on which we are testing the size of $\mathcal P ^{2,t,L} _{\textup{big}} $. 
Then $ Q\subset \pi^{4} _{\mathcal L} K$.  
\end{proposition}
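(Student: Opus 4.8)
The plan is to translate the statement into a bound on the number of generations separating $K$ from $Q$ in the tree $\mathcal L$, and then to extract that bound from the Whitney-type structure of the families $\mathcal K_{\cdot}$ out of which $\mathcal T_{\mathcal P}$, and hence $\mathcal L$, is built.

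First I would unwind the hypothesis. That $K$ tests the size of $\mathcal P^{2,t,L}_{\textup{big}}$ means $K\in\mathcal T_{\mathcal P^{2,t,L}_{\textup{big}}}$, so $K\in\mathcal K_Q$ for some $Q\in\widetilde{\mathcal P}_1$, and by the definition \eqref{e:tL} of $\mathcal P^{2,t,L}_{\textup{big}}$ this $Q$ satisfies $\pi_{\mathcal L}Q=L$. Since $10K\subset Q$ and $Q\subset L$, the $\mathcal L$-cube $L$ contains $K$, so $L=\pi^{i_0}_{\mathcal L}K$ for some $i_0\ge 1$. The desired inclusion $Q\subset\pi^4_{\mathcal L}K$ is equivalent to $i_0\le 4$: if $i_0\le4$ then $\pi^4_{\mathcal L}K\supseteq\pi^{i_0}_{\mathcal L}K=L\supseteq Q$, and conversely $Q\subset\pi^4_{\mathcal L}K$ with $\pi^4_{\mathcal L}K\in\mathcal L$ forces $L=\pi_{\mathcal L}Q\subseteq\pi^4_{\mathcal L}K$, so $i_0\le4$. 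Since the cubes of $\mathcal L$ lying strictly between $K$ and $Q$, together with $K$ (when $K\in\mathcal L$) and $L$, make up the chain $\pi_{\mathcal L}K,\pi^2_{\mathcal L}K,\dots,\pi^{i_0}_{\mathcal L}K=L$, it suffices to show that at most two cubes $L'\in\mathcal L$ satisfy $K\subsetneq L'\subsetneq Q$; the cases $\pi_{\mathcal L}K\supseteq Q$ (where $\pi_{\mathcal L}K=L$ already) and $K\in\mathcal L$ are only more favorable.

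Fix such an intermediate cube $L'$. Since $\mathcal L\subset\mathcal T_{\mathcal P}=\bigcup\{\mathcal K_{\widetilde P_1}\}$ (see \eqref{e:KP}), there is $R=R(L')\in\widetilde{\mathcal P}_1$ with $L'\in\mathcal K_R$, i.e. $10L'\subset R$ with $L'$ maximal in $\mathcal D^r_f$ with this property. Two geometric facts then drive the proof. $\textup{(i)}$ $R\supsetneq Q$: maximality of $K$ in $\mathcal K_Q$ makes its $\mathcal D^r_f$-parent $\widehat K$ fail $10\widehat K\subset Q$ (and $\widehat K\subseteq Q$, the alternative $\widehat K=Q$ leaving no room for any intermediate $L'$), so $\textup{dist}(\widehat K,\mathbb R^n\setminus Q)<\tfrac92\,2^r\ell(K)$; since $L'\supsetneq K$ in $\mathcal D^r_f$ forces $L'\supseteq\widehat K$ and $\ell(L')\ge2^r\ell(K)$, this gives $\textup{dist}(L',\mathbb R^n\setminus Q)<\tfrac92\ell(L')$, whereas $10L'\subset R$ gives $\textup{dist}(L',\mathbb R^n\setminus R)\ge\tfrac92\ell(L')$; as $\mathbb R^n\setminus R\subseteq\mathbb R^n\setminus Q$ would then be contradicted and $R\cap Q\supseteq L'\ne\emptyset$, we get $R\supsetneq Q$. $\textup{(ii)}$ If $L'_1\subsetneq\cdots\subsetneq L'_m$ are the intermediate cubes with hosts $R_1,\dots,R_m$, then maximality of $L'_i$ in $\mathcal K_{R_i}$ together with $L'_{i+1}\supsetneq L'_i$ forces $10L'_{i+1}\not\subset R_i$, hence $R_{i+1}\not\subseteq R_i$ and so $R_i\subsetneq R_{i+1}$; likewise $10Q\not\subset R_i$ (again because $Q\supsetneq L'_i$ and $L'_i$ is maximal in $\mathcal K_{R_i}$), which, $Q$ being a dyadic subcube of $R_i$ and $L'_{i+1}\subset Q$ showing $\textup{dist}(Q,\mathbb R^n\setminus R_i)<\ell(Q)$, forces $\textup{dist}(Q,\partial R_i)=0$ for every $i<m$, i.e. $Q$ is flush against $\partial R_i$.

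The endgame is a counting argument built on $\textup{(i)}$ and $\textup{(ii)}$. The hosts form a strictly increasing chain $Q\subsetneq R_1\subsetneq R_2\subsetneq\cdots\subsetneq R_m$, so $\ell(R_i)\ge2^{ir}\ell(Q)$; $Q$ sits flush against $\partial R_i$ for $i<m$; but each $L'_i\subset Q$ lies at depth $\ge\tfrac92\ell(L'_i)$ inside $R_i$ and, once $i\ge2$, is itself flush against $\partial Q$ (since $\textup{dist}(L'_i,\partial Q)\le\textup{dist}(\widehat K,\partial Q)<\tfrac92\,2^r\ell(K)$ is $<\ell(L'_i)$ and a multiple of $\ell(L'_i)$). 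Thus $L'_i$ must touch a face of $Q$ that is interior to $R_i$, while $Q$ touches $\partial R_i$ through a different face; tracking these competing face‑directions one sees that after two intermediate cubes the room available in $Q$ is already exhausted, whence $m\le2$ and the constant $4$. I expect this last piece of geometric bookkeeping — turning "$Q$ flush against $\partial R_i$, while $L'_i$ is deep in $R_i$ and flush against $\partial Q$" into an outright cap on the number of generations — to be the only real obstacle; everything before it is routine manipulation of the definitions and of the maximality built into the Whitney families $\mathcal K_{\cdot}$.
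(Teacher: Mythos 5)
Your overall plan — reduce to bounding the number of $\mathcal L$-generations between $K$ and $L$, and derive that bound from the Whitney structure of the families $\mathcal K_{\cdot}$ — is the right one, and the paper follows the same general shape. Your step (i) is essentially correct and agrees with the paper's use of the bound $\textup{dist}(K,\partial Q) \lesssim 2^{r}\ell(K)$ coming from maximality of $K$ in $\mathcal K_Q$. But the proposal has a genuine gap, and you already put your finger on it: the final "geometric bookkeeping" step, which you only sketch, does not go through with the ingredients you have assembled. Two concrete problems.

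First, your claim (ii) that $10Q\not\subset R_i$ forces $\textup{dist}(Q,\partial R_i)=0$ is false. From $10Q\not\subset R_i$ you only get $\textup{dist}(Q,\mathbb R^n\setminus R_i)<\tfrac92\ell(Q)$, so $Q$ is within a bounded number of $\ell(Q)$-steps of $\partial R_i$, not flush against it; and the invocation of $L'_{i+1}\subset Q$ in that sentence does not repair this. Second, and more fundamentally, the "face-direction counting" cannot cap $m$ at an absolute constant using Whitney estimates alone. Whitney maximality gives the two-sided comparability $\textup{dist}(L'_i,\partial R_i)\simeq\ell(L'_i)$, and there is nothing in that to prevent a long nested chain $L'_1\subsetneq\cdots\subsetneq L'_m$ inside $Q$ with each $L'_i$ sitting at comparable relative depth inside its host. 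The paper closes exactly this gap by using \emph{goodness}, which you never invoke: if $L_s\Subset_{4r}Q$ and $L_s$ (or its parent, or the host $Q_s$) is good, then $\textup{dist}(L_s,\partial Q)\ge\ell(L_s)^{\epsilon}\ell(Q)^{1-\epsilon}$, a bound that grows like a positive power of $\ell(Q)$ rather than of $\ell(L_s)$. This is incompatible with the Whitney bound $\textup{dist}(K,\partial Q)\lesssim 2^{r}\ell(K)$ together with $K\subset L_s$, and that incompatibility is what rules out three strictly intermediate $\mathcal L$-cubes. The paper runs this goodness estimate through a short case analysis (whether $L_1,L_2\in\widetilde{\mathcal P}_1$ or not, and in the latter case whether the host of $L_1$ is $\Subset_{4r}$ in $Q$ or vice versa), and in each case the $\epsilon$-gap from goodness produces the contradiction. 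Without goodness, I do not see how your counting terminates, so the argument as written is incomplete.
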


\begin{proof}
Now, $ \ell (\pi^{3} _{\mathcal L} K) \ge 2 ^{3r} \ell (K)$, by our construction, so 
the conclusion is obvious if $ \ell (K)\ge 2 ^{-3r} \ell (Q)$.  

Continuing under the hypothesis that $ 2 ^{4r}\ell (K) \le  \ell (Q)$, it follows that 
we must have $ \textup{dist}(K, \partial Q) \le 20 \cdot 2 ^{r} \ell (K)$.  
By way of  contradiction, suppose that 
\begin{equation} \label{e:2L}
K \subsetneq L_1 \subsetneq L_2 \subsetneq L_3   \subsetneq Q, 
\qquad  L_1 , L_2, L_3  \in \mathcal L.  
\end{equation}
Suppose that  $ L _s  \in \widetilde {\mathcal P}_1$, for either $  s =1, 2 $. 
Since all cubes are from a grid with scales separated by $ r$, we then have $ L_s \Subset_{4r} Q$, 
and then   goodness of the parent of $ L_s$ implies 
\begin{equation*}
\lvert  L_s\rvert ^{\epsilon } \lvert  Q\rvert ^{1-\epsilon } 
\le 
\textup{dist}(L_s,\partial Q) 
\le
\textup{dist}(K, \partial Q )\le 20 \cdot 2 ^{r} \lvert  L_s\rvert
\end{equation*}
which is a contradiction. 

Thus, we must have $ L_s \not\in\widetilde {\mathcal P}_1$, 
for $ 1\leq s \leq 2$. 
Let $ Q_s \in \widetilde {\mathcal P}_1$ be such that $ L_s \in \mathcal K _{Q_s}$.  
If $ Q_1 \subsetneq Q$, then $ Q_1 \Subset_{4r} Q$, and  we  again see a con tradition to $ K\in \mathcal K_Q$, namely 
\begin{equation*}
\lvert  Q_1\rvert ^{\epsilon } \lvert  Q\rvert ^{1- \epsilon }
\le 
\textup{dist}(Q_1, \partial Q) 
\le 
\textup{dist}(K, \partial Q) 
\le 
10 \cdot 2 ^{r+1 } \lvert  Q_1\rvert .  
\end{equation*}

Assume that  $ Q_1\not\Subset_{4r} Q$.  Equality $ Q_1 = Q_2$ cannot hold, since $ K\subsetneq L_1$.   We must have 
$
Q \Subset_{4r}  Q_1 \Subset_{4r} Q_2
$, 
and that means  
\begin{equation*}
\lvert  Q\rvert ^{\epsilon } \lvert  Q _{2}\rvert^{1- \epsilon } 
\le 
\textup{dist}(Q, \partial Q _{2}) 
\le 
\textup{dist}(L_ {r+1}, \partial Q _{2})  
\le 
10\, 2 ^{r} \, \ell (Q) .  
\end{equation*}
And this final contradiction proves that \eqref{e:2L} cannot hold, and this prove the proposition.  
\end{proof}

\section{Proof of Lemma~\ref{l:2above}} 
The proof of Lemma~\ref{l:2above} depends upon a well-known case analysis, however the very weak form of the 
$ A_2$ condition\footnote{Our $ A_2$ condition is weak, but also in the case of $ d>1$, the necessary 
$ A_2$ condition comes with a power decay of $ 2d$, which is too weak to directly apply in some of the cases below.} 
complicates our analysis, and indeed, every case below requires  a new analysis.

The expression to be dominated is a sum over pairs of cubes $  2 ^{-4r} \ell (Q) \leq \ell (P) \leq 2 ^{4r} \ell (Q)$. 
Throughout,  we assume that $ \ell (Q)\le \ell (P)$, with the other case being handled by duality.  The cases are 
\begin{description}
\item[Far Apart]  $   \ell (P) \leq 2 ^{4r} \ell (Q)$ and the cubes $ 3P$ and $3Q $ do not intersect.  
\item[Surgery]   $ \ell (P) \le 2 ^{4r} \ell (Q)$ and $ 3P \cap 3Q \neq \emptyset $.  This is a delicate surgery argument, one in which we use the 
\emph{a priori} bound, and take advantage of the presence of random grids. 
\item[Nearby]  $    \ell (P) \leq 2 ^{4r} \ell (Q)$ and $ Q\subset 3P\setminus P$.   
\item[Inside]  $   \ell (P) \leq 2 ^{4r} \ell (Q)$ and $ Q\subset P$.  In this case we bound 
the sum of terms 
\begin{equation*}
 \langle  \mathsf R  (\Delta ^ \sigma  _P f \cdot (P\setminus P_Q)) , \Delta ^{w} _{Q} g  \rangle_w . 
\end{equation*}
\end{description}

\begin{lemma}[Far Apart] 
\label{l:far}
The following estimate is true:
\begin{equation}\label{e:far}
\Biggl\lvert 
\sum_{P} \sum_{\substack{Q \::\:  \ell (Q) \le \ell (P) \\ 3Q\cap 3P = \emptyset }}
\langle  \mathsf R _{\sigma } \Delta ^{\sigma } _{P} f,  \Delta ^{w} _{Q} g \rangle _{w}
\Biggr\rvert
\lesssim \mathscr R \lVert f\rVert_{\sigma } \lVert g\rVert_{w}. 
\end{equation}
The dual estimate also holds.  
\end{lemma}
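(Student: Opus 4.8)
The plan is to exploit the cancellation of the martingale differences, together with the monotonicity principle of Lemma~\ref{l:monotone}, in order to reduce \eqref{e:far} to an energy-type sum controlled by $\mathscr E$ and $\mathscr A_2$. Throughout I keep $\ell (Q)\le \ell (P)$, the reverse case being the dual estimate obtained by interchanging the roles of $\sigma$ and $w$ (and of $P$ and $Q$). In this regime $2 ^{-4r}\ell (Q)\le \ell (P)\le 2 ^{4r}\ell (Q)$, so only $O (r)$ relative scales occur, and $3P\cap 3Q=\emptyset$ forces $P\cap Q=\emptyset$ and $\textup{dist}(P,Q)\gtrsim \ell (P)\simeq \ell (Q)$.

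First I would fix a cube $Q$ and collect the sum over $P$: setting $\Phi_Q\equiv \sum_{P} \Delta ^{\sigma }_P f$, the sum over the admissible $P$'s, the inner quantity is $\langle \mathsf R _{\sigma }\Phi_Q,\Delta ^{w}_Q g\rangle _{w}$, and $\Phi_Q$ is supported at distance $\gtrsim \ell (Q)$ from $Q$. Hence $\mathsf R _{\sigma }\Phi_Q$ is smooth on $Q$ with Poisson-type bounds on its first and second gradients, and the Taylor argument in the proof of Lemma~\ref{l:monotone} applies verbatim with the neighbourhood $10Q$ there replaced by $3Q$; together with $\lvert \langle x/\ell (Q),\Delta ^{w}_Q g\rangle _{w}\rvert \le E (w,Q) w (Q) ^{1/2} \lVert \Delta ^{w}_Q g\rVert _{w}$ and $\mathsf P ^{\textup{g}+}_{\sigma }(\lvert\Phi_Q\rvert,Q)\lesssim \mathsf P _{\sigma }^{\textup{g}}(\lvert\Phi_Q\rvert,Q)$ (valid because the argument sits off $3Q$), this yields
\[
\bigl\lvert \langle \mathsf R _{\sigma }\Phi_Q,\Delta ^{w}_Q g\rangle _{w}\bigr\rvert \lesssim \mathsf P _{\sigma }^{\textup{g}}(\lvert \Phi_Q\rvert ,Q)\, E (w,Q)\, w (Q) ^{1/2}\, \lVert \Delta ^{w}_Q g\rVert _{w}.
\]
Summing over $Q$ and applying Cauchy--Schwarz in $Q$, the estimate \eqref{e:far} is reduced to
\[
\sum_{Q} \mathsf P _{\sigma }^{\textup{g}}(\lvert \Phi_Q\rvert ,Q) ^2\, E (w,Q) ^2\, w (Q)\lesssim \mathscr R ^2 \lVert f\rVert _{\sigma } ^2 .
\]

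For the last inequality, note that pointwise $\lvert \Phi_Q\rvert \le \sum_{k=0} ^{4r} g ^{Q}_k$, where $g ^{Q}_k\equiv \sum_{\ell (P)=2 ^{k}\ell (Q),\ 3P\cap 3Q=\emptyset} \lvert \Delta ^{\sigma }_P f\rvert $ is supported off $3Q$ and, the cubes $P$ being disjoint, $\lVert g ^{Q}_k\rVert _{\sigma } ^2 = \sum_{\ell (P)=2 ^{k}\ell (Q),\ 3P\cap 3Q=\emptyset}\lVert \Delta ^{\sigma }_P f\rVert _{\sigma } ^2$; by Cauchy--Schwarz in the $O (r)$ values of $k$ it suffices to bound, for each fixed $k$, the sum $\sum_{Q}\mathsf P _{\sigma }^{\textup{g}}(g ^{Q}_k\cdot (\mathbb R ^{n}\setminus 3Q),Q) ^2 E (w,Q) ^2 w (Q)$. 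This is precisely of the shape governed by the energy and functional energy inequalities: for a fixed scale the cubes $Q$ tile $\mathbb R ^{n}$, the Poisson argument omits a neighbourhood of $Q$, and the weight is the local energy $E (w,Q) ^2 w (Q)$. I would run the recursion of \S\ref{s:core} and of the proof of Theorem~\ref{t:fe} over a stopping tree for the scale-$k$ part of $f$: split $g ^{Q}_k\cdot (\mathbb R ^{n}\setminus 3Q)$ into its $\sigma $-average over the relevant stopping cube (whose contribution is then a constant multiple of a $\sigma $-restriction, hence controlled by the energy inequality \eqref{e:E}) plus the telescoping remainder (which acquires geometric decay from the Poisson kernel and is summed using the Carleson property of the stopping tree). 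This produces the bound $\mathscr E ^2+\mathscr A_2\lesssim \mathscr R ^2$.

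The main obstacle is this concluding estimate. The naive route --- a direct Calder\'on--Zygmund kernel bound using the cancellation of both martingale differences, followed by the $A_2$ condition organised by distance annuli --- does not close: there are $\simeq 2 ^{mn}$ cubes at distance $\simeq 2 ^{m}\ell (Q)$, and because the reproducing $A_2$ condition carries only the weak exponent $2d$ while $\mathsf P _{\sigma }^{\textup{g}}$ decays like $d+1$, the residual decay in $m$ is only $2 ^{-2m}$, insufficient once $n\ge 4$ and $d\le n-2$. Routing the argument through the energy constant $\mathscr E$ --- which by Lemma~\ref{l:E} is finite, in fact $\lesssim \mathscr R$, precisely because the weights are uniformly of full dimension --- is what makes it work, since $\mathscr E$ is by construction the correct bound for sums of gradient-Poisson averages weighted by the local energy.
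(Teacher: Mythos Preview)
Your reduction in the first two paragraphs is fine, but the final step is a genuine gap.  You arrive at
\[
\sum_{Q}\mathsf P^{\textup{g}}_{\sigma}\bigl(g^{Q}_{k}\cdot(\mathbb R^{n}\setminus 3Q),Q\bigr)^{2}E(w,Q)^{2}w(Q)\lesssim\mathscr R^{2}\lVert f\rVert_{\sigma}^{2},
\]
and then assert that this ``is precisely of the shape governed by the energy and functional energy inequalities.''  It is not.  The energy inequality \eqref{e:E} bounds Poisson averages of the \emph{indicator} $\sigma\cdot(Q_{0}\setminus C_{0}K)$, not of a general $L^{2}(\sigma)$ function; and Theorem~\ref{t:fe} requires a $\sigma$-Carleson family $\mathcal F$ with the hole in the Poisson being $\mathbb R^{n}\setminus F$ for the \emph{parent} cube $F$, together with the very particular collections $\mathcal V_{F}$ and the restricted Haar projection in the energy factor.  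Your sum is over every $Q$ in the Haar support of $g$, the hole is $3Q$ itself, and the weight is the full energy $E(w,Q)^{2}w(Q)$; none of this matches the hypotheses of either inequality.  The phrase ``run the recursion of \S\ref{s:core} over a stopping tree for the scale-$k$ part of $f$'' is not a proof: you would have to specify the tree, verify the Carleson condition, match the hole structure, and show that the local energies you need are dominated by those appearing in \eqref{e:P<}.  As written, the argument stops well short of closing.

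By contrast, the paper handles this lemma with a direct kernel estimate and the $A_{2}$ condition alone---no energy, no functional energy.  One fixes the relative scale $2^{s}\ell(Q)=\ell(P)$ and, for each dyadic $R$ with $\ell(R)=\ell(P)$, groups the cubes $P$ lying in the annulus $3^{t+2}R\setminus 3^{t+1}R$ against the cubes $Q\subset 3^{t}R$.  The mean-zero property of $\Delta^{w}_{Q}g$ converts the kernel to a gradient, yielding the factor $\ell(Q)/(3^{t}\ell(R))^{d+1}$; Cauchy--Schwarz and the simple $A_{2}$ bound $\sigma(3^{t+2}R)\,w(3^{t}R)\lesssim\mathscr A_{2}(3^{t}\ell(R))^{2d}$ then give geometric decay $2^{-s}3^{-t}$, and one sums.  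Your closing paragraph dismisses this route on the grounds that the kernel decay is too weak against the count of far cubes, but the paper's organization---grouping by the central cube $R$ rather than summing $P$'s for a fixed $Q$---is exactly what sidesteps that count.  So the ``naive'' approach is in fact the correct one here, and the detour through $\mathscr E$ is both unnecessary and, as you have written it, incomplete.
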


\begin{proof}
Hold the relative side lengths of $ P$ and $ Q$ fixed, thus for an integer $ s\ge 0$, 
$ 2 ^{s} \ell (Q) = \ell (P)$.  For an integer $ t\ge 0$, and dyadic cube $ R$, consider the two projections 
\begin{align*}
\Pi ^{\sigma }  _{R, s, t} f&\equiv   
\sum_{\substack{P \::\: P \subset 3 ^{t+2} R \setminus  3 ^{t+1} R\\  \ell (R) = \ell (P) }}
\Delta ^{\sigma } _{P} f , 
\\
\Pi ^{w }  _{R, s, t} g&\equiv   
\sum_{\substack{Q \::\: Q\subset 3 ^{t} R\\ 2 ^{s}\ell (Q) = \ell (R) }} \Delta ^{w} _{Q} g . 
\end{align*}
Observe that it suffices to bound 
\begin{equation} \label{e:pi-suff}
\sum_{s,t=0} ^{\infty } \sum_{R\in \mathcal D} 
\bigl\lvert  \langle  
\mathsf R _{\sigma } \Pi ^{\sigma }  _{R, s, t} f , \Pi ^{w }  _{R, s, t} g
\rangle_w\bigr\rvert. 
\end{equation}
Moreover, we have 
\begin{equation} \label{e:Pi}
\sum_{R\in \mathcal D} \lVert \Pi ^{\sigma }  _{R, s, t} f\rVert_{\sigma } ^2 \lesssim \lVert f\rVert_{\sigma } ^2 
\end{equation}
and likewise for the projections $ \Pi ^{w }  _{R, s, t} f$.

Then, estimate as follows.  
\begin{align}
\sum_{\substack{Q \::\:  Q\subset 3 ^{t} R\\ \ell (Q) = \ell (R) }}  \Bigl\lvert 
\Bigl\langle  
\mathsf R _{\sigma } \Pi ^{\sigma }  _{R, s, t} f , \Delta ^{w} _{Q} g
\Bigr\rangle_{w} \Bigr\rvert 
& \lesssim \frac { 2 ^{-s}\ell (R)} {  [3 ^{t} \ell (R)] ^{d+1} } 
\sum_{\substack{Q \::\: Q\subset Q ^{(s)}\\ \ell (Q) = \ell (R) }}   
\lVert \Pi ^{\sigma }  _{R, s, t} f 
\rVert_{L ^{1} (\mathbb{R}^n;\sigma )}  \lVert\Delta ^{w} _{Q} g \rVert_{L ^{1} (\mathbb{R}^n;w)}
\\
& \lesssim 2 ^{-s} 3 ^{-t} 
\frac {  \sqrt { \sigma ( 3^{t+2} R \setminus 3 ^{t+1}R)  w (3 ^{t} R)}} {   (3 ^{t}\ell (R)) ^{d} } 
\lVert \Pi ^{\sigma }  _{R, s, t} f\rVert_{\sigma } \lVert \Pi ^{w }  _{R, s, t} g\rVert_{w} 
\\   \label{e:piPi}
& \lesssim 2 ^{-s} 3 ^{-t}  \mathscr A_2 ^{1/2} 
\lVert \Pi ^{\sigma }  _{R, s, t} f\rVert_{\sigma } \lVert \Pi ^{w }  _{R, s, t} g\rVert_{w} .  
\end{align}
The first inequality is just the standard off-diagonal kernel bound, which can be used since $ \Delta ^{w } _{Q} g$ has $ w$ integral zero, 
giving us the constant in front, which is the side length of $ Q$ times the $ L ^{\infty } $ norm of the gradient of the Riesz transform on the 
complement of $ 3 ^{t+1} R$.   
After that, apply Cauchy--Schwarz, and the $ A_2$ condition.  

By \eqref{e:Pi}, the sum over $R$ of the terms in \eqref{e:piPi} is bounded.  We have gained a geometric factor in $ s$ and $ t$, so we have the 
required bound for \eqref{e:pi-suff}. 
\end{proof}

The delicate surgery estimate is contained in the following lemma. 
It is phrased differently as the \emph{a priori} norm estimate is needed 
to complete the proof. 

\begin{lemma}[Surgery]
\label{l:surgery}  We assume that the norm  $ \mathscr N$ of $ R _\sigma $ as defined in \eqref{e:N} is finite. 
For all $ 0 < \vartheta < 1$, and choices of $ 0< \epsilon < (4d+4) ^{-1} $, 
 there is a choice of $ r$ sufficiently large so that,  uniformly in $ 0 < a_0 < b_0 < \infty $,  
\begin{equation*} 
\mathbb E \,\Biggl\lvert 
\sum_{P} \sum_{\substack{Q \::\:  2 ^{-4r} \ell (Q)\le \ell (P) \le 2 ^{4r} \ell (Q) \\ 
3P \cap 3Q \neq \emptyset  
}} \langle  \mathsf R (\sigma  \Delta ^{\sigma } _{P} f ), \Delta ^{w} _{Q} g \rangle _{w} 
\Biggr\rvert   \lesssim   \{C _{\epsilon , r, \vartheta } \mathscr R + \vartheta \tilde{\mathscr N}_0\}  \lVert f\rVert_{\sigma } \lVert g\rVert_{w} .  
\end{equation*}
\end{lemma}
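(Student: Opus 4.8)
The plan is a surgery argument in the style of Nazarov--Treil--Volberg: write the bilinear form as a \emph{bulk} piece, bounded by $\mathscr A_2$ and the testing constant, plus a \emph{boundary} piece supported in thin shells about the dyadic boundaries, bounded by the \emph{a priori} norm $\tilde{\mathscr N}_0$ with a gain of order $\rho^{1/2}$ obtained by averaging over the random grids, where $\rho$ is the shell width. All estimates below are uniform in the truncation parameters, so we suppress them and write $\mathsf R_\sigma$. First I would fix the relative scales: assuming $\ell(Q)\le\ell(P)$, decompose the sum over $0\le s\le 4r$ according to $2^s\ell(Q)=\ell(P)$, using that for each $P$ there are $\lesssim 2^{sn}$ cubes $Q$ with $3P\cap 3Q\neq\emptyset$ at scale $2^{-s}\ell(P)$, that for each such $Q$ there are $\lesssim 3^n$ cubes $P$ (we abbreviate this relation by $P\sim Q$), and that the final sum over the $O(r)$ values of $s$ costs only a factor $2^{O(rn)}$, absorbed into $C_{\epsilon,r,\vartheta}$. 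Fix $0<\rho=\rho(n,r,\vartheta)<2^{-4r}$, to be chosen last, and for $P\in\mathcal D_\sigma$ let $N_P$ be the $\rho\,\ell(P)$-neighbourhood of $\bigcup_{P'\in\textup{ch}(P)}\partial P'$, so that $\partial P\subset N_P$. For each pair $(P,Q)$ split $\langle\mathsf R_\sigma\Delta^\sigma_Pf,\Delta^w_Qg\rangle_w=\mathrm I(P,Q)+\mathrm{II}(P,Q)$, where $\mathrm I(P,Q)=\langle\mathsf R_\sigma\Delta^\sigma_Pf,(\Delta^w_Qg)\mathbf 1_{N_P^c}\rangle_w$ and $\mathrm{II}(P,Q)=\langle\mathsf R_\sigma\Delta^\sigma_Pf,(\Delta^w_Qg)\mathbf 1_{N_P}\rangle_w$, and in $\mathrm I$ expand $\Delta^\sigma_Pf=\sum_{P'\in\textup{ch}(P)}[\Delta^\sigma_Pf]^\sigma_{P'}\mathbf 1_{P'}$.

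For the bulk sum $\sum_{P,Q}\mathrm I(P,Q)$, split $N_P^c$ as $(N_P^c\cap P')\cup(N_P^c\cap(P\setminus P'))\cup(N_P^c\setminus P)$ for each child $P'$. On $N_P^c\cap P'$, Cauchy--Schwarz and the testing inequality \eqref{e:test} for $\mathbf 1_{P'}$ give a bound $\mathscr T\,\lvert[\Delta^\sigma_Pf]^\sigma_{P'}\rvert\,\sigma(P')^{1/2}\,\|(\Delta^w_Qg)\mathbf 1_{P'}\|_w$. On the other two regions the support of $(\Delta^w_Qg)\mathbf 1_{N_P^c}$ lies at distance $\gtrsim\rho\,\ell(P)$ from $P'$, so the standard kernel bound gives $\lvert\mathsf R_\sigma\mathbf 1_{P'}\rvert\lesssim\rho^{-d}\ell(P)^{-d}\sigma(P')$ there; and because the relevant $w$-mass sits in a cube that is a neighbour of $P'$ of comparable size, the $\mathscr A_2$ condition --- with holes when $d>n-1$, without holes when $d\le n-1$, applied through $\mathsf P^{\textup r}$ --- converts $\rho^{-d}\ell(P)^{-d}\sigma(P')^{1/2}w(\textup{piece})^{1/2}$ into $2^{O(rd)}\rho^{-d}\mathscr A_2^{1/2}\sigma(P')^{1/2}$. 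Summing by Cauchy--Schwarz over $P'$, then over the $\lesssim 2^{sn}$ cubes $Q$ with $3Q\cap 3P\neq\emptyset$, then over $P$ (using $\sum_P\sum_Q\lesssim 3^n\sum_Q$), and finally over $s$, yields $\sum_{P,Q}\lvert\mathrm I(P,Q)\rvert\lesssim C_{\epsilon,r,\vartheta}\,\mathscr R\,\|f\|_\sigma\|g\|_w$ for every realization of the grids; the configurations in which, after the cut, $g$ lies cleanly inside one child of $P$ or cleanly outside $P$ are exactly those handled by the Nearby and Inside estimates, which may alternatively be invoked.

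For the boundary sum, the \emph{a priori} bound gives $\lvert\mathrm{II}(P,Q)\rvert\le\tilde{\mathscr N}_0\,\|\Delta^\sigma_Pf\|_\sigma\,\|(\Delta^w_Qg)\mathbf 1_{N_P}\|_w$. Two applications of Cauchy--Schwarz, over $Q$ for fixed $P$ and then over $P$, together with $\sum_{P\sim Q}\mathbf 1_{N_P}\lesssim 3^n\,\mathbf 1_{\bigcup_{P\sim Q}N_P}$ and the deterministic bound $\sum_P\|\Delta^\sigma_Pf\|_\sigma^2\le\|f\|_\sigma^2$, control the $s$-contribution by $\tilde{\mathscr N}_0\,2^{sn/2}\|f\|_\sigma\big(\sum_Q\int_{\bigcup_{P\sim Q}N_P}\lvert\Delta^w_Qg\rvert^2\,dw\big)^{1/2}$. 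Taking $\mathbb E$ and conditioning on $\mathcal D_w$: since with $\lambda=1$ the grid $\mathcal D_\sigma$ at a fixed scale is a uniform random translate in each coordinate, a fixed point lies within $\rho\,\ell(P)$ of a $\mathcal D_\sigma$-hyperplane of scale $\ell(P)$ or $\tfrac12\ell(P)$ with probability $\lesssim n\rho$, whence $\mathbb E_{\mathcal D_\sigma}\mathbf 1_{\bigcup_{P\sim Q}N_P}(x)\lesssim n\rho$ pointwise and $\mathbb E\sum_Q\int_{\bigcup_{P\sim Q}N_P}\lvert\Delta^w_Qg\rvert^2\,dw\lesssim n\rho\,\|g\|_w^2$. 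By Jensen, $\mathbb E\sum_{P,Q}\lvert\mathrm{II}(P,Q)\rvert\lesssim\tilde{\mathscr N}_0\,2^{O(rn)}\rho^{1/2}\|f\|_\sigma\|g\|_w$; choosing $\rho$ so that $2^{O(rn)}\rho^{1/2}\le\vartheta$ and adding the bulk bound gives $\mathbb E\,\lvert\sum_{P,Q}\langle\mathsf R_\sigma\Delta^\sigma_Pf,\Delta^w_Qg\rangle_w\rvert\lesssim\{C_{\epsilon,r,\vartheta}\,\mathscr R+\vartheta\,\tilde{\mathscr N}_0\}\|f\|_\sigma\|g\|_w$, the assertion.

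I expect the main obstacle to be the bulk estimate rather than the boundary one. Since $\mathscr A_2$ is weak --- its decay $2d$ yields a large exponent when $d>1$ --- one must pair $\sigma(P')$ only against $w$-mass living in a region \emph{disjoint} from $P'$ and of \emph{comparable} size, so that $\mathsf P^{\textup r}$ genuinely controls it; a naive pairing of $\sigma(P)$ against $w(Q)$ fails, and the surgery cut is designed precisely so that every residual interaction is either nested (handled by testing) or $\rho\,\ell(P)$-separated (handled by $\mathscr A_2$). Verifying this dichotomy, and carrying the $2^{O(rd)}$ losses through the summations, is the real work; the random-grid averaging producing the $\vartheta$-gain is routine once the shells $N_P$ are in place.
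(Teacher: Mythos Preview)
Your proposal is correct and follows essentially the same surgery strategy as the paper: split into a nested piece handled by testing, a separated piece handled by the kernel bound and $\mathscr A_2$, and a thin boundary strip handled by the \emph{a priori} norm together with averaging over the random grid $\mathcal D_\sigma$. The paper's organization differs only cosmetically: it expands \emph{both} $\Delta^\sigma_Pf$ and $\Delta^w_Qg$ into children and works with the pairings $\langle \mathsf R_\sigma P_i, Q_j\rangle_w$, splitting $Q_j$ as $(Q_j\cap P_i)\cup Q_\partial\cup Q_{\textup{sep}}$ with shell width $\vartheta\,\ell(Q)$, whereas you expand only $f$, use a global shell $N_P$ of width $\rho\,\ell(P)$, and choose $\rho$ at the end; the separated piece in the paper is dispatched in one line via simple $\mathscr A_2$, $\frac{\sigma(P_i)w(Q_j)}{\ell(P)^{2d}}\lesssim\mathscr A_2$, so your final paragraph somewhat overstates the difficulty of the bulk term.
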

\begin{proof}

It is important that the expectation over the random choice of grids appears above.  
We fix the relative lengths of $ P$ and $ Q$, 
setting $ 2 ^{s} \ell (Q) = \ell (P)$, where $ 0\le s \le r$ is fixed.  The case of $ -r\le s < 0$ is handled by duality. 
There are only a bounded number of cubes $ Q$ with length as above, such that $ 3P \cap 3Q\neq \emptyset $, and so we can assume that 
$ Q$ is a function of $ P$, but this is suppressed in the notation.   
Further, enumerating the children $ P _{i}, Q_j$, $ 1\le i,j \le n$ of $ P$, and $ Q $ respectively, we fix $ i,j$, and only consider 
\begin{equation*}
 \mathbb E  \lvert  \Delta ^{\sigma } _{P} f  \cdot \langle  \mathsf R _{\sigma }  P_i , Q_j \rangle _{w} \cdot
\Delta ^{w} _{Q} g \rvert . 
\end{equation*}
Here and below, we are suppressing the truncation parameters.

Now, we have by the testing hypothesis, uniformly over the probability space 
\begin{equation*}
\lvert  \langle  \mathsf R _{\sigma }  P_i , Q_j\cap P_i \rangle _{w} \rvert 
\lesssim \mathscr R   \sqrt{\sigma (P_i) w (Q_j) } .
\end{equation*}
In this case, Cauchy--Schwarz completes the proof, so we need only consider 
\begin{equation*}
\mathbb E \lvert   \Delta ^{\sigma } _{P} f  \cdot \langle  \mathsf R _{\sigma }  P_i , (Q_j \setminus P_i)  \rangle _{w}  \Delta ^{w} _{Q} g \rvert . 
\end{equation*}
The set $ Q_j \setminus P_i$ is decomposed into the sets $ Q _{\partial} \cup Q _{\textup{sep}}$, where 
\begin{align*}
Q _{\partial} \equiv  \{ x \in Q_j \setminus P_i \::\:  \textup{dist} (x, P_i) < \vartheta \ell  (Q)\}, \qquad 0< \vartheta <1.  
\end{align*}
This latter set depends upon $ P_i$, which is a function of the dyadic grid $ \mathcal D _{\sigma }$, holding  $\mathcal D _{w}$ fixed. 
Observe that we can estimate, using the \emph{a priori} norm inequality  \eqref{e:N}, 
\begin{align*}
\mathbb E _{\mathcal D _{\sigma }} 
\sum_{P}  
\left\lvert 
\Delta ^{\sigma } _{P} f  \cdot \langle  \mathsf R _{\sigma }  P_i , Q_ {\partial} \rangle _{w}  \Delta ^{w} _{Q} g 
\right\rvert 
& \lesssim \mathscr N \mathbb E _{\mathcal D_\sigma} 
\sum_{P}  
\left\lvert 
\Delta ^{\sigma } _{P} f  \sqrt{ \sigma  (P_i ) w( Q_ {\partial})}   \Delta ^{w} _{Q} g 
\right\rvert  
\\
& \lesssim \mathscr N\lVert f\rVert_{\sigma } 
\Biggl[  \mathbb E _{\mathcal D _{\sigma }} 
\sum_{P} 
\lvert  \Delta ^{w} _{Q} g \rvert ^2  w( Q_ {\partial})
\Biggr] ^{1/2} 
\\
& \lesssim  {\mathscr N}  \vartheta ^{1/2}  \lVert f\rVert_{\sigma } 
\lVert g\rVert_{w}. 
\end{align*}
We gain the factor of $ \vartheta ^{1/2}  $  since $ \mathbb E _{\mathcal D_\sigma} w( Q_ {\partial}) \lesssim \vartheta w (Q_j)$.  

It therefore remains to bound the term below, with cube $ P_i$ and set $ Q_ {\textup{sep}} $. 
But these sets are separated by distance $ \vartheta \ell (P) $, so that using just the kernel bound, we have 
\begin{align*}
\lvert  \langle  \mathsf R _{\sigma }  P_i , Q_ {\textup{sep}} \rangle _{w} \rvert 
\leq  C _{\vartheta, r } \frac {\sigma (P_i) w (Q_j)} { \ell (P) ^{d}} \lesssim  C _{\vartheta } \mathscr A_2 ^{1/2} \sqrt {\sigma (P_i) w (Q_j)}, 
\end{align*}
and this is clearly enough to complete the proof. 

\end{proof}

\begin{lemma}[Inside Term]
\label{l:inside}  There holds 
\begin{equation*}
\sum_{P} \sum_{\substack{Q \::\:  Q\Subset _{4r} P_Q}} 
\lvert  \langle  \mathsf R _{\sigma } (\Delta ^{\sigma } _{P} f \cdot (P \setminus P_Q)), \Delta ^{w} _{Q} g \rangle _{w}  \rvert
   \lesssim \mathscr R  \lVert f\rVert_{\sigma } \lVert g\rVert_{w} .  
\end{equation*}
\end{lemma}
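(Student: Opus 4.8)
The plan is to group the pairs $ (P,Q)$ by the scale ratio $ 2 ^{s}=\ell (P)/\ell (Q)$ — necessarily large here, since $ Q\Subset_{4r}P_Q$ forces $ s\ge 4r$ — and to apply the monotonicity principle to each pair, then extract a geometric gain in $ s$. For a good $ Q$ with $ Q\Subset_{4r}P_Q$, goodness applied with the cube $ P_Q$ (of side length exceeding $ 2 ^{r}\ell (Q)$) gives $ \textup{dist}(Q,\partial P_Q)\ge \ell (Q) ^{\epsilon }\ell (P_Q) ^{1-\epsilon }\ge 2 ^{4r (1-\epsilon )}\ell (Q)$, so $ 10Q\subset P_Q$; moreover $ \Delta ^{\sigma } _{P} f \cdot (P\setminus P_Q)$, being supported in $ P\setminus P_Q$, is not supported on $ P_Q$. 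Hence Lemma~\ref{l:monotone}, with $ P_Q$ in the role of its large cube, bounds $ \lvert \langle \mathsf R _{\sigma }(\Delta ^{\sigma } _{P} f \cdot (P\setminus P_Q)),\Delta ^{w} _{Q} g\rangle_w\rvert $ by $ \mathsf P ^{\textup{g}}_{\sigma }(\lvert \Delta ^{\sigma } _{P} f\rvert \mathbf 1_{P\setminus P_Q},Q)\,\lvert \langle \tfrac x{\ell (Q)},\Delta ^{w} _{Q} g\rangle_w\rvert $ plus the gradient‑plus term with the full energy factor. Since every point of $ P\setminus P_Q$ is at distance $ \ge \textup{dist}(Q,\partial P_Q)\gg \ell (Q)$ from $ Q$, the ratio of the two Poisson kernels there is $ \lesssim \ell (Q)/\textup{dist}(Q,\partial P_Q)\lesssim 2 ^{-s (1-\epsilon )}$, so I would reduce to estimating the gradient term, using $ E (w,Q)\le 1$ and $ \lvert \langle \tfrac x{\ell (Q)},\Delta ^{w} _{Q} g\rangle_w\rvert \le w (Q) ^{1/2}\lVert \Delta ^{w} _{Q} g\rVert_w$; the gradient‑plus contribution is then smaller by the factor $ 2 ^{-s (1-\epsilon )}$.

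The key step is a single–scale estimate. Fix $ s$, a cube $ P$, a child $ P'$ of $ P$, and put $ \ell _0=2 ^{-s}\ell (P)$, $ \delta _{\ast}=\ell _0 ^{\epsilon }\ell (P') ^{1-\epsilon }\simeq 2 ^{-s\epsilon }\ell (P)$. For a good $ Q\subset P'$ with $ \ell (Q)=\ell _0$, goodness gives $ \textup{dist}(Q,\partial P')\ge \delta _{\ast}$, so $ \mathsf P ^{\textup{g}}_{\sigma }(\lvert \Delta ^{\sigma } _{P} f\rvert \mathbf 1_{P\setminus P'},Q)\le \ell _0\,\textup{dist}(Q,\partial P') ^{-(d+1)}\lVert \Delta ^{\sigma } _{P} f\rVert_{\sigma }\,\sigma (P\setminus P') ^{1/2}$ by Cauchy--Schwarz on the $ L ^{1} (\sigma )$‑norm. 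Grouping these $ Q$ by the dyadic size of $ \textup{dist}(Q,\partial P')$ and using that at each size they are disjoint subcubes of $ P'$ gives $ \sum_Q w (Q)\,\textup{dist}(Q,\partial P') ^{-2 (d+1)}\lesssim w (P')\delta _{\ast} ^{-2 (d+1)}$, hence
\begin{equation*}
\sum_{\substack{Q\subset P'\ \textup{good}\\ \ell (Q)=\ell _0}}\mathsf P ^{\textup{g}}_{\sigma }(\lvert \Delta ^{\sigma } _{P} f\rvert \mathbf 1_{P\setminus P'},Q) ^2 w (Q)\lesssim \frac {\ell _0 ^2}{\delta _{\ast} ^{2 (d+1)}}\,\sigma (P\setminus P')\,w (P')\,\lVert \Delta ^{\sigma } _{P} f\rVert_{\sigma } ^2 .
\end{equation*}
Now the $ A_2$ hypothesis on the cube $ P'$: since $ P\setminus P'\subset \mathbb R ^{n}\setminus P'$ lies within $ O (\ell (P'))$ of $ P'$, $ \mathsf P ^{\textup{r}}_{\sigma }(\mathbb R ^{n}\setminus P',P')\gtrsim \sigma (P\setminus P')/\ell (P') ^{d}$, so $ \sigma (P\setminus P')\,w (P')\lesssim \mathscr A_2\ell (P) ^{2d}$. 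Plugging in and computing, $ \ell _0 ^2\ell (P) ^{2d}\delta _{\ast} ^{-2 (d+1)}\lesssim 2 ^{-2s (1-(d+1)\epsilon )}\le 2 ^{-3s/2}$, the last inequality by the hypothesis $ \epsilon <(4d+4) ^{-1}$; so the displayed sum is $ \lesssim 2 ^{-3s/2}\mathscr A_2\lVert \Delta ^{\sigma } _{P} f\rVert_{\sigma } ^2$.

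To conclude: for fixed $ s$ every good $ Q$ belongs to exactly one set $ \mathcal Q (P,P')\equiv \{Q\subset P'\ \textup{good}:\ \ell (Q)=2 ^{-s}\ell (P)\}$, its ancestor $ P$ at scale $ 2 ^{s}\ell (Q)$ and the child $ P'=P_Q$ being forced by goodness. Summing the monotonicity bound, applying Cauchy--Schwarz first within each $ \mathcal Q (P,P')$ and then over the pairs $ (P,P')$, and using $ \sum_{(P,P')}\lVert \Delta ^{\sigma } _{P} f\rVert_{\sigma } ^2\le 2 ^{n}\lVert f\rVert_{\sigma } ^2$, $ \sum_Q\lVert \Delta ^{w} _{Q} g\rVert_{w} ^2\le \lVert g\rVert_{w} ^2$, and the single–scale estimate, I get a bound $ \lesssim 2 ^{-3s/4}\mathscr A_2 ^{1/2}\lVert f\rVert_{\sigma }\lVert g\rVert_{w}$ for the $ s$‑layer (the gradient‑plus layer being smaller by $ 2 ^{-s (1-\epsilon )}$). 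A geometric sum over $ s\ge 4r$ gives the lemma, since $ \mathscr A_2 ^{1/2}\le \mathscr R$.

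The step I expect to be the real obstacle, and the reason this case needs its own treatment, is making the weak $ A_2$ condition suffice: when $ d>1$ its Poisson factor decays only like $ 2d$, and a single parent $ P$ contains about $ 2 ^{ns}$ of the cubes $ Q$, so the estimate must produce a genuine exponential gain $ 2 ^{-cs}$ with $ c>0$ to absorb that multiplicity. The argument above does this by using $ A_2$ only in the "lumped" form $ \sigma (P\setminus P')\,w (P')\lesssim \mathscr A_2\ell (P) ^{2d}$, which carries no multiplicity, while the many masses $ w (Q)$ merely add along the shells $ \{\textup{dist}(Q,\partial P')\simeq 2 ^{j}\ell _0\}$; the surplus power $ 2 ^{-3s/2}$ is supplied entirely by the goodness gap $ \textup{dist}(Q,\partial P')\gtrsim \ell (Q) ^{\epsilon }\ell (P) ^{1-\epsilon }$ once $ \epsilon <(4d+4) ^{-1}$. (Goodness of $ Q$ is also exactly what lets the monotonicity principle be applied at the child $ P_Q$ in the first place.)
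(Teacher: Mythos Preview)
Your argument is correct and follows essentially the same template the paper uses (note that the paper does not write out a separate proof of the Inside Term, stating only that it is ``argued by a similar method'' to the Nearby Term): fix the scale ratio $2^{s}\ell(Q)=\ell(P)$, apply the crude monotonicity estimate, exploit goodness to force $\textup{dist}(Q,\partial P_Q)\gtrsim 2^{-\epsilon s}\ell(P)$, invoke the $A_2$ hypothesis, and extract a geometric gain in $s$.

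The one genuine difference is in how you cash in the $A_2$ condition. The paper (in the Nearby proof) groups the cubes $Q$ into roughly $2^{n\epsilon s}$ intermediate cubes $R$ of side $\simeq 2^{-\epsilon s}\ell(P)$ and applies the reproducing-Poisson form of $A_2$ on each $R$; the cardinality $2^{n\epsilon s}$ then has to be absorbed into the exponential gain, forcing a constraint on $\epsilon$ that involves the ambient dimension $n$. You instead use the lumped bound $\sigma(P\setminus P')\,w(P')\lesssim \mathscr A_2\,\ell(P)^{2d}$ once, together with the shell sum $\sum_Q w(Q)\,\textup{dist}(Q,\partial P')^{-2(d+1)}\lesssim w(P')\delta_*^{-2(d+1)}$; this avoids the intermediate-cube bookkeeping and yields the cleaner exponent $2^{-2s(1-(d+1)\epsilon)}$, depending only on $d$ and matching exactly the standing hypothesis $\epsilon<(4d+4)^{-1}$. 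Both routes work; yours is a bit more direct here.
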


\begin{lemma}[Nearby Term]
\label{l:nearby}  There holds 
\begin{equation*}
\sum_{P}  
\sum_{\substack{Q \::\:  2 ^{4r} \ell (Q)\le \ell (P)  \\ 
Q\subset 3P\setminus P   }}
\lvert \langle  \mathsf R _{\sigma } \Delta ^{\sigma } _{P} f , \Delta ^{w} _{Q} g \rangle _{w}  \rvert 
   \lesssim \mathscr R  \lVert f\rVert_{\sigma } \lVert g\rVert_{w} .  
\end{equation*}
\end{lemma}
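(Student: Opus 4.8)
The plan is to prove a single pointwise bound on $ \lvert \langle  \mathsf R _{\sigma } \Delta ^{\sigma } _{P} f , \Delta ^{w} _{Q} g \rangle_w\rvert $ that already carries a geometric gain in the ratio $ \ell (Q)/\ell (P)$, and then to sum it crudely; the one genuinely delicate point is that the $ A_2$ condition, with its power $ 2d$, cannot be applied in the naive way once $ d>1$. As in the preceding cases the truncations on $ \mathsf R _{\sigma }$ are harmless here and will be suppressed.

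Fix a pair $ (P,Q)$ from the sum and write $ 2 ^{s}\ell (Q)= \ell (P)$ with $ s\ge 4r$. Since $ Q$ is good and $ Q\cap P= \emptyset $, $ Q$ does not straddle any face of $ P$, so it lies inside exactly one of the $ 3 ^{n}-1$ neighbours $ P ^{\nu }$ of $ P$ at scale $ \ell (P)$ (each $ P ^{\nu }\in \mathcal D _{\sigma }$, disjoint from $ P$), and goodness moreover gives $ \textup{dist}(Q, \partial P)\ge \ell (Q) ^{\epsilon } \ell (P) ^{1- \epsilon }$. In particular $ \mathsf R _{\sigma } \Delta ^{\sigma } _{P} f$ is $ C ^{2}$ near $ Q$, and by the antisymmetry of the Riesz kernel $ \langle  \mathsf R _{\sigma } \Delta ^{\sigma } _{P} f , \Delta ^{w} _{Q} g \rangle_w = - \langle  \Delta ^{\sigma } _{P} f , \mathsf R _{w} \Delta ^{w} _{Q} g \rangle _{\sigma }$, with $ \mathsf R _{w} \Delta ^{w} _{Q} g$ of class $ C ^{2}$ near $ P$. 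Subtracting the $ \sigma $-mean over $ P$ (the $ \sigma $-mean zero of $ \Delta ^{\sigma } _{P} f$) and using the $ w$-mean zero of $ \Delta ^{w} _{Q} g$ once inside a gradient of the kernel, one gets
\begin{equation*}
\lvert \langle  \mathsf R _{\sigma } \Delta ^{\sigma } _{P} f , \Delta ^{w} _{Q} g \rangle_w\rvert
\lesssim \frac {\ell (P)\, \ell (Q)} {\textup{dist}(Q, \partial P) ^{d+2}}\, \lVert \Delta ^{\sigma } _{P} f\rVert _{L ^{1} (\mathbb{R}^n;\sigma )}\, \lVert \Delta ^{w} _{Q} g\rVert _{L ^{1} (\mathbb{R}^n;w)}
\lesssim 2 ^{-s \beta }\, \ell (P) ^{-d}\, \lVert \Delta ^{\sigma } _{P} f\rVert _{L ^{1} (\mathbb{R}^n;\sigma )}\, \lVert \Delta ^{w} _{Q} g\rVert _{L ^{1} (\mathbb{R}^n;w)},
\end{equation*}
where $ \beta = 1- \epsilon (d+2)$; positivity of $ \beta $ is exactly where $ \epsilon < (4d+4) ^{-1}$ is used. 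This is the main obstacle in disguise: the gradient kernel decays only like $ \textup{dist} ^{-(d+1)}$, whereas the reproducing Poisson kernel in \eqref{e:Pr} decays like $ \textup{dist} ^{-2d}$, so for $ d>1$ one cannot dominate the former by the latter and the usual one‑line reduction to $ A_2$ fails. The point is that \emph{all} the needed distance decay has been extracted from goodness, not from $ A_2$; what remains is only the robust observation that, since $ P$ lies within $ C_n \ell (P)$ of $ P ^{\nu }$ and is disjoint from it, $ \mathsf P ^{\textup{r}} (\sigma , P ^{\nu })\gtrsim \sigma (P) \ell (P) ^{-d}$, whence by \eqref{e:A2} (the version with holes suffices)
\begin{equation*}
\frac {\sigma (P)\, w (P ^{\nu })} {\ell (P) ^{2d}} \lesssim \mathscr A_2 .
\end{equation*}

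To finish, decompose the sum over $ (P,Q)$ according to the neighbour $ P ^{\nu }\supset Q$ and to $ s$. For $ P, \nu , s$ fixed the cubes $ Q\subset P ^{\nu }$ of sidelength $ 2 ^{-s}\ell (P)$ are pairwise disjoint, so $ \sum_{Q} \lVert \Delta ^{w} _{Q} g\rVert _{L ^{1} (\mathbb{R}^n;w)}\le w (P ^{\nu }) ^{1/2} \bigl\lVert \sum_{Q} \Delta ^{w} _{Q} g\bigr\rVert_w$; combined with $ \lVert \Delta ^{\sigma } _{P} f\rVert _{L ^{1} (\mathbb{R}^n;\sigma )}\le \sigma (P) ^{1/2} \lVert \Delta ^{\sigma } _{P} f\rVert _{\sigma }$ and the displayed $ A_2$ bound, this gives a contribution $ \lesssim \mathscr A_2 ^{1/2}\, 2 ^{-s \beta } \lVert \Delta ^{\sigma } _{P} f\rVert _{\sigma } \bigl\lVert \sum_{Q} \Delta ^{w} _{Q} g\bigr\rVert_w$. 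Now apply Cauchy--Schwarz over the triples $ (P, \nu , s)$: $ \sum_{\nu , s} 2 ^{-s \beta }< \infty $, each $ P$ has only $ 3 ^{n}-1$ neighbours, and for each $ Q$ and each $ s$ there are only boundedly many pairs $ (P, \nu )$ with $ Q\subset P ^{\nu }$ and $ \ell (P)= 2 ^{s}\ell (Q)$ (namely $ P ^{\nu }$ is the unique scale‑$ 2 ^{s}\ell (Q)$ cube of $ \mathcal D _{\sigma }$ containing $ Q$, and $ P$ is one of its neighbours). The two resulting square‑function sums are therefore controlled by $ \lVert f\rVert _{\sigma } ^2$ and $ \lVert g\rVert _{w} ^2$, and the total is $ \lesssim \mathscr A_2 ^{1/2} \lVert f\rVert _{\sigma } \lVert g\rVert_w \le \mathscr R \lVert f\rVert _{\sigma } \lVert g\rVert_w$. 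Beyond exponent bookkeeping, the only points needing care are that the double use of cancellation really yields a $ \nabla ^{2}$‑free bound with numerator $ \ell (P)\ell (Q)$ (one power of $ \ell (P)$ from the oscillation of $ \mathsf R _{w} \Delta ^{w} _{Q} g$ across $ P$, one power of $ \ell (Q)$ from the $ w$‑mean zero of $ \Delta ^{w} _{Q} g$), and the cross‑grid goodness statement that furnishes $ \textup{dist}(Q, \partial P)\ge \ell (Q) ^{\epsilon } \ell (P) ^{1- \epsilon }$.
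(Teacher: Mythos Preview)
Your proof is correct, and it takes a genuinely different route from the paper's.

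The paper uses only the $w$-mean zero of $\Delta^w_Q g$ (via the monotonicity principle) to obtain a first-order bound $\mathsf P^{\textup g}_\sigma(P',Q)\,w(Q)^{1/2}\lVert\Delta^w_Q g\rVert_w$. This leaves a kernel decay of order $d+1$, which as you observe does not match the $2d$ decay in the $A_2$ condition when $d>1$. The paper resolves this mismatch by covering the ``good strip'' of $3P\setminus P$ with $\simeq 2^{n\epsilon s}$ cubes $R$ of side $\simeq 2^{-\epsilon s}\ell(P)$ and applying $A_2$ cube by cube at that intermediate scale; the cardinality loss $2^{n\epsilon s/2}$ is beaten by the gain $2^{-(1-\epsilon)s}$.

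You instead also exploit the $\sigma$-mean zero of $\Delta^\sigma_P f$, passing to $\mathsf R_w$ by antisymmetry and extracting a full second mixed derivative of the kernel. That produces the numerator $\ell(P)\ell(Q)$ against $\textup{dist}(Q,\partial P)^{d+2}$, so all of the needed decay already sits in the goodness bound $\textup{dist}(Q,\partial P)\ge \ell(Q)^\epsilon\ell(P)^{1-\epsilon}$, and $A_2$ can be invoked once, crudely, at scale $\ell(P)$ via $\sigma(P)\,w(P^\nu)\lesssim \mathscr A_2\,\ell(P)^{2d}$. The remaining Cauchy--Schwarz and counting steps are then entirely elementary. (Your remark that ``positivity of $\beta$ is exactly where $\epsilon<(4d+4)^{-1}$ is used'' is a slight overstatement: $\beta>0$ only needs $\epsilon<(d+2)^{-1}$; the stronger bound on $\epsilon$ is used elsewhere.)

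What each approach buys: the paper's argument uses only the monotonicity machinery already in place and, because it never needs the mean-zero of $\Delta^\sigma_P f$, the same proof handles the Inside Term (Lemma~\ref{l:inside}), where the argument $\Delta^\sigma_P f\cdot(P\setminus P_Q)$ has no cancellation. Your argument is shorter and avoids the covering step entirely, but it is specific to the Nearby Term; for the Inside Term one would have to fall back on a one-sided cancellation estimate along the paper's lines.
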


Both are argued by a similar method, and we present the proof of the nearby term. 
The key points are the goodness of cubes and the $ A_2$ condition. 
These two proofs are the only place in which the full strength of the $ A_2$ condition 
is used. Moreover, the $ A_2$ bound has  to be arranged correctly to overcome certain dimensional 
obstructions not present in the Hilbert transform case.  

\begin{proof}
We hold the relative lengths of $ Q$ and $ P$ to be  fixed by an integer $ s\geq 4r$.
By a crude application of the monotonicity principle, there holds for each child $ P'$ of $ P$, 
\begin{equation*}
\lvert \langle  \mathsf R _{\sigma } (\Delta ^{\sigma } _{P}  f \cdot P') , \Delta ^{w} _{Q} g \rangle _{w}  \rvert  
\lesssim 
[\, \lvert \Delta ^{\sigma } _{P}  f \rvert\,] _{P'} ^{\sigma } 
\mathsf P ^{g} _{\sigma } (P',Q) w (Q) ^{1/2} \lVert \Delta ^{w} _{Q} g \rVert_w  .
\end{equation*}
Clearly, we will use Cauchy-Schwarz in $ Q$.  
To organize the sum of the Poisson related terms, we first estimate 
\begin{equation*}
\mathsf P ^{g} _{\sigma } (P',Q) ^2  w (Q)  
\lesssim \sigma (P') \int _{P'} \frac { \ell (Q) ^2 } { \ell (Q) ^{2d+2} + \lvert  x-x_Q\rvert ^{2d+2} } \;  \sigma (dx)  \cdot 
w (Q). 
\end{equation*}

Now, $ Q$ is good, and $ 2 ^{s} \ell (Q)=\ell (P)$, 
hence for each $ x\in P'$, $ \lvert  x-x_Q\rvert \gtrsim 2 ^{- \epsilon s} \ell (P) $.  Let $ \mathcal R_P^s$ be a collection of cubes $ R \subset 3P \setminus P$ such that 
 (1) each has side length 
$ \ell (R) \simeq  2 ^{- \epsilon s}\ell (P)  $,  (2) 
$ \textup{dist}(R, P) \gtrsim 2 ^{- \epsilon s}\ell (P)  $,  (3) 
each good $Q$ with $  2 ^{s}\ell (Q)$ is contained in some $ R\in \mathcal R _{P} ^{s}$, and (4) 
the cardinality of $ \mathcal R_P ^{s}$ is at most $ C_n 2 ^{n \epsilon s}$. 

Then, for each $ R\in \mathcal R_P ^{s}$, we can use the $ A_2$ bound. 
\begin{align*}
\sum_{\substack{Q \::\:  2 ^{s} \ell (Q)= \ell (P)  \\ 
Q\subset R  }} &
\int _{P'} \frac { \ell (Q) ^2 } { \ell (Q) ^{2d+2} + \lvert  x-x_Q\rvert ^{2d+2} } \;  \sigma (dx) \cdot 
w (Q) 
\\& \lesssim 
2 ^{- 2(1-\epsilon)s} 
\int _{P'} \frac { 1 } { \ell (R) ^{2d} + \lvert  x-x_R\rvert ^{2d} } \; \sigma (dx)  \cdot 
w (R) 
\lesssim 2 ^{-2 (1- \epsilon )s} \mathscr A_2 . 
\end{align*}

Combining these observations, we have 
\begin{align*}
\sum_{R\in \mathcal R_P ^{s}} 
\sum_{\substack{Q \::\:  2 ^{s} \ell (Q)= \ell (P)  \\ 
Q\subset R  }}  &
\lvert \langle  \mathsf R _{\sigma } (\Delta ^{\sigma } _{P}  f \cdot P') , \Delta ^{w} _{Q} g \rangle _{w}  \rvert   
\\& \lesssim   2 ^{-2 (1- \epsilon )s} \mathscr A_2  ^{1/2} 
[\, \lvert \Delta ^{\sigma } _{P}  f \rvert\,] _{P'} ^{\sigma }  \sigma (P') ^{1/2} 
\sum_{R\in \mathcal R_P ^{s}}  
\Biggl[
\sum_{\substack{Q \::\:  2 ^{s} \ell (Q)= \ell (P)  \\ 
Q\subset R  }} \lVert \Delta ^{w} _{Q} g \rVert _{w}^2 
\Biggr] ^{1/2} 
\\
& \lesssim 
 2 ^{-2 (1- \epsilon -n \epsilon /2 )s} \mathscr A_2  ^{1/2}  
 [\, \lvert \Delta ^{\sigma } _{P}  f \rvert\,] _{P'} ^{\sigma }  \sigma (P') ^{1/2}  
 \Biggl[
\sum_{\substack{Q \::\:  2 ^{s} \ell (Q)= \ell (P)  \\ 
Q\subset 3P \setminus P   }} \lVert \Delta ^{w} _{Q} g \rVert _{w}^2 
\Biggr] ^{1/2} .  
\end{align*}
Notice that we have gained a factor comparable to the square root of the cardinality of $ \mathcal R ^{s}_P$. 
We take $ 0< \epsilon < 1$ so small that the exponent on $ s$ above is positive. 
A further sum over $ P$, children $ P'$,  and $ s\ge 4r$ are very easy to complete. This finishes the proof. 

\end{proof}

\begin{bibdiv}
\begin{biblist}

\bib{13120843}{article}{
  author={Hyt{\"o}nen, Tuomas P.},
  title={The two-weight inequality for the Hilbert transform with general measures},
  date={2013},
  eprint={http://www.arxiv.org/abs/1312.0843 },
}

\bib{MR3285858}{article}{
   author={Lacey, Michael T.},
   title={Two-weight inequality for the Hilbert transform: a real variable
   characterization, II},
   journal={Duke Math. J.},
   volume={163},
   date={2014},
   number={15},
   pages={2821--2840},
   issn={0012-7094},
   review={\MR{3285858}},
   doi={10.1215/00127094-2826799},
}

\bib{13045004}{article}{
   author={Lacey, Michael T.},
    title={The Two Weight Inequality for the Hilbert Transform: A Primer},
    journal={To appear, Springer Vol. in honor of Cora Sadosky},
  eprint={http://www.arxiv.org/abs/1304.5004},
  date={2013},
  }

\bib{13084571}{article}{
   author={Lacey, Michael T.},
   author={Li, Kangwei},
   title={Two weight norm inequalities for the $g$ function},
   journal={Math. Res. Lett.},
   volume={21},
   date={2014},
   number={3},
   pages={521--536},
   issn={1073-2780},
   review={\MR{3272028}},
   doi={10.4310/MRL.2014.v21.n3.a9},
}

\bib{MR3285857}{article}{
   author={Lacey, Michael T.},
   author={Sawyer, Eric T.},
   author={Shen, Chun-Yen},
   author={Uriarte-Tuero, Ignacio},
   title={Two-weight inequality for the Hilbert transform: a real variable
   characterization, I},
   journal={Duke Math. J.},
   volume={163},
   date={2014},
   number={15},
   pages={2795--2820},
   issn={0012-7094},
   review={\MR{3285857}},
   doi={10.1215/00127094-2826690},
}

\bib{combined}{article}{
  author={Lacey, Michael T.},
  author={Sawyer, Eric T.},
  author={Shen, Chun-Yun},
  author={Uriarte-Tuero, Ignacio},
  author={Wick, Brett},
  title={Two Weight Inequalities for the Cauchy Transform from $ \mathbb {R}$ to $ \mathbb {C} _+$},
  eprint={http://www.arXiv.org/abs/},
  date={2011},
}

\bib{MR1998349}{article}{
  author={Nazarov, F.},
  author={Treil, S.},
  author={Volberg, A.},
  title={The $Tb$-theorem on non-homogeneous spaces},
  journal={Acta Math.},
  volume={190},
  date={2003},
  number={2},
  pages={151--239},
}

\bib{MR3431617}{article}{
   author={Sawyer, Eric T.},
   author={Shen, Chun-Yen},
   author={Uriarte-Tuero, Ignacio},
   title={A note on failure of energy reversal for classical fractional
   singular integrals},
   journal={Int. Math. Res. Not. IMRN},
   date={2015},
   number={19},
   pages={9888--9920},
   issn={1073-7928},
   review={\MR{3431617}},
}

\bib{13104484}{article}{
  author={Sawyer, Eric T.},
  author={Uriarte-Tuero, Ignacio},
  author={Shen, Chun-Yen},
  title={A geometric condition, necessity of energy, and two weight boundedness of fractional Riesz transforms}, 
  eprint={http://www.arxiv.org/abs/1310.4484},
}

\bib{V}{book}{
  author={Volberg, A.},
  title={Calder\'on-Zygmund capacities and operators on nonhomogeneous spaces},
  series={CBMS Regional Conference Series in Mathematics},
  volume={100},
  publisher={Published for the Conference Board of the Mathematical Sciences, Washington, DC},
  date={2003},
  pages={iv+167},
}

\end{biblist}
\end{bibdiv}

\end{document}